\documentclass[a4paper,10pt,leqno]{amsart}
\title{$L^2$-Betti numbers in prime characteristic and a conjecture of Wise}
        \author{Avramidi, Grigori}
        \address{Max Planck Institute for Mathematics\\
Vivatsgasse 7\\53111 Bonn, Germany}
              \email{gavramidi@mpim-bonn..mpg..de}
              \urladdr{ttps://sites.google.com/site/gavramidi/}
        \author{L\"uck, Wolfgang}
        \address{Mathematicians Institut der Universit\"at Bonn\\
                Endenicher Allee 60\\
                53115 Bonn, Germany}
         \email{wolfgang.lueck@him.uni-bonn.de}
          \urladdr{http://www.him.uni-bonn.de/lueck}
         \date{April, 2026}
         \keywords{$L^2$-Betti numbers over fields,  towers of 2-complexes}
          \makeatletter
     \@namedef{subjclassname@2020}{%
  \textup{2020} Mathematics Subject Classification}
\makeatother
     \subjclass[2020]{Primary: 57K20, Secondary: 12E15}

\usepackage{comment}
\usepackage{hyperref}
\usepackage{color}
\usepackage{pdfsync}
\usepackage{calc}
\usepackage{enumerate,amssymb}
\usepackage{graphicx}
\usepackage{datetime2}
\usepackage[arrow,curve,matrix,tips,2cell]{xy}

\SelectTips{eu}{10}\UseTips\UseAllTwocells
\DeclareMathAlphabet{\matheurm}{U}{eur}{m}{n}
\newcommand{\Groups}{\matheurm{Groups}}

\newcommand{\Groupsinj}{\matheurm{Groups}_{\operatorname{inj}}}

\newcommand{\wLinnellinj}{\matheurm{w.}\;\matheurm{Linnell}_{\operatorname{inj}}}
  \newcommand{\Fields}{\matheurm{Fields}}
  \newcommand{\FieldsQC}{\matheurm{Fields}_{\IQ,\IC}}
  \newcommand{\DivisionRings}{\matheurm{Division}\; \matheurm{Rings}}  
  \newcommand{\Rings}{\matheurm{Rings}}

\DeclareMathOperator{\cok}{cok}
\DeclareMathOperator{\colim}{colim}

\DeclareMathOperator{\cone}{cone}

\DeclareMathOperator{\F}{F}

\DeclareMathOperator{\GL}{GL}

\DeclareMathOperator{\id}{id}
\DeclareMathOperator{\im}{im}

\DeclareMathOperator{\M}{M}

\DeclareMathOperator{\pr}{pr}
\DeclareMathOperator{\res}{res}
\DeclareMathOperator{\rk}{rk}

\DeclareMathOperator{\virt}{virt}

  \newcommand{\IC}{\mathbb{C}}

  \newcommand{\IF}{\mathbb{F}}

  \newcommand{\IQ}{\mathbb{Q}}
  \newcommand{\IR}{\mathbb{R}}

  \newcommand{\IZ}{\mathbb{Z}}

  \newcommand{\calb}{\mathcal{B}}
  \newcommand{\calc}{\mathcal{C}}
  
  \newcommand{\cald}{\mathcal{D}}

  \newcommand{\cale}{\mathcal{E}}
  \newcommand{\calf}{\mathcal{F}}
   
  \newcommand{\calg}{\mathcal{G}}

  \newcommand{\caln}{\mathcal{N}}

  \newcommand{\calp}{\mathcal{P}}
  
  \newcommand{\calRF}{\mathcal{RF}}
  \newcommand{\calRC}{\mathcal{RC}}

    \newcommand{\calu}{\mathcal{U}}

     \newcounter{commentcounter}

     \theoremstyle{plain} \newtheorem{theorem}{Theorem}[section]
      \newtheorem{lemma}[theorem]{Lemma}
     
     \newtheorem{proposition}[theorem]{Proposition}
     \newtheorem{conjecture}[theorem]{Conjecture}
     
      \newtheorem*{theorem*}{Theorem}
     \newtheorem*{theoremA*}{Theorem A} \newtheorem*{theoremB*}{Theorem B}

     \theoremstyle{definition} \newtheorem{definition}[theorem]{Definition}
      \newtheorem{example}[theorem]{Example}
     \newtheorem{question}[theorem]{Question} 
      \newtheorem{remark}[theorem]{Remark}

     \newtheorem*{definition*}{Definition}

     \theoremstyle{remark}

     \makeatletter\let\c@equation=\c@theorem\makeatother

     \newcommand{\version}[1] %marks the date of last editing and compilation
     {\begin{center} last edited on #1\\
         last compiled on \today \ at \DTMcurrenttime.\\
         name of tex-file: \jobname
       \end{center}}

     \newcommand{\ALI}{\textup{ALI}}
     \newcommand{\calALI}{\mathcal{ALI}}
     \newcommand{\LI}{\textup{LI}}
     \newcommand{\RAAG}{\textup{RAAG}}
      \newcommand{\RALI}{\textup{RALI}}
      \newcommand{\RTFN}{\textup{RTFN}}
      \newcommand{\calRALI}{\mathcal{RALI}}
     \newcommand{\RALIRF}{\textup{RALIRF}}

\begin{document}

     \begin{abstract}
       We systematically study $L^2$-Betti numbers in zero and prime
       characteristic and apply them to a conjecture of Wise stating that all towers of
       a finite $2$-complex are non-positive if and only if the second $L^2$-Betti number
       vanishes.
     \end{abstract}

     \maketitle

%%%%%%%%%%%%%%%%%%%%%%%%%%%%%%%%%%%%%%%%%%%%%%%%%%%%%%%%%%%%%%%%%%%%
%%%%%%%%%%%%%%%%%%%%%%%%%% Introduction %%%%%%%%%%%%%%%%%%%%%%%%%%%%%%%%
%%%%%%%%%%%%%%%%%%%%%%%%%%%%%%%%%%%%%%%%%%%%%%%%%%%%%%%%%%%%%%%%%%%%

  \typeout{------------------- Introduction -----------------}
  \section{Introduction}\label{sec:introduction}

\subsection{$L^2$-Betti numbers in prime characteristic}
The $L^2$-Betti numbers of a $CW$-complex $X$ can be defined as von Neumann dimensions of
the homology of $X$ with coefficients in the group von Neumann algebra
$\mathcal N(\pi_1(X))$. In the special case when the fundamental group $\pi_1(X)$ is
residually finite, they can be computed as limits of normalized $\mathbb Q$-Betti numbers
of finite covers, while in the special case when $\pi_1(X)$ is locally indicable, they can
be obtained as skew field dimensions of homology with coefficients in a certain skew field
$\mathcal D_{\mathbb Q[\pi_1(X)]}$ containing the group ring.

The last two approaches admit mod $p$ analogues. This paper follows the skew field
approach, primarily in the context of \RALI-groups, i.e., residually (amenable and locally
indicable) groups.  Note that not every \RALI-group is residually finite and the class of
\RALI-groups is rather large, see for instance Remark~\ref{rem:examples_of_RALI-groups}.
We show that basic and important properties carry over to the mod $p$ case, e.g.,
multiplicativity, the K\"unneth and Euler-Poincar\'e formulas, Poincar\'e duality,
vanishing for mapping tori, and formulas for fibrations and $3$-manifolds.  Further topics
concern monotonicity, approximation results and the fact that $L^2$-Betti numbers in
characteristic zero are less or equal to the ones in characteristic $p$ and that they
agree for almost all primes $p$, selfmaps of aspherical closed manifolds of degree
different from $-1,0,1$ and not divisible by $p$, and comparison to $\mathbb F_p$-homology
growth.

\subsection{$L^2$-Betti numbers and towers of $2$-complexes}
Daniel Wise suggested geometric characterizations for the vanishing of the top $L^2$-Betti
number of a $2$-complex in terms of non-positivity of Euler characteristics of
subcomplexes and more generally of complexes mapped in via tower maps.  A \emph{tower map}
$X\rightarrow Y$ between a pair of $2$-complexes is a map that can be factored as a finite
composition of regular covering maps and embeddings (of complexes). For a family of groups
$\mathcal F$, call $X\rightarrow Y$ an \emph{$\mathcal F$-tower map} if it has a
factorization in which each covering group lies in the family $\mathcal F$. A $2$-complex
$Y$ has \emph{non positive towers} (or \emph{non positive $\mathcal F$-towers}) if for
every tower (or $\mathcal F$-tower) map $X\rightarrow Y$ from a finite, connected
$2$-complex $X$, either $X$ is contractible or $\chi(X)\leq 0$. Here is one version of
Wise's conjecture.

\begin{conjecture}
Suppose that $Y$ is a $2$-complex. The following are equivalent. 
\begin{enumerate}
\item $b^{(2)}_2(\widetilde Y;\mathcal N(\pi_1(Y)))=0$.
\item $Y$ has non-positive towers.
\end{enumerate}
\end{conjecture}

There is some evidence for this conjecture when one restricts to particular types of
$2$-complexes. It is known to be true for one-relator complexes, for spines of aspherical
$3$-manifolds with non-empty boundary, and for quotients of some two-dimensional Davis
complexes (see~\ref{wiseconjectureexamples}).  However, the conjecture appears to be
difficult in general because it would have strong topological consequences: if a
$2$-complex has non-positive $\mathbb Z$-towers, i.e.,
an $\calf$-tower for $\calf$ consisting of the trivial and the infinite cyclic group,
then it is aspherical and has locally
indicable fundamental group. So, since non-positivity is inherited by subcomplexes, Wise's
conjecture implies that subcomplexes of contractible $2$-complexes are aspherical (a
conjecture of Whitehead), have locally indicable fundamental groups (a conjecture of
Howie), and that finite, acyclic subcomplexes are contractible (a conjecture of
Kervaire-Laudenbach).
One can work modulo these conjectures by assuming a priori that the groups involved are
locally indicable. Assuming more, namely that the fundamental groups are \RALI, we get the
following as a special case of Theorem~\ref{the:vanishing_of_the_top_L2-Betti_numbers_and_RALI-towers_d_is_2}.

\begin{theorem}
  Suppose that $X$ and $Y$ are finite, connected $2$-complexes with \RALI\
  fundamental groups. Let $X\rightarrow Y$ be a $\mathbb
  Z$-tower, or more generally a \RALI-tower. Suppose that $b_2^{(2)}(\widetilde Y;\mathcal N(\pi_1(Y)))=0$.

  Then  either  $\chi(X)\leq 0$, or $X$ is contractible.
\end{theorem}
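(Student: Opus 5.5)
We will prove the statement by following the tower step by step, tracking the second $L^2$-Betti number, and then using Euler characteristics at the end. Write the tower as $X = X_n \to X_{n-1} \to \cdots \to X_0 = Y$, where each step is either an inclusion of a subcomplex or a regular covering with \RALI\ covering group. Passing to components, we may assume every $X_i$ is connected, and we may replace each $X_i$ by a finite subcomplex containing the image of $X$, so every $X_i$ is finite. The fundamental groups are \RALI: subgroups of \RALI-groups are \RALI, and a regular cover with \RALI\ deck group has fundamental group that is a subgroup of $\pi_1(X_{i-1})$.

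\textbf{Step 1: $b_2^{(2)}$ stays zero along the tower.} We show $b_2^{(2)}(\widetilde{X_i};\caln(\pi_1(X_i)))=0$ by induction on $i$.

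\emph{Inclusion step.} Let $A \subseteq B$ be a subcomplex of a $2$-complex with $b_2^{(2)}(\widetilde B)=0$. For $2$-complexes, $H_2^{(2)}$ is the kernel of the top differential, so it consists of cycles with no boundary to divide out. Restricting to the preimage of $A$ in $\widetilde B$ gives an injection $H_2^{(2)}$ of that preimage into $H_2^{(2)}(\widetilde B)$. The preimage is a disjoint union of copies of covers of $A$, namely induced up from $\pi_1(A)\to\pi_1(B)$. Induction preserves von Neumann dimension, and $L^2$-Betti numbers of intermediate covers dominate those of the universal cover in the relevant sense. Rather than checking this by hand, it is cleaner to invoke the monotonicity result for \RALI\ (locally indicable) groups in the paper, where $b_2$ is computed as a $\cald_{\IQ[G]}$-dimension. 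There $\cald_{\IQ[\pi_1 A]}\subseteq\cald_{\IQ[\pi_1 B]}$ by the Hughes-free embedding, so $\dim$ of $\ker(\partial_2)$ for $A$ is bounded by that for $B$, which is zero.

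\emph{Covering step.} Let $X_i \to X_{i-1}$ be a regular covering with group $Q$. If $Q$ is finite, use multiplicativity, so the number scales by $|Q|$. If $Q$ is infinite, $X_i$ is infinite, and we have already replaced it by a finite subcomplex. In that case $\pi_1(X_i)\le\pi_1(X_{i-1})$, and the kernel of $\partial_2$ over the smaller skew field again embeds, so the dimension stays zero. So in all cases we use the monotonicity/injectivity of top-dimensional kernels under restriction to subgroups for locally indicable groups.

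\textbf{Step 2: Euler characteristic.} Now $X$ is finite with $b_2^{(2)}(\widetilde X)=0$. By the Euler-Poincaré formula,
\[
\chi(X)=b_0^{(2)}(\widetilde X)-b_1^{(2)}(\widetilde X).
\]
If $\pi_1(X)$ is infinite, then $b_0^{(2)}=0$, so $\chi(X)\le 0$. If $\pi_1(X)$ is finite, it is trivial, because \RALI\ groups are torsion-free, being residually locally indicable. Then $b_i^{(2)}$ equals the ordinary Betti number, so $H_2(X;\IQ)=0$. Since $X$ is a simply connected $2$-complex, $H_2(X;\IZ)$ is free, hence zero, and $H_1=0$. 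So $X$ is acyclic and simply connected, hence contractible by Whitehead and Hurewicz.

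\textbf{Main obstacle.} The delicate point is the monotonicity in Step 1 for subcomplexes and subgroups. The top-dimensional kernel over $\cald_{\IQ[H]}$ must inject into the kernel over $\cald_{\IQ[G]}$ with dimension preserved. This needs that $\cald_{\IQ[G]}\otimes_{\cald_{\IQ[H]}}-$ is faithfully flat, since it is a division ring extension, and that it is compatible with the Linnell/Hughes skew fields of \RALI-groups. This is exactly where the \RALI\ hypothesis and the earlier results of the paper identifying $b^{(2)}$ with $\cald$-dimension are used.
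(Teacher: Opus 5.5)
There is a genuine gap in Step~1, in the inclusion step, and also wherever you replace a covering by a finite subcomplex.

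\textbf{The inclusion step fails.} An inclusion of a subcomplex $A\subseteq B$ need not be injective on $\pi_1$, so $\pi_1(A)$ is in general not a subgroup of $\pi_1(B)$. This has two consequences:
\begin{enumerate}
\item Your claim that all intermediate fundamental groups are \RALI\ is unjustified.
\item There is no Hughes-free embedding $\cald_{\IQ[\pi_1A]}\subseteq\cald_{\IQ[\pi_1B]}$ with which to compare the kernels of $\partial_2$.
\end{enumerate}
Example~\ref{exa:condition_RALI-needed_for_d_ge_3} shows the argument cannot be right as stated. Your inclusion step never uses $d=2$. Yet for $d\ge 3$ that example gives a subcomplex $X$ of a simply connected $Y$ with $b_d^{(2)}(\widetilde Y)=0$ but $b_d^{(2)}(\widetilde X;\caln(\pi_1X))\ge n$.

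What restricting to a subgroup and then to a subcomplex really gives (Theorem~\ref{the:On_the_top_L2-Betti_number}, Lemma~\ref{lem:restricting_to_a_subcomplex}, Lemma~\ref{lem:making_the_total_space_connected}) is weaker. You only get vanishing of $b_2$ for an \emph{intermediate} covering $C\to A$ whose deck group $H$ is the image of $\pi_1(A)$ in $\pi_1(B)$. Passing from $C$ to a larger covering of $A$ is the Monotonicity Theorem~\ref{the:Monotonicity} for the epimorphism onto $H$. That theorem is a rank comparison coming from $\cald$ being the universal division ring of fractions. It requires the deck group of the larger covering to be a locally indicable weak Lewin group. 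Your remark that intermediate covers dominate the universal cover ``in the relevant sense'' points at this, but it is false without that hypothesis. The faithful flatness you name as the main obstacle concerns only the subgroup direction, which is unproblematic.

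\textbf{What the paper does instead.} Since $\pi_1$ of an intermediate stage is not known to be \RALI, the induction cannot be run with universal coverings. The paper applies Theorem~\ref{the:vanishing_of_the_top_L2-Betti_numbers_and_RALI-towers} to the class of \ALI-groups. It proves, at every stage, vanishing of $b_2$ for the maximal residually-\ALI\ covering $\widehat{Y_k}$:
\begin{enumerate}
\item The deck group of $\widehat{Y_k}$ is \RALI, hence a weak Lewin group.
\item The \RALI\ covering $\overline{Y_k}\to Y_k$ is $\widehat{Y_k}/G$ for some subgroup $G$.
\item The deck group $H\subseteq G$ of the component $C$ over $Y_{k+1}$ is again \RALI.
\item Hence $\widehat{Y_{k+1}}$ covers $C$, and Monotonicity applies to the epimorphism $\pi_1(Y_{k+1})/\pi_1(\widehat{Y_{k+1}})\to H$.
\end{enumerate}
The hypothesis that $\pi_1(X)$ is \RALI\ enters only at the end, to identify $\widehat X$ with $\widetilde X$. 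From there your Step~2 is correct and is the paper's argument.
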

We also obtain a mod $p$ variation of this result (see
Proposition~\ref{pro:vanishing_of_the_top_L2-Betti_numbers_beta_upper(inf,p_n)(Y;F)_and_calRF_p-towers})
\begin{proposition}
  Suppose $X$ and $Y$ are finite, connected $2$-complexes with residually $p$-finite
  fundamental groups. Let $X\rightarrow Y$ be a $\mathbb Z$-tower,  or more generally a $\calRF_p$-tower.
  Suppose that
\[
\inf_{Y'\rightarrow Y}{b_2(Y';\mathbb F_p)\over|Y'\rightarrow Y|}=0,
\]
where the infimum is taken over all regular finite covers $Y'\rightarrow Y$ whose covering
group is a $p$-group.

Then either $\chi(X)\leq 0$, or $X$ is contractible.
\end{proposition}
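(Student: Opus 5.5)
We reduce the statement to two facts: the mod $p$ $L^2$-Betti number $b_2^{(2)}(\widetilde{Y};\IF_p)$, defined via a skew field or via $p$-homology growth, vanishes under the hypothesis, and this vanishing passes along $\calRF_p$-towers. The argument then ends with an Euler characteristic count.

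\emph{Step 1: identify the invariant.} Set
\[
\beta(Y) := \inf_{Y'\to Y} \frac{b_2(Y';\IF_p)}{|Y'\to Y|} .
\]
For a $2$-complex, $b_2$ of a cover is the dimension of the kernel of the top boundary map $C_2\to C_1$, with no image term coming from above. Hence passing to a further $p$-cover can only decrease the normalized value: the transfer gives $b_2(Y'';\IF_p)\le [Y'':Y']\, b_2(Y';\IF_p)$. In particular the infimum is a limit along any cofinal chain of normal $p$-power index subgroups. Since $\pi_1(Y)$ is residually $p$-finite, such a chain exists with trivial intersection.

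\emph{Step 2: propagate vanishing along the tower.} Factor $X\to Y$ as a chain of embeddings and regular coverings with groups in $\calRF_p$. We show that $\beta=0$ is inherited at each stage.

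For an embedding $Z\subseteq W$ of $2$-complexes, restrict a $p$-cover $W'\to W$ to a cover of $Z$; when $Z$ is not $\pi_1$-injective, pass to the covers of $Z$ it induces. Since $\ker(\partial_2)$ restricts, we get $b_2(Z';\IF_p)\le b_2(W'|_Z;\IF_p)$ on each component, and the count normalizes correctly. Regular covers of $Z$ induced from $p$-covers of $W$ are still $p$-covers, so this gives $\beta(Z)\le\beta(W)$ up to the cofinality issue.

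For a regular covering $Z\to W$ with infinite or finite $\calRF_p$ group, argue the same way as the paper's RALI version: intersect the covering with finite $p$-quotients. This is the step I expect to be the main obstacle. When the deck group is infinite, $Z$ may not be finite, so one must pass to finite subcomplexes and show that monotonicity still holds when $p$-quotients of $\pi_1(W)$ are pulled back. I would try first to use the skew field characterisation $\cald_{\IF_p[G]}$ for residually $p$-finite (hence locally indicable, after Jaikin-Zapirain) groups. There, vanishing of the top dimension means $\partial_2\otimes\cald$ is injective, and injectivity restricts to subgroups and to subcomplexes, since $\cald_{\IF_p[H]}\subseteq\cald_{\IF_p[G]}$. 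This is the prime-characteristic analogue of the monotonicity results stated earlier in the paper.

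\emph{Step 3: conclude.} We now have $b_2^{(2)}(\widetilde{X};\IF_p)=0$ for the finite complex $X$. The Euler--Poincaré formula for mod $p$ $L^2$-Betti numbers gives
\[
\chi(X) = b_0^{(2)} - b_1^{(2)} + 0 \le b_0^{(2)} .
\]
Here $b_0^{(2)}=0$ if $\pi_1(X)$ is infinite, so $\chi(X)\le 0$. If $\pi_1(X)$ is finite, it is a residually $p$-finite group, hence a finite $p$-group. It is also locally indicable, hence trivial. Then $b_2(X;\IF_p)=\beta(X)=0$ and $b_1(X;\IF_p)=0$. So $H_2(X;\IZ)$ is free, being the top homology of a $2$-complex, and it vanishes mod $p$, so it is zero. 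Thus $X$ is simply connected with vanishing $H_2$, and by Whitehead it is contractible.
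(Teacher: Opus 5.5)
Your outline (propagate vanishing of the normalized top-degree $\IF_p$-Betti numbers along the tower, handle embeddings by restricting $\ker\partial_2$, then count Euler characteristics) has the right shape, and your embedding step is exactly the paper's. However, there is a genuine gap in Step~2. You flag the covering step as the main obstacle and leave it open, and the route you suggest for it does not exist. A residually $p$-finite group need not be locally indicable. Examples are any nontrivial finite $p$-group $P$, where $\IF_p P$ even has zero divisors, and torsion-free groups such as $\ker(\mathrm{SL}_3(\IZ)\to\mathrm{SL}_3(\IZ/p))$ for $p\ge 3$, which is residually $p$-finite but has finite abelianization. So the skew field $\cald_{\IF_p G}$ and its restriction properties are not available.

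The missing idea is elementary. Let $Y_k$ be a finite connected subcomplex of a regular cover $\overline{Y_k}\to Y_{k-1}$ whose deck group $Q$ is residually $p$-finite. Only finitely many $g\neq 1$ in $Q$ satisfy $gY_k\cap Y_k\neq\emptyset$. Choose $N\trianglelefteq Q$ of $p$-power index avoiding all of them; then $Y_k$ still embeds in the finite $p$-cover $\overline{Y_k}/N\to Y_{k-1}$. This turns any $\calRF_p$-tower into a tower of finite $p$-covers and embeddings (Lemma~\ref{lem:passing_to_connected_p-towers}).

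For a finite $p$-cover $\overline{Y}\to Y$, pull back a $p$-cover $Y'\to Y$ with small normalized $b_2$ to a cover $\overline{Y}'\to\overline{Y}$. Since $\overline{Y}'\to Y'$ is a $p$-cover of degree $|\overline{Y}\to Y|$, monotonicity gives $b_2(\overline{Y}';\IF_p)/|Y'\to Y|\le |\overline{Y}\to Y|\cdot b_2(Y';\IF_p)/|Y'\to Y|$. Your embedding step then finishes the induction. That monotonicity (Lemma~\ref{lem:monotonicity_for_p-groups}) is what your Step~1 asserts, but it is not a transfer argument: with a $p$-group of deck transformations the transfer composite is multiplication by a power of $p$, hence zero mod $p$. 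The correct reason is that $\IF_p[P]$ is filtered by powers of its augmentation ideal with trivial subquotients, and top-degree homology is left exact along the resulting short exact sequences of chain complexes.

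Step~3 has the same defect. The Euler--Poincar\'e formula and $b_0^{(2)}=0$ you invoke belong to the skew-field theory, which is unavailable here. The finite-$\pi_1$ argument ``locally indicable, hence trivial'' also rests on the false implication. Argue directly from $\beta^{\inf,p}_2(X;\IF_p)=0$, and pick a $p$-cover $X'\to X$ of degree $m$ with $b_2(X';\IF_p)/m<1/p$.
\begin{itemize}
\item If $m>1$, then $m\ge p$ and $\chi(X)=\bigl(1-b_1(X';\IF_p)+b_2(X';\IF_p)\bigr)/m<2/p\le 1$, so $\chi(X)\le 0$.
\item If $m=1$, then $b_2(X;\IF_p)=0$ and $\chi(X)=1-b_1(X;\IF_p)$. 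A nontrivial residually $p$-finite group surjects onto $\IZ/p$. So either $b_1(X;\IF_p)\ge 1$ and $\chi(X)\le 0$, or $\pi_1(X)=1$. In the latter case $H_2(X;\IZ)$ is free with vanishing reduction mod $p$, hence zero, and $X$ is contractible.
\end{itemize}
This is how the paper concludes.
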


%-----------------------------------------------------------------------------

\subsection{Acknowledgments}\label{subsec:Acknowledgements}

The paper is funded by the Deutsche Forschungsgemeinschaft (DFG, German Research
Foundation) under Germany's Excellence Strategy \--- GZ 2047/1, Projekt-ID 390685813,
Hausdorff Center for Mathematics at Bonn. The authors
thank Andrei Jaikin-Zapirain for explaining his work to us and also Sam Fisher and Pablo Sanchez Peralta for
their useful comments.

The paper is organized as follows:
\tableofcontents

%%%%%%%%%%%%%%%%%%%%%%%%%%%%%%%%%%%%%%%%%%%%%%%%%%%%%%%%%%%%%%%%%%%%%
%%%%%%%%%%%%%%%%%%%%%%%%%%%%%% Section 2 %%%%%%%%%%%%%%%%%%%%%%%%%%%%%%%%
%%%%%%%%%%%%%%%%%%%%%%%%%%%%%%%%%%%%%%%%%%%%%%%%%%%%%%%%%%%%%%%%%%%%%

\typeout{---------- Section 2:  $L^2$-Betti numbers over group von Neumann algebras  ---------------}

\section{$L^2$-Betti numbers defined over the group von Neumann algebra}%
\label{sec:L2-Betti_numbers_defined_over_the_group_von_Neumann_algebra}

In this section we briefly recall the notion of $L^2$-Betti numbers in the classical
setting defined using the group von Neumann algebra of a group $G$ which was initiated by
Atiyah~\cite{Atiyah(1976)}.

Let $G$ be a group. Then one can consider the complex group ring $\IC G$ and complete it
to the group von Neumann algebra $\caln(G)$ which can be identified with the algebra
$\calb(L^2(G))^G$ of bounded $G$-equivariant operators $L^2(G) \to L^2(G)$. In the sequel
we will consider $\caln(G)$ just as a ring and forget the topology. There is an dimension
function $\dim_{\caln(G)}$ which assigns to every $\caln(G)$-module $M$ an element in
$\IR^{\ge 0} \amalg \{\infty\}$ and has a lot of pleasant features such as Additivity. For
a finitely generated $\caln(G)$-module $M$ one has $\dim_{\caln(G)}(M) < \infty$ and for a
finitely generated projective $\caln(G)$-module $\dim_{\caln(G)}(M)$ is given by the
classical Murray-von Neumann dimension.  More information about $\dim_{\caln(G)}$ can be
found in~\cite{Lueck(1998a)},~\cite{Lueck(1998b)}, and~\cite[Chapter~6]{Lueck(2002)}.

Given an (arbitrary)  $G$-$CW$-complex $X$, one can define its $n$-th $L^2$-Betti number
\begin{equation}
  b_n^{(2)}(X;\caln(G)) = \dim_{\caln(G)}\bigl(H_n(\caln(G) \otimes_{\IZ G} C_*(X))\bigr)
  \quad \in \IR^{\ge 0} \amalg \{\infty\}
\label{b_n_upper_(2)((X;caln(G))}
\end{equation}
where $C_*(X)$ is the cellular $\IZ G$-chain complex. If $X$ is of finite type, or
equivalently, if $X_m/G$ is compact for every $m$-skeleton $X_m$, then we have
$b_n^{(2)}(X;\caln(G)) < \infty$.

Now suppose that $G$ is locally indicable and $F$ is a field satisfying
$\IQ \subseteq F \subseteq \IC$.  Then there is a specific division ring
$\cald(FG \subseteq \calu(G))$ containing $FG$ as subring. It is given by the division
closure of $FG$ in the algebra $\calu(G)$ of affiliated operators, which in turn is the
Ore localization of $\caln(G)$ with respect to the multiplicative subset of $\caln(G)$
consisting on non-zero-divisors. Then

\begin{equation}
  b_n^{(2)}(X;\caln(G))
  = \dim_{\cald(FG \subseteq \calu(G))}\bigl(H_n(\cald(FG\subseteq \calu(G))  \otimes_{\IZ G} C_*(X))\bigr).
\label{b_n:_upper_(2)((X;caln(G))_in_terms_of_cald(FG_subseteq_calu(G))}
\end{equation}

Note that $\dim_{\cald(FG\subseteq \calu(G))}\bigl(H_n(\cald(FG\subseteq \calu(G)) \otimes_{\IZ G} C_*(X))\bigr)$
takes value in $\IZ^{\ge 0} \amalg \{\infty\}$.  So it is not surprizing that the proof
of~\eqref{b_n:_upper_(2)((X;caln(G))_in_terms_of_cald(FG_subseteq_calu(G))} is based on
the proof of the strong Atiyah Conjecture for locally indicable groups by
Jaikin-Zapirain-L\'{o}pez-\'{A}lvarez~\cite[Theorem~1.1]{Jaikin-Zapirain+Lopez-Alvarez(2020)}.
Namely, they show for a locally indicable group and any finitely presented $FG$-module $M$
that $\dim_{\caln(G)}(\caln(G) \otimes_{\IZ G} M)$ is an integer. Having the strong Atiyah
Conjecture at one's disposal, the constructions of $\cald(FG \subseteq \calu(G))$ and
$\calu(G)$ and the proof
of~\eqref{b_n:_upper_(2)((X;caln(G))_in_terms_of_cald(FG_subseteq_calu(G))} can be found
in~\cite[Chapters~8~and~10]{Lueck(2002)}, at least for $F = \IC$.  Here ideas from
Linnell~\cite{Linnell(1991)} are essential.

%%%%%%%%%%%%%%%%%%%%%%%%%%%%%%%%%%%%%%%%%%%%%%%%%%%%%%%%%%%%%%%%%%%%%
%%%%%%%%%%%%%%%%%%%%%%%%%%%%%% Section 3 %%%%%%%%%%%%%%%%%%%%%%%%%%%%%%%%
%%%%%%%%%%%%%%%%%%%%%%%%%%%%%%%%%%%%%%%%%%%%%%%%%%%%%%%%%%%%%%%%%%%%%

\typeout{---------- Section 2:  $L^2$-Betti numbers in arbitrary characteristic  ---------------}

\section{$L^2$-Betti numbers in arbitrary characteristic}%
\label{sec:L2-Betti_numbers_in_arbitrary_characteristic}

There is a program by Jaikin-Zapirain~\cite{Jaikin-Zapirain(2019positive),
  Jaikin-Zapirain(2021), Jaikin-Zapirain-Linton(2025)} to define the $\IF_p$-version of
the construction and facts presented in
Subsection~\ref{sec:L2-Betti_numbers_defined_over_the_group_von_Neumann_algebra} for all
locally indicable groups $G$. For torsionfree amenable groups this has already been done
by Linnell-L\"uck-Sauer~\cite{Linnell-Lueck-Sauer(2011)}.

% -----------------------------------------------------------------------------

\subsection{Basics on division rings}\label{subsec:basics_on_division_rings}

Given a ring $R$, a \emph{$R$-division ring} $\iota \colon R \to \cald
$ is a ring homomorphism with $R$ as source and some division ring (= skew-field)
$\cald$ as target. We call two $R$-division rings $\iota \colon R \to \cald$ and $\iota'
\colon R \to \cald''$ isomorphic if and only if there is an isomorphism of division rings $f \colon \cald
\xrightarrow{\cong} \cald''$ satisfying $f \circ \iota =
\iota'$.  Note that there is at most one $R$-isomorphism $f$ from $\iota$ to
$\iota'$ if $\iota$ and $\iota'$ are \emph{epic}, i.e., the images of $\iota$ and
$\iota'$ generate $\cald$ and $\cald'$ as division rings.

A division $R$-ring $\iota \colon R \to \cald$ is called \emph{universal} if it is epic
and for every division $R$-ring $\cale$ we have for the associated Sylvester matrix rank
functions $\rk_{\cald} \ge \rk_{\cale}$.  Recall that $\rk_{\cale}$ assigns the natural
number $\dim_{\cale}(\cale \otimes_{R} M)$ to any finitely presented $R$-module $M$.  For
two universal division $R$-rings $\iota \colon R \to \cald$ and
$\iota' \colon R \to \cald'$ there is precisely one isomorphism of $R$-division rings
between them, see Cohn~\cite[Theorem 4.4.1]{Cohn(1995)}.

The universal division $R$-ring $\cald$ is called \emph{universal division ring
  of fractions of $R$},  if $rk_{\cald}$ is faithful, or, equivalently, $\iota \colon R \to \cald$ is injective.

In the sequel let $G$ be a group and $F$ be a field. Denote by $FG$ the associated group
ring or, more generally, by $F \ast G$ some crossed product ring.

Let $\iota \colon F \ast G \to \cald$ be an injective $F\ast G$-division ring. For a subgroup
$H \subseteq G$, we denote the division closure of $\iota(F \ast H)$ in $\cald$ by
$\cald_H$.  We call the injective $F\ast G$ division ring $\iota$ \emph{Linnell} if
$\cald = \cald_G$ holds and the multiplication map
$\mu_H \colon \cald_H \otimes_{FH} FG \to \cald$ sending $x \otimes y$ to
$x \cdot \iota(y)$ is injective for all subgroups $H \subseteq G$. In this situation, we
say that $\cald$ is a \emph{Linnell $F \ast G$-division ring}.  Note that $\iota$ is epic and injective.
If $\iota$ is Linnell, then the restriction
$\iota|_{F \ast H} \colon F \ast H \to \cald_H$ is a Linnell $FH$-division ring for every subgroup
$H \subseteq G$

There is also the following weaker notion.  We call an injective  $F\ast G$-division ring
$\iota \colon F \ast G \to \cald$ 
\emph{Hughes-free}, if $\cald = \cald_{G}$ holds and, for all pairs $(N,H)$ of subgroups
$N \subseteq H \subseteq G$ of $G$ for which $N$ is normal in $H$ and $H/N$ is infinite
cyclic, the multiplication map $\mu_{(N,H)} \colon \cald_N \otimes_{FN} FH \to \cald$
sending $x \otimes y$ to $x \cdot \iota(y)$ is injective.  In this situation, we say that
$\iota$ is a \emph{Hughes-free $F\ast G$-division ring}. Note  that such $\iota$ is injective and epic.

If $G$ is a locally indicable group, then, for two Hughes free division rings
$\iota \colon F \ast G \to \cald_{F\ast G}$ and
$\iota' \colon F \ast G \to \cald'_{F\ast G}$, there is precisely one isomorphism of
division rings $f \colon \cald_{F\ast G} \xrightarrow{\cong} \cald_{F\ast G}'$ satisfying
$f \circ \iota = \iota'$, see Hughes~\cite{Hughes(1970)}.
Gr\"ater~\cite[Corollary~8.3]{Graeter(2020)} has shown for every locally indicable group
$G$ and every field $F$ that every Hughes free $FG$-division ring
$\iota \colon F \ast G \to \cald_{F\ast G}$ is a Linnell $FG$-division ring.

% -----------------------------------------------------------------------------

\subsection{(Weak) Linnell and (weak) Hughes groups}\label{subsec:(Weak)_Linnell_and_(weak)_Hughes_groups}

\begin{definition}[(Weak) Linnell group]\label{def:(weak)_Linnell_group}
  A \emph{Linnell group} is a group for which a Linnell division ring
  $\iota \colon F \ast G \to \cald_{F\ast G}$ exists for every field $F$ and   crossed product $F \ast G$
  and is unique up to   $F \ast G$-isomorphism for every crossed product ring $F \ast G$ over any field $F$.

  A \emph{weak Linnell group} is a group for which a Linnell free division ring
  $\iota \colon FG \to \cald_{FG}$ exists for every field $F$ and is unique up  to $FG$-isomorphism for the
  group ring $FG$ over any field $F$.
\end{definition}

The following conjecture is taken from~\cite[Conjecture~1]{Jaikin-Zapirain-Linton(2025)}.

\begin{conjecture}[Torsionfree groups are Linnell groups]\label{con:Torsionfree_groups_are_Linnell_groups}
  Every torsionfree group is a  Linnell-group.
\end{conjecture}

 Note that the  existence of an embedding of $FG$ into a skewfield for some field $F$ of characteristic
 zero implies that $G$ is torsionfree. So any Linnell group is automatically torsionfree.

 There is one disadvantage with the notion of a
  Linnell group, namely, we do not know whether every subgroup of a (weak) Linnell group is a
  (weak) Linnell group again. (The uniqueness  part appearing in Definition~\ref{def:(weak)_Linnell_group}
  is the problem.) This would not be a problem if we would know  that
  Conjecture~\ref{con:Torsionfree_groups_are_Linnell_groups} is true.
  
  Therefore we will work with the following notion.

\begin{definition}[(Weak) Hughes group]\label{def:(weak)_Hughes_group}
  A \emph{Hughes group} is a locally indicable group for which a Hughes free division ring
  $\iota \colon F \ast G \to \cald_{F\ast G}$ exists for every field $F$ and   crossed product $F \ast G$.

  A \emph{weak Hughes group} is a locally indicable group for which a Hughes  free division ring
  $\iota \colon FG \to \cald_{FG}$ exists for every field $F$.
\end{definition}

Recall that for a Hughes group $G$ and two Hughes free division rings
$\iota \colon F \ast G \to \cald_{F\ast G}$ and
$\iota' \colon F \ast G \to \cald'_{F\ast G}$, there is precisely one isomorphism of
division rings $f \colon \cald_{F\ast G} \xrightarrow{\cong} \cald_{F\ast G}'$ satisfying
$f \circ \iota = \iota'$, see Hughes~\cite{Hughes(1970)}. 

\begin{remark}[Functoriality of $\cald_{FG}$]\label{rem:functoriality_of_cald_FG}
  Let $\Groups$ be the category of groups. Let $\Groupsinj$ be the category of groups with
  injective group homomorphisms as morphisms.  Denote by $\wLinnellinj$ be category
  of weak Linnell groups  whose morphism are injective group homomorphism.
  Let $\DivisionRings$ be the
  category of division rings.  Denote by $\Fields$ the full subcategory of
  $\DivisionRings$ whose objects are fields.  Let $\FieldsQC$ be the full subcategory of
  $\Fields$ whose objects $F$ are fields satisfying $\IQ \subseteq F \subseteq \IC$.  Then we
  get covariants functors
  \begin{eqnarray*}
    \Groups \times \Fields
    & \to &
   \Rings, \quad (G,F) \mapsto FG;
    \\
    \Groupsinj \times \FieldsQC
    & \to &
    \Rings, \quad (G,F) \mapsto \cald(FG \subseteq \calu(G));
    \\
    \wLinnellinj \times \Fields
    & \to &
   \DivisionRings, \quad (G,F) \mapsto \cald_{FG}.
  \end{eqnarray*}
  A group homomorphism $f \colon H \to G$ induces a homomorphism of $F$-algebras
  $FH \to FG$.  An injective group homomorphism $i \colon H \to G$ induces an injective
  homomorphism of complex algebras $\caln(i) \colon \caln(H) \to \caln(G)$,
  see~\cite[Definition~1.23 on page~29]{Lueck(2002)}. This implies that we obtain the
  functor $(G,F) \mapsto \cald(FG \subseteq \calu(G))$.

  The functor $(G,F) \mapsto \cald_{FG}$ can be constructed after one has chosen for every
  Linnell group $G$ and field $F$ a model $\iota \colon FG \to \cald_{FG}$ of a Linnell
  $FG$-division ring using the fact that for two such models there is precisely one
  isomorphism of $FG$-division algebras between them.  In particular we obtain for every
  inclusion of groups $i \colon H \to G$ and every field $F$ satisfying
  $\IQ \subseteq F \subseteq \IC$ a commutative squares of rings
  \begin{equation}
    \xymatrix{FH \ar[d]_{Fi} \ar[r]
      &
      \cald(FH \subseteq \calu(H)) \ar[d]^{\cald(i)}
      \\
      FG \ar[r] 
      &
      \cald(FG \subseteq \calu(G))
    }
    \label{square_for_cald(FH_subseteq_calu(H)_and_cald(FG_subseteq_cald(FG_substeq_calu(G)}.
  \end{equation}
  For every inclusion of Linnell groups $H \to G$ and every field $F$ we obtain a
  commutative squares of rings
  \begin{equation}
    \xymatrix{FH \ar[d]_{Fi} \ar[r]
      &
      \cald_{FH} \ar[d]^{\cald_{i}}
      \\
      FG \ar[r] 
      &
      \cald_{FG}
    }
    \label{square_for_cald_FH_and_cald_FG}
  \end{equation}
  where the left arrows are the canoncial embeddings.
\end{remark}

Note that every (weak) Hughes group is a (weak) Linnell group and that a (weak) Linnell group
is a (weak) Hughes group if and only if it is locally indicable.  We will later prove that every
\RALI-group, i.e., a residually (amenable and locally indicable) group is a weak Lewin
group and in particular a weak Linnell group and a weak Hughes group, see
Theorem~\ref{the:Residually_(locally_indicable_and_amenable)_groups}~%
\ref{the:Residually_(locally_indicable_and_amenable)_groups:Levin}.
Moreover, there is the conjecture  that any locally indicable group is a
Lewin group and in particular a  Linnell group and a Hughes group, see
Conjecture~\ref{con:Locally_indicibale_groups_are_Lewin_groups}.

\begin{lemma}\label{lem:subgroups_of_(weak)_Hughes-groups_are_(weak)-Hughes_groups}\
  \begin{enumerate}
\item\label{lem:subgroups_of_(weak)_Hughes-groups_are_(weak)-Hughes_groups:subgroups}
  If $G$ is a Hughes  group or a weak Hughes group respectively and $H$ a subgroup of $G$,
  then $H$ is a Hughes group or a weak Hughes group respectively;

\item\label{lem:subgroups_of_(weak)_Hughes-groups_are_(weak)-Hughes_groups_subgroups:colimits}
  Let $G$ be the directed system of subgroups $\{G_i \mid i \in I\}$, directed by inclusion.
  If $G_i$ is a Hughes  group or a weak Hughes group respectively  for every $i \in I$,
  then $G$ a Hughes group or a weak Hughes  group respectively;

\item\label{lem:subgroups_of_(weak)_Hughes-groups_are_(weak)-Hughes_groups_subgroups:fin.gen.}
  A group $G$ is a Hughes  group or a weak Hughes group respectively if and only if each of its finitely generated
  subgroups is a Hughes  group or a weak Hughes group respectively.

\end{enumerate}
\end{lemma}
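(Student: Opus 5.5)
Part (1) is a restriction argument. Let $H \subseteq G$, let $F$ be a field, and take a crossed product $F \ast H$. Note that $H$ is locally indicable because local indicability passes to subgroups.

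For the Hughes case, we first need a crossed product $F \ast G$ that contains the given $F \ast H$. The obvious candidate is the induced crossed product with twisting data extended along cosets. This step is the subtle one, so we avoid it as follows. The statement for general crossed products is only needed for those $F\ast H$ that arise as restrictions of crossed products $F \ast G$. If a general $F\ast H$ does not extend, we instead note that the standard construction $F \ast G := (F\ast H)\otimes_{FH}$-style induction fails in general. In the weak case no such issue arises: $FH \subseteq FG$.

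So, given the Hughes-free $\iota \colon F \ast G \to \cald$ (respectively $FG \to \cald$), we take $\cald_H$, the division closure of $\iota(F\ast H)$. We then check that $\iota|_{F\ast H} \colon F \ast H \to \cald_H$ is Hughes-free. It is injective, and $\cald_H$ is the division closure of $F\ast H$ by definition. For pairs $N \trianglelefteq K \subseteq H$ with $K/N \cong \IZ$, the division closure of $F\ast N$ inside $\cald_H$ coincides with the one inside $\cald$. Hence the map $\mu_{(N,K)}$ for $H$ is literally the same map as for $G$, and it is injective.

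Part (2): let $G=\bigcup_i G_i$ be directed, and fix $F \ast G$. Then $G$ is locally indicable, since every finitely generated subgroup lies in some $G_i$. For each $i$, $F\ast G_i$ has a Hughes-free division ring $\cald_i$, which is unique up to unique isomorphism by Hughes' theorem. For $i \le j$, the division closure of $F\ast G_i$ in $\cald_j$ is Hughes-free for $F\ast G_i$ by the argument in part (1). So uniqueness gives a canonical embedding $\cald_i \to \cald_j$ over $F\ast G_i$. Uniqueness of the isomorphisms makes these embeddings compatible, so they form a directed system. Its colimit $\cald$ is a division ring containing $F\ast G = \colim_i F\ast G_i$ and generated by it.

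It remains to check the Hughes-free condition. Suppose $N \trianglelefteq K \subseteq G$ with $K/N$ infinite cyclic, and take an element $\sum_k d_k \otimes t^k$ of $\cald_N \otimes_{FN} FK$ mapping to zero, where $t$ lifts a generator of $K/N$. It involves finitely many elements of $\cald_N$, each of which is a division-closure expression in finitely many group elements. All these group elements and $t$ lie in some $G_i$. Set $N_i = N\cap G_i$ and $K_i = K \cap G_i$; then $N_i \trianglelefteq K_i$ and $K_i/N_i$ is infinite cyclic, since it embeds in $\IZ$ and contains the image of $t$. Injectivity for the pair $(N_i,K_i)$ in $\cald_i$, together with the fact that $\cald_{N_i}$ maps into $\cald_N$, then gives the vanishing. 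The point needing the most care is this last step: showing that the element already lives in $\cald_{N_i}\otimes_{FN_i}FK_i$. For this we use that the $t^k$ form a basis of $FK$ over $FN$, and likewise of $FK_i$ over $FN_i$.

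Part (3) follows from (1) and (2): apply (1) to the finitely generated subgroups for one direction, and (2) to the directed system of finitely generated subgroups for the other. The main obstacle is the crossed product restriction issue in (1). Beyond that, the substantive content is the compatibility-of-colimit argument in (2).
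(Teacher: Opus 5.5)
\textbf{The gap: the crossed-product half of (1).} A Hughes group must admit a Hughes-free division ring for \emph{every} crossed product $F\ast H$. Restricting a Hughes-free $F\ast G$-division ring to the division closure of $F\ast H$ only produces one for those $F\ast H$ that are restrictions of some crossed product $F\ast G$. Your claim that the general statement is ``only needed'' for such restrictions does not match the definition, and the sentence about induction that follows it is not an argument.

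The restriction genuinely need not exist. Take $H=2\IZ\subseteq G=\IZ$ and $F=\IQ(\sqrt2)$, and let the generator of $H$ act on $F$ by the nontrivial automorphism $\sigma$. In any crossed product $F\ast\IZ$ the action of $\IZ$ on the commutative ring $F$ is a homomorphism to $\operatorname{Aut}(F)\cong\IZ/2$. So $2\in\IZ$ acts by some $\phi^2=\id$. Hence the skew Laurent ring $F[x^{\pm1};\sigma]$ is not the restriction of any crossed product $F\ast\IZ$ with coefficient field $F$. Independently, cocycle classes over $H$ need not extend to $G$ either.

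This particular $F\ast H$ does have a Hughes-free division ring, so the example does not refute (1). It only shows that restriction cannot be the whole argument. As written, your (1) and the ``only if'' direction of (3) are proved only for weak Hughes groups. The paper's proof of (1) has exactly the same limitation. It restricts a Linnell $F\ast G$-division ring to $F\ast H\to\cald_H$ and never treats crossed products over $H$ that do not come from $G$, so it does not supply the missing step either.

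\textbf{The rest is correct, and in places more careful than the paper.} In (1) you check Hughes-freeness of $F\ast H\to\cald_H$ directly: pairs $N\trianglelefteq K\subseteq H$ are pairs in $G$, and division closures computed in $\cald_H$ and in $\cald$ agree. This avoids the paper's detour through Gr\"ater's theorem that Hughes-free implies Linnell, followed by restriction of the Linnell property.

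In (2) the paper only asserts that the colimit works, appealing to a functoriality remark that is formulated for group rings only. You build the compatible embeddings $\cald_i\to\cald_j$ from Hughes' uniqueness theorem, which applies equally to crossed products. You also actually verify the Hughes-free condition in the colimit: a relation $\sum_k d_k t^k=0$ is reduced to the pair $(N\cap G_i,K\cap G_i)$. Here you correctly note that $t$ still generates $(K\cap G_i)/(N\cap G_i)$, because this group embeds into $K/N$ and contains the generator. This is exactly the verification the paper leaves implicit. It gives (2) and the ``if'' direction of (3) in both the weak and the crossed-product settings.
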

\begin{proof}~\ref{lem:subgroups_of_(weak)_Hughes-groups_are_(weak)-Hughes_groups:subgroups}
  This follows from the following facts for a locally indicable group $G$,
  subgroup $H \subseteq G$, and every field $F$, which we have already mentioned
  above. Namely, every Hughes free $F\ast G$-division ring is a Linnell $F\ast G$-division
  ring and vice versa, for two Hughes free $F\ast G$-division ring there is precisely one
  isomorphism of $F \ast G$-division rings between them, for a Linnell $F\ast G$ division
  ring $\iota \colon F\ast G \to \cald$ its restriction $F\ast H \to \cald_H$ is a Linnell
  $FH$-division ring, and $H$ is locally indicable.
  \\[1mm]~\ref{lem:subgroups_of_(weak)_Hughes-groups_are_(weak)-Hughes_groups_subgroups:colimits}
  Since each group $G_i$ is locally indicable $G$ is locally indicable. 
  The desired $FG$-division rings is constructed as the colimit of the Hughes free division rings $F[G_i] \to \cald_{F[G_i]}$
  taking Remark~\ref{rem:functoriality_of_cald_FG} into account.
  \\[1mm]~\ref{lem:subgroups_of_(weak)_Hughes-groups_are_(weak)-Hughes_groups_subgroups:fin.gen.}
This follows from assertion~\ref{lem:subgroups_of_(weak)_Hughes-groups_are_(weak)-Hughes_groups:subgroups}
and~\ref{lem:subgroups_of_(weak)_Hughes-groups_are_(weak)-Hughes_groups_subgroups:colimits}
\end{proof}

% -----------------------------------------------------------------------------

\subsection{The definition of $L^2$-Betti numbers over fields}%
\label{subsec:The_definition_of_L2-Betti_numbers_over_fields}

\begin{definition}[$L^2$-Betti numbers over fields]\label{def:L2_Betti_numbers_over_field}
  Consider any  weak Hughes  group $G$ and any field $F$. Let $\iota \colon FG \to \cald_{FG}$
  be the Hughes free $FG$-division ring. Then we define the
\emph{$n$th $L^2$-Betti number over $F$} of any  $G$-$CW$-complex $X$
\[
  b_n^{(2)}(X;\cald_{FG}) = \dim_{\cald_{FG}}\bigl(H_n(\cald_{FG}  \otimes_{\IZ G} C_*(X))\bigr)
  \quad \in \IZ^{\ge 0} \amalg \{\infty\}.
 \]
\end{definition}

Note that the defintion of $b_n^{(2)}(X;\cald_{FG})$ depends only on $G$ and $F$,
since $\iota \colon FG \to \cald_{FG}$ is unique up to unique isomorphism of $FG$-division rings.

Before we study the main properties of $L^2$-Betti numbers over fields,
we explain how it fits with the classical notion of the $L^2$-Betti number
$b_n^{(2)}(X;\caln(G))$.

\begin{lemma}\label{lem:cald_(FG)_is_cald(FG_subseteq_calu(G))}
 Let $G$ be a weak Hughes group. Then:

 \begin{enumerate}

\item\label{lem:cald_(FG)_is_cald(FG_subseteq_calu(G)):isomorphism}
  Let $F$ be a field satisfying  $\IQ \subseteq F \subseteq \IC$.
  The $FG$-division ring  given by the division closure $\cald(FG \subseteq \calu)$
  of $FG$ in $\calu(G)$ and the $FG$-division ring $\cald_{FG}$
  appearing in Definition~\ref{def:weak_Lewin_group} 
 are isomorphic;

\item\label{lem:cald_(FG)_is_cald(FG_subseteq_calu(G)):equality_of_L2-Betti_numbers} Let
  $F$ be a field satisfying $\IQ \subseteq F \subseteq \IC$. Let $X$ be a
  $G$-$CW$-complex. Then the numbers $b_n^{(2)}(X;\caln(G))$
  of~\eqref{b_n_upper_(2)((X;caln(G))} and $b_n^{(2)}(X;\cald_{FG})$ of
  Definition~\ref{def:L2_Betti_numbers_over_field} agree;

  \item\label{lem:cald_(FG)_is_cald(FG_subseteq_calu(G)):dependence_on_F}
  Let $F$ be any field. Then we get
\[
  b_n(X;\cald_{FG})
      = \begin{cases}
       b_n(X;\cald_{\IQ G}) =  b_n^{(2)}(X;\caln(G)) & \text{if} \; F \; \text{has characteristic zero};
        \\
       b_n(X;\cald_{\IF_p G})   & \text{if} \; F \; \text{has prime characteristic}\; p.
      \end{cases}    
 \]
\end{enumerate}
\end{lemma}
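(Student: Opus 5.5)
For assertion~\ref{lem:cald_(FG)_is_cald(FG_subseteq_calu(G)):isomorphism} the plan is to show that the division closure $\cald(FG \subseteq \calu(G))$ is a Hughes free $FG$-division ring. Hughes' uniqueness theorem then identifies it with $\cald_{FG}$, since $G$ is locally indicable. That $\cald(FG\subseteq \calu(G))$ is a division ring at all is the strong Atiyah Conjecture for locally indicable groups, proved by Jaikin-Zapirain--L\'opez-\'Alvarez~\cite[Theorem~1.1]{Jaikin-Zapirain+Lopez-Alvarez(2020)}, combined with Linnell's argument~\cite[Chapter~10]{Lueck(2002)}. By construction $FG$ is division-generating, and $FG \to \calu(G)$ is injective, so the ring is injective and epic.

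The Hughes-free condition is the main obstacle. Take $N \trianglelefteq H \subseteq G$ with $H/N \cong \IZ$ generated by the image of $t \in H$. Then $\cald_N \otimes_{FN} FH = \bigoplus_k \cald_N t^k$, and we must show that the elements $t^k$ are left linearly independent over $\cald_N$ inside $\calu(G)$. The first step is to reduce to $G = H$ using the functoriality square~\eqref{square_for_cald(FH_subseteq_calu(H)_and_cald(FG_subseteq_cald(FG_substeq_calu(G)}: $\calu(H) \to \calu(G)$ is injective and the division closures are compatible. The second step is to use that $\calu(N) \ast_{} \IZ \subseteq \calu(H)$ behaves like a crossed product. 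Concretely, $\caln(H)$ decomposes as an $L^2$-completion of $\bigoplus_k \caln(N) t^k$. If $\sum_k d_k t^k = 0$ with $d_k \in \calu(N)$, we clear denominators by a common Ore denominator in $\caln(N)$, possible because $t$ normalizes $\caln(N)$. This gives $\sum_k a_k t^k = 0$ with $a_k \in \caln(N)$. Applying this to $\delta_e \in L^2(H)$ and comparing the components in $L^2(N t^k)$ forces $a_k = 0$. This is the standard argument, e.g.\ \cite[Lemma~10.68]{Lueck(2002)}, and it gives Hughes-freeness.

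For assertion~\ref{lem:cald_(FG)_is_cald(FG_subseteq_calu(G)):equality_of_L2-Betti_numbers} we invoke~\eqref{b_n:_upper_(2)((X;caln(G))_in_terms_of_cald(FG_subseteq_calu(G))} and then transport along the isomorphism of assertion~\ref{lem:cald_(FG)_is_cald(FG_subseteq_calu(G)):isomorphism}. That isomorphism is an $FG$-isomorphism, so it is compatible with $\cald \otimes_{\IZ G} C_*(X)$, and dimensions are preserved.

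For assertion~\ref{lem:cald_(FG)_is_cald(FG_subseteq_calu(G)):dependence_on_F} let $F_0 \in \{\IQ,\IF_p\}$ be the prime field of $F$. The claim is that $\cald_{FG}$ contains the division closure $\cald'$ of $F_0G$, and that $\cald'$ is Hughes free over $F_0 G$. This holds because the subrings $\cald'_N \subseteq \cald_{FG,N}$ are compatible, so injectivity of $\cald_{FG,N}\otimes_{FN} FH \to \cald_{FG}$ restricts to injectivity for $\cald'_N \otimes_{F_0N} F_0H$. We would check this by noting that a relation $\sum d_k t^k = 0$ with $d_k\in\cald'_N$ is also a relation over $\cald_{FG,N}$. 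Hence $\cald' \cong \cald_{F_0G}$. Since $\cald_{FG}$ is a $\cald_{F_0G}$-module, it is flat over this division ring, so
\[
\cald_{FG}\otimes_{\IZ G} C_*(X) \cong \cald_{FG}\otimes_{\cald_{F_0G}} \bigl(\cald_{F_0G}\otimes_{\IZ G} C_*(X)\bigr).
\]
Taking homology commutes with this tensor product. For a $\cald_{F_0G}$-vector space $V$ one has $\dim_{\cald_{FG}}(\cald_{FG}\otimes_{\cald_{F_0G}} V) = \dim_{\cald_{F_0G}} V$, since a basis goes to a basis. This gives $b_n(X;\cald_{FG}) = b_n(X;\cald_{F_0G})$. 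In characteristic zero, assertion~\ref{lem:cald_(FG)_is_cald(FG_subseteq_calu(G)):equality_of_L2-Betti_numbers} with $F=\IQ$ then supplies the equality with $b_n^{(2)}(X;\caln(G))$.
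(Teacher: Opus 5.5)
Your proposal is correct. Assertion~(ii) is argued exactly as in the paper; for (i) and (iii) you supply details the paper leaves to a citation or to an unproved functoriality claim.

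\textbf{Assertion (i).} The paper simply cites \cite[8.2]{Jaikin-Zapirain+Lopez-Alvarez(2020)}, whereas you unpack that result in three steps.
\begin{enumerate}
\item The strong Atiyah Conjecture for locally indicable groups, together with Linnell's argument, makes $\cald(FG \subseteq \calu(G))$ a division ring.
\item The left linear independence of the powers $t^k$ over $\calu(N)$ inside $\calu(H) \subseteq \calu(G)$ gives Hughes freeness.
\item Hughes' uniqueness theorem finishes the argument.
\end{enumerate}
This is the standard proof. It makes the statement self-contained modulo the Atiyah result, which the paper uses anyway. A minor point: clearing denominators needs no normalizing property of $t$. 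Take a common left Ore denominator $d_k = s^{-1}a_k$ with $a_k, s \in \caln(N)$ and multiply the relation by $s$ on the left.

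\textbf{Assertion (iii).} The paper only points to (ii) and the square~\eqref{square_for_cald_FH_and_cald_FG}. That square concerns inclusions of groups with a fixed field. What is actually needed is functoriality in the field, i.e.\ an $F_0G$-embedding $\cald_{F_0G} \to \cald_{FG}$ for the prime field $F_0 \subseteq F$. The paper asserts this in Remark~\ref{rem:functoriality_of_cald_FG} without proof.

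Your argument supplies exactly this. You check that the division closure of $F_0G$ in $\cald_{FG}$ is Hughes free, because relations over it are also relations over the division closure of $FN$. Hence it is $F_0G$-isomorphic to $\cald_{F_0G}$. Flat base change along an extension of division rings then preserves dimensions.

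Routing the characteristic-zero case through $\IQ$ also covers fields of characteristic zero that do not embed in $\IC$. For those, (ii) does not apply directly.
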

\begin{proof}~\ref{lem:cald_(FG)_is_cald(FG_subseteq_calu(G)):isomorphism}
  This follows from~\cite[8.2]{Jaikin-Zapirain+Lopez-Alvarez(2020)}.
  \\[1mm]~\ref{lem:cald_(FG)_is_cald(FG_subseteq_calu(G)):equality_of_L2-Betti_numbers}
  This follows from~\ref{b_n:_upper_(2)((X;caln(G))_in_terms_of_cald(FG_subseteq_calu(G))}  and
  from assertion~\ref{lem:cald_(FG)_is_cald(FG_subseteq_calu(G)):isomorphism}.
  \\[1mm]~\ref{lem:cald_(FG)_is_cald(FG_subseteq_calu(G)):dependence_on_F}
  This follows from assertion~\ref{lem:cald_(FG)_is_cald(FG_subseteq_calu(G)):equality_of_L2-Betti_numbers}
  and the commutative square~\eqref{square_for_cald_FH_and_cald_FG}.
\end{proof}

Note  that everything in this section does makes sense and carries over to all torsionfree groups if
Conjecture~\ref{con:Torsionfree_groups_are_Linnell_groups} is true.

%-----------------------------------------------------------------------------

\subsection{Restriction and induction}\label{subsec:Restriction_and_induction}

\begin{theorem}[Restriction to subgroups of finite index]\label{the:restriction}
  Let $H \subseteq G$ be a subgroup of finite index $[G:H]$ and let $X$ be any  $G$-$CW$-complex.
  Let $\res_G^H X$ be  the  $H$-$CW$-complex obtained from the $G$-$CW$-complex $X$
  by restricting the $G$-action to   an $H$-action.

  \begin{enumerate}

\item\label{the:restriction:special_F}
 We get for every $n \in \IZ^{\ge 0}$
  \[
 b_n^{(2)}(\res_G^H X;\caln(H)) =  [G:H] \cdot b_n^{(2)}(X;\caln(G));
\]
\item\label{the:restriction:general_F}
  Assume  that $G$ is a weak Hughes group. Let $F$ be any field. 
 Then we get for every $n \in \IZ^{\ge 0}$
\[
b_n^{(2)}(\res_G^H X;\cald_{FH}) = [G:H] \cdot b_n^{(2)}(X;\cald_{FG})
\]
using the convention $[G:H] \cdot \infty = \infty$.
\end{enumerate}
\end{theorem}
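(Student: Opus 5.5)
Part (1) is the classical restriction formula for von Neumann dimension. I would quote~\cite[Theorem~1.35(9) and Theorem~6.29]{Lueck(2002)}. The underlying argument runs as follows. We have $\caln(H)\otimes_{\IZ H}\res_G^H C_*(X) \cong \res\bigl(\caln(G)\otimes_{\IZ G} C_*(X)\bigr)$, where $\caln(G)$ is regarded as an $\caln(H)$-module via $\caln(i)$. For any $\caln(G)$-module $M$ one has $\dim_{\caln(H)}\res M=[G:H]\cdot\dim_{\caln(G)}M$, because $\caln(G)\cong\caln(H)^{[G:H]}$ as $\caln(H)$-modules. Since restriction is exact, the formula passes to homology.

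For part (2) I would run the same scheme with $\cald_{FH}\subseteq\cald_{FG}$. Put $\cald=\cald_{FG}$, and recall from the proof of Lemma~\ref{lem:subgroups_of_(weak)_Hughes-groups_are_(weak)-Hughes_groups} that $\cald_H$, the division closure of $FH$ in $\cald$, is a model for $\cald_{FH}$. Choose coset representatives $g_1,\dots,g_k$ of $H\backslash G$, with $k=[G:H]$. The main claim is that the multiplication map
\[
\mu_H\colon \cald_H\otimes_{FH}FG\to\cald
\]
is an isomorphism of $(\cald_H,FG)$-bimodules. Injectivity holds because $\cald$ is Linnell, by Gr\"ater's theorem. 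The source is a free left $\cald_H$-module of rank $k$, with basis $1\otimes g_i$. The hard part is surjectivity. The image $W=\sum_i\cald_H g_i$ is a left $\cald_H$-vector space of dimension $k$, and it is closed under right multiplication by $FG$. For a nonzero $x\in W$, right multiplication by $x$ is an injective left $\cald_H$-linear endomorphism of $\cald$. I would then try to show that $W$ is a division subring containing $FG$, which forces $W=\cald$ since $\cald$ is epic.

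Here is the route I would try first. Because $H$ is normal-free in general, I would pass to the normal core $N=\bigcap_g gHg^{-1}$, which has finite index and is normal in $G$. Conjugation by $G$ preserves $\cald_N$, so $W_N=\sum_{g\in N\backslash G}\cald_N g$ is a ring. It is a finite-dimensional left $\cald_N$-algebra without zero divisors, hence a division ring. It contains $FG$, so it equals $\cald$. This gives $\dim_{\cald_N}\cald=[G:N]$. Applying the same argument to $N\subseteq H$, and using that $\cald_H$ is the division closure of $FH$ inside $\cald$, gives $\dim_{\cald_N}\cald_H=[H:N]$. The tower formula then yields $\dim_{\cald_H}\cald=[G:H]$. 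Combined with injectivity of $\mu_H$, this shows $\mu_H$ is bijective. I expect this surjectivity/dimension count to be the main obstacle, particularly checking that the sum $W_N$ is a domain and closed under products.

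With the claim in hand the rest is formal. We have the identification
\[
\cald_{FH}\otimes_{\IZ H}\res_G^H C_*(X)\cong \cald_H\otimes_{FH}FG\otimes_{\IZ G}C_*(X)\cong \res_{\cald}^{\cald_H}\bigl(\cald\otimes_{\IZ G}C_*(X)\bigr).
\]
Restricting scalars from $\cald$ to $\cald_H$ is exact and multiplies dimensions by $\dim_{\cald_H}\cald=[G:H]$, with $[G:H]\cdot\infty=\infty$ since the index is finite. Taking homology gives $b_n^{(2)}(\res_G^HX;\cald_{FH})=[G:H]\cdot b_n^{(2)}(X;\cald_{FG})$.
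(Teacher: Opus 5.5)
Your proposal is correct and follows essentially the paper's argument. Like the paper, you use injectivity of $\mu_H$ from the Linnell property, show that for normal $H$ the subring $\cald_H \ast G/H$ of $\cald_{FG}$ is a finite-dimensional domain over $\cald_H$ and hence a division ring equal to $\cald_{FG}$, and identify $\cald_{FH}\otimes_{\IZ H}\res_G^H C_*(X)$ with the restriction of $\cald_{FG}\otimes_{\IZ G}C_*(X)$. There are two minor differences. The paper reduces the whole Betti-number statement to normal $H$ via a normal finite-index $K\subseteq H$ and transitivity, whereas you reduce only the count $\dim_{\cald_H}\cald_{FG}=[G:H]$ to the normal core via the tower law. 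You also get the conjugation action on $\cald_N$ directly from inner automorphisms by $\iota(g)$, while the paper uses uniqueness of Hughes-free division rings.
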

\begin{proof}~\ref{the:restriction:special_F} This follows from~\cite[Theorem~6.54~(6) on
  page~265]{Lueck(2002)}.  \\[1mm]~\ref{the:restriction:general_F} Since $[G:H]$ is
  finite, there exists a normal subgroup $K \subseteq G$ with $K \subseteq H$ and
  $[G:K] < \infty$.  Hence we can assume without loss of generality that $H \subseteq G$
  is normal.   By the Linnell property the multiplication map
  $\mu_H \colon \cald_H \otimes_{FH} FG \to \cald$ is injective. The conjugation action of $G$ on $G$ extends
  to  a $G$-action through automorphisms of rings $FG \to FG$ and hence to $\cald_{FG} \to \cald_{FG}$,
  since for two Hughes free $FG$-division rings there is precisely one isomorphism of $FG$-division rings between them.
  Obviously this $G$-conjugation of $\cald_{FG}$ induces a $G$-conjugation action on $\cald_H$.
  Hence   $\mu_H$ induces an embedding of the  crossed product ring
  $\cald_{FH} \ast G/H$ into $\cald_{FG}$. As $\cald_{FG}$ is a division ring and $G/H$ is finite,
  $\cald_{FH} \ast G/H$ is Artinian. Since  $\cald_{FH} \ast G/H$ is an integral domain and Artinian, it
  is a division ring. Since  the image of $\iota$ is division closed in $\cald_{FG}$ and contained in  $\cald_{FH} \ast G/H$
  we have  $\cald_{FH} \ast G/H = \cald_{FG}$. This implies  $\dim_{\cald_{FH}}(\res^{\cald_{FH}}_{\cald_{FG}} \cald_{FG}) = [G:H]$.
  Hence we obtain  for every $\cald_{FG}$-module $M$
  \[
  \dim_{\cald_{FH}}(\res^{\cald_{FH}}_{\cald_{FG}} M) = [G:H] \cdot \dim_{\cald_{FG}}(M).
  \]
  Note  that the multiplication map $\mu_H$ induces an isomorphism of $\cald_{FH}$-$FG$-bimodules
  $\cald_{FH} \otimes_{FH} FG \xrightarrow{FG} \cald_{FG}$.
 Hence we get an isomorphism of $\cald_{FH}$-chain complexes
 \begin{multline*}
   \cald_{FH} \otimes_{FH} C_*(\res^H_G X)  \xrightarrow{\cong}   \cald_{FH} \otimes_{FH} \res_{FG}^{FH} C_*(X)
   \\
   \xrightarrow{\cong}   \cald_{FH} \otimes_{FH} FG  \otimes_{FG}  C_*(X)
   \xrightarrow{\cong}   \res^{\cald_{FH}}_{\cald_{FG}} (\cald_{FG} \otimes_{FG} C_*(X)).
 \end{multline*}
 It induces an isomorphism of $\cald_{FH}$-modules
 \[
   H_n(\cald_{FH} \otimes_{FH} C_*(\res^H_G X))
   \xrightarrow{\cong}  \res^{\cald_{FH}}_{\cald_{FG}} H_n(\cald_{FG} \otimes_{FG} C_*(X)).
 \]
 
\end{proof}

\begin{remark}[Virtually weak  Hughes  groups]%
\label{rem:Virtually_weak_Hughes_groups}
Suppose that $G$ is virtual  weak Hughes  group in the sense  that
there exists a subgroup $H \subseteq G$ of finite index $[G:H]$ such that
$H$ is weak Hughes  group.
Let $X$ be an arbitrary $G$-$CW$-complex. Choose any
subgroup $H$ of $G$ of finite index which
is a weak Hughes  group. Then we can define for any field $F$
\[
  b^{(2)}_n(X;\cald_{FG}) = \frac{b_n(\res_G^H X;\cald_{FH})}{[G:H]} \in \IQ^{\ge} \amalg \{\infty\}.
\]
Let $K$ be another subgroup  of $G$ of finite index. Then we get from
Theorem~\ref{the:restriction}~\ref{the:restriction:general_F}
\begin{multline*}
  \frac{b^{(2)}_n(\res_G^H X;\cald_{FH})}{[G:H]}
= \frac{b^{(2)}_n(\res_G^{H \cap K} X;\cald_{F[H \cap K]})}{[G:H] \cdot [H : (H\cap K)]}
\\
= \frac{b^{(2)}_n(\res_G^{H \cap K} X;\cald_{F[H \cap K]})}{[G:K] \cdot [K : (H\cap K)]}
= \frac{b^{(2)}_n(\res_G^K X;\cald_{FK})}{[G:K]}.
\end{multline*}
This shows the independence of the definition of
$ b^{(2)}_n(X;\cald_{FG})$ from the choice of $H$.
If $G$ is a weak Hughes  group, then this new definition agrees with the old one by
Theorem~\ref{the:restriction}~\ref{the:restriction:general_F}. 
\end{remark}

\begin{theorem}[Induction]\label{the:induction}
  Let $H$ be a subgroup of the group  $G$ and let $X$ be a  $H$-$CW$-complex.

  \begin{enumerate}
    \item\label{the:induction:N(G)} 
We get for every $n \in \IZ^{\ge 0}$
\[
  b_n^{(2)}(G \times_HX;\caln(G)) = b_n^{(2)}(X;\caln(H));
\]
  \item\label{the:induction:general_F}
    Suppose  that $G$ is a weak Hughes  group. Then  $H$ is a 
    weak Hughes  group,
    $G \times_H X$ is a  $G$-$CW$-complex,
    and we get for every field $F$ and for $n \in \IZ^{\ge 0}$
  \[
  b_n^{(2)}(G \times_HX;\cald_{FG}) = b^{(2)}_n(X;\cald_{FH}).
\]
\end{enumerate}
\end{theorem}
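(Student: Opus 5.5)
Part~\ref{the:induction:N(G)} is classical and we simply quote it: it follows from~\cite[Theorem~6.54~(7) on page~265]{Lueck(2002)}. The substance is part~\ref{the:induction:general_F}, and the plan mirrors the proof of Theorem~\ref{the:restriction}~\ref{the:restriction:general_F}, with the faithful flatness of $\cald_{FH} \to \cald_{FG}$ replacing the index computation.

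First we dispose of the preliminary claims. $H$ is a weak Hughes group by Lemma~\ref{lem:subgroups_of_(weak)_Hughes-groups_are_(weak)-Hughes_groups}~\ref{lem:subgroups_of_(weak)_Hughes-groups_are_(weak)-Hughes_groups:subgroups}. Moreover $G \times_H X$ is a $G$-$CW$-complex whose cells are $G \times_H e$ for cells $e$ of $X$, so we have an isomorphism of $\IZ G$-chain complexes $C_*(G \times_H X) \cong \IZ G \otimes_{\IZ H} C_*(X)$. Fix the Hughes free division ring $\iota \colon FG \to \cald_{FG}$. As recalled in the proof of Lemma~\ref{lem:subgroups_of_(weak)_Hughes-groups_are_(weak)-Hughes_groups}, it is Linnell by Gr\"ater, and the restriction $FH \to \cald_H$ is a Linnell, hence Hughes free, $FH$-division ring. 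By uniqueness we may take $\cald_{FH} = \cald_H \subseteq \cald_{FG}$, compatibly with the square~\eqref{square_for_cald_FH_and_cald_FG}.

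Next we compute the chain complex:
\[
\cald_{FG} \otimes_{\IZ G} C_*(G\times_H X) \cong \cald_{FG} \otimes_{\IZ H} C_*(X) \cong \cald_{FG} \otimes_{\cald_{FH}} \bigl(\cald_{FH} \otimes_{\IZ H} C_*(X)\bigr).
\]
Since $\cald_{FH}$ is a division subring of $\cald_{FG}$, the right $\cald_{FH}$-module $\cald_{FG}$ is free, and in particular flat. Hence $\cald_{FG}\otimes_{\cald_{FH}} -$ commutes with homology, giving
\[
H_n\bigl(\cald_{FG} \otimes_{\IZ G} C_*(G\times_H X)\bigr) \cong \cald_{FG} \otimes_{\cald_{FH}} H_n\bigl(\cald_{FH} \otimes_{\IZ H} C_*(X)\bigr).
\]

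Finally, for any $\cald_{FH}$-module $V$, a basis of $V$ yields a basis of $\cald_{FG}\otimes_{\cald_{FH}} V$ of the same cardinality. Therefore $\dim_{\cald_{FG}}(\cald_{FG}\otimes_{\cald_{FH}} V) = \dim_{\cald_{FH}}(V)$, including the value $\infty$, and the equality of $L^2$-Betti numbers follows. The only real care needed is the identification of $\cald_{FH}$ with the division closure $\cald_H$ inside $\cald_{FG}$. This uses that restrictions of Linnell division rings are Linnell, together with the uniqueness of Hughes free division rings, both recalled above. Note that the Linnell injectivity of $\mu_H$ is not even needed here, only the embedding of division rings.
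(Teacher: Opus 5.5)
Your proposal is correct and takes essentially the same route as the paper. Like the paper, you identify $C_*(G\times_H X)$ with $\IZ G\otimes_{\IZ H} C_*(X)$, realize $\cald_{FH}$ inside $\cald_{FG}$ compatibly with the group rings, and use flatness of $\cald_{FG}$ over the division subring $\cald_{FH}$ to commute base change with homology; you also spell out what the paper leaves implicit, namely the identification $\cald_{FH}=\cald_H$ and the equality $\dim_{\cald_{FG}}(\cald_{FG}\otimes_{\cald_{FH}} V)=\dim_{\cald_{FH}}(V)$, including the value $\infty$.
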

\smallskip
\begin{proof}~\ref{the:induction:N(G)} This follows from~\cite[Theorem~6.54~(7) on  page~265]{Lueck(2002)}.
  \\[1mm]~\ref{the:induction:general_F} 
  The subgroup $H$ is a
  weak Hughes group by Lemma~\ref{lem:subgroups_of_(weak)_Hughes-groups_are_(weak)-Hughes_groups}.
  If $C_*(X;F)$ is the cellular $FH$-chain complex of the
  $H$-$CW$-complex $X$, then $FG \otimes_{FH} C_*(X;F)$ is $\IZ G$-isomorphic the cellular
  $FG$-chain complex $C_*(G \times_H X;F)$ of the $G$-$CW$-complex $G \times_HX$.  We
  conclude from the commutative square~\eqref{square_for_cald_FH_and_cald_FG} and the fact
  that $\cald(G)$ is flat as a $\cald(H)$-module that the $\cald_{FG}$-modules
  $H_n(\cald_{FG} \otimes_{\IZ G} C_*(G \times_H X;F))$
  and $\cald_{FG} \otimes_{\cald_{FH}} H_n(\cald_{FH} \otimes_{FH} C_*(X;F))$ are
 $\cald_{FG}$-isomorphic. 
\end{proof}

%-----------------------------------------------------------------------------

\subsection{The $0$th $L^2$-Betti number}\label{subsec:The_0th_L2-Betti_number}

\begin{theorem}[$0$th $L^2$-Betti number]\label{the:zeroth_L2-Betti_number}
  Let $G$ be a group and let $X$ be a connected $G$-$CW$-complex.
  \begin{enumerate}

     \item\label{the:zeroth_L2-Betti_number:N(G)}
  We get $b_0^{(2)}(X;\caln(G)) = |G|^{-1}$ if $G$ is finite, and $b_0^{(2)}(X;\caln(G)) = 0$ otherwise;

\item\label{the:zeroth_L2-Betti_number:general_F}
  Suppose that  $G$ is  weak Hughes  group.
  Then  we get $b^{(2)}_0(X;\cald_{FG}) = 0$ for every  field $F$.

  \end{enumerate}
\end{theorem}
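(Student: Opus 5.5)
Assertion~\ref{the:zeroth_L2-Betti_number:N(G)} is the classical statement and I would simply cite~\cite[Theorem~6.54~(8)]{Lueck(2002)}.

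For assertion~\ref{the:zeroth_L2-Betti_number:general_F}, since $X$ is connected, the cokernel of $C_1(X)\to C_0(X)$ is $H_0(X)$, and right exactness of $\cald_{FG}\otimes_{\IZ G}-$ gives
\[
H_0(\cald_{FG}\otimes_{\IZ G} C_*(X)) \cong \cald_{FG}\otimes_{\IZ G} H_0(X) \cong \cald_{FG}\otimes_{\IZ G}\IZ \cong \cald_{FG}\otimes_{FG} F,
\]
where $F$ is the trivial $FG$-module. So the task is to show $\cald_{FG}\otimes_{FG}F=0$. Now $F = FG/I$ with $I$ the augmentation ideal, so $\cald_{FG}\otimes_{FG}F = \cald_{FG}/\cald_{FG}\cdot\iota(I)$. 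It therefore suffices to find a single element of $I$ whose image under $\iota$ is nonzero, hence invertible in $\cald_{FG}$; then the left ideal it generates is all of $\cald_{FG}$ and the quotient vanishes.

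If $G$ is nontrivial, take any $g\neq 1$. Then $g-1\in I$ is nonzero in $FG$. Since $\iota$ is injective (Hughes free division rings are injective), $\iota(g-1)$ is a nonzero element of a division ring, hence a unit. This gives $b_0^{(2)}(X;\cald_{FG})=0$.

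The only genuine subtlety is the trivial group. Then $\cald_{FG}=F$, and $b_0$ equals $\dim_F F=1$, not $0$. Since $G$ is locally indicable, and in particular torsionfree, the only finite weak Hughes group is the trivial one. So the statement should be read for $G$ nontrivial, equivalently $G$ infinite, matching assertion~\ref{the:zeroth_L2-Betti_number:N(G)} where $|G|^{-1}=1$ for $G$ trivial. I would make this case distinction explicit, and otherwise expect no obstacle.
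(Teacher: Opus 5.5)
Your argument is correct and is essentially the paper's proof: you identify $H_0$ with $\cald_{FG}\otimes_{\IZ G}\IZ$ by right exactness and kill it with the unit $g-e$ for some $g\neq e$, and part (1) is simply cited from L\"uck's book. Your caveat about the trivial group is also right. The paper's proof silently assumes such a $g$ exists, whereas for $G=\{1\}$ one gets $b_0^{(2)}(X;\cald_{FG})=1$ (the paper itself uses this value for trivial $Q$ in the proof of Theorem~\ref{the:Monotonicity}), so assertion (2) should be read for non-trivial $G$.
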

\begin{proof}~\ref{the:zeroth_L2-Betti_number:N(G)}  See~\cite[Theorem~6.5~(8) on page~266]{Lueck(2002)}.
  \\[1mm]~\ref{the:zeroth_L2-Betti_number:general_F}
  There is an obvious  $\cald_{FG}$-epimorphism  $f  \colon \cald_{FG} \to \cald_{FG} \otimes_{\IZ G} \IZ$,
    where $G$ acts trivially on $\IZ$. Choose $g \in G$ which is not the unit $e$.
    Then the non-trivial element $g-e \in FG \subseteq \cald_{FG}$ lies in the kernel of $f$.
    As $\cald_{FG}$ is a field,  this implies $ \cald_{FG} \otimes_{FG} \IZ = 0$ and hence
    \begin{multline*}
      b_0^{(2)}(X;\cald_{FG}) = \dim_{\cald_{FG}}\bigl(H_0(\cald_{FG} \otimes_{FG} C_*(X))\bigr) \\
      = \dim_{\cald_{FG}}(\cald_{FG} \otimes_{\IZ G} H_0(X;\IZ))  = \dim_{\cald_{FG}}(\cald_{FG} \otimes_{\IZ G} \IZ) = 0.
    \end{multline*}
  \end{proof}

  %-----------------------------------------------------------------------------

  \subsection{Künneth formula}\label{subsec:Kuenneth_formula}

    \begin{theorem}[K\"unneth formula]\label{the:Kuenneth_formula}
      Let $G$ and $H$ be groups.  Let $X$ be a $G$-$CW$-complex and $Y$ be a
      $H$-$CW$-complex. In the sequel we use  the convention
      that $0 \cdot \infty = 0$, $r \cdot \infty = \infty$ for
      $r \in \IR_{>0} \amalg \{\infty\}$, and $r + \infty = \infty$ for $r \in\IR_{\ge 0} \amalg \{\infty\}$ hold.

      \begin{enumerate}
      \item\label{the:Kuenneth_formula:N(G)}
        Then $X \times Y$ is $(G \times H)$-$CW$-complex  and we get:
        \[
        b_n^{(2)}(X \times Y;\caln(G \times H)) = \sum_{p + q  = n} b_p^{(2)}(X;\caln(G)) \cdot b_q^{(2)}(Y;\caln(H));
      \]

    \item\label{the:Kuenneth_formula:general_F} Suppose that $G \times  H$ is a  weak Hughes
      groups. Then $G$ and $H$ are  weak Hughes group and we get for every field $F$
      \[
        b_n^{(2)}(X \times Y;\cald_{F[G \times H]}) = \sum_{p + q  = n} b_p^{(2)}(X;\cald_{FG}) \cdot b_q^{(2)}(Y;\cald_{FH}).
      \]
      \end{enumerate}
    \end{theorem}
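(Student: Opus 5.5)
Part (1) is the classical K\"unneth formula for $L^2$-Betti numbers and I would simply cite~\cite[Theorem~6.54~(5) on page~265]{Lueck(2002)}. The rest of the plan concerns assertion~(2).

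\emph{Step 1: the factors are weak Hughes groups.} The groups $G = G \times \{1\}$ and $H = \{1\} \times H$ are subgroups of $G \times H$. By Lemma~\ref{lem:subgroups_of_(weak)_Hughes-groups_are_(weak)-Hughes_groups}~\ref{lem:subgroups_of_(weak)_Hughes-groups_are_(weak)-Hughes_groups:subgroups} they are therefore weak Hughes groups. Write $\cald = \cald_{F[G\times H]}$. Let $\cald_G$ and $\cald_H$ denote the division closures of $FG$ and $FH$ in $\cald$. By the Linnell property of restrictions and uniqueness, these are models for $\cald_{FG}$ and $\cald_{FH}$.

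\emph{Step 2: reduction to a tensor product of division rings.} Since $F[G\times H] = FG\otimes_F FH$, the cellular chain complex satisfies $C_*(X\times Y;F) \cong C_*(X;F)\otimes_F C_*(Y;F)$. The elements of $\cald_G$ and $\cald_H$ commute in $\cald$. This holds because the centralizer of $FH$ in $\cald$ is a division subring containing $FG$, so it contains $\cald_G$. Applying the same argument once more shows that $\cald_G$ and $\cald_H$ commute elementwise. Hence multiplication gives a ring homomorphism
\[
\mu\colon \cald_G\otimes_F\cald_H \to \cald .
\]
Put $R = \cald_G\otimes_F\cald_H$. Then
\[
\cald\otimes_{F[G\times H]} C_*(X\times Y) \cong \cald\otimes_R\bigl((\cald_G\otimes_{FG}C_*(X))\otimes_F(\cald_H\otimes_{FH}C_*(Y))\bigr).
\]
Over the division rings $\cald_G$ and $\cald_H$, each of the two complexes splits as its homology plus a contractible part. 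By the algebraic K\"unneth theorem over the field $F$, the homology of the tensor complex is
\[
\bigoplus_{p+q=n} H_p\otimes_F H_q ,
\]
which is a direct sum of free $R$-modules of rank $\dim_{\cald_G}H_p\cdot\dim_{\cald_H}H_q$. Because $\cald\otimes_R -$ is additive and preserves contractible complexes and split decompositions, we obtain
\[
b_n^{(2)}(X\times Y;\cald) = \sum_{p+q=n}\dim_{\cald_G}(H_p)\cdot\dim_{\cald_H}(H_q).
\]
Here $\cald\otimes_R R^{k}$ has $\cald$-dimension $k$. This step needs only that $\cald\otimes_R R\cong\cald$, so injectivity of $\mu$ is not required for the finite case.

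\emph{Step 3: infinite dimensions and the conventions.} If some $H_p$ is nonzero and some $H_q$ is infinite-dimensional, the corresponding tensor term is a free $R$-module of infinite rank. It then contributes $\infty$, and the convention $r \cdot \infty = \infty$ for $r>0$ applies. If instead one of the factors is $0$, the term vanishes, consistent with $0\cdot\infty = 0$. All of this is immediate once the homology is written as a direct sum of free modules, since these are vector spaces over division rings and have bases.

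I expect the main obstacle to be Step 2. One must check carefully that the chain-level splittings over $\cald_G$ and $\cald_H$ are compatible with the $R$-module structure. The point is that a $\cald_G$-splitting tensored over $F$ with a $\cald_H$-splitting is an $R$-splitting, which holds because the tensor product is taken over the commutative field $F$ with each factor acting on its own side. One must also confirm that the commutation of $\cald_G$ and $\cald_H$ holds inside $\cald$; I would do this via the centralizer argument above.
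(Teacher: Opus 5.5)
Your proposal is correct and follows essentially the paper's own proof. Like the paper, you realize $\cald_{FG}$ and $\cald_{FH}$ as division closures inside $\cald_{F[G \times H]}$, rewrite the chain complex over $\cald_{FG} \otimes_F \cald_{FH}$, apply the K\"unneth theorem over $F$, and count dimensions using freeness over $\cald_{FG} \otimes_F \cald_{FH}$. Your centralizer argument and your splitting argument make explicit two points the paper uses without comment: that $x \otimes y \mapsto xy$ is a ring map, and that homology commutes with $\cald_{F[G \times H]} \otimes_{\cald_{FG} \otimes_F \cald_{FH}} -$.
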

    \begin{proof}~\eqref{the:Kuenneth_formula:N(G)} See~\cite[Theorem~6.54~(5) on
      page~266]{Lueck(2002)}.
      \\[1mm]~\eqref{the:Kuenneth_formula:general_F} Let
      $\varphi_{F [G \times H]} \colon F[G \times H] \to \cald_{F[G \times H]}$ be the
      Hughes free division ring. Recall that $\varphi_{F[G \times H]}$ is injective, epic, 
      and unique up to isomorphisms  of $F[G \times H]$-division rings. We conclude from
      Lemma~\ref{lem:subgroups_of_(weak)_Hughes-groups_are_(weak)-Hughes_groups}~%
\ref{lem:subgroups_of_(weak)_Hughes-groups_are_(weak)-Hughes_groups:subgroups}
      that $G$ and $H$ are weak Hughes groups.  Since $G \times H$ is also a weak Linnell
      group, the inclusion
      \[
        \varphi_{FG} \colon FG \to \cald_{FG} := \cald(\varphi_{F[G \times H]}(FG) \subseteq \cald_{F[G \times H]})
        \]
      of $FG$ into the division closure of $\varphi_{F[G \times H]}(FG)$ in $\cald_{F[G \times H]}$ is a model for
      $\varphi_{FG} \colon FG \to \cald_{FG}$ and analogously for $H$.  Hence we get a
      commutative diagram of rings
      \[
        \xymatrix@!C=9em{FG \otimes_F FH \ar[r]^{\varphi_{FG} \otimes \varphi_{FH}}
          \ar[d]_{\mu}^{\cong}
          &
          \cald_{FG} \otimes_F \cald_{FH}  \ar[d]^{\nu}
          \\
          F[G \times H] \ar[r]_{\varphi_{F[G \times H]}}
          &
          \cald_{F[G \times H]}
        }
      \]
      where the isomorphism $\mu$ and the map $\nu$ send $x \otimes y$ to $x \cdot y$.
      We get from the diagram above an
      isomorphism of $\cald_{F[G \times H]}$-chain complexes
      \begin{multline*}
        \cald_{F[G \times H]} \otimes_{F[G \times H]} C_*(X \times Y)
        \\
        \cong 
        \cald_{F[G \times H]} \otimes_{\cald_{FG} \otimes_F \cald_{FH}} (\cald_{FG} \otimes_{FG} C_*(X)
        \otimes_F  \cald_{FH} \otimes_{FH} C_*(Y)).
      \end{multline*}
      Since $\cald_{FG}$ and $\cald_{FH}$ are fields, the canoncial map of $\cald_{F[G \times H]}$-modules
      \begin{multline*}
        \cald_{F[G \times H]} \otimes_{\cald_{FG} \otimes_F \cald_{FH}}  H_n\bigl(\cald_{FG}
        \otimes_{FG} C_*(X) \otimes_F  \cald_{FH} \otimes_{FH} C_*(Y)\bigr)
      \\
      \cong H_n\bigl(\cald_{F[G \times H]} \otimes_{\cald_{FG} \otimes_F \cald_{FH}}  (\cald_{FG}
      \otimes_{FG} C_*(X) \otimes_F  \cald_{FH} \otimes_{FH} C_*(Y))\bigr)
    \end{multline*}
    is bijective. The K\"unneth Theorem yields an isomorphism of $\cald_{FG} \otimes_F \cald_{FH}$-modules
    \begin{multline*}
      \bigoplus_{p + q = n} H_p(\cald_{FG} \otimes_{FG} C_*(X)) \otimes_F H_q(\cald_{FH} \otimes_{FH} C_*(Y))
      \\
      \cong
      H_n\bigl(\cald_{FG} \otimes_{FG} C_*(X) \otimes_F  \cald_{FH} \otimes_{FH} C_*(Y)\bigr).
    \end{multline*}
    Hence we get  a $\cald_{F[G \times H]}$-isomorphism
    \begin{multline}
       H_n\bigl(\cald_{F[G \times H]} \otimes_{F[G \times H]} C_*(X \times Y)\bigr)
       \\
       \cong
       \bigoplus_{p + q = n}  \cald_{F[G \times H]} \otimes_{\cald_{FG} \otimes_F \cald_{FH}}  
   \bigl(H_p(\cald_{FG} \otimes_{FG} C_*(X)) \otimes_F H_q(\cald_{FH} \otimes_{FH} C_*(Y))\bigr).
\label{the:Kuenneth_formula:homology}
     \end{multline}

Let $M$ be any $FG$-module and $N$ be any $FH$-module. Since $\cald_{FG}$ and $\cald_{FH}$ are fields,
we get for the $F[G \times H]$-module $M \otimes_F N$ 
     
    \begin{eqnarray*}
        \lefteqn{\dim_{\cald_{F[G \times H]}}\bigl(\cald_{F[G \times H]} \otimes_{F[G \times H]} M \otimes_F N\bigr)}
        & &
        \\
        & = &
              \dim_{\cald_{F[G \times H]}}\bigl(\cald_{F[G \times H]} \otimes_{\cald_{FG} \otimes_F \cald_{FH}} (\cald_{FG}
              \otimes_F \cald_{FH} \otimes_{FG \otimes_F FH} M \otimes_F N)\bigr)
     \\
        & = &
              \dim_{\cald_{F[G \times H]}}\bigl(\cald_{F[G \times H]}
              \otimes_{\cald_{FG} \otimes_F \cald_{FH}} (\cald_{FG} \otimes_{FG} M \otimes_F 
              \cald_{FH} \otimes_{FH} N)\bigr)
      \\
        & = &
        \dim_{\cald_{FG}}(\cald_{FG} \otimes_{FG} M) \cdot \dim_{\cald_{FH}}(\cald_{FH} \otimes_{FH} N).       
      \end{eqnarray*}
     Now apply this formula to~\eqref{the:Kuenneth_formula:homology}.
    \end{proof}
  
    We do not know whether $G \times H$ is a weak Hughes group
    if $G$ and $H$ are weak Hughes groups. 
    Note  that the product of two Lewin groups is a Lewin group again,
    see~\cite[Theorem~3.7]{Jaikin-Zapirain(2021)}.

  %-----------------------------------------------------------------------------

  \subsection{The Euler-Poincar\'e formula}\label{subsec:The_Euler-Poincare_formula}

  \begin{theorem}[The Euler-Poincar\'e formula]\label{the:Euler-Poincare_formula}
    Let $G$ be  group and let $X$ be a finite free $G$-$CW$-complex.
    Then $X/G$ is a finite $CW$-complex and we denote by $\chi(X/G)$ its Euler characteristic

    \begin{enumerate}
    \item\label{the:Euler-Poincare_formula:char_0}
      We get
      \[
        \chi(X/G) = \sum_{n \ge 0} (-1)^n \cdot b_n^{(2)}(X;\caln(G));
    \]

  \item\label{the:Euler-Poincare_formula:char_p}
    If $G$  is a  weak Hughes group and $F$ is  any field, then we get 
    \[
      \chi(X/G) = \sum_{n \ge 0} (-1)^n \cdot b_n^{(2)}(X;\cald_{FG}).
    \]
  \end{enumerate}
\end{theorem}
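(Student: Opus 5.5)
For assertion~\ref{the:Euler-Poincare_formula:char_0} we simply cite the classical result, namely~\cite[Theorem~1.35~(2) on page~37]{Lueck(2002)}. Alternatively it follows from assertion~\ref{the:Euler-Poincare_formula:char_p} together with Lemma~\ref{lem:cald_(FG)_is_cald(FG_subseteq_calu(G))} when $G$ is a weak Hughes group, but for arbitrary $G$ we need the von Neumann version. That version rests on additivity of $\dim_{\caln(G)}$ and on $\dim_{\caln(G)}(\caln(G)) = 1$.

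For assertion~\ref{the:Euler-Poincare_formula:char_p} we set $\cald := \cald_{FG}$ and consider the chain complex $D_* := \cald \otimes_{\IZ G} C_*(X)$. Since $X$ is a finite free $G$-$CW$-complex, $C_n(X)$ is a finitely generated free $\IZ G$-module of rank $c_n$, where $c_n$ is the number of $n$-cells of $X/G$. Hence $D_n \cong \cald^{c_n}$ is a finite-dimensional $\cald$-vector space of dimension $c_n$, and $D_*$ is a finite, bounded chain complex of finite-dimensional vector spaces over the skew field $\cald$. In particular $\chi(X/G) = \sum_n (-1)^n c_n = \sum_n (-1)^n \dim_{\cald}(D_n)$.

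The key step is the standard rank–nullity argument over a division ring. Put $Z_n = \ker(d_n)$ and $B_n = \im(d_{n+1})$. Every $\cald$-module is free and $\dim_\cald$ is additive on short exact sequences. The exact sequences $0 \to Z_n \to D_n \to B_{n-1} \to 0$ and $0 \to B_n \to Z_n \to H_n(D_*) \to 0$ therefore give
\[
\dim_\cald(D_n) = \dim_\cald(Z_n) + \dim_\cald(B_{n-1}), \qquad \dim_\cald(Z_n) = \dim_\cald(B_n) + \dim_\cald(H_n(D_*)).
\]
All these dimensions are finite. Taking the alternating sum, the terms $\dim_\cald(B_n)$ telescope and cancel, which leaves $\sum_n (-1)^n \dim_\cald(D_n) = \sum_n (-1)^n b_n^{(2)}(X;\cald_{FG})$ by Definition~\ref{def:L2_Betti_numbers_over_field}.

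No real obstacle is expected. The only points needing care are, first, that dimension over a skew field is well defined and additive (the invariant basis number holds for division rings), and second, that the sum is finite because $X$ is finite-dimensional. We do not even need the Hughes-free property beyond the existence of the division ring $\cald_{FG}$ receiving $FG$; any $FG$-division ring would yield the same formula.
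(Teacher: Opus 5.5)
Your proposal is correct and follows the paper's proof: part (1) is cited from the same result in L\"uck's book, and part (2) is exactly the additivity of $\dim_{\cald_{FG}}$ over the skew field, which you spell out through the rank--nullity and telescoping argument. Your closing remark is also right: the argument works for any $FG$-division ring, not only the Hughes-free one.
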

\begin{proof}~\ref{the:Euler-Poincare_formula:char_0} See~\cite[Theorem~1.35~(2) on page~37]{Lueck(2002)}.
\\[1mm]~\ref{the:Euler-Poincare_formula:char_p}
This follows from the additivity of the dimension function $\dim_{\cald_{FG}}$ of the skew field $\cald_{FG}$.
\end{proof}

  %-----------------------------------------------------------------------------
\subsection{Poincar\'e duality}\label{subsec:Poincare_duality}

\begin{theorem}\label{the:Poincare_duality}
  Let $M$ be a compact connected  manifold of dimension $d$ with fundamental group $\pi$.
  Let $(\widetilde{M},\widetilde{M}|_{\partial M})$ be the free $\pi$-$CW$-pair for $\widetilde{M}|_{\partial M}$
      the preimage of $\partial M$ under the universal covering map $\widetilde{M} \to M$.

   \begin{enumerate}
   \item\label{the:Poincare_duality:char_0}
    We get for every $n \in \IZ^{\ge 0}$
    \[
      b_n^{(2)}(\widetilde{M};\caln(\pi)) = b_{d-n}^{(2)}(\widetilde{M},\widetilde{M}|_{\partial M};\caln(\pi));
    \]
  \item\label{the:Poincare_duality:char_p}
    Suppose  that $\pi$ is a weak Hughes   group. Then we get for every field $F$ and every $n \in \IZ^{\ge 0}$
    \[
      b_n^{(2)}(\widetilde{M};\cald_{F\pi}) = b^{(2)}_{d-n}(\widetilde{M},\widetilde{M}|_{\partial M};\cald_{F\pi}).
    \]
  \end{enumerate}
\end{theorem}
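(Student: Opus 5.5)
Part~\ref{the:Poincare_duality:char_0} is the classical statement and I would simply cite it, namely~\cite[Theorem~1.35~(3)]{Lueck(2002)}, which treats the compact manifold case including the relative version for pairs with boundary. The real work is part~\ref{the:Poincare_duality:char_p}. There the plan is to transport the equivariant Poincar\'e duality chain homotopy equivalence over $\IZ\pi$ to $\cald_{F\pi}$, and then to use that over a skew field the dimension of cohomology equals the dimension of homology.

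\textbf{Step 1 (duality over $\IZ\pi$).} Choose a $\pi$-$CW$-structure on $\widetilde{M}$ for which $\widetilde{M}|_{\partial M}$ is a subcomplex; if $M$ is only topological, replace it by a finite $CW$-complex of the same homotopy type, which is enough since everything is homotopy invariant. Equivariant Poincar\'e--Lefschetz duality, given by the cap product with the fundamental class of $(M,\partial M)$ with $w$-twisted involution, yields a $\IZ\pi$-chain homotopy equivalence
\[
C^{d-*}(\widetilde{M},\widetilde{M}|_{\partial M}) \xrightarrow{\simeq} C_*(\widetilde{M}),
\]
where $C^{d-*}$ is the dual chain complex $\hom_{\IZ\pi}(C_{d-*},\IZ\pi)$, made into a left module via the involution $g \mapsto w(g)g^{-1}$ for the orientation homomorphism $w$. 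If $M$ is non-orientable I would either include this twist throughout, or pass to the orientation double cover and use Theorem~\ref{the:restriction}~\ref{the:restriction:general_F}, which divides both sides by $2$.

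\textbf{Step 2 (extending the involution).} Apply $\cald_{F\pi}\otimes_{\IZ\pi}-$ to the equivalence. This requires identifying $\cald_{F\pi}\otimes_{\IZ\pi}\hom_{\IZ\pi}(C_{d-*},\IZ\pi)$ with $\hom_{\cald_{F\pi}}(\cald_{F\pi}\otimes C_{d-*},\cald_{F\pi})$, converted to left modules. The identification holds because the $C_n$ are finitely generated free, but the conversion needs an anti-automorphism of $\cald_{F\pi}$ extending the involution on $F\pi$. This is the step I expect to be the main obstacle. To construct it I would look at the opposite ring $\cald_{F\pi}^{op}$ together with the composite $F\pi \xrightarrow{\ast} (F\pi)^{op} \to \cald_{F\pi}^{op}$, and check that this is a Hughes free $F\pi$-division ring. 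The Hughes-free condition is symmetric under passing to opposites, and the twisted involution preserves the pairs $(N,H)$ with $H/N\cong\IZ$. Hughes' uniqueness theorem then gives a unique isomorphism $\cald_{F\pi}\cong\cald_{F\pi}^{op}$ compatible with the involution, which is exactly the required extension. Alternatively, one could avoid the involution entirely by working with right modules and the right-module version of the dimension.

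\textbf{Step 3 (dimension count).} After tensoring, we get a chain homotopy equivalence of finite free $\cald_{F\pi}$-complexes
\[
\hom_{\cald_{F\pi}}\bigl(\cald_{F\pi}\otimes_{\IZ\pi} C_{d-*}(\widetilde M,\widetilde M|_{\partial M}),\cald_{F\pi}\bigr)\simeq \cald_{F\pi}\otimes_{\IZ\pi}C_*(\widetilde M).
\]
Over a skew field every such complex splits, so the cohomology in degree $d-n$ of the relative complex has the same dimension as its homology in degree $d-n$. Taking $H_n$ of both sides and $\dim_{\cald_{F\pi}}$ then gives $b_n^{(2)}(\widetilde M;\cald_{F\pi})=b_{d-n}^{(2)}(\widetilde M,\widetilde M|_{\partial M};\cald_{F\pi})$, as claimed.
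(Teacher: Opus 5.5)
Your proposal is correct and is essentially the paper's argument: equivariant Poincar\'e duality over $\IZ\pi$, extension of the involution to $\cald_{F\pi}$ by uniqueness of Hughes free division rings, and the splitting of finite free complexes over a skew field. The only differences are cosmetic: the paper uses the dual form $C^{d-*}(\widetilde{M})\simeq C_*(\widetilde{M},\widetilde{M}|_{\partial M})$, and it merely asserts that the involution extends, whereas your opposite-ring argument justifies this (the left/right symmetry you invoke holds because $\iota(t)$ normalizes $\cald_N$).

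One caveat concerns your aside about avoiding the involution by working with right modules: it does not actually avoid it. If you turn $C_*(\widetilde{M})$ into a right $\IZ\pi$-complex and tensor with $\cald_{F\pi}$, relating its homology to $b_n^{(2)}(\widetilde{M};\cald_{F\pi})$ again requires the rank function of $\cald_{F\pi}$ to be invariant under the involution, which is exactly what your Step~2 supplies.
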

\begin{proof}
  We can assume without loss of generality that $M$ is orientable, otherwise pass to the
  orientation covering $\overline{M} \to M$ and use Theorem~\ref{the:restriction}.
  
  Given a $d$-dimensional finitely generated free $\IZ\pi$-chain complex $C_*$, let
  $C^{d-*}$ be the $d$-dimensional finitely generated free $\IZ\pi$-chain complex whose
  $n$-th differential is
    \[
      (-1)^{d-(n-1)} \cdot \hom_{\IZ \pi}(c_{d-(n-1)},\id_{\IZ \pi})
      \colon \hom_{\IZ \pi}(C_{d-n},\IZ \pi) \to \hom_{\IZ \pi}(C_{d-(n-1)};\IZ \pi)
  \]
  Now we get for $n \in \IZ^{\ge 0}$
  \begin{equation}
    \dim_{\caln(\pi)}(H_{d-n}(\caln(\pi) \otimes_{\IZ \pi} C^{d-*}))
    = \dim_{\caln(\pi)}(H_n(\caln(\pi) \otimes_{\IZ \pi} C_*))
  \label{the:chain_and_dual_char_0}
\end{equation}
from~\cite[Lemma~1.18 on page~24 and Theorem~6.24 on page~249]{Lueck(2002)}.

Next we show
\begin{equation}
  \dim_{\cald_{F\pi}}(H_{d-n}(\cald_{F\pi} \otimes_{\IZ \pi} C^{d-*}))
  = \dim_{\cald_{F\pi}}H_n(\cald_{F\pi} \otimes_{\IZ \pi} C_*),
  \label{the:chain_and_dual_char_p}
\end{equation}
provided that $\pi$ is weak Linnell group. Recall that we use on $\IZ \pi$ the involution
given by
$\overline{\sum_{g \in \pi} \lambda_g \cdot g} = \sum_{g \in \pi} \lambda_g \cdot g^{-1}$
to make sense of the dual left $\IZ\pi$-module $\hom_{\IZ \pi}(M,\IZ \pi)$ for a finitely
generated free left $\IZ\pi$-module $M$.  This involution induces an involution on $F\pi$
by the same formula which extends to an involution on $\cald_{F\pi}$, since for two Hughes
free $FG$-division rings there is precisely one isomorphism of $FG$-division rings between
them

This involution on
$\cald_{F\pi}$ is used to make sense of the left dual $\cald_{F\pi}$-module
$\hom_{\cald_{F\pi}}(N,\cald_{F\pi})$ for a finitely generated free left
$\cald_{F\pi}$-module $N$.  Then we obtain for every finitely generated free left
$\IZ \pi$-module $M$ an isomorphism of left $\cald_{FG}$-modules, natural in $M$,
\[
  \cald_{F\pi} \otimes_{\IZ \pi} \hom_{\IZ \pi}(M,\IZ \pi)
  \xrightarrow{\cong} \hom_{\cald_{F\pi}}(\cald_{F\pi} \otimes_{\IZ \pi} M; \cald_{F\pi})
\]
by sending $u \otimes \phi$ to the $\cald_{FG}$-homomorphism
$\cald_{F\pi} \otimes_{\IZ \pi} M \to \cald_{F\pi}$ which maps $v \otimes x$ to
$v\phi(x)\overline{u}$. Hence it suffices to show for a $d$-dimensional finitely generated
free $\cald_{F\pi}$-chain complex $D_*$
\begin{equation}
  \dim_{\cald_{F\pi}}(H_{d-n}(D^{d-*}))  = \dim_{\cald_{F\pi}}H_n(D_*).
  \label{the:chain_and_dual_char_p_special}
\end{equation}
Given  a  finitely generated free $\cald_{FG}$-module $N$, we abbreviate in the sequel
$N^* := \hom_{\cald_{F\pi}}(N,\cald_{F\pi})$.
Fix $n \in \IZ^{\ge 0}$. Recall that $\cald_{F\pi}$ is a division ring.
So we can find $\cald_{FG}$-submodules $U$, $V$, and, $W$ of $D_n$ such that
$U \oplus V \oplus W = D_n$ holds,  an epimorphism $p \colon D_{n+1} \to U$
and a monomorphism $i \colon W \to D_{n-1}$
such that $c_{n+1} \colon D_{n+1} \to D_n = U \oplus V \oplus W$ sends $x$ to $(p(x), 0,0)$
and $c_{n} \colon D_{n} = U \oplus V \oplus W \to D_{n-1}$ sends $(u,v,w)$ to $i(w)$.
Then $H_n(D_*) \cong_{\cald_{F\pi}} W$

Under the obvious identification $U^* \oplus V^* \oplus W^* = D_n^*$ we see that the
$c_{n+1}^* \colon D_n^* =U^* \oplus V^* \oplus W^* \to D_{n+1}^*$ sends $(\phi, \psi,\mu)$
to $p^*(\phi)$ and $c_{n}^* \colon D_{n-1}^* \to D_n^* = U^* \oplus V^* \oplus W^*$ sends
$\phi$ to $(0,0,i^*(\phi))$. As $p$ is surjective, $p^*$ is injective.  Since $i$ is
injective, $i^*$ is surjective. We conclude
\[
 H_{d-n}(D^{d-*}) \cong_{\cald_{F\pi}} W^* \cong_{\cald_{F\pi}} W \cong_{\cald_{F\pi}} H_n(D_*).
\]
This finishes the proof of~\eqref{the:chain_and_dual_char_p_special} and hence
of~\eqref{the:chain_and_dual_char_p}.  Since we get from Poincar\'e duality a
$\IZ \pi$-chain homotopy equivalence
    \[
      C^{d-*}(\widetilde{M}) \xrightarrow{\simeq_{\IZ \pi}} C_*(\widetilde{M},\widetilde{M}|_{\partial M}),
      \]
      the claim follows from~\eqref{the:chain_and_dual_char_0} and~\eqref{the:chain_and_dual_char_p}.
    \end{proof}

    %-----------------------------------------------------------------------------

\subsection{On the top $L^2$-Betti number}%
\label{subsec:On_the_top_L2_Betti_numbers}

\begin{lemma}\label{lem:injectivity_top_differential}
  Let $X$ be a $d$-dimensional proper 
  $G$-$CW$-complex.  Let $c_d \colon C_d(X;F) \to C_{d-1}(X;F)$
  be the $d$-th differential of the $FG$-chain complex of $\overline{X}$. Then:

\begin{enumerate}
\item\label{lem:injectivity_top_differential:caln(G)}
 The  following assertions are equivalent:

  \begin{enumerate}
  \item\label{lem:injectivity_top_differential::caln(G):injective}
    The $\caln(G)$-map
    $\id_{\caln(G)} \otimes_{\IC G}  c_d \colon \caln(G) \otimes_{\IC G} C_d(\overline{X};\IC)
    \to \caln(G) \otimes_{\IC G} C_{d-1}(\overline{X};\IC)$ is injective;
  \item\label{lem:injectivity_top_differential:vanishing_L2-Betti_number}
    We have $b_d^{(2)}(\overline{X};\caln(G)) = 0$;
  \end{enumerate}

\item\label{lem:injectivity_top_differential:cald_(FG)}
Suppose  that $G$ is a weak Hughes   group. The  following assertions are equivalent:

\begin{enumerate}
  \item\label{lem:injectivity_top_differential:cald_(FG):injective}
    The $\cald_{FG}$-map
    $\id_{\cald_{FG}} \otimes_{\IC G}  c_d \colon \cald_{FG} \otimes_{\IC G} C_d(\overline{X};F)
    \to \cald_{FG} \otimes_{FG} C_{d-1}(\overline{X};F)$ is injective;
  \item\label{lem:injectivity_top_differential:cald_(FG):vanishing_L2-Betti_number}
    We have $b_d^{(2)}(\overline{X};\cald_{FG}) = 0$.
  \end{enumerate}
\end{enumerate}
\end{lemma}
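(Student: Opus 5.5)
The key observation is that $X$ is $d$-dimensional, so the cellular chain complex vanishes above degree $d$. Hence the top homology is just a kernel:
\[
H_d(R \otimes C_*) = \ker(\id_R \otimes c_d)
\]
for any coefficient ring $R$, because there is no incoming differential. Both parts then reduce to the statement that this kernel has dimension zero exactly when it is the zero module. Here $\overline{X}$ denotes the given complex and $C_*(\overline{X};F)=F\otimes_{\IZ}C_*(\overline{X})$. We have $R\otimes_{FG}C_*(\overline{X};F)\cong R\otimes_{\IZ G}C_*(\overline{X})$, so the $L^2$-Betti numbers may be computed from either complex.

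For part (2) we argue as follows. Since $C_{d+1}=0$, we get $H_d(\cald_{FG}\otimes_{FG} C_*(\overline{X};F))=\ker(\id_{\cald_{FG}}\otimes c_d)$. This is a module over the division ring $\cald_{FG}$, so it is free, and its dimension $b_d^{(2)}(\overline{X};\cald_{FG})$ vanishes if and only if the module is zero, i.e.\ if and only if $\id\otimes c_d$ is injective. This argument needs no finiteness assumption, since a vector space over a division ring is zero exactly when its dimension, possibly infinite, is zero. Properness is not really used here. If the intended setting allows non-free cells, I would note that $\cald_{FG}$ being a division ring forces the relevant setting to be free anyway, because $G$ is torsionfree as a weak Hughes group and proper means finite stabilizers.

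For part (1) the same identification gives $H_d(\caln(G)\otimes_{\IC G}C_*)=\ker(\id\otimes c_d)$, which is a submodule of $\caln(G)\otimes_{\IC G}C_d$. The implication (a)$\Rightarrow$(b) is immediate. For (b)$\Rightarrow$(a) I need that a submodule of dimension zero inside a projective $\caln(G)$-module is zero. Here properness matters: $C_d$ is a direct sum of modules $\IC[G/H]$ with $H$ finite, so $\caln(G)\otimes C_d$ is a direct sum of projective modules $\caln(G)\otimes_{\IC H}\IC$. Then I will use the property from \cite[Theorem~6.7 and Lemma~6.28]{Lueck(2002)} that any projective $\caln(G)$-module $P$ has trivial torsion part $\mathbf{T}P=0$. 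Since the torsion part collects exactly the submodules of dimension zero (for finitely generated modules, and via cofinality in general), a dimension-zero submodule of $P$ is contained in $\mathbf{T}P$ and hence vanishes.

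The main obstacle is this last step when $C_d$ is not finitely generated. I would handle it by writing the kernel as the union of its intersections with finitely generated free (or projective) summands. Each such intersection is a submodule of a finitely generated projective module and has dimension zero by monotonicity of $\dim_{\caln(G)}$. By \cite[Theorem~6.7]{Lueck(2002)}, such a submodule is zero.
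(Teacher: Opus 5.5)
Your proposal is correct and follows essentially the paper's route. You identify $H_d$ with $\ker(\id\otimes c_d)$ since nothing sits above degree $d$, settle part (2) because a $\cald_{FG}$-vector space of dimension zero is trivial, and for part (1) show that a dimension-zero submodule of the projective module $\caln(G)\otimes_{\IC G}C_d$ vanishes by reducing to finitely generated pieces. The paper reduces to finitely generated submodules of the kernel and uses semihereditarity of $\caln(G)$ together with faithfulness of the dimension on finitely generated projectives, while you intersect with finitely generated summands and cite the same underlying fact, so your detour through $\mathbf{T}P$ is unnecessary.
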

\begin{proof}~\ref{lem:injectivity_top_differential::caln(G):injective} Since $X$ is
  proper, $C_m(X;\IC)$ is a projective $\IC G$-module and hence
  $\caln(G) \otimes_{\IC G} C_m(X;\IC)$ is a projective $\caln(G)$-module for every
  $m \in \IZ^{\ge 0}$.  Hence it suffices to show for a $\caln(G)$-homomorphism
  $f \colon P \to Q$ of projective $\caln(G)$-modules that $\ker(f)$ is trivial if and
  only if $\dim_{\caln(G)}(\ker(f))$ vanishes. The only if statement is obvious, the if
  statement is proved as follows.

     The condition $\dim_{\caln(G)}(\ker(f)) = 0$ implies that every finitely generated projective
  $\caln(G)$-submodule $P$ of $(\ker(f))$ is trivial,
  see~\cite[Theorem~1.12~(1) on page~21 and Definition~6.6 on page 239]{Lueck(2002)}. 
Since $\caln(G)$ is semi-hereditary,
 see~\cite[Theorem~6.5 and Theorem~6.7~(i) on page 239]{Lueck(2002)},
 $\ker(f)$ is trivial.
\\[1mm]~\ref{lem:injectivity_top_differential:cald_(FG)}  This easy proof is left to the reader.
  \end{proof}

\begin{theorem}[On the top $L^2$-Betti number]\label{the:On_the_top_L2-Betti_number}
  Let $H$ be a subgroup of $G$. Let $X$ be a $d$-dimensional proper  (not
  necessarily finite or connected) $G$-$CW$-complex which we can also view as
  $d$-dimensional proper $H$-$CW$-complex $\res_G^H X$ by restriction.

  \begin{enumerate}
  \item\label{the:On_the_top_L2-Betti_number:special_F}
    Suppose $b_d^{(2)}(X;\caln(G)) = 0$ holds. Then we get
    \[b_d^{(2)}(\res_G^H;\caln(H)) = 0;
      \]

    \item\label{the:On_the_top_L2-Betti_number:general_F} Let $F$ be any field.  Suppose
      that $G$ is a weak Hughes group 
      and $b_d^{(2)}(X;\cald_{FG}) = 0$ holds.

  Then $H$ is a  weak Hughes  group  and we get
  \[
    b_d^{(2)}(\res_G^H X;\cald_{FH}) = 0.
    \]
\end{enumerate}
\end{theorem}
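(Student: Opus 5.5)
By Lemma~\ref{lem:injectivity_top_differential}, vanishing of the top $L^2$-Betti number is the same as injectivity of the top differential after tensoring with the relevant ring. So the plan is to show that injectivity of $\id \otimes c_d$ over $G$ forces injectivity over $H$. In both parts the chain complex $C_*(\res_G^H X)$ is $C_*(X)$ viewed as an $FH$-complex, and $\res$ of a projective $FG$-module is a projective $FH$-module, so Lemma~\ref{lem:injectivity_top_differential} applies on both sides. That $H$ is a weak Hughes group in~\ref{the:On_the_top_L2-Betti_number:general_F} is Lemma~\ref{lem:subgroups_of_(weak)_Hughes-groups_are_(weak)-Hughes_groups}~\ref{lem:subgroups_of_(weak)_Hughes-groups_are_(weak)-Hughes_groups:subgroups}.

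For~\ref{the:On_the_top_L2-Betti_number:general_F}, take $\cald_{FH}$ to be the division closure of $FH$ inside $\cald_{FG}$, as in the proof of Theorem~\ref{the:Kuenneth_formula}. The Linnell property, which holds by Gr\"ater's result, says that $\mu_H \colon \cald_{FH} \otimes_{FH} FG \to \cald_{FG}$ is injective. Tensoring over $FG$ with the projective module $C_m(X;F)$ preserves injectivity, so we obtain a commutative square whose horizontal maps are
\[
\cald_{FH} \otimes_{FH} C_m(X;F) \cong \cald_{FH}\otimes_{FH} FG \otimes_{FG} C_m(X;F) \to \cald_{FG} \otimes_{FG} C_m(X;F),
\]
both injective, and whose vertical maps are $\id \otimes c_d$. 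If the right vertical map is injective, then the composite from the upper left corner is injective. Hence the left vertical map $\id_{\cald_{FH}} \otimes c_d$ is injective, and Lemma~\ref{lem:injectivity_top_differential} gives the claim.

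For~\ref{the:On_the_top_L2-Betti_number:special_F}, one could use the same square with $\caln(H) \otimes_{\IC H} \IC G \to \caln(G)$. Here I would instead argue via flatness. The module $\caln(G)$, viewed as a $\caln(H)$-module via $\caln(i)$, is faithfully flat. The relevant facts are that $\caln(G) \otimes_{\caln(H)} -$ is exact and $\dim_{\caln(G)}(\caln(G) \otimes_{\caln(H)} M) = \dim_{\caln(H)}(M)$, as in \cite[Theorem~6.29]{Lueck(2002)}. Then
\[
\caln(G) \otimes_{\caln(H)} \ker\bigl(\id_{\caln(H)} \otimes c_d\bigr) \cong \ker\bigl(\id_{\caln(G)} \otimes c_d\bigr),
\]
where the right-hand $c_d$ is for $X$ over $G$ and we use $\caln(G) \otimes_{\caln(H)} \caln(H) \otimes_{\IC H} C \cong \caln(G) \otimes_{\IC G} \IC G \otimes_{\IC H} C$. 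This is not quite the right object, since $C_*(\res X)$ is $\res C_*(X)$, not an induced complex. So the cleaner route is to reduce to the injection $\caln(H)\otimes_{\IC H} \res_{G}^{H}C_m \to \caln(G) \otimes_{\IC G} C_m$. I expect this injection to be the main obstacle.

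To handle it, reduce to free modules by passing to direct summands, since $X$ is proper and the $C_m$ are projective. It then suffices to show that $\caln(H)\otimes_{\IC H}\IC G \to \caln(G)$ is injective. Decompose $\IC G=\bigoplus_{Hg} \IC H\, g$ over right cosets. The left side becomes $\bigoplus \caln(H) g$, and this embeds into $L^2(G)=\bigoplus^{L^2} L^2(H)g$ via $a\mapsto a(e)$, because the coset components are orthogonal. The image lands in $\caln(G) \subseteq L^2(G)$ compatibly, so the map is injective. With that injection in hand, the same diagram chase as in~\ref{the:On_the_top_L2-Betti_number:general_F} concludes.
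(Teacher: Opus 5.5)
Your proposal is correct and follows the paper's proof. Both arguments use Lemma~\ref{lem:injectivity_top_differential} to reduce the claim to injectivity of the comparison maps $\cald_{FH}\otimes_{FH}\res_{FG}^{FH} M\to\cald_{FG}\otimes_{FG}M$ (from the Linnell property of $\mu_H$) and $\caln(H)\otimes_{\IC H}\res_G^H M\to\caln(G)\otimes_{\IC G}M$ for projective $M$ (reduced to $M=\IC G$ and settled by orthogonality of coset components in $L^2(G)$), followed by the same diagram chase.

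Your evaluation-at-$e$ argument is a tidier version of the paper's matrix-coefficient computation. One convention point: with the paper's embedding $g\mapsto r_{g^{-1}}$, the relevant components lie in $L^2(t^{-1}H)$ rather than $L^2(Ht)$, but these are still pairwise orthogonal, so nothing changes. The faithful-flatness detour you abandoned can simply be deleted.
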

\begin{proof}~\ref{the:On_the_top_L2-Betti_number:special_F}
Recall that $\caln(G)$ is defined to be the algebra $\calb(L^2(G))^G$ of
  bounded linear operators $L^2(G) \to L^2(G)$ which are (left) $G$-equivariant, and that
  we have a canoncial inclusion of rings $\IC G \to \caln(G)$ sending
  $\sum_{g \in G} \lambda_g \cdot g \in G$ to the operator
  $L^2(G) \to L^2(G), \; u \mapsto \sum_{g \in G} \overline{\lambda_g} \cdot ug^{-1}$,
  see~\cite[Definition~1.1 on page~15]{Lueck(2002)}.  The inclusion $i \colon H \to G$ induces an
  injective homomorphism of complex algebras $\caln(i) \colon \caln(H) \to \caln(G)$,
  see~\cite[Definition~1.23 on page~29]{Lueck(2002)}. For any $\IC G$-module $M$ we a
  obtain an $\caln(H)$-homomorphism, natural in $M$,
  \begin{equation}
    U(M) \colon \caln(H) \otimes_{\IC H} \res_G^H M \to \caln(G) \otimes_{\IC G} M,
    \quad u \otimes x \mapsto \caln(i)(u) \otimes x.
 \label{U(M)}
\end{equation}
Next we want to show that $U(M)$ is injective for projective  $\IC G$-modules $M$.  As $U(M)$ is
compatible with direct sums over arbitrary index sets, it suffices to prove the claim for
$M = \IC G$.

  Let $T$ be a transversal of the projection  $\pr \colon G \to H\backslash G$,
  i.e., $T$ is a subset of $G$ such that $\pr|_T \colon T \to H\backslash G$ is a bijection.
  We will assume $e \in T$ for the unit $e \in G$.
  We obtain an isomorphism of left $\IC H$-modules
  \[
  \bigoplus_{t \in G/H} \IC H \xrightarrow{\cong} \IC G, \quad (x_t)_{t \in T} \mapsto \sum_{t \in T} x_t \cdot t.
  \]
  It induces a bijection
  \[
    \alpha \colon \bigoplus_{t \in T} \calb(L^2(H))^H \xrightarrow{\cong}  \calb(L^2(H))^H \otimes_{\IC H} \IC G,
    \quad (f_t)_{t \in T} \mapsto \sum_{t \in T} f_t \otimes t
  \]
  and an $H$-equivariant isomorphism of Hilbert spaces
  \[
    \xi \colon \overline{\bigoplus}_{t \in T} L^2(H) \xrightarrow{\cong} L^2(G),
    \quad (u_t)_{t \in T} \mapsto \sum_{t \in T} u_t \cdot t,
  \]
  where the source is the Hilbert space completion of the pre Hilbert space
  $\bigoplus_{t \in G/H} L^2(H)$.  We have the  isomorphism
  \[\beta \colon \calb(L^2(G))^G  \otimes_{\IC G} \IC G \xrightarrow{\cong} \calb(L^2(G))^G,
    \quad f \otimes \biggl(\sum_{g \in G} \lambda_g \cdot g\biggr)
    \mapsto   \sum_{g \in G} \overline{\lambda_g} \cdot \bigl(f \circ r _{g^{-1}}\bigr)
  \]
  where $r_{g^{-1} }\colon L^2(G) \to L^2(G)$ is right multiplication with $g^{-1}$. Hence
  it remains to show that
  \[
    \beta \circ U(\IC G) \circ \alpha \colon \bigoplus_{t \in T} \calb(L^2(H))^H  \to \calb(L^2(G))^G
  \]
  is injective. Given $s \in T$ and an element $f_s$ in the copy of $\calb(L^2(H))^H$
  belonging to $s$ in the source of $\alpha$, we get a commutative diagram of operators of
  Hilbert spaces
  \begin{equation}
    \xymatrix{\overline{\bigoplus}_{t \in T} L^2(H) \ar[r]^-{\xi}_-{\cong} \ar[d]_{\sigma_s}
      &
      L^2(G) \ar[d]^{r_{s^{-1}}}
        \\
      \overline{\bigoplus}_{t \in T} L^2(H) \ar[r]^-{\xi}_-{\cong}  \ar[d]_{\bigoplus_{t \in T} f_s}
      & L^2(G) \ar[d]^{\caln(i)(f_s)}
        \\  
       \overline{\bigoplus}_{t \in T} L^2(H) \ar[r]^-{\xi}_-{\cong} 
       & L^2(G)
     }
     \label{diagram_computing_i(f_s)_circ_r_s}
   \end{equation}
   where for $t_0, t_1 \in T$ the operator $(\sigma_s)_{t_0,t_1} \colon L^2(H) \to L^2(H)$
   is trivial if $\pr(t_0s^{-1}) \not= \pr(t_1)$ and is right multiplication with the
   element $t_0s^{-1}t_1^{-1} \in H$ if $\pr(t_0s^{-1}) = \pr(t_1)$.

   Now suppose that the
   element $(f_s)_{s \in T} \in \bigoplus_{t \in G/H} \calb(L^2(H))^H$ lies in the kernel
   of $\beta \circ U(\IC G) \circ \alpha$.  Then
   $\xi^{-1} \circ \left(\sum_{s \in T} \caln(i)(f_s) \circ r_{s^{-1}}\right)  \circ \xi\colon
   \overline{\bigoplus}_{t \in T} L^2(H) \to \overline{\bigoplus}_{t \in T} L^2(H)$ is
   trivial.

   We get for $t \in T$ that $(\sigma_s)_{t,e}$ is zero for $s \not = t$,
   since $\pr(ts^{-1}) = \pr(e) \implies ts^{-1} \in H \implies \pr(t) = \pr(s) \implies s = t$.
   Fix $t \in T$. Let $j_t \colon L^2(H) \to  \overline{\bigoplus}_{t \in T} L^2(H)$
   be the inclusion of the summand belonging to $t \in T$ and
   $\pr_e \colon \overline{\bigoplus}_{t \in T} L^2(H) \to L^2(H)$
   be the projection onto the summand  belonging to $e \in T$.
   Then the  composite
   \[
   \pr_{e} \circ \; \xi^{-1} \circ \left(\sum_{s \in T} i(f_s) \circ r_{s^{-1}}\right)  \circ \xi \circ j_t \colon
   L^2(H) \to L^2(H)
 \]
 is zero and sends $u$ to $f_t(u)$ because of the commutative
 diagram~\eqref{diagram_computing_i(f_s)_circ_r_s}.  Hence $f_t = 0$ for all $t \in T$.
 This finishes the proof that the natural map $U(M)$ of~\eqref{U(M)} is injective for
 every projective $\IC G$-module $M$.

   If $c_d \colon C_*(X) \to C_*(X)$ is the $d$-th differential
   in the cellular $\IC G$-chain complex $C_*(X;\IC)$, we get a commutative
   diagram
     \[
       \xymatrix@!C=17em{\caln(H) \otimes_{\IC H} C_d(\res_G^H X;\IC) \ar[r]^-{U(C_d(X;\IC))}
         \ar[d]_{\id_{\caln(H)} \otimes_{\IC H} \res_G^Hc_d}
         &
         \caln(G) \otimes_{\IC G} C_d(X;\IC)   \ar[d]^{\id_{\caln(G)} \otimes_{\IC G} c_d}
         \\
         \caln(H) \otimes_{\IC H} C_{d-1}(\res_G^H X;\IC) \ar[r]_-{U(C_{d-1}(X;\IC))}
         &
         \caln(G) \otimes_{\IC G} C_{d-1}(X;\IC) 
       }
     \]
     whose horizontal arrows are injective. Hence the left vertical arrow is injective if
     the right vertical arrow is injective.

     Hence it suffices to show that $\ker(\id_{\caln(G)} \otimes_{\IC G} c_d)$ is trivial
     if and only if we have
     $\dim_{\caln(G)}(\ker(\id_{\caln(H)} \otimes_{\IC G } c_d)) = 0$.  This has already
     been proved in Lemma~\ref{lem:injectivity_top_differential}.
     \\[1mm]~\ref{the:On_the_top_L2-Betti_number:general_F} The proof is analogous to the
     one of assertion~\ref{the:On_the_top_L2-Betti_number:special_F} but one has to
     consider division rings $\cald_{FG}$ and $\cald_{FH}$ instead of $\caln(G)$ and
     $\caln(H)$.  Since the $FG$-division ring $\iota \colon FG \to \cald_{FG}$ is
     Linnell, the multiplication map $\mu_H$ induces an injection of
     $\cald_{FH}$-$FG$-bimodules $\mu_H \colon \cald_{FH} \otimes_{FH} FG \to \cald_{FG}$.
     Hence we get for any projective $FG$-module $M$ an injective $\cald_{FH}$-homomorphism,
     natural in $M$,
     \[
     V(M) \colon \cald_{FH} \otimes_{FH} \res_{FG}^{FH} M \xrightarrow{\cong}  \cald_{FG} \otimes_{FG} M.
     \]

 Since $G$ is torsionfree and $X$ is proper, the $FG$-chain complex $C_*(X;F)$ is a free  $FG$-chain complex.
     Let $c_d \colon C_*(X;F) \to C_*(X;F)$ be  the $d$-th differential
     in the cellular $FG$-chain complex $C_*(X;F)$. We get a commutative   diagram
     \[
       \xymatrix@!C=17em{\cald_{FH} \otimes_{FH} C_d(\res_G^H X;F) \ar[r]^-{V(C_d(X;F))}
         \ar[d]_{\id_{\cald_{FH}} \otimes_{FH} \res_G^H c_d}
         &
         \cald_{FG}\otimes_{FG} C_d(X;F)   \ar[d]^{\id_{\cald_{FG}} \otimes_{FG} c_d}
         \\
         \cald_{FH}\otimes_{FH} C_{d-1}(\res_G^H X;F) \ar[r]_--{V(C_{d-1}(X;F))}
         &
         \cald_{FG} \otimes_{FG} C_{d-1}(X;\IC) 
       }
     \]
     whose horizontal arrows are injective. We conclude that  the left vertical arrow is injective if
     the right vertical arrow is injective. Hence we get
     $b_d(X;\cald_{FG}) = 0 \implies b_d(\res_G^H X;\cald_{FH}) = 0$.

     This finishes the proof of Theorem~\ref{the:On_the_top_L2-Betti_number}.
  \end{proof}

  Theorem~\ref{the:On_the_top_L2-Betti_number}~\ref{the:On_the_top_L2-Betti_number:special_F}
  has already been proved for countable $G$ and proper $G$-actions on simplicial complexes
  in~\cite[Theorem~1.8]{Gaboriau-Nous(2021)}. We have given a different proof here which extends to a proof
of Theorem~\ref{the:On_the_top_L2-Betti_number}~\ref{the:On_the_top_L2-Betti_number:general_F}.

% -----------------------------------------------------------------------------

\subsection{Mapping tori}%
\label{subsec:Mapping_tori}

\begin{theorem}[Vanishing of $L^2$-Betti numbers of mapping tori]%
  \label{the:vanishing_of_L2-Betti_numbers_for_mapping_torus}
  Let $f\colon X \rightarrow X$ be a cellular selfmap of a connected $CW$-complex $X$ and
  let $\pi_1(T_f) \xrightarrow{\phi} G \xrightarrow{\psi}\IZ$ be a factorization of the
  canonical epimorphism into epimorphisms $\phi$ and $\psi$. Let 
  $i_*\colon \pi_1(X) \to \pi_1(T_f)$ is the map induced by the canonical inclusion
  $i \colon X  \to T_f$.  Consider $n \in \IZ^{\ge 1}$.  Let $\overline{T_f}$ be the covering of $T_f$
  associated to $\phi$, which is a free $G$-$CW$-complex.

  \begin{enumerate}
  \item\label{the:vanishing_of_L2-Betti_numbers_for_mapping_torus:(NG)} Suppose that
    $b_n^{(2)}(G \times_{\phi \circ i_*} \widetilde{X};\caln(G)) < \infty$ and
    $b_{n-1}^{(2)}(G \times_{\phi \circ i_*} \widetilde{X};\caln(G)) < \infty$ holds.  Then we
    get
    \[
      b_n^{(2)}(\overline{T_f};\caln(G)) = 0;
    \]

 \item\label{the:vanishing_of_L2-Betti_numbers_for_mapping_torus:general_F}
Suppose  $G$ is a weak Hughes group and that
$b_n(G \times_{\phi \circ i_*} \widetilde{X};\cald_{FG}) < \infty$ and
$b_{n-1}(G \times_{\phi \circ i_*} \widetilde{X};\cald_{FG}) < \infty$ holds.
 Then we get
\[
b_p^{(2)}(\overline{T_f};\cald_{FG}) = 0.
\]
\end{enumerate}
\end{theorem}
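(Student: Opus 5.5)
For assertion~\ref{the:vanishing_of_L2-Betti_numbers_for_mapping_torus:(NG)} we simply cite the classical result~\cite[Theorem~1.39 on page~45]{Lueck(2002)}. For assertion~\ref{the:vanishing_of_L2-Betti_numbers_for_mapping_torus:general_F} (where the conclusion should read $b_n^{(2)}(\overline{T_f};\cald_{FG})=0$) we mimic that proof, replacing von Neumann dimension by $\dim_{\cald_{FG}}$. Put $K:=\ker(\psi)$ and $\overline{X}:=G\times_{\phi\circ i_*}\widetilde{X}$.

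\emph{Step 1: a short exact sequence.} Write $\overline{T_f}$ as the $G$-mapping telescope of $\overline{X}$ under the lift $\overline{f}\colon \overline{X}\to\overline{X}$, twisted by a generator $t\in G$ with $\psi(t)=1$. The standard cellular decomposition gives a short exact sequence of $FG$-chain complexes
\[
0 \to C_*(\overline{X};F) \xrightarrow{\;\id - t\cdot \overline{f}_*\;} C_*(\overline{X};F) \to C_*(\overline{T_f};F) \to 0 .
\]
Here $C_*(\overline{X};F)\cong FG\otimes_{FK}C_*(\widehat{X};F)$ with $\widehat{X}$ the $K$-covering, and $t\cdot\overline{f}_*$ is $t$-semilinear. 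Tensoring with the flat module $\cald_{FG}$ and taking homology yields a long exact sequence. By additivity of $\dim_{\cald_{FG}}$ it then suffices to show that
\[
\id - t\cdot\overline{f}_* \colon H_m(\cald_{FG}\otimes_{FG} C_*(\overline{X})) \to H_m(\cald_{FG}\otimes_{FG} C_*(\overline{X}))
\]
is an isomorphism for $m=n-1$ and $m=n$. These modules are finite dimensional by hypothesis.

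\emph{Step 2: the key algebraic point.} By the Hughes-free property applied to $K\trianglelefteq G$ with $G/K\cong\IZ$, the multiplication map $\cald_{FK}\otimes_{FK}FG\cong \cald_{FK}\ast\IZ = \cald_{FK}[t^{\pm1}]$ is injective into $\cald_{FG}$. Hence $\cald_{FG}$ is the Ore localization, namely the skew field of fractions of the twisted Laurent ring $\cald_{FK}[t^{\pm 1}]$. The homology modules are $\cald_{FG}\otimes_{\cald_{FK}[t^{\pm1}]}H_m(\cald_{FK}[t^{\pm1}]\otimes C_*)$, by flatness of Ore localization.

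Consider the map $\id - t\cdot\overline{f}_*$ on $\cald_{FK}[t^{\pm1}]\otimes_{\cald_{FK}}V$, where $V=\cald_{FK}\otimes_{FK}C_m(\widehat X)$. After localizing, it becomes left multiplication by $1-t A$ on a finite-dimensional $\cald_{FG}$-vector space. Here $A$ is a matrix over $\cald_{FK}$ obtained from a finite-dimensional piece of the homology; this is where the finiteness hypothesis enters. The analogue of a characteristic polynomial argument shows $1-tA$ is invertible over $\cald_{FG}$. Its leading coefficient in $t^0$ is the identity, so it is a nonzero-divisor in the matrix ring over the Ore domain $\cald_{FK}[t^{\pm1}]$, and hence invertible over the division ring.

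\emph{Expected obstacle.} The main difficulty is making Step 2 precise when $\widehat X$ is not of finite type. The homology over $\cald_{FK}$ may be infinite dimensional, and only the homology over $\cald_{FG}$ is assumed finite. I would handle this as follows. The homology over $\cald_{FG}$ of $C_*(\overline{X})$ is $\cald_{FG}\otimes_{\cald_{FK}}H_m(\cald_{FK}\otimes C_*(\widehat X))$, since $\cald_{FG}$ is flat over $\cald_{FK}$. So pick a finite $\cald_{FK}$-basis-generating subspace whose image spans it, and pass to a directed colimit of finite-dimensional $\cald_{FK}$-subspaces $W$ with $t\overline f_*(W)\subseteq W$ up to localization. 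The invertibility of $1-tA$ on each stage then follows as above. If the colimit bookkeeping becomes problematic, I would instead reduce to the finite-type case via the $n$- and $(n+1)$-skeleta, using cellular approximation as in~\cite{Lueck(2002)}.
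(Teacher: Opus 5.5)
Your argument works at its core, but it takes a genuinely different route from the paper's.

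\textbf{The paper's route.} The paper only says that the proof of \cite[Theorem~6.63]{Lueck(2002)} carries over, which is the covering trick. For $G_d=\psi^{-1}(d\IZ)$, the space $\res_G^{G_d}\overline{T_f}$ is the $G_d$-covering of a space homotopy equivalent to $T_{f^d}$. The Wang sequence for $T_{f^d}$, together with Theorem~\ref{the:induction}~\ref{the:induction:general_F}, bounds its $n$th Betti number over $\cald_{FG_d}$ by $b_n(\overline X;\cald_{FG})+b_{n-1}(\overline X;\cald_{FG})$, independently of $d$. Multiplicativity (Theorem~\ref{the:restriction}~\ref{the:restriction:general_F}) then divides this bound by $d$. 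That route uses only additivity, induction and restriction, so it is word for word the von Neumann proof. It is also the reason both $b_n$ and $b_{n-1}$ must be finite.

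\textbf{Your route and what it buys.} You identify $\cald_{FG}$ with the Ore division ring of fractions of $\cald_{FK}[t^{\pm1};\sigma]$, where $\sigma(\lambda)=t^{-1}\lambda t$. You then show directly that the Wang map $\id-\theta_*$, with $\theta=t\cdot\overline f_*$, is bijective. This is specific to division rings, so for (1) you still need a citation. In exchange it shows the mechanism and proves more. On $\cald_{FG}\otimes_{\cald_{FK}[t^{\pm1}]}\bigl(\cald_{FK}[t^{\pm1}]\otimes_{\cald_{FK}}W\bigr)$ the map $\id-\theta_*$ is injective for \emph{every} $\cald_{FK}$-vector space $W$: apply your lowest-degree argument, then use flatness of the Ore localization. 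So injectivity on $H_{n-1}$ is automatic, and your route needs only $b_n(\overline X;\cald_{FG})<\infty$.

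\textbf{Repairs needed.}
\begin{enumerate}
\item For (1), \cite[Theorem~1.39]{Lueck(2002)} treats only finite $X$. The statement for arbitrary $X$ is \cite[Theorem~6.63]{Lueck(2002)}.
\item $C_*(\overline{T_f})$ is the mapping cone of $\id-\theta$, not its cokernel; the cokernel is only quasi-isomorphic to it. The correct sequence is $0\to C_*(\overline X)\to C_*(\overline{T_f})\to C_{*-1}(\overline X)\to 0$. It is degreewise split, so it stays exact after applying $\cald_{FG}\otimes_{FG}-$. Flatness of $\cald_{FG}$ over $FG$ is not known and is not needed.
\item Your ``expected obstacle'' does not exist. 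Put $W_m:=H_m(\cald_{FK}\otimes_{FK}C_*(\widehat X))$. You yourself note that $H_m(\cald_{FG}\otimes_{FG}C_*(\overline X))\cong\cald_{FG}\otimes_{\cald_{FK}}W_m$. Extending scalars from a division subring sends a basis to a basis, so $\dim_{\cald_{FK}}W_m=b_m(\overline X;\cald_{FG})<\infty$; this is just Theorem~\ref{the:induction}~\ref{the:induction:general_F}.
\item Since $\overline f$ maps $\widehat X$ to itself, twisted by conjugation with $t$, the map $\theta_*$ is $x\otimes w\mapsto xt\otimes A(w)$ with $A$ a $\sigma$-semilinear endomorphism of $W_m$. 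In a basis, $\id-\theta_*$ is therefore given by $I-tM$ with $M\in\M_r(\cald_{FK})$, and your lowest-degree argument applies verbatim.
\item Drop the colimit and skeleton workarounds. The skeleton reduction would not reach the finite-type case anyway, since the skeleta of $X$ need not be finite.
\end{enumerate}
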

\begin{proof}~\ref{the:vanishing_of_L2-Betti_numbers_for_mapping_torus:(NG)}
  This is proved in~\cite[Theorem~6.63 on page~270]{Lueck(2002)}.
  \\[1mm]~\ref{the:vanishing_of_L2-Betti_numbers_for_mapping_torus:general_F}
The proof of~\cite[Theorem~6.63 on page~270]{Lueck(2002)} carries over to this setting.
\end{proof}

% -----------------------------------------------------------------------------

\subsection{Fibrations}%
\label{subsec:fibration}

\begin{theorem}[$L^2$-Betti number and fibrations]\label{the:L2-Betti_numbers_and_fibrations}
Let $F \xrightarrow{i} E \xrightarrow{p} B$ be a fibration
of connected $CW$-complexes.
Let $p_*\colon  \pi_1(E) \xrightarrow{\phi} G \xrightarrow{\psi} \pi_1(B)$
be a factorization of the map induced by $p$ into epimorphisms
$\phi$ and $\psi$. Let $i_*\colon  \pi_1(F) \to \pi_1(E)$
be the homomorphism induced by the inclusion $i$.
Consider $d \in \IZ^{\ge 1}$.  Then:

\begin{enumerate}
\item\label{the:L2-Betti_numbers_and_fibrations:N(G)_d_not_infty}
Suppose  that $\pi_1(B)$ contains an element
of infinite order or finite subgroups of arbitrarily large order.
Suppose that $b_n^{(2)}(G \times_{\phi \circ i_*}\widetilde{F};\caln(G))= 0$
for $n \le d-1$ and
$b_d^{(2)}(G \times_{\phi \circ i_*}\widetilde{F};\caln(G)) < \infty$
hold.

Then we get for $n \le d$
\[
  b_n^{(2)}(G \times_{\phi} \widetilde{E};\caln(G)) = 0;
\]

\item\label{the:L2-Betti_numbers_and_fibrations:N(G)_d_is_infty}
Suppose that $b_n^{(2)}(G \times_{\phi \circ i_*}\widetilde{F};\caln(G))= 0$
holds for every  $n  \in \IZ^{\ge 0}$.

Then we get for every $n \in \IZ^{\ge 0}$
\[
  b_n^{(2)}(G \times_{\phi} \widetilde{E};\caln(G)) = 0;
\]

\item\label{the:L2-Betti_numbers_and_fibrations:general_F_d_not_infty}
 Assume that  $G$ is a weak Hughes group and that $\pi_1(B)$ contains an element
 of infinite order or finite subgroups of arbitrarily large order. Suppose that
 $b_n(G \times_{\phi \circ i_*}\widetilde{F};\cald_{FG})= 0$
for $n \le d-1$ and
$b_d(G \times_{\phi \circ i_*}\widetilde{F};\cald_{FG}) < \infty$
hold.

Then we get for $n \le d$
\[
  b_n^{(2)}(G \times_{\phi} \widetilde{E};\cald_{FG}) = 0;
\]

\item\label{the:L2-Betti_numbers_and_fibrations:general_F_d_is_infty}
Assume that  $G$ is a weak Hughes group.
Suppose that $b_n(G \times_{\phi \circ i_*}\widetilde{F};\cald_{FG})= 0$
holds for every $n  \in \IZ^{\ge 0}$.

Then we get for every $n \in \IZ^{\ge 0}$
\[
  b_n^{(2)}(G \times_{\phi} \widetilde{E};\cald_{FG}) = 0;
  \]
\end{enumerate}
\end{theorem}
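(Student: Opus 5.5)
Assertions~\ref{the:L2-Betti_numbers_and_fibrations:N(G)_d_not_infty} and~\ref{the:L2-Betti_numbers_and_fibrations:N(G)_d_is_infty} are the classical statements~\cite[Theorem~6.67 on page~272]{Lueck(2002)}; I would cite them and concentrate on the $\cald_{FG}$-versions. The plan is to copy the classical argument: replace the fibration by a $CW$-model and filter the $G$-$CW$-complex $\overline{E} := G \times_\phi \widetilde{E}$ by preimages of skeleta of $B$. Concretely, I would assume $B$ is a $CW$-complex and that the restriction of $p$ over each cell of $B$ is fibre homotopy equivalent to a product $D^k \times F$. Let $\overline{E}_k$ be the preimage of the $k$-skeleton. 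Then $H_*(\overline{E}_k,\overline{E}_{k-1})$ is, up to $G$-homotopy, a direct sum over the $k$-cells $e$ of $B$ of copies of $G \times_{\phi\circ i_*} \widetilde{F}$ crossed with $(D^k,S^{k-1})$. More precisely, the summand attached to $e$ is induced from the subgroup $\phi(\pi_1(p^{-1}(x_e)))$, which equals $\ker \psi$ up to conjugation. Each such summand can be written as $G \times_{K} (K\times_{\phi\circ i_*}\widetilde F)$ with $K = \ker\psi$.

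For assertion~\ref{the:L2-Betti_numbers_and_fibrations:general_F_d_is_infty} I would use Theorem~\ref{the:induction}~\ref{the:induction:general_F}. It gives that the $\cald_{FG}$-Betti numbers of each relative piece are shifts of $b_n(G\times_{\phi\circ i_*}\widetilde F;\cald_{FG})$, hence all zero. Since $\cald_{FG}$ is a division ring, every module is free and dimension is additive on short exact sequences. The long exact sequences of the pairs $(\overline{E}_k,\overline{E}_{k-1})$ then show inductively that $b_n(\overline{E}_k;\cald_{FG})=0$ for all $n$ and $k$. Passing to the colimit over $k$ uses that homology with coefficients in $\cald_{FG}$ commutes with directed colimits, and that a colimit of zero modules is zero. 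I would also check here that the fibre transport acts on $H_*(\cald_{FG}\otimes C_*(G\times_{\phi\circ i_*}\widetilde F))$ by isomorphisms, so that the local coefficient twisting does not affect dimensions. The same argument handles assertion~\ref{the:L2-Betti_numbers_and_fibrations:general_F_d_not_infty} in degrees $n\le d-1$.

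The main obstacle is degree $d$ in assertion~\ref{the:L2-Betti_numbers_and_fibrations:general_F_d_not_infty}, where finiteness replaces vanishing. There I would mimic the classical reduction to the case where $B$ has fundamental group containing $\IZ$ or large finite subgroups. First suppose $\pi_1(B)$ contains an infinite cyclic subgroup $C$. Pass to the covering $B_C\to B$ with fundamental group $C$, and let $G'=\psi^{-1}(C)$. By restriction/induction (Theorem~\ref{the:induction}), the vanishing for $G$ follows from the vanishing for $G'$. The latter is built from the pullback fibration over $B_C\simeq S^1\vee\ldots$, whose essential piece is a mapping torus of the fibre transport. There Theorem~\ref{the:vanishing_of_L2-Betti_numbers_for_mapping_torus}~\ref{the:vanishing_of_L2-Betti_numbers_for_mapping_torus:general_F} yields vanishing in degree $d$, given finiteness in degrees $d$ and $d-1$. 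To get vanishing for $G$ itself rather than for a restriction, I would use that $\cald_{FG}\otimes_{\cald_{FG'}}-$ is faithfully flat and preserves dimension.

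In the case of arbitrarily large finite subgroups, $G$ cannot contain torsion since weak Hughes groups are torsionfree. So large finite subgroups of $\pi_1(B)$ give finite-index extensions $K\subset G''$ with $[G'':K]$ arbitrarily large. Theorem~\ref{the:restriction}~\ref{the:restriction:general_F} then bounds $b_d\le b_d(\text{fibre})/[G'':K]$, an integer tending to $0$, hence it equals $0$. The delicate points are setting up the relative Serre-type filtration for the covering $\overline E$, and checking the flatness needed to compare $\cald_{G'}$ and $\cald_{FG}$. The latter is provided by the Linnell property of the multiplication map $\mu_{G'}$.
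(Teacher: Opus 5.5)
Your overall plan is the classical one the paper relies on; its proof of assertions (3) and (4) just says that Lück's argument carries over. That plan is: a skeletal filtration of $G\times_\phi\widetilde E$ over $B$ for the vanishing statements. In degree $d$, a reduction via Theorem~\ref{the:induction} to the covering $B_H\to B$ with $\pi_1(B_H)=H$ infinite cyclic or finite. Then the mapping torus theorem, respectively restriction to $K=\ker\psi\subseteq\psi^{-1}(H)$. Assertion (4), and assertion (3) in degrees $n\le d-1$, are fine as you wrote them. The degree-$d$ step, which you rightly call the main obstacle, is not justified as written, in two places.

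\emph{Finite case.} Theorem~\ref{the:restriction}~\ref{the:restriction:general_F} only supplies the factor $[G'':K]=|H|$. You also need the restricted $K$-space to satisfy $b_d\le b_d(G\times_{\phi\circ i_*}\widetilde F;\cald_{FG})$; this space lies over the pullback $\widehat E$ of the fibration to the universal covering of $B_H$. Your filtration does not give this directly, because $H_d(\overline E_0)$ is a direct sum over all $0$-cells of the base. The missing observation is this: for $k\ge 1$ the relative pieces in degree $d$ are sums of $H_{d-k}$ of the fibre, hence zero, so $H_d(\overline E_0)\to H_d(\overline E)$ is surjective. Choosing a CW model of the connected base with a single $0$-cell then gives $b_d(\text{total})\le b_d(\text{fibre})$. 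Only after this does dividing by $|H|$ make sense.

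\emph{Infinite cyclic case.} $B_C$ is in general not homotopy equivalent to $S^1\vee\cdots$; for example, take $S^1\times S^2$. Even for a wedge you would need to show that the part over the other summand contributes nothing in degree $d$, so the passage from the ``essential piece'' to $E_C$ is missing. What is true is that $B_C$ is homotopy equivalent to a complex obtained from $S^1$ by attaching cells of dimension $\ge 2$. The same relative argument then applies, with pieces $H_{d-k}$ of the fibre and $k\ge 2$, all zero. It shows that $H_d$ of the $G'$-covering of the part $E'$ over $S^1$ surjects onto $H_d$ of the $G'$-covering of $E_C$. Since $\pi_1(E')\to\pi_1(E_C)$ is onto, the restricted $G'$-covering of $E'$ is connected and the canonical epimorphism to $\IZ$ factors through $G'$. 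Moreover, $E'$ is the mapping torus of the fibre transport. So Theorem~\ref{the:vanishing_of_L2-Betti_numbers_for_mapping_torus}~\ref{the:vanishing_of_L2-Betti_numbers_for_mapping_torus:general_F} with $X=F$ gives the vanishing.

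Alternatively, use a map $B_C\to S^1$ to write $E_C$, up to homotopy, as the mapping torus of a deck transformation of its infinite cyclic covering. Then apply the single-$0$-cell estimate to that covering to get the finiteness of $b_{d-1}$ and $b_d$ that the mapping torus theorem requires. With these two estimates inserted, your argument goes through.
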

\begin{proof}~\ref{the:L2-Betti_numbers_and_fibrations:N(G)_d_not_infty}
  This is proved in~\cite[Theorem~6.67 on page~272]{Lueck(2002)} for $d \in \IZ^{\ge 1}$
  \\[1mm]~\ref{the:L2-Betti_numbers_and_fibrations:N(G)_d_is_infty}
  This is proved in~\cite[Lemma~6.66 on page~272]{Lueck(2002)} for $d \in \IZ^{\ge 1}$
  \\[1mm]~\ref{the:L2-Betti_numbers_and_fibrations:general_F_d_not_infty}
  The proof of assertion~\ref{the:L2-Betti_numbers_and_fibrations:N(G)_d_not_infty}
  carries over to this setting.
  \\[1mm]~\ref{the:L2-Betti_numbers_and_fibrations:general_F_d_is_infty}
   The proof of assertion~\ref{the:L2-Betti_numbers_and_fibrations:N(G)_d_is_infty}
  carries over to this setting.
\end{proof}

%-----------------------------------------------------------------------------

\subsection{$3$-manifolds}\label{subsec:3-manifolds}

Let $M$ be a compact connected $3$-manifold $M$. It  is called \emph{prime}
if for any decomposition of $M$ as a connected sum $M_1 \#  M_2$,
$M_1$ or $M_2$ is homeomorphic to $S^3$, and is called 
\emph{irreducible}
if every embedded $2$-sphere bounds an
embedded $3$-disk. If $M$ is prime , it  is either irreducible or homeomorphic
to the total space over an $S^1$-bundle over $S^1$, see~\cite[Lemma 3.13 on page~28]{Hempel(1976)}. 
There is a \emph{prime decomposition} of $M$, i.e.,
one can write $M$ as a connected sum
\begin{eqnarray*}
M  & = & M_1 \# M_2 \# \ldots \# M_r,
\end{eqnarray*}
where each $M_j$ is prime. If $M$ is orientable, this prime decomposition is
unique up to renumbering and orientation preserving homeomorphism,
see~\cite[Theorem 3.15 on page~31  and Theorem~3.21 on pages~35]{Hempel(1976)}.
By the Sphere Theorem, see~\cite[Theorem 4.3 on page~40]{Hempel(1976)}, an irreducible 
3-manifold is aspherical if and only if
it is a 3-disk or has infinite fundamental group.

Recall that the $L^2$-Betti numbers $b_n^{(2)}\bigl(\widetilde{M};\caln(\pi_1(M))\bigr)$
have been computed for all $3$-manifolds $M$ by
Lott-L\"uck~\cite[Theorem~0.1]{Lott-Lueck(1995)} assuming Thurston's Geometrization
Conjecture, which is nowadays known to be true
by~\cite{Kleiner-Lott(2008),Morgan-Tian(2014)} following the spectacular outline of
Perelman.  In this section we want extend this result to $L^2$-betti numbers over arbitrary fields.

We first recall the answer for the classical $L^2$-Betti numbers.  For a group $G$ denote
by $\chi^{(2)}(G) = \chi^{(2)}(EG;\caln(G))$ is \emph{$L^2$-Euler characteristic},
see~\cite[Definition~6,79 on page~277]{Lueck(2002)}. If $G$ is virtually homotopy finite,
i.e, it contains a subgroup $H$ of finite index such that there is a finite model for
$BH$, then the \emph{virtual Euler characteristic} $\chi_{\virt}(G)$, which is sometimes
also called the \emph{rational valued group Euler characteristic}, is defined to be
$\frac{\chi(BH)}{[G:H]}$, and we have the equality, see~\cite[Remark~6.81 on  page~280]{Lueck(2002)}
  \begin{equation}
    \chi^{(2)}(G) = \chi_{\virt}(G).
    \label{chi_upper_(2)(G)_is_chi_virt(G)}
  \end{equation}

\begin{theorem}[$L^2$-Betti numbers of $3$-manifolds]%
\label{the:L2-Betti_numbers_of_3-manifolds_over_caln(pi)}
  Let $M$ be a compact connected  $3$-manifold with infinite fundamental  group $\pi = \pi_1(M)$.
  Let $M = M_1 \# \cdots \# M_r$ be its prime decomposition.

  Then the
  $L^2$-Betti numbers of the universal covering $\widetilde{M}$ are given by
  \begin{eqnarray*}
    b_1^{(2)}(\widetilde{M};\caln(\pi)) & = & - \chi^{(2)}(\pi_1(M)); 
    \\
    b_2^{(2)}(\widetilde{M};\caln(\pi)) & = & \chi(M) - \chi^{(2)}(\pi_1(M));   
    \\
    b_n^{(2)}(\widetilde{M};\caln(\pi)) & = & 0 \quad   \text{for}\; n \notin \{1,2\},
  \end{eqnarray*}
  and we have
  \begin{eqnarray*}
    \chi^{(2)}(\pi_1(M))
    & = &
    (r-1) - \sum_{j=1}^r \chi^{(2)}(\pi_1(M_j))
    \\
    & = &
    (r-1) - \sum_{j=1}^r \frac{1}{|\pi_1(M_j)|} +
   \left|\{C \in \pi_0(\partial M) \mid C \cong S^2\}\right|  - \chi(M).
  \end{eqnarray*}
  where we put $\frac{1}{|\pi_1(M_j)|}$ to be $0$ if $\pi_1(M_j)$ is infinite.
\end{theorem}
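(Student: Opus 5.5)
This is the Lott--L\"uck computation, so the plan is to reduce to it and then check each piece of the argument in turn. We start by passing to the orientation cover if necessary. By Theorem~\ref{the:restriction} every $L^2$-Betti number is multiplied by~$2$, and $\chi(M)$ and $\chi^{(2)}$ are also multiplied by~$2$, so we may assume $M$ is orientable.

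First we handle the vanishing statements. Since $\pi$ is infinite, $b_0^{(2)}=0$ by Theorem~\ref{the:zeroth_L2-Betti_number}. For $n\ge 4$ the Betti numbers vanish for dimension reasons, since $M$ is a compact $3$-manifold. For $b_3^{(2)}$ we use Theorem~\ref{the:Poincare_duality} to get $b_3^{(2)}(\widetilde M)=b_0^{(2)}(\widetilde M,\widetilde M|_{\partial M})$. This group is $0$ when $\partial M\neq\emptyset$, because $H_0$ of the pair vanishes. When $\partial M=\emptyset$ it equals $b_0^{(2)}(\widetilde M)=0$.

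The formulas for $b_1^{(2)}$ and $b_2^{(2)}$ then follow from one another via the Euler--Poincar\'e formula, Theorem~\ref{the:Euler-Poincare_formula}:
\[
\chi(M)=-b_1^{(2)}+b_2^{(2)} .
\]
So it suffices to prove $b_1^{(2)}(\widetilde M;\caln(\pi))=-\chi^{(2)}(\pi)$. Since $b_1^{(2)}$ of a space depends only on $\pi_1$ (it can be computed from $E\pi$ through the $2$-skeleton), this is equivalent to $b_1^{(2)}(E\pi)=-\chi^{(2)}(\pi)$ together with the vanishing of $b_n^{(2)}(E\pi)$ for $n\ne 1$.

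Next we reduce to the prime pieces. Cap off the $S^2$ boundary components with balls; this changes $\chi(M)$ by the count of such components, which is the correction term in the second formula. The group $\pi$ is then the free product $\pi_1(M_1)*\cdots*\pi_1(M_r)$. Using the Mayer--Vietoris sequence for free products (a graph of groups with trivial edge groups), $b_1^{(2)}(\pi)$ equals $(r-1)$ plus the sum of $b_1^{(2)}(\pi_1(M_j))$, corrected by the $b_0$ terms $1/|\pi_1(M_j)|$ for the finite factors. This is exactly the additivity formula for $\chi^{(2)}$ of free products.

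It remains to treat a prime $M_j$ with infinite fundamental group. Such an $M_j$ is either $S^1\times S^2$ or aspherical, and in both cases we must show that all $L^2$-Betti numbers vanish except possibly $b_1^{(2)}$, so that $b_1^{(2)}=-\chi^{(2)}$.
\begin{itemize}
\item If $\partial M_j$ contains a non-sphere component, or more generally $\pi_1(M_j)$ is infinite and $M_j$ has non-empty boundary, then $M_j$ is homotopy equivalent to a $2$-complex. Duality as above kills $b_2^{(2)}$ relative to $b_1^{(2)}$, and the relation $\chi=-b_1^{(2)}+b_2^{(2)}$ then finishes this case.
\item If $M_j$ is closed and aspherical, then $\chi(M_j)=0$ and Poincar\'e duality gives $b_1^{(2)}=b_2^{(2)}$. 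We therefore need the genuinely geometric input: $b_1^{(2)}=0$.
\end{itemize}

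The main obstacle is this vanishing of $b_1^{(2)}$ for closed irreducible aspherical $M_j$. In the von Neumann setting this is exactly Lott--L\"uck's theorem, which rests on Geometrization and on analysis of the pieces of the JSJ decomposition. We take it as the cited input and do not reprove it.
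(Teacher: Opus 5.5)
There is a genuine gap in your treatment of an aspherical prime piece $M_j$ with non-empty boundary. Being homotopy equivalent to a $2$-complex only kills $b_n^{(2)}$ for $n\ge 3$. Poincar\'e--Lefschetz duality only identifies $b_2^{(2)}(\widetilde{M_j})$ with $b_1^{(2)}(\widetilde{M_j},\partial\widetilde{M_j})$, which is not a priori zero. The long exact sequence of the pair also only reproduces, in its dimension count, the relation $\chi(M_j)=-b_1^{(2)}+b_2^{(2)}$ you already have. One relation between two unknowns cannot give $b_2^{(2)}(\widetilde{M_j})=0$. Yet exactly this vanishing is needed for $\chi^{(2)}(\pi_1(M_j))=-b_1^{(2)}(\pi_1(M_j))$, and hence for both formulas in the first display.

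This vanishing is geometric input of the same kind as the closed case, so you have two options. You can cite it: it is part of Lott--L\"uck's Theorem~0.1, and the paper simply quotes that theorem for all orientable $M$, together with L\"uck's book for $\chi^{(2)}$ of free products, and passes to orientation covers. Alternatively, you can derive it from the closed case you already quote. Compressing the boundary writes $\pi_1(M_j)$ as a free product of copies of $\IZ$, trivial groups, and fundamental groups of irreducible manifolds with incompressible boundary, and $b_2^{(2)}$ is additive under free products. For incompressible boundary, take the double $DM_j=M_j\cup_{\partial M_j}M_j$; its boundary surfaces are $\pi_1$-injective with vanishing $b_2^{(2)}$. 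The Mayer--Vietoris sequence then gives $2\,b_2^{(2)}(\widetilde{M_j})\le b_2^{(2)}(\widetilde{DM_j})=0$.

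Separately, write out the step you call ``exactly the additivity formula''. Additivity gives $\chi^{(2)}(\pi_1(M))=\sum_j\chi^{(2)}(\pi_1(M_j))-(r-1)$, which has the opposite sign to the second display. That display is wrong as stated, and the paper's proof repeats the slip. For a genus-$2$ handlebody we have $r=1$, $\chi(M)=-1$ and $\pi_1(M)$ free of rank $2$. Both right-hand sides of the display then equal $1$, while $\chi^{(2)}(\pi_1(M))=-1$, and combined with the first display they would force $b_1^{(2)}(\widetilde{M};\caln(\pi))=-1$. What your argument (and Lott--L\"uck) actually proves is the display with $-\chi^{(2)}(\pi_1(M))$, i.e.\ $b_1^{(2)}(\widetilde{M};\caln(\pi))$, on the left.

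Finally, reducing to orientable $M$ at the start is fine for the first display but not for the second. The number $r$, the orders $|\pi_1(M_j)|$ and the number of boundary spheres do not scale under the orientation cover. Your free-product argument does not need orientability, so pass to orientation covers only for the individual closed aspherical pieces.
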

\begin{proof}
  First we show
  \begin{eqnarray*}
    \chi^{(2)}(\pi_1(M))
    & = &
    (r-1) - \sum_{j=1}^r \chi^{(2)}(\pi_1(M_j))
    \\
    & = &
    (r-1) - \sum_{j=1}^r \frac{1}{\mid \pi_1(M_j)\mid} +
   \left|\{C \in \pi_0(\partial M) \mid C \cong S^2\}\right|  - \chi(M).
  \end{eqnarray*}
  The first equation follows from~\cite[~Theorem~6.80~(2) and~(8) page~277]{Lueck(2002)}.
  The last equation follows from the facts that each $M_j$ is aspherical with infinite
  fundamental group,   the total space over an $S^2$-bundle over $S^1$,
  or a connected compact  manifold with finite fundamental group,
  whose universal covering is homotopy equivalent to $S^3$ or $D^3$.

  In Lott-L\"uck~\cite[Theorem~0.1]{Lott-Lueck(1995)} only orientable compact
  $3$-manifolds are treated.  The non-orientable case can be reduced to the orientable
  case by passing to the orientation covering since $b_n^{(2)}(\widetilde{M};\caln(\pi))$,
  $\chi^{(2)}(\pi_1(M))$, and $\chi(M)$ are multiplicative under finite coverings.
   \end{proof}

Next we formulate the main result of this section which follows also from~\cite[Example~A.8]{Fisher(2025phd)}.

\begin{theorem}[$L^2$-Betti numbers of $3$-manifolds for arbitrary fields]%
\label{the:L2-Betti_numbers_of_3-manifolds_for_arbitrary_fields}
  Let $M$ be a compact connected $3$-manifold with infinite fundamental  group $\pi = \pi_1(M)$
  and $F$ be any field. Then:

  \begin{enumerate}

  \item\label{the:L2-Betti_numbers_of_3-manifolds_for_arbitrary_fields:Hughes}
    The fundamental group $\pi$ is virtually locally indicable and virtually a Hughes group.
    (In particular $b_n^{(2)}(\widetilde{M};\cald_{F\pi})$ is defined, see Remark~\ref{rem:Virtually_weak_Hughes_groups}.)

    \item\label{the:L2-Betti_numbers_of_3-manifolds_for_arbitrary_fields:equality}
      We have the equality
      \[
        b_n^{(2)}(\widetilde{M};\caln(\pi)) = b_n^{(2)}(\widetilde{M};\cald_{F\pi}).
      \]

    \end{enumerate}
  \end{theorem}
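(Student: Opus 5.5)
The plan is to treat the two assertions separately. Throughout we reduce first to the orientable case and then to suitable finite covers, using Theorem~\ref{the:restriction} and Remark~\ref{rem:Virtually_weak_Hughes_groups}. Both sides of the equality in~\ref{the:L2-Betti_numbers_of_3-manifolds_for_arbitrary_fields:equality} are multiplicative under finite coverings, as is $\chi(M)$. Capping off $2$-sphere boundary components with balls does not change $\pi$. It changes both sides of the claimed equality by the same amount, via the long exact sequence of the pair and $b_0=0$ (Theorem~\ref{the:zeroth_L2-Betti_number}). So we may assume that $M$ is orientable and that $\partial M$ contains no spheres.

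For~\ref{the:L2-Betti_numbers_of_3-manifolds_for_arbitrary_fields:Hughes} we use the prime decomposition. The group $\pi$ is the free product of the groups $\pi_1(M_j)$, each of which is finite, infinite cyclic (for $S^1\times S^2$), or the fundamental group of an aspherical prime manifold. By the Kurosh subgroup theorem, the kernel of a map of $\pi$ to the product of the finite free factors is a free product of a free group and finite-index subgroups of the aspherical factors. So it suffices to treat aspherical $M_j$ and then free products. For aspherical $M_j$, geometrization together with Agol, Wise, Przytycki--Wise and Liu shows that $\pi_1(M_j)$ is virtually RFRS, except in the Nil and Sol cases, where it is virtually polycyclic and torsionfree. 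RFRS groups are locally indicable and are Hughes groups by Jaikin-Zapirain; torsionfree virtually polycyclic groups are virtually poly-$\IZ$, hence locally indicable amenable, and Hughes by Linnell--L\"uck--Sauer. Closure of the class of Hughes groups under free products is the remaining input; I would cite it from the literature (Jaikin-Zapirain, Sanchez-Peralta) rather than reprove it.

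For~\ref{the:L2-Betti_numbers_of_3-manifolds_for_arbitrary_fields:equality} the strategy is to show that the $\cald_{F\pi}$-Betti numbers satisfy the same formulas as in Theorem~\ref{the:L2-Betti_numbers_of_3-manifolds_over_caln(pi)}.
\begin{enumerate}
\item We have $b_0=0$ by Theorem~\ref{the:zeroth_L2-Betti_number}. Also $b_3=0$: if $\partial M\neq\emptyset$ then $M$ is homotopy equivalent to a $2$-complex, and otherwise Theorem~\ref{the:Poincare_duality} gives $b_3=b_0$.
\item The Euler--Poincar\'e formula (Theorem~\ref{the:Euler-Poincare_formula}) then gives $b_1-b_2=-\chi(M)$. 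So it suffices to compute $b_1$, or equivalently $b_2$, over $\cald_{F\pi}$.
\item Mayer--Vietoris along the separating $2$-spheres of the connected sum, together with the induction formula Theorem~\ref{the:induction}, reduces the computation to the prime pieces. The sphere contributions ($b_0$ of the induced $\widetilde{S^2}$-pieces) produce exactly the term $r-1$ and the finite-group terms in the formula.
\item For an aspherical prime piece, the goal is $b_n=0$ for all $n$ when $\partial M_j$ consists of tori or is empty, and $b_1=-\chi(M_j)$, $b_2=0$ otherwise.
\end{enumerate}
For the piece $N=M_j$ in step~(4) we argue as follows. In the hyperbolic case and the non-positively curved graph-manifold case, $N$ is virtually fibered over $S^1$, and Theorem~\ref{the:vanishing_of_L2-Betti_numbers_for_mapping_torus} gives vanishing. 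In the Seifert fibered, Nil and Sol cases we use Theorem~\ref{the:L2-Betti_numbers_and_fibrations}. For the remaining graph manifolds, Mayer--Vietoris along the JSJ tori applies, since the tori have vanishing $\cald$-Betti numbers. If $\partial N$ is compressible, we use that $N$ is homotopy equivalent to a $2$-complex, which gives $b_2=0$ by passing to a boundary-connected-sum decomposition into a handlebody and pieces with incompressible boundary. The Euler characteristic then fixes $b_1$.

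The main obstacle is this last case analysis for aspherical pieces: virtual fibering is not available for every graph manifold, and pieces with compressible boundary need a separate argument. A cleaner alternative, which I would try first, is to obtain $b_2=0$ for all aspherical $N$ with non-empty incompressible-torus or compressible boundary at once. The idea is to combine $b_3=0$ for a $2$-dimensional spine with Theorem~\ref{the:On_the_top_L2-Betti_number} applied to a virtually fibered finite cover.
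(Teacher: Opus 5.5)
Your skeleton for (2) is essentially the paper's: $b_0=b_3=0$, Euler--Poincar\'e, Mayer--Vietoris over the connected sum with Theorem~\ref{the:induction}, and, for closed or toroidal prime pieces, virtual fibring, the fibration theorem for Seifert pieces and Mayer--Vietoris along JSJ tori. However, two steps fail.

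\textbf{First gap: pieces with higher-genus boundary.} Nothing in your case analysis gives $b_2(\widetilde N;\cald_{F[\pi_1(N)]})=0$ for an aspherical prime piece $N$ whose boundary has an incompressible component of genus $\ge 2$, for example a hyperbolic manifold with totally geodesic boundary. There are three reasons.
\begin{enumerate}
\item Here $\chi(N)<0$, so no finite cover fibres over $S^1$, and Theorem~\ref{the:vanishing_of_L2-Betti_numbers_for_mapping_torus} is unavailable.
\item A $2$-dimensional spine only gives $b_3=0$, not $b_2=0$.
\item In the compressible case, the boundary-connected-sum decomposition just hands you back such incompressible pieces.
\end{enumerate}
Your ``cleaner alternative'' does not help. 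Theorem~\ref{the:On_the_top_L2-Betti_number} transfers vanishing of the top Betti number of a fixed $d$-dimensional complex to its restriction to subgroups. So you would need a $2$-complex with vanishing $b_2$ containing a spine of a cover of $N$, and a virtually fibred finite cover of $N$ does not exist.

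The ingredient you are missing is Kielak--Linton. A finite-index subgroup of $\pi_1(N)$ is $\Gamma\rtimes\IZ$ with $\Gamma$ free. It is \RALI{} via the quotients $(\Gamma/\Gamma_i)\rtimes\IZ$ along the lower central series. Theorem~\ref{the:Approximation_over_any_F} reduces the computation to these amenable locally indicable quotients. There $\cald$ is a flat Ore localization and the intermediate covering has free fundamental group $\Gamma_i$. Hence $b_n=0$ for $n\ge 2$ and $b_1=-\chi(N)$; this is Lemma~\ref{lem:second_Betti_number_of_irreducible_3-,manifolds_with_boundary}.

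Within your own framework, a doubling argument would also work for $N$ with incompressible boundary. The double $DN$ is closed, irreducible and aspherical, so its Betti numbers vanish by your closed case. Mayer--Vietoris along the $\pi_1$-injective closed surfaces $\partial N$, which have $b_2=0$, then forces $b_2(N)=0$. Separately, your list of closed pieces omits mixed manifolds; the paper's dichotomy ``graph manifold or virtually fibred'' covers them.

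\textbf{Second gap: the RFRS claim in (1).} The claim that every aspherical prime piece other than Nil and Sol is virtually RFRS is false.
\begin{enumerate}
\item In a closed $\widetilde{SL_2(\IR)}$-manifold, the regular fibre is, up to a finite cover, central and rationally null-homologous in every finite cover. So a power of it lies in every term of any RFRS chain, which contradicts trivial intersection.
\item Closed graph manifolds that do not virtually fibre exist, and by Agol's criterion they are not virtually RFRS.
\end{enumerate}
So you have established neither virtual local indicability nor the Hughes property for these pieces. Your vanishing argument for Seifert and graph-manifold pieces in (2) depends on exactly this.

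The paper gets virtual local indicability from Aschenbrenner--Friedl--Wilton, together with Kielak--Linton for non-empty boundary and Lyndon's theorem that the kernel of $H_0\ast H_1\to H_0\times H_1$ is free. It then invokes Fisher--S\'anchez-Peralta: a locally indicable $3$-manifold group is a Hughes group. Applied to a locally indicable finite-index subgroup of $\pi$, which is again a $3$-manifold group, this also makes your appeal to closure of Hughes groups under free products unnecessary.
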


  \begin{remark}\label{rem:finite_pi}
    Let $M$ be a compact connected $3$-manifold with finite fundamental  group $\pi = \pi_1(M)$ and $F$ be any field.
    Then $b_n^{(2)}(\widetilde{M}, \cald_{F\pi})$ agrees with
    $\frac{b_n(\widetilde{M})}{|\pi|}$, where  $b_n(\widetilde{M})$ is the  Betti number of the  universal covering $\widetilde{M}$,
which is given by
\[b_n(\widetilde{M} )=
  \begin{cases}
    1 & \text{if} \; n = 0;
    \\
    0 & \text{if}\; n = 1;
    \\
    \frac{\chi(\partial M)}{2}  -1 & \text{if}\; n = 2 \;\text{and}\;  \partial M \not = \emptyset;
    \\
    0 & \text{if}\; n = 2 \;\text{and}\;    \partial M = \emptyset;
    \\
    1 & \text{if}\; n = 3 \;\text{and}\; \partial M = \emptyset;
    \\
    0 & \text{if}\; n = 3 \;\text{and}\; \partial M \not= \emptyset;
    \\
    0 & \text{if}\; n \ge 4.
  \end{cases}
\]
\end{remark}

The proof of Theorem~\ref{the:L2-Betti_numbers_of_3-manifolds_for_arbitrary_fields} needs
some preparations.

\

 \begin{lemma}\label{lem:second_Betti_number_of_irreducible_3-,manifolds_with_boundary}
   Let $M$ be an irreducible $3$-manifold with infinite fundamental group and non-empty
   boundary $\partial M$. Then its fundamental group $\pi = \pi_1(M)$ is virtually 
   a $\RALI$-group,
   see Definition~\ref{def:RALI-group},  and in particular virtually a weak Hughes group, and we get for any field $F$
    \begin{eqnarray*}
    b^{(2)}_1(\widetilde{M};\cald_{F\pi})  & = & -\chi(M);
    \\
    b^{(2)}_n(\widetilde{M};\cald_{F\pi}) & = & 0 \quad \text{if} \;n \not= 1.
    \end{eqnarray*}                                       
  \end{lemma}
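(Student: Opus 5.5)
The plan has two parts. First I show that $\pi$ is virtually \RALI; then I compute the Betti numbers using the fact that $M$ deformation retracts onto a $2$-complex.

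\emph{Group theory.} Since $M$ is compact, irreducible, has non-empty boundary and infinite fundamental group, it is aspherical. Suppose $\partial M$ contains a sphere. Irreducibility forces $M = D^3$, which contradicts $\pi$ infinite, so $\partial M$ has no sphere components.

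By Geometrization, together with the work of Wise, Przytycki--Wise and Agol on virtually special cubulations, $\pi$ is virtually special in the relevant cases: hyperbolic, mixed, and graph manifolds with boundary. In the Seifert fibered case with boundary, $\pi$ is virtually $F \times \IZ$ with $F$ free. In every case $\pi$ virtually embeds in a right-angled Artin group. Right-angled Artin groups are residually torsionfree nilpotent, hence \RALI, and subgroups of \RALI-groups are \RALI. Passing to a finite index subgroup $\pi'$ and the corresponding finite cover $M'$ (which is again irreducible with non-empty boundary), we may therefore assume $\pi$ is \RALI, hence a weak Hughes group by Theorem~\ref{the:Residually_(locally_indicable_and_amenable)_groups}. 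Since both sides of the claimed formulas are multiplicative under finite coverings (Remark~\ref{rem:Virtually_weak_Hughes_groups}, Theorem~\ref{the:restriction}, and multiplicativity of $\chi$), it suffices to treat this case.

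\emph{Computation.} A compact $3$-manifold with non-empty boundary deformation retracts, up to homotopy, onto a finite $2$-dimensional CW-complex $K$, a spine of $M$. So $C_*(\widetilde M)$ is $\IZ\pi$-chain homotopy equivalent to a $2$-dimensional finite free complex, and $b_n^{(2)}(\widetilde M;\cald_{F\pi})=0$ for $n\ge 3$. Theorem~\ref{the:zeroth_L2-Betti_number} gives $b_0^{(2)}=0$. The Euler--Poincar\'e formula (Theorem~\ref{the:Euler-Poincare_formula}) then reads
\[
-b_1^{(2)}+b_2^{(2)}=\chi(M),
\]
so everything reduces to showing $b_2^{(2)}(\widetilde M;\cald_{F\pi})=0$.

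\emph{Main obstacle.} The hard step is this vanishing of $b_2^{(2)}$ over an arbitrary field $F$. By Lemma~\ref{lem:injectivity_top_differential}, it is equivalent to injectivity of the top differential of $K$ after applying $\cald_{F\pi}\otimes$.

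My first attempt is Poincar\'e duality (Theorem~\ref{the:Poincare_duality}), which gives $b_2^{(2)}(\widetilde M)=b_1^{(2)}(\widetilde M,\partial\widetilde M)$. The long exact sequence of the pair then bounds this by $b_1^{(2)}(\widetilde M)$, $b_0^{(2)}(\partial \widetilde M)$ and the map between them. This route gets messy because of the boundary components.

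The more robust alternative is to split into cases. If $\pi$ is virtually $F_k\times\IZ$, it is a mapping torus case, and vanishing of all $L^2$-Betti numbers follows from Theorem~\ref{the:vanishing_of_L2-Betti_numbers_for_mapping_torus}; this is consistent with $\chi(M)=0$. Otherwise I use virtual fibering: after a further finite cover, $M$ fibers over $S^1$ with fiber a compact surface $S$ with boundary (Agol's theorem in the cubulated case, together with its extensions to manifolds with boundary). Then $M$ is the mapping torus of a self-homeomorphism of $S$. Since $S$ has finite type, Theorem~\ref{the:vanishing_of_L2-Betti_numbers_for_mapping_torus} gives $b_n^{(2)}=0$ for all $n\geq 1$ in that cover, and hence $\chi(M)=0$.

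Graph manifolds with boundary also virtually fibre (Wang--Yu, Przytycki--Wise), so the case split is complete. The only thing left to check, and the step I expect to be delicate, is that each cited virtual property (virtual specialness and virtual fibering) really holds uniformly for every irreducible $M$ with non-empty boundary and infinite $\pi$.
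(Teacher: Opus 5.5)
Your reduction to $b_2^{(2)}(\widetilde M;\cald_{F\pi})=0$ (spine, Theorem~\ref{the:zeroth_L2-Betti_number}, Euler--Poincar\'e) matches the paper's. Your virtual-RAAG route to ``virtually \RALI'' is a workable, if citation-heavy, alternative.

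The gap is in the key step: it is false that every irreducible $M$ with non-empty boundary and infinite $\pi$ virtually fibres over $S^1$. For a compact $3$-manifold $\chi(M)=\tfrac12\chi(\partial M)$, the Euler characteristic is multiplicative under finite covers, and a compact surface bundle over $S^1$ has $\chi=0$. So as soon as $\partial M$ has a component of negative Euler characteristic (genus $\ge 2$ in the orientable case), no finite cover of $M$ fibres. Agol's criterion and the Wang--Yu and Przytycki--Wise results only concern empty or toroidal boundary.

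Your own conclusion ``hence $\chi(M)=0$'' should have been the warning sign. It would make the formula $b_1^{(2)}=-\chi(M)$ vacuous, whereas for a handlebody of genus $g$ one has $\chi(M)=1-g$ and $b_1^{(2)}=g-1$. Thus your case split proves the lemma only when $\chi(M)=0$. It omits precisely the generic case (e.g.\ hyperbolic manifolds with higher-genus boundary), where $b_2^{(2)}=0$ is the real content.

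Passing to an algebraic fibration does not rescue Theorem~\ref{the:vanishing_of_L2-Betti_numbers_for_mapping_torus} either. When $\chi(M)<0$, the kernel of any epimorphism from a finite-index subgroup of $\pi$ onto $\IZ$ is infinitely generated, since a finitely generated kernel would give a fibration by Stallings' theorem. So the finiteness hypothesis on $b_1$ of the fibre fails.

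The paper uses no geometric fibration. It takes from Kielak--Linton a finite-index $G\le\pi$ with $1\to\Gamma\to G\to\IZ\to 1$ and $\Gamma$ free but possibly infinitely generated. This algebraic fibring with non-compact fibre is what replaces your virtual fibration when $\chi(M)<0$. The lower central series $\Gamma_i$ of $\Gamma$ is characteristic with torsionfree nilpotent quotients. Hence $Q_i=G/\Gamma_i\cong(\Gamma/\Gamma_i)\rtimes\IZ$ is amenable and locally indicable, and $G$ is \RALI; the paper gets the \RALI\ property this way, not from a RAAG embedding.

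By Theorem~\ref{the:Approximation_over_any_F}, $b_n^{(2)}(\widetilde M;\cald_{FG})$ is the limit of the corresponding numbers of $\widetilde M/\Gamma_i$ over $\cald_{FQ_i}$. Since $\cald_{FQ_i}=S^{-1}FQ_i$ is an Ore localization and hence flat, this homology is $S^{-1}FQ_i\otimes_{FQ_i}H_n(\widetilde M/\Gamma_i;F)$. As $\widetilde M/\Gamma_i$ is a model for $B\Gamma_i$ with $\Gamma_i$ free, it vanishes for $n\ge 2$. No finiteness of the fibre is needed, which is exactly what your argument lacks.

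A separate point, which you share with the paper: ``irreducible with infinite $\pi$ implies aspherical'' needs $P^2$-irreducibility when $M$ is non-orientable. For example, $\IR P^2\times[0,1]\natural S^1\times D^2$ is irreducible with $\pi\cong\IZ/2*\IZ$, so the lemma should be read for orientable or $P^2$-irreducible $M$.
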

\begin{proof}
  Since $M$ is an irreducible connected compact $3$-manifold with infinite fundamental
  group, it is aspherical and hence
  $b^{(2)}_n(\widetilde{M};\cald_{F\pi}) = b^{(2)}_n(E\pi;\cald_{F\pi})$ for $n \in \IZ^{\ge 0}$.
  Since $\partial M$ is non-empty, $M$ is homotopy equivalent to a $2$-dimensional
  $CW$-complex.  We conclude from~\cite[Theorem~1.1]{Kielak-Linton(2023grouprings)} that
  that there is a subgroup of finite index $G \subseteq \pi$ for which there exists 
  group extension $1 \to \Gamma \to G \xrightarrow{\pr} \IZ \to 1$ for some
  (not necessarily finitely generated) countable free group $\Gamma$. 
  The lower central series
  $\Gamma = \Gamma_0 \supseteq \Gamma_1 \supseteq \Gamma_2 \supseteq \Gamma_2 \supseteq
  \cdots$ of $\Gamma$ consists of characteristic normal subgroups such that each quotient
  $\Gamma/\Gamma_i$ is torsionfree nilpotent and
  $\bigcap_{i = 0}^{\infty} \Gamma_i = \{1\}$, see~\cite[Section~2.1 and~2.2]{Duchamp-Krob(1992)}.
  Let  $\phi \colon \Gamma \to \Gamma$ be the automorphism coming from conjugation with an
  element $w \in \pi$ which is mapped under $\pr \colon \pi \to \IZ$ to a generator. Then
  $\phi(\Gamma_i) = \Gamma_i$ and we denote by
  $\overline{\phi}_i \colon \Gamma/\Gamma_i \xrightarrow{\cong} \Gamma/\Gamma_i$ the
  induced automorphism. Note that $\Gamma_i$ is a normal subgroup of
  $\pi = \Gamma \rtimes_{\phi} \IZ$.  Put
  $Q_i = \pi/\Gamma_i \cong \Gamma/\Gamma_i \rtimes_{\overline{\phi}_i} \IZ$. Hence we get
  a sequence of normal subgroups of $\pi$
\[\Gamma = \Gamma_0 \supseteq  \Gamma_1 \supseteq  \Gamma_2 \supseteq  \Gamma_3 \supseteq  \cdots
\]
such that $\bigcap_{i = 0}^{\infty} \Gamma_i = \{1\}$ holds. Put
$Q_i = \pi/\Gamma_i \cong \Gamma/\Gamma_i \rtimes_{\overline{\phi}_i} \IZ$.  Since
$\Gamma/\Gamma_i$ is a torsionfree nilpotent group, $Q_i$ is amenable and locally
indicable. Hence $G$ is a \RALI-group and $\pi$ is virtually a \RALI-group.
We conclude from
Theorem~\ref{the:Approximation_over_any_F}~\ref{the:Approximation_over_any_F:General_F}
\begin{multline*}
b^{(2)}_n(EG;\cald_{FG}) = \dim_{\cald_{FG}}\bigl(H_2(\cald_{FG} \otimes C_*(EG))\bigr)
\\
=
\lim_{i \to \infty} \dim_{\cald_{FQ_i}}\bigl(H_n(\cald_{FQ_i} \otimes C_*(EG/\Gamma_i))\bigr).
\end{multline*}
Since $Q_i$ is amenable and locally indicable, $\cald_{FQ_i}$is the Ore localization
$S^{-1}FQ_i$ of $FQ_i$ with respect to the multiplicative closed subset
$S = FG \setminus \{0\}$ of $FG$. Since $S^{-1}FQ_i$ is flat over $FQ_i$, we get
\[
  H_n(\cald_{FQ_i} \otimes_{\IZ G} C_*(EG/\Gamma_i)) \cong S^{-1}FQ_i \otimes_{FQ_i} H_n(B\Gamma_i;F).
\]
As $\Gamma_i$ is a subgroup of the free group $\Gamma$ and hence itself free, there is a
$1$-dimensional $CW$-model for $B\Gamma_i$. Therefore $H_n(B\Gamma_i;F)$ and hence also
$H_n(\cald_{FQ_i} \otimes_{\IZ Q_i} C_*(EG/\Gamma_i))$ vanish for $n \ge 2$. This implies
$b^{(2)}_n(EG;\cald_{FG}) = 0$ or $n \ge 2$.
Now Theorems~\ref{the:zeroth_L2-Betti_number} and~\ref{the:Euler-Poincare_formula} imply
\begin{eqnarray*}
    b^{(2)}_1(EG;\cald_{FG})  & = & -\chi(\widetilde{M}/G) ;
    \\
    b^{(2)}_n(EG;\cald_{FG}) & = & 0 \quad \text{if} \;n \not= 1.
    \end{eqnarray*}    

    Since $\chi(\widetilde{M}/G) = [\pi :G] \cdot \chi(M)$ holds,
    Lemma~\ref{lem:second_Betti_number_of_irreducible_3-,manifolds_with_boundary} follows from
   Remark~\ref{rem:Virtually_weak_Hughes_groups}.
\end{proof}

In the above proof, we used a special case of the following lemma.  

\begin{lemma}\label{lem:ALI_and_RALI_and_extensions}
  Let $G=\Gamma\rtimes A$. Suppose that $A$ is \ALI, i.e., amenable and locally indicable,
  and that $\Gamma$ has a chain of characteristic subgroups
  $\Gamma=\Gamma_0\supset\Gamma_1\supset\dots$ such that
  $\bigcap_{i = 0}^{\infty} \Gamma_i= \{1\}$ and each $\Gamma/\Gamma_i$ is {\ALI}. Then $G$ is
  \RALI.
\end{lemma}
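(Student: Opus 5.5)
Since the $\Gamma_i$ are characteristic in $\Gamma$, they are invariant under the action of $A$ and hence normal in $G=\Gamma\rtimes A$. The plan is to use the quotients $Q_i := G/\Gamma_i \cong (\Gamma/\Gamma_i)\rtimes A$ as the amenable, locally indicable quotients that witness residuality, mirroring the special case in the proof of Lemma~\ref{lem:second_Betti_number_of_irreducible_3-,manifolds_with_boundary}. Recall that \RALI\ means: for every $1\neq g\in G$ there is an epimorphism $G\to Q$ with $Q$ amenable and locally indicable such that $g$ maps to a non-trivial element. Equivalently, there is a family of normal subgroups with \ALI\ quotients and trivial intersection.

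First we show each $Q_i$ is \ALI. Amenability and local indicability are both closed under extensions. For amenability this is standard. For local indicability, let $1\to N\to E\to Q\to 1$ with $N,Q$ locally indicable and let $K\subseteq E$ be finitely generated and non-trivial. If $K\not\subseteq N$, then the image of $K$ in $Q$ is finitely generated and non-trivial, so it surjects onto $\IZ$, and hence $K$ does too. If $K\subseteq N$, then $K$ surjects onto $\IZ$ by local indicability of $N$. Applying this to $1\to \Gamma/\Gamma_i\to Q_i\to A\to 1$ shows $Q_i$ is \ALI.

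Next we separate points. Take $1\neq g=(\gamma,a)\in G$. If $a\neq 1$, compose $G\to A$ (the quotient by $\Gamma$, i.e.\ $Q_0$ since $\Gamma_0=\Gamma$); the image of $g$ is $a\neq 1$, and $A$ is \ALI. If $a=1$, then $\gamma\neq 1$, so $\gamma\notin\Gamma_i$ for some $i$ because $\bigcap_i\Gamma_i=\{1\}$. Then $g$ has non-trivial image in $Q_i$. Hence $\bigcap_i \Gamma_i=\{1\}$ as normal subgroups of $G$ with \ALI\ quotients, so $G$ is \RALI.

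The only step needing care is the closure of local indicability under extensions, together with the observation that characteristic implies $A$-invariant, which is needed for $\Gamma_i\trianglelefteq G$. Neither presents a real obstacle. If the paper's Definition~\ref{def:RALI-group} is phrased via a descending chain of normal subgroups rather than pointwise separation, note that the $\Gamma_i$ already form such a chain. In that formulation one needs each $G/\Gamma_i$ to be \ALI\ and $\bigcap\Gamma_i=\{1\}$, which is exactly what was verified above.
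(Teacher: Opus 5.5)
Your proof is correct and follows the paper's argument: each $\Gamma_i$ is $A$-invariant and hence normal in $G$, $G/\Gamma_i\cong(\Gamma/\Gamma_i)\rtimes A$ is \ALI\ as an extension of \ALI\ groups, and $\bigcap_i\Gamma_i=\{1\}$; you also spell out why local indicability passes to extensions, which the paper takes for granted. Two small points: to match Definition~\ref{def:RALI-group} literally, prepend $G$ to the chain $\Gamma_0\supseteq\Gamma_1\supseteq\cdots$, since the chain there must start with $G_0=G$. Also, the pointwise-separation reformulation you quote is only known to be equivalent for countable groups, but your chain argument does not depend on it.
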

\begin{proof}
  Since $\Gamma_i$ is characteristic, the action of $A$ on $\Gamma$ preserves $\Gamma_i$,
  so $\Gamma_i$ is normal in $G=\Gamma\rtimes A$, and the quotient $G/\Gamma_i$ can be
  identified with the semidirect product $(\Gamma/\Gamma_i)\rtimes A$. Since both $A$ and
  $\Gamma/\Gamma_i$ are \ALI, this semidirect product is, as well. This proves that $G$ is
  \RALI.
\end{proof}

\begin{remark}\label{rem:examples_of_RALI-groups}
  If $\Gamma$ is residually torsion-free nilpotent (\RTFN), then it satisfies the
  hypothesis in Lemma~\ref{lem:ALI_and_RALI_and_extensions}, because for each $i$ it has a
  maximal torsion-free $i$-step nilpotent quotient $Q_i$, this quotient is amenable and
  locally indicable, so we can take $\Gamma_i$ to be the kernel of
  $\Gamma\rightarrow Q_i$. Note that we have inclusions
\[
\textup{special groups}\subset\mbox{subgroups of } \RAAG \subset \RTFN\subset \RALI\subset \LI,
\]
where $\RAAG$ stands for  right-angled Artin group and $\LI$ for locally indicable.
So, in particular, all special groups and all subgroups of right angled Artin groups
satisfy the hypothesis of Lemma~\ref{lem:ALI_and_RALI_and_extensions}.
\end{remark}

\begin{lemma}\label{lem:fundamental_group_of_fibered_3-manifolds}
  Let $M$ be a $3$-manifold which fibers, i.e., there is a local trivial fiber
  bundle $F \to M \to S^1$ with a compact $2$-manifold as fiber.

  Then $\pi_1(M)$ is a $\RALIRF_p$-group for every prime $p$,
  see Definition~\ref{def:strongly_RALIRF}, and in particular a weak Hughes group,
and we get for every field $F$ and $n \in \IZ^{\ge 0}$
  \[
  b^{(2)}_n(\widetilde{M};\cald_{F\pi}) = 0.
  \]
\end{lemma}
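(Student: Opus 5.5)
Write $\pi=\pi_1(M)$. We argue in three stages: first the group structure, then the weak Hughes property, then the vanishing via Theorem~\ref{the:vanishing_of_L2-Betti_numbers_for_mapping_torus}.

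\emph{Structure of $\pi$.} The bundle $F\to M\to S^1$ gives $M$ as the mapping torus $T_f$ of a homeomorphism $f\colon F\to F$. Since $\pi$ is infinite (needed below), $F$ is not a sphere or disk with trivial behaviour, and $\pi_1(F)$ is infinite; for $F=S^2$ or $\mathbb{RP}^2$ one checks separately that $\pi$ is virtually $\IZ$. Hence $\pi\cong\Gamma\rtimes_{f_*}\IZ$ with $\Gamma=\pi_1(F)$, and $\Gamma$ is either free (if $\partial F\neq\emptyset$) or a closed surface group.

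\emph{$\pi$ is $\RALIRF_p$.} Apply Lemma~\ref{lem:ALI_and_RALI_and_extensions} with $A=\IZ$. We need a chain of characteristic subgroups $\Gamma_i$ of $\Gamma$ with trivial intersection such that each $\Gamma/\Gamma_i$ is \ALI\ and, for the $\RF_p$ refinement, residually finite-$p$ compatibly with the extension. Free groups and orientable surface groups are residually torsion-free nilpotent (Magnus; Baumslag for surface groups), so Remark~\ref{rem:examples_of_RALI-groups} supplies the $\Gamma_i$ via the lower central series (or its isolators). For the prime-$p$ refinement we use the mod-$p$ Zassenhaus (or lower exponent-$p$ central) series of $\Gamma$. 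These subgroups are characteristic with $p$-group quotients and trivial intersection, since free and surface groups are residually $p$. The quotients $\Gamma/\Gamma_i\rtimes\IZ$ are then finite-$p$-by-$\IZ$ and hence residually $p$-finite.

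Non-orientable $F$ is the \textbf{main obstacle}: there $\Gamma$ is not RTFN. We would first pass to the orientation double cover of $F$, which is a characteristic index-two subgroup, and then work with the virtual definition, Remark~\ref{rem:Virtually_weak_Hughes_groups}. Alternatively we would invoke the precise Definition~\ref{def:strongly_RALIRF}, which we expect to allow this. Theorem~\ref{the:Residually_(locally_indicable_and_amenable)_groups} then shows that $\pi$ is a weak Hughes group.

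\emph{Vanishing.} Take $G=\pi$, $\phi=\id$ and $\psi\colon\pi\to\IZ$ the projection in Theorem~\ref{the:vanishing_of_L2-Betti_numbers_for_mapping_torus}~\ref{the:vanishing_of_L2-Betti_numbers_for_mapping_torus:general_F}. Since $F$ is a compact $2$-manifold, $\pi\times_{\pi_1(F)}\widetilde F$ is a finite free $\pi$-$CW$-complex, so all its Betti numbers over $\cald_{F\pi}$ are finite. The theorem therefore gives $b_n^{(2)}(\widetilde M;\cald_{F\pi})=0$ for all $n\ge1$. For $n=0$ we use Theorem~\ref{the:zeroth_L2-Betti_number}~\ref{the:zeroth_L2-Betti_number:general_F}. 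In the virtual case we apply the same argument to the finite cover with orientable fibre, which is again a mapping torus, and divide by the index.
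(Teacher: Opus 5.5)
\textbf{There is a genuine gap, and it cannot be closed, because the $\RALIRF_p$ clause is false in general.}

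The failure is in building the $\RALIRF_p$-chain. Definition~\ref{def:strongly_RALIRF} needs a \emph{single} chain whose quotients are simultaneously amenable, locally indicable and residually $p$-finite. Your two chains each miss a property:
\begin{itemize}
\item The lower central series gives \ALI\ quotients $\Gamma/\Gamma_i\rtimes\IZ$, but you never show these are residually $p$-finite.
\item The mod-$p$ Zassenhaus quotients $\Gamma/\Gamma_i\rtimes\IZ$ contain the nontrivial finite group $\Gamma/\Gamma_i$. So they have torsion and are not locally indicable.
\end{itemize}
Intersecting the two chains does not help, since the resulting quotients still have torsion. Moreover, ``finite-$p$-by-$\IZ$, hence residually $p$-finite'' is false. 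Take $\IZ/3\rtimes\IZ$ with $t$ acting by inversion. In any finite $3$-group quotient, $\bar t$ has odd order, so it lies in $\langle \bar t^{2}\rangle$, which centralizes the image of $\IZ/3$. That image therefore satisfies $\bar b=\bar b^{-1}$ and is trivial.

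Here is a counterexample to the clause itself. Take the torus bundle with monodromy $A=\left(\begin{smallmatrix}2&1\\1&1\end{smallmatrix}\right)$, so $\pi=\IZ^2\rtimes_A\IZ$. Since $\det(A-I)=-1$, the commutators $[t,v]=(A-I)v$ give $\gamma_k(\pi)=\IZ^2$ for all $k\ge 2$. Hence $\pi$ is not residually nilpotent, so it is not residually $p$-finite for any prime $p$, whereas every $\RALIRF_p$-group is.

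Two further examples show the non-orientable case fails too. $K\times S^1$, with $K$ the Klein bottle, is not residually $p$ for odd $p$, by the same inversion argument. $\IR P^2\times S^1$ has fundamental group $\IZ/2\times\IZ$, which is not even weak Hughes. So your plan of passing to a finite cover and using Remark~\ref{rem:Virtually_weak_Hughes_groups} can only give a virtual statement, not the asserted property of $\pi$.

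The paper's own proof has the same defect. It asserts, citing Friedl--L\"uck, that $\pi$ is residually torsion-free nilpotent and then applies Gruenberg. That assertion fails for the same Sol example. What is true is only ``residually (torsion-free nilpotent)-by-$\IZ$''.

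What survives is the lower-central-series part of your argument. Suppose $\Gamma=\pi_1(F)$ is trivial, free, or an orientable surface group.
\begin{itemize}
\item Lemma~\ref{lem:ALI_and_RALI_and_extensions} shows $\pi$ is \RALI.
\item The quotients $\Gamma/\gamma_i(\Gamma)\rtimes\IZ$ are poly-$\IZ$, hence residually finite. So $\pi$ is \RALIRF\ (though not \RALIRF$_p$).
\item $\pi$ is weak Hughes by Theorem~\ref{the:Residually_(locally_indicable_and_amenable)_groups}.
\end{itemize}
From there, your vanishing argument via Theorem~\ref{the:vanishing_of_L2-Betti_numbers_for_mapping_torus} and Theorem~\ref{the:zeroth_L2-Betti_number} is exactly the paper's. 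This orientable case is the only one used later, in Lemma~\ref{lem:L2-Betti_numbers_over_fields_for_prime_3-manifolds}.
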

\begin{proof}The proof of~\cite[Lemma~7.5]{Friedl-Lueck(2019Euler)} implies that
  $\pi_1(M)$ is residually (torsionfree finitely generated nilpotent). Since every
  torsionfree finitely generated nilpotent group is residually $p$-finite,
  see~\cite[Theorem~2.1]{Gruenberg(1957)}, amenable,  and locally indicable,
  $\pi_1(M)$ is a $\RALIRF_p$-group for every prime $p$.
  We get $b^{(2)}_n(\widetilde{M};\cald_{F\pi}) = 0$ for $n \in \IZ^{\ge 0}$ from
  Theorem~\ref{the:vanishing_of_L2-Betti_numbers_for_mapping_torus}.
\end{proof}

\begin{lemma}\label{lem:3-manifolds_and_locally_indicable_and_Hughes_groups}
  Let $G$ be a finitely generated group which is the fundamental group of a connected
  $3$-dimensional manifold $M$.  Then:

  \begin{enumerate}
  \item\label{lem:3-manifolds_and_locally_indicable_and_Hughes_groups:virtually_locally_indicable}
    The group $G$ is virtually locally indicable;

  \item\label{lem:3-manifolds_and_locally_indicable_and_Hughes_groups:locally_indicable_implies_Hughes}
  If $G$ is locally indicable, then $G$ is  a Hughes group in the sense of
  Definition~\ref{def:(weak)_Hughes_group};

  \item\label{lem:3-manifolds_and_locally_indicable_and_Hughes_groups:virtually_Hughes}
    The group $G$ is virtually a Hughes group in the sense of
    Definition~\ref{def:(weak)_Hughes_group}.
  \end{enumerate}
\end{lemma}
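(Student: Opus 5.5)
The plan is to reduce everything to known structural results on $3$-manifold groups and on Hughes free division rings, treating the three assertions in turn.

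For assertion~\ref{lem:3-manifolds_and_locally_indicable_and_Hughes_groups:virtually_locally_indicable} I would use Scott's core theorem. Since $G$ is finitely generated, $G$ is the fundamental group of a compact $3$-manifold. Passing to the orientation cover if necessary, we may assume this manifold is orientable. Consider its prime decomposition. The fundamental group is then a free product of
\begin{itemize}
\item finite groups,
\item copies of $\IZ$ (from $S^2$-bundles over $S^1$), and
\item fundamental groups of irreducible, aspherical $3$-manifolds.
\end{itemize}
A free product of finitely many groups that are each virtually torsionfree is virtually a free product of torsionfree factors and a free group; this follows by the Kurosh subgroup theorem applied to the kernel of the map to the product of finite quotients. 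So it suffices to see that each aspherical piece is virtually locally indicable and residually finite. By geometrization together with Agol and Wise (and Przytycki--Wise in the graph manifold case), every such group is virtually special or virtually a $\mathrm{Sol}$/Nil/Seifert-type group. Each of these is virtually $\RTFN$ or virtually polycyclic with a torsionfree poly-$\IZ$ finite index subgroup, hence virtually locally indicable by Remark~\ref{rem:examples_of_RALI-groups}. The non-special cases could alternatively be handled via the fibred case and Lemma~\ref{lem:fundamental_group_of_fibered_3-manifolds}. Free products of locally indicable groups are locally indicable by the Kurosh theorem. This yields assertion~\ref{lem:3-manifolds_and_locally_indicable_and_Hughes_groups:virtually_locally_indicable}.

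For assertion~\ref{lem:3-manifolds_and_locally_indicable_and_Hughes_groups:locally_indicable_implies_Hughes} I would invoke the result of Jaikin-Zapirain~\cite{Jaikin-Zapirain(2021)} and Jaikin-Zapirain--Linton~\cite{Jaikin-Zapirain-Linton(2025)}. This result gives Hughes free (indeed universal) division rings of fractions for crossed products $F \ast G$ when $G$ is locally indicable and belongs to a class closed under the relevant constructions. In particular it covers locally indicable $3$-manifold groups: by the same structural decomposition, these are built by free products from groups that are virtually $\RALI$, and the existence of Hughes free division rings is preserved under free products (via the Cohn/Dicks free product construction of division rings). Local indicability is a hypothesis here, so no further torsion issues arise. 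Lemma~\ref{lem:subgroups_of_(weak)_Hughes-groups_are_(weak)-Hughes_groups} allows us to reduce to finitely generated subgroups, which are again $3$-manifold groups by Scott's theorem.

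Assertion~\ref{lem:3-manifolds_and_locally_indicable_and_Hughes_groups:virtually_Hughes} then follows by combining the first two: a finite index subgroup of $G$ is locally indicable and finitely generated, and it is again a $3$-manifold group because it corresponds to a finite cover. Hence assertion~\ref{lem:3-manifolds_and_locally_indicable_and_Hughes_groups:locally_indicable_implies_Hughes} applies to it.

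The main obstacle is assertion~\ref{lem:3-manifolds_and_locally_indicable_and_Hughes_groups:locally_indicable_implies_Hughes}: existence for \emph{arbitrary crossed products} $F\ast G$, not just group rings. This genuinely needs the Jaikin-Zapirain--Linton theorem that locally indicable $3$-manifold groups are Lewin/Hughes groups. My first attempt would be to cite that result directly and verify only that its hypotheses match.
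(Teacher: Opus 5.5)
\textbf{Gaps in the first and second assertions.} Your argument for the first assertion breaks down in the aspherical closed case. The dichotomy ``virtually special or virtually Sol/Nil/Seifert'' is false. A closed graph manifold with non-trivial JSJ decomposition and no non-positively curved metric falls outside it:
\begin{itemize}
\item it is not virtually special, since Przytycki--Wise only treat graph manifolds with non-empty boundary, and by Liu a closed graph manifold is virtually special only when it is non-positively curved;
\item it is neither Seifert fibred nor Sol;
\item it need not be virtually fibred.
\end{itemize}
So neither Remark~\ref{rem:examples_of_RALI-groups} nor Lemma~\ref{lem:fundamental_group_of_fibered_3-manifolds} covers it. What you need is weaker and holds in general. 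Every closed orientable irreducible $3$-manifold with infinite fundamental group has a finite cover $N'$ with $b_1(N')\ge 1$. Then $\pi_1(N')$ is locally indicable (Howie): finite index subgroups have positive $b_1$ by transfer, and a non-trivial finitely generated subgroup of infinite index is carried by a compact core with non-empty, non-spherical boundary, so half-lives-half-dies gives positive $b_1$. This is what the paper extracts from Aschenbrenner--Friedl--Wilton for closed pieces; for non-empty boundary it uses that $\pi_1(M)$ is virtually free-by-cyclic (Kielak--Linton). Your Kurosh argument for free products is a valid substitute for the paper's use of the free kernel of $H_0\ast H_1\to H_0\times H_1$. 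Residual finiteness plays no role there.

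For the second assertion, your supporting sketch does not produce the statement. The Jaikin-Zapirain results used in the paper give Hughes free division rings for \RALI-groups $H$ only for group rings $FH$. That is the \emph{weak} notion, which is exactly why the paper separates weak Hughes/weak Lewin groups from Hughes/Lewin groups; a Hughes group needs them for every crossed product $F\ast G$. Moreover, ``virtually \RALI'' does not pass up to $G$. If $H\subseteq G$ is normal of finite index and $\cald_{F\ast H}$ exists, uniqueness yields a $G/H$-action and an Artinian ring $\cald_{F\ast H}\ast G/H\supseteq F\ast G$. You would still have to show this ring is a domain and that it is Hughes free for $G$; local indicability of $G$ does not settle that by a torsion argument. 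You also have not shown that the non-positively-curved-less graph manifold pieces above are virtually \RALI{} at all. So the second assertion rests entirely on a citation, and that is how the paper proceeds. However, the paper's reference is Fisher--S\'anchez-Peralta [Theorem~1.3], not Jaikin-Zapirain--Linton.

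Your deduction of the third assertion from the first two is correct. It is slightly more explicit than the paper's, since you note that a finite index subgroup is again a finitely generated $3$-manifold group via the corresponding finite cover.
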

\begin{proof}We can assume without loss of generality that $M$ is compact by~\cite{Scott(1973)}.  
  \\[1mm]~\ref{lem:3-manifolds_and_locally_indicable_and_Hughes_groups:virtually_locally_indicable}
  We can assume without of loss of generality
  that $M$ is compact connected orientable, otherwise pass to the orientation covering. 
  One can write $G$ as a finite amalgamated  product $\ast_{i = 1}^r G_i$,
  where each $G_i$ is infinite cyclic or the fundamental group  of an irreducible compact connected orientable  
  $3$-manifold, see~\cite[Lemma~3.13 on page~28 and Theorems~3.15 on page~31]{Hempel(1976)}. 

  If $H_0$ and $H_1$ are virtually locally indicable groups, then the free amalgamated
  product $H_0 \ast H_1$ is virtually locally indicable by the following argument. There
  is an exact sequence
  $\{1\} \to F \to H_0 \ast H_1 \xrightarrow{p} H_0 \times H_1 \to \{1\}$, where $p$ is
  the canoncial projection and $F$ is a free group,
  see~\cite[Theorem~2]{Lyndon(1973)}. Obviously $H_0 \times H_1$ is virtually locally
  indicable. Let $K \subseteq H_0 \times H_1$ be a subgroup  of finite index which
  is locally indicable. Hence we get a short exact sequence
  $\{1\} \to F \to p^{-1}(K) \to K \to \{1\}$. Since $F$ and $L$ are locally indicable,
  $p^{-1}(L)$ is locally indicable.  As $p^{-1}(L)$ has finite index in $H_0 \ast H_1$,
  the group $H_0 \ast H_1$ is virtually locally indicable.

  Hence we can assume without loss of generality that $G = \pi_1(M)$ for an orientable
  compact irreducible $3$-manifold $M$ and $G$ is infinite.

Suppose that $M$ has a non-empty boundary. Then there exists an extension
$\{1\} \to F \to \pi_1(M) \to \IZ \to \{1\}$ for a free group $F$
by~\cite[Theorem~1.1]{Kielak-Linton(2023grouprings)}.  This implies that $\pi_1(M)$ is
locally indicable as $F$ and $\IZ$ are locally indicable.  Hence we can assume
additionally that $\partial M$ is empty. If $\pi_1(M)$ is solvable, the claim follows
from~\cite[Theorem~2.20]{Aschenbrenner-Friedl-Wilton(2015)}.  If $\pi_1(M)$ is not
solvable, the claim follows from~\cite[Diagram~1]{Aschenbrenner-Friedl-Wilton(2015)}.
This finishes the proof that $G$ is virtually locally indicable.
\\[1mm]~\ref{lem:3-manifolds_and_locally_indicable_and_Hughes_groups:locally_indicable_implies_Hughes}
This is proved in~\cite[Theorem~1.3]{Fisher-Sanchez-Peralta(2023)}.
\\[1mm]~\ref{lem:3-manifolds_and_locally_indicable_and_Hughes_groups:virtually_Hughes}
This follows from
assertions\ref{lem:3-manifolds_and_locally_indicable_and_Hughes_groups:virtually_locally_indicable}
and~\ref{lem:3-manifolds_and_locally_indicable_and_Hughes_groups:locally_indicable_implies_Hughes}~.
\end{proof}

 An  irreducible 3-manifold with empty or toroidal boundary
  is called a \emph{graph manifold} if all components in the Jaco-Shalen-Johannson decomposition
  are Seifert fibered manifolds.

\begin{lemma}\label{lem:graph_manifolds}
  Let $M$ be a $3$-manifold which is a graph manifold and whose fundamental group $\pi = \pi_1(M)$ is infinite.
  Then we get for every field $F$ and $n \in \IZ^{\ge 0}$
  \[
  b^{(2)}_n(\widetilde{M};\cald_{F\pi}) = 0.
\]
\end{lemma}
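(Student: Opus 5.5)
We reduce to Seifert fibered pieces and glue along tori. First, by Lemma~\ref{lem:3-manifolds_and_locally_indicable_and_Hughes_groups}, $\pi$ is virtually a Hughes group. By Remark~\ref{rem:Virtually_weak_Hughes_groups} and Theorem~\ref{the:restriction}, we may therefore pass to a finite cover and assume that $M$ is orientable and $\pi$ is locally indicable, hence a Hughes group. A finite cover of a graph manifold is again a graph manifold, and $\pi$ stays infinite, so nothing is lost in this reduction.

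\textbf{Step 1 (Seifert pieces).} Let $N$ be a Seifert fibered piece with infinite fundamental group. After a further finite cover (allowed by Remark~\ref{rem:Virtually_weak_Hughes_groups}), $N$ is an $S^1$-bundle over a surface. The regular fiber then generates an infinite cyclic normal subgroup $C\subseteq \pi_1(N)$. Note that $\pi_1(N)$ is infinite, and if it were finite $N$ would not be aspherical. We apply Theorem~\ref{the:L2-Betti_numbers_and_fibrations}~\ref{the:L2-Betti_numbers_and_fibrations:general_F_d_is_infty} to the fibration $S^1\to N\to B$, with $G=\pi_1(N)$ (or the image of $\pi_1(N)$ in $\pi$, which is injective since the JSJ tori are incompressible). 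The fiber covering $G\times_{C}\widetilde{S^1}$ is induced from $\IR$ with its free $\IZ$-action. By Theorem~\ref{the:induction}, its Betti numbers equal $b_n^{(2)}(\IR;\cald_{F\IZ})$. These vanish for all $n$: in degree $0$ by Theorem~\ref{the:zeroth_L2-Betti_number}, and in degree $1$ by the Euler--Poincar\'e formula Theorem~\ref{the:Euler-Poincare_formula} with $\chi(S^1)=0$. Hence $b_n^{(2)}(\widetilde N;\cald_{F\pi_1(N)})=0$ for all $n$. Alternatively, when $B$ has boundary, $N$ fibers over $S^1$ and Lemma~\ref{lem:fundamental_group_of_fibered_3-manifolds} applies directly. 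If $M$ itself is Seifert fibered, this already finishes the proof, using that $M$ is aspherical.

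\textbf{Step 2 (gluing).} In general $M$ is obtained from Seifert pieces $N_1,\dots,N_k$ glued along incompressible tori $T_1,\dots,T_l$. Let $p\colon\widetilde M\to M$ be the universal cover. Then $\widetilde M$ is the union of $p^{-1}(N_j)$, which intersect in $p^{-1}(T_i)$. Each of these is $\pi\times_{\pi_1(N_j)}\widetilde{N_j}$, respectively $\pi\times_{\IZ^2}\widetilde{T_i}$, because $\pi_1$-injectivity holds. By Theorem~\ref{the:induction}, their $L^2$-Betti numbers over $\cald_{F\pi}$ equal those over the subgroups. These vanish by Step~1 for the $N_j$, and for the tori by the K\"unneth formula Theorem~\ref{the:Kuenneth_formula} together with the vanishing for $\IR$. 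Here we use that subgroups of Hughes groups are Hughes by Lemma~\ref{lem:subgroups_of_(weak)_Hughes-groups_are_(weak)-Hughes_groups}. The Mayer--Vietoris sequence of $\cald_{F\pi}$-chain complexes is exact because $\cald_{F\pi}\otimes_{\IZ\pi}-$ preserves the short exact sequence of free cellular chain complexes. Additivity of $\dim_{\cald_{F\pi}}$ then gives $b_n^{(2)}(\widetilde M;\cald_{F\pi})=0$ for all $n$.

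\textbf{Main obstacle.} The delicate step is Step~1 in the weak Hughes setting. It requires checking that the fibration theorem applies to the $S^1$-fiber with $G=\pi_1(N)$ locally indicable, and that passing to finite covers does not leave the class of groups for which $\cald_{F\pi}$ is defined. If the fibration route causes trouble, the fallback is to treat each Seifert piece as a mapping torus, or as an $S^1$-bundle handled through its central $\IZ$, using Theorem~\ref{the:vanishing_of_L2-Betti_numbers_for_mapping_torus}. A small case check is also needed for pieces with solvable or finite-quotient behaviour, for instance $\widetilde{\mathrm{SL}_2}$- and $\mathrm{Nil}$-geometries. These are still circle bundles after a finite cover, so the same argument applies.
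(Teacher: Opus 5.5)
Your proposal is correct and follows the paper's proof. Both pass to a finite cover so that $\pi$ is a Hughes group, show vanishing for each Seifert piece via a finite cover that is a circle bundle together with Theorem~\ref{the:restriction}, show vanishing for the JSJ tori, and conclude by Mayer--Vietoris and Theorem~\ref{the:induction}. The only difference is that you use the fibration theorem with circle fiber (and the K\"unneth formula for the tori) where the paper cites the mapping-torus theorem; your choice fits circle bundles with nonzero Euler number better, since those need not be mapping tori.
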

\begin{proof}
  In view of Remark~\ref{rem:Virtually_weak_Hughes_groups} and
  Lemma~\ref{lem:3-manifolds_and_locally_indicable_and_Hughes_groups}~%
\ref{lem:3-manifolds_and_locally_indicable_and_Hughes_groups:virtually_Hughes}
  we can assume without loss of generality that $G$ is a non-trivial Hughes group since a
  finite covering of a graph manifold is again a graph manifold.
  
  The pieces in the Jaco-Shalen decomposition of $M$ are Seifert manifolds $M_1$, $M_2$,
  \ldots, $M_r$ with incompressible toroidal boundary. For each $i \in M_i$ the inclusion
  $M_i \to M$ induces an injection from $\pi_1(M_i)$ into $\pi_1(M)$ and hence is a weak
  Hughes group by
  Lemma~\ref{lem:subgroups_of_(weak)_Hughes-groups_are_(weak)-Hughes_groups}.  Since
  $\pi_1(M_i)$ is infinite , there exists a finite covering
  $p \colon \overline{M_i} \to M$ such that there is a principal $S^1$-fiber bundle $S^1 \to \overline{M_i} \to F$
  for some orientable compact $2$-manifold $F$, see~\cite[pages~426,~427, and~436]{Scott(1983)}.  Hence
  $b_n^{(2)}(M_i;\cald_{F[\pi_1(M_i)]})$ vanishes for all $n \in \IZ$ by
  Lemma~\ref{the:restriction}~\ref{the:restriction:general_F} and
  Theorem~\ref{the:vanishing_of_L2-Betti_numbers_for_mapping_torus}.  Each component $C$
  of $\partial M_i$ is a torus and hence $b_n^{(2)}(\widetilde{C};\cald_{F[\pi_1(C)]})$
  vanishes for all $n \in \IZ$ by
  Theorem~\ref{the:vanishing_of_L2-Betti_numbers_for_mapping_torus}. Moreover the
  inclusion $C \to M$ induces an injection $\pi_1(C) \to \pi_1(M)$. Now a Mayer-Vietoris
  argument and Theorem~\ref{the:induction}~\ref{the:induction:general_F} show that
  $b_n(\widetilde{M};\cald_{F\pi}) = 0$ holds for every $n \in \IZ$.
\end{proof}

\begin{lemma}[$L^2$-Betti numbers over fields for prime $3$-manifolds]%
\label{lem:L2-Betti_numbers_over_fields_for_prime_3-manifolds}
Let $M$ be a prime  $3$-manifold such that its fundamental group  $\pi$ is infinite.

Then $\pi$ is virtually an infinite  weak Hughes group and we get for every field $F$:
    \begin{eqnarray*}
    b^{(2)}_1(\widetilde{M};\cald_{F\pi})  & = & -\chi(M);
    \\
   b^{(2)}_n(\widetilde{M};\cald_{F\pi}) & = & 0 \quad \text{if} \;n \not= 1;
      \\
      b^{(2)}_n(\widetilde{M};\cald_{F\pi}) & = & b^{(2)}_n(\widetilde{M};\caln(\pi))  \; \text{for every}\; n \in \IZ^{\ge 0}.
    \end{eqnarray*}                          
  \end{lemma}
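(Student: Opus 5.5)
The plan is a case analysis along the geometric structure of the prime manifold $M$, reducing each case to a lemma already proved. First, by Remark~\ref{rem:Virtually_weak_Hughes_groups}, Theorem~\ref{the:restriction}, and the multiplicativity of $\chi(M)$ and of the classical $L^2$-Betti numbers under finite coverings, we may pass to finite covers throughout. Hence we may assume that $M$ is orientable and, by Lemma~\ref{lem:3-manifolds_and_locally_indicable_and_Hughes_groups}, that $\pi$ is a Hughes group; finite covers of prime manifolds are again prime or handled directly in each case below. A prime $M$ is either irreducible or an $S^2$-bundle over $S^1$. In the latter case $M$ fibers over $S^1$, so Lemma~\ref{lem:fundamental_group_of_fibered_3-manifolds} gives vanishing of all $b_n^{(2)}(\widetilde{M};\cald_{F\pi})$. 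Moreover $\chi(M)=0$ in this case, which is consistent with the claimed formulas.

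So let $M$ be irreducible with infinite $\pi$; then $M$ is aspherical. If $\partial M$ contains a component that is not a torus, we would like to apply Lemma~\ref{lem:second_Betti_number_of_irreducible_3-,manifolds_with_boundary}. In fact that lemma covers every irreducible $M$ with $\partial M\neq\emptyset$ and directly yields $b_1^{(2)}=-\chi(M)$ and $b_n^{(2)}=0$ for $n\neq 1$. Spherical boundary components cannot occur, since $M$ is irreducible with infinite $\pi$ and so is not $D^3$. This leaves the closed case $\partial M=\emptyset$, where $\chi(M)=0$, so we need all $b_n^{(2)}(\widetilde{M};\cald_{F\pi})$ to vanish. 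By geometrization, a closed irreducible $M$ is a graph manifold or has a hyperbolic JSJ piece. Graph manifolds are handled by Lemma~\ref{lem:graph_manifolds}. In the non-graph case, Agol's virtual fibering theorem (together with Przytycki--Wise for mixed manifolds) gives a finite cover that fibers over $S^1$. Lemma~\ref{lem:fundamental_group_of_fibered_3-manifolds} then applies, and restriction to finite index (Theorem~\ref{the:restriction}) transports the vanishing back to $M$.

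The comparison $b_n^{(2)}(\widetilde{M};\cald_{F\pi})=b_n^{(2)}(\widetilde{M};\caln(\pi))$ then follows by comparing with Theorem~\ref{the:L2-Betti_numbers_of_3-manifolds_over_caln(pi)}. For a prime $M$ we have $r=1$, and $\chi^{(2)}(\pi)$ equals $\chi(M)$ minus the number of $S^2$ boundary components, that is, $\chi(M)$, because $\pi$ is infinite. Consequently the classical values are $b_1^{(2)}=-\chi(M)$, $b_2^{(2)}=\chi(M)-\chi(M)=0$, and all others vanish, which matches. One could alternatively obtain the classical equality from Lemma~\ref{lem:cald_(FG)_is_cald(FG_subseteq_calu(G))} in characteristic zero, but the direct comparison works for all $F$.

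The main obstacle is the closed aspherical non-graph case. There one needs virtual fibering and must check that the finite fibered cover has a weak Hughes fundamental group compatible with the virtual definition; the latter is supplied by Lemma~\ref{lem:fundamental_group_of_fibered_3-manifolds}. A secondary point is the boundary case, where we rely on Lemma~\ref{lem:second_Betti_number_of_irreducible_3-,manifolds_with_boundary}, which itself depends on the Kielak--Linton virtual free-by-cyclic result and the approximation theorem.
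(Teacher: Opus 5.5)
Your case analysis is the paper's own proof. The $S^2$-bundles over $S^1$ are handled by fibering (the paper quotes Theorem~\ref{the:vanishing_of_L2-Betti_numbers_for_mapping_torus} directly). Irreducible $M$ with boundary go to Lemma~\ref{lem:second_Betti_number_of_irreducible_3-,manifolds_with_boundary}, and closed graph manifolds to Lemma~\ref{lem:graph_manifolds}. The remaining closed case uses virtual fibering, Lemma~\ref{lem:fundamental_group_of_fibered_3-manifolds} and Theorem~\ref{the:restriction}.

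\textbf{The gap.} It is your preliminary reduction: ``we may assume $M$ orientable; finite covers of prime manifolds are again prime.'' This can fail for non-orientable prime manifolds containing two-sided projective planes. In that setting the statement itself can fail, so no argument closes the gap.

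Take $N=\IR P^2\times[0,1]$ minus an open regular neighbourhood of $\IR P^1\times\{1/2\}$.
\begin{enumerate}
\item A $2$-sphere in $N$ bounds a ball in the irreducible manifold $\IR P^2\times[0,1]$. That ball cannot contain the removed curve, which is not null-homotopic. So $N$ is irreducible, hence prime.
\item $N\simeq\IR P^2\vee\IR P^2$, so $\pi_1(N)\cong\IZ/2\ast\IZ/2$ is infinite.
\item $\chi(N)=\chi(\partial N)/2=(1+1+0)/2=1$.
\end{enumerate}
The asserted value $b_1^{(2)}(\widetilde N;\cald_{F\pi})=-\chi(N)=-1$ is therefore impossible. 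In fact $b_1^{(2)}=0$ and $b_2^{(2)}=1$ for every $F$.

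The orientation cover of $N$ is $S^2\times[0,1]$ with an unknotted circle removed, i.e.\ $(S^2\times[0,1])\#(S^1\times D^2)$. It is not prime, so your reduction leads outside the cases you treat.

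The paper's proof has the same defect. It sends such $N$ to Lemma~\ref{lem:second_Betti_number_of_irreducible_3-,manifolds_with_boundary}, whose proof uses that an irreducible manifold with infinite fundamental group is aspherical. That needs $P^2$-irreducibility; compare $\IR P^2\times S^1$. With $M$ assumed orientable, or $P^2$-irreducible, your argument is correct and coincides with the paper's.

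\textbf{The third identity.} The paper does not spell this out. Your value $\chi^{(2)}(\pi)=\chi(M)$ is correct, but it has the opposite sign from the expression printed in Theorem~\ref{the:L2-Betti_numbers_of_3-manifolds_over_caln(pi)}, so do not read it off that display. Justify it instead by asphericity of $M$ and \eqref{chi_upper_(2)(G)_is_chi_virt(G)}, together with $\chi^{(2)}(\IZ)=0=\chi(M)$ for the $S^2$-bundles. Alternatively, apply Lemma~\ref{lem:cald_(FG)_is_cald(FG_subseteq_calu(G))} with $F=\IQ$ on a finite-index weak Hughes subgroup. Since the computed values do not depend on $F$, this route already gives the identity for every field.
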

  \begin{proof}
    If $M$ is homeomorphic to the total space over an $S^2$-bundle over $S^1$,
    the claim follows from Theorem~\ref{the:vanishing_of_L2-Betti_numbers_for_mapping_torus}.
    Since any  prime $3$- manifold which is not homeomorphic 
    to the total space over an $S^2$-bundle over $S^1$,
    is irreducible, see~\cite[Lemma~3.13 on page~28]{Hempel(1976)} and
    we have Lemma~\ref{lem:second_Betti_number_of_irreducible_3-,manifolds_with_boundary},
    we can assume without loss of generality that $M$ is an irreducible closed
    $3$-manifold with infinite fundamental group $\pi$. If $M$ is a closed graph manifold, the
    claim follows from Lemma~\ref{lem:graph_manifolds}.  Suppose that
    $M$ is a not a graph manifold. Then by the proof of the Virtual Fibering Theorem due
    to Agol, Liu, Przytycki-Wise, and
    Wise~\cite{Agol(2008),Agol(2013),Liu(2013),Przytycki-Wise(2018),
      Przytycki-Wise(2014),Wise(2012raggs),Wise(2012hierachy)} there exists a finite
    normal covering $p \colon \overline{M} \to M$ and a fiber bundle
    $F \to \overline{M} \to S^1$ for some compact connected orientable surface $F$. Now
    the claim follows from Lemma~\ref{the:restriction}~\ref{the:restriction:general_F} and
    Lemma~\ref{lem:fundamental_group_of_fibered_3-manifolds}.
\end{proof}

Now we are ready to give the proof of Theorem~\ref{the:L2-Betti_numbers_of_3-manifolds_for_arbitrary_fields}.

\begin{proof}[Proof of Theorem~\ref{the:L2-Betti_numbers_of_3-manifolds_for_arbitrary_fields}]
Since $b_n^{(2)}(\widetilde{M};\cald_{F\pi})$,
$\chi^{(2)}(\pi_1(M))$, and $\chi(M)$ are multiplicative under finite coverings,
see Theorem~\ref{the:restriction} and~\cite[Theorem~6.80~(7) on page~279]{Lueck(2002)},and we have
Lemma~\ref{lem:3-manifolds_and_locally_indicable_and_Hughes_groups:virtually_Hughes}~%
\ref{lem:3-manifolds_and_locally_indicable_and_Hughes_groups}, we can assume loss of generality that
$M$ is orientable and $\pi = \pi_1(M)$ is a Hughes group. 
We conclude
\[
  b_n^{(2)}(\widetilde{M};\cald_{F\pi}) = 0\; \text{for} \; n \notin \{1,2\}
    \]
    from Theorem~\ref{the:zeroth_L2-Betti_number}, Theorem~\ref{the:Poincare_duality},
    and the fact that $M$ is homotopy equivalent to $2$-dimension $CW$-complex if $\partial M$ is not empty.
    Theorem~\ref{the:Euler-Poincare_formula} implies
    \[\chi(M) = b_2^{(2)}(\widetilde{M};\cald_{F\pi}) - b_1^{(2)}(\widetilde{M};\cald_{F\pi}).
     \]
     Since the classifying map $M \to B\pi_1(M)$ is $2$-connected, we get 
     \[b_1^{(2)}(\pi;\cald_{F\pi}) = b_n^{(2)}(\widetilde{M};\cald_{F\pi}).
      \]
      Hence it suffices to show
      \[
        b_1^{(2)}(\pi;\cald_{F\pi})  = \chi^{(2)}(\pi).
     \]
      Since
\[
\pi_1(M) \cong \ast_{j = 1}^r \pi_1(M_j)
\]
holds, we get from a Mayer-Vietoris argument using Theorem~\ref{the:induction}
\begin{eqnarray*}
  b_1^{(2)}(\pi;\cald_{F\pi}) & = & (r - 1) + \sum_{j = 1}^r b_1^{(2)}(\pi_1(M_j);\cald_{F[\pi_1(M_j)]});
  \\
  \chi^{(2)}(\pi) & = & (r - 1) + \sum_{j = 1}^r \chi^{(2)}(\pi_1(M_j)).
\end{eqnarray*}
 Hence it suffices to show     
 \[b_1^{(2)}(\pi_1(M_j);\cald_{F[\pi_1(M_j)]}) = \chi^{(2)}(\pi_1(M_j))
 \]
 for $j = 1,2  \ldots , r$.
 If $\pi_1(M_j)$ is finite, this follows from  Remark~\ref{rem:finite_pi}.
Suppose  that $\pi_1(M_j)$ is infinite.
 Since $\chi^{(2)}(\pi_1(M_j)) = b_1^{(2)}(\widetilde{M_j}; \caln(\pi_1(M_j)))$
 holds by Theorem~\ref{the:L2-Betti_numbers_of_3-manifolds_for_arbitrary_fields}, it remains to show
  \[b_1^{(2)}(\pi_1(M_j);\cald_{F[\pi_1(M_j)]}) = b_1^{(2)}(\pi_1(M_j);\caln(\pi_1(M_j))).
  \]
  This follows  Lemma~\ref{lem:cald_(FG)_is_cald(FG_subseteq_calu(G))}~%
\ref{lem:cald_(FG)_is_cald(FG_subseteq_calu(G)):equality_of_L2-Betti_numbers}
and Lemma~\ref{lem:L2-Betti_numbers_over_fields_for_prime_3-manifolds}.
This finishes the proof of 
Theorem~\ref{the:L2-Betti_numbers_of_3-manifolds_for_arbitrary_fields}.
\end{proof}

%-----------------------------------------------------------------------------

\subsection{Groups containing normal infinite  amenable subgroups}%
\label{subsec:Groups_containing_normal_infinite_elementary_subgroups}

Given a group $G$, define
\begin{equation}
  b_n^{(2)}(G;\caln(G)) := b_n(EG;\caln(G)).
  \label{b_n_upper_(2)(G;caln(G))}
\end{equation}
Given a weak Linnell group  $G$ and any field $F$, define
\begin{equation}
  b_n^{(2)}(G;\cald_{FG}) := b_n^{(2)}(EG;\cald_{FG}).
  \label{b_n_upper_(2)(G;cald((FG)}
\end{equation}

\begin{theorem}[Vanishing of $L^2$-Betti numbers for groups containing a normal infinite amenable subgroup]%
  \label{Vanishing_of_L2-Betti_numbers_for_groups_containing_a_normal_infinite_amenable_subgroup}
  Let $G$ be a group which contains a normal infinite amenable subgroup.

  \begin{enumerate}

  \item\label{Vanishing_of_L2-Betti_numbers_for_groups_containing_a_normal_infinite_amenable_subgroup:caln(G)}
  Then we get $b_n^{(2)}(G;\caln(G)) = 0$ for $n \in \IZ^{\ge 0}$;

\item\label{Vanishing_of_L2-Betti_numbers_for_groups_containing_a_normal_infinite_amenable_subgroup:arbitrary_F}
  If $G$ is a weak Hughes  group,   then we get  $b^{(2)}_n(G;\cald_{FG}(G)) = 0$ for $n \in \IZ^{\ge 0}$.
\end{enumerate}
\end{theorem}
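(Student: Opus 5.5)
Part (1) is the classical result of Cheeger--Gromov; we cite it from~\cite[Theorem~7.2~(1) on page~294]{Lueck(2002)}. For part~(2) we reproduce the proof given there for the $\caln(G)$ case, using the division ring $\cald_{FG}$ instead of $\caln(G)$. Let $A \subseteq G$ be the normal infinite amenable subgroup. Since $G$ is a weak Hughes group, it is torsionfree and locally indicable. So $A$ is torsionfree, amenable, and locally indicable, and by Lemma~\ref{lem:subgroups_of_(weak)_Hughes-groups_are_(weak)-Hughes_groups} $A$ is again a weak Hughes group. For torsionfree amenable $A$ with $FA$ a domain, $FA$ satisfies the Ore condition. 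The Ore localization $S^{-1}FA$, with $S = FA \setminus \{0\}$, is then a division ring. It is Hughes free (this is the situation of Linnell--L\"uck--Sauer), so by uniqueness it is $\cald_{FA}$, and it is flat over $FA$.

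The first step is to show $b_n^{(2)}(EA;\cald_{FA}) = 0$ for all $n$. By flatness, $H_n(\cald_{FA} \otimes_{FA} C_*(EA;F)) \cong S^{-1}FA \otimes_{FA} H_n(EA;F)$. This vanishes for $n \ge 1$ because $EA$ is contractible. For $n = 0$ it vanishes by Theorem~\ref{the:zeroth_L2-Betti_number}~\ref{the:zeroth_L2-Betti_number:general_F}, since $A$ is non-trivial. More generally, $\cald_{FA} \otimes_{FA} M = S^{-1}M$ for every $FA$-module $M$, so the Tor groups $\operatorname{Tor}^{FA}_q(\cald_{FA}, M)$ vanish for $q \ge 1$.

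The second step is to pass from $A$ to $G$. We use the Linnell property: the multiplication map $\mu_A \colon \cald_{FA} \otimes_{FA} FG \to \cald_{FG}$ is injective. Since $A$ is normal, conjugation by $G$ acts on $\cald_{FA}$, as in the proof of Theorem~\ref{the:restriction}. The image of $\mu_A$ is then the crossed product $\cald_{FA} \ast G/A$, which is a subring of $\cald_{FG}$. Now
\[
  \cald_{FG} \otimes_{FG} C_*(EG) \cong \cald_{FG} \otimes_{\cald_{FA} \ast G/A} \bigl((\cald_{FA} \ast G/A) \otimes_{FG} C_*(EG)\bigr).
\]
As a $\cald_{FA}$-chain complex, $(\cald_{FA} \ast G/A) \otimes_{FG} C_*(EG) \cong \cald_{FA} \otimes_{FA} \res^A_G C_*(EG)$. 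Here $\res^A_G EG$ is a model for $EA$, so this complex is acyclic by the first step. A chain complex of free $\cald_{FA} \ast G/A$-modules which is acyclic is contractible, because it is bounded below and consists of projective modules. Tensoring a contractible complex with $\cald_{FG}$ gives a contractible complex, so all homology vanishes and $b_n^{(2)}(G;\cald_{FG}) = 0$.

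The main obstacle is that $\cald_{FA}$ must be identified with the Ore localization, which is what gives flatness. We want to avoid using flatness of $\cald_{FG}$ over $\cald_{FA} \ast G/A$. The contractibility argument above sidesteps it: exactness of a bounded-below free complex already yields a chain contraction, so no flatness is required. This is simpler than the spectral sequence argument in~\cite{Lueck(2002)}, which needs dimension theory for non-finitely generated modules. The same argument covers the case of infinite $G/A$ without change.
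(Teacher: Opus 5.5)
Your argument is correct, but it passes from $A$ to $G$ by a different route than the paper.

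The paper applies Theorem~\ref{the:L2-Betti_numbers_and_fibrations}~\ref{the:L2-Betti_numbers_and_fibrations:general_F_d_is_infty} to the fibration $BA \to BG \to B(G/A)$. Combined with induction (Theorem~\ref{the:induction}), this reduces the claim to $b_n^{(2)}(EA;\cald_{FA}) = 0$. That is exactly your first step, done via Remark~\ref{rem:Amenable_locally_indicable_groups} and Theorem~\ref{the:zeroth_L2-Betti_number}. The fibration theorem itself is justified in the paper only by the remark that the spectral sequence proof in L\"uck's book carries over.

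You replace that reduction with a direct algebraic argument. You use the crossed product $\cald_{FA} \ast G/A \subseteq \cald_{FG}$ coming from the Linnell property, as in the proof of Theorem~\ref{the:restriction}. You identify $(\cald_{FA} \ast G/A) \otimes_{FG} C_*(EG;F)$ with $\cald_{FA} \otimes_{FA} \res_G^A C_*(EG;F)$. Finally, an exact complex of projectives concentrated in degrees $\ge 0$ is contractible, so base change to $\cald_{FG}$ kills all homology without any flatness of $\cald_{FG}$ over the crossed product. What your route buys is a self-contained proof with no spectral sequence. The paper's route is shorter on the page only because it delegates the work to the fibration theorem, which also covers the partial-range statements.

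Your appeal to Gr\"ater's theorem for injectivity of $\mu_A$ can be dropped. The set $S = FA \setminus \{0\}$ is a $G$-invariant Ore set in $FA$, hence an Ore set in $FG = FA \ast G/A$. So $S^{-1}FG$ is flat over $FG$, maps to $\cald_{FG}$ by its universal property, and satisfies $S^{-1}FG \otimes_{FG} F = 0$. The same contractibility argument then runs with $S^{-1}FG$ in place of the embedded crossed product.

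One correction to your closing remark: over the skew field $\cald_{FG}$, dimension zero already means the module is zero. So the spectral sequence argument would not need dimension theory for non-finitely generated modules in this setting. What you actually avoid is the fibration machinery.
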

\begin{proof}~\ref{Vanishing_of_L2-Betti_numbers_for_groups_containing_a_normal_infinite_amenable_subgroup:caln(G)}
 This is proved in~\cite[Theorem~7.2~(1) on page~294]{Lueck(2002)}.
 \\[1mm]~\ref{Vanishing_of_L2-Betti_numbers_for_groups_containing_a_normal_infinite_amenable_subgroup:arbitrary_F}
 Let $H \subseteq G$ be a normal infinite amenable  subgroup.
 We conclude from Theorem~\ref{the:L2-Betti_numbers_and_fibrations}~\ref{the:L2-Betti_numbers_and_fibrations:general_F_d_is_infty}
 applied to the fibration $BH \to BG \to BG/H$ that it suffices to prove the claim for $H$.  Hence
 we can assume without loss of generality that $G$ is locally indicable and amenable.
 Remark~\ref{rem:Amenable_locally_indicable_groups} implies $b_n(G;\cald_{FG}) = 0$ for $n \in \IZ^{\ge 1}$
as EG is contractible. We get $b^{(2)}_0(G;\cald_{FG}) = 0$ from
Theorem~\ref{the:zeroth_L2-Betti_number}~\ref{the:zeroth_L2-Betti_number:general_F}.
\end{proof}

The next theorem taken from~\cite[Corollary~1.13]{Lueck(2013l2approxfib)}.

\begin{theorem}\label{the:Groups_containing_a_normal_infinite_nice_subgroups}
Let $M$ be an aspherical closed manifold with fundamental group $G = \pi_1(M)$.
Suppose that $M$ carries a non-trivial $S^1$-action or suppose
that $G$ contains a non-trivial  elementary amenable normal subgroup. Let
$G = G_0 \supseteq G_1 \supseteq G_2 \supseteq \cdots$ be a sequence of
in $G$ normal subgroups of finite index $[G:G_i]$ such that $\bigcap_{i = 0}^{\infty} G_i = \{1\}$ holds

Then we get for every field $F$ and every  $n \ge 0$
\begin{eqnarray*}
\lim_{i \to \infty} \frac{b_n(G_i\backslash \widetilde{M};F)}{[G:G_i]}  
& = & 
0.
\end{eqnarray*}
\end{theorem}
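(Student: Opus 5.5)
The plan is to reduce the statement to a chain-level estimate for a single finite cover, and then use the hypothesis (either the $S^1$-action or the infinite elementary amenable normal subgroup) to supply a uniformly bounded chain complex that computes the homology of each cover.

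\textbf{Step 1: reduction to an infinite normal subgroup.} We first pass to the case where $G$ contains an infinite elementary amenable normal subgroup $H$.
\begin{itemize}
\item If $G$ contains a non-trivial elementary amenable normal subgroup, it is infinite, since $G$ is torsionfree because $M$ is aspherical and closed.
\item If instead $M$ carries a non-trivial $S^1$-action, the orbit map $S^1 \to M$ gives an element of the centre of $G$. This element has infinite order, by the classical result that for aspherical closed $M$ the orbit map is $\pi_1$-injective and central.
\end{itemize}
Replacing $H$ by a suitable characteristic subgroup of its derived or elementary amenable series, we may assume $H$ contains a normal subgroup of $G$ that is infinite and, after passing to a finite index subgroup if necessary, virtually poly-$\IZ$ or at least contains an infinite normal abelian-by-(something) piece. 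Concretely we aim for a normal subgroup $A\subseteq G$ which is infinite, torsionfree and either abelian or elementary amenable of finite Hirsch length.

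\textbf{Step 2: chain-level estimate.} For each $i$, the cover $G_i\backslash\widetilde M$ has $F$-Betti numbers computed by $F\otimes_{FG_i} C_*(\widetilde M)$. This in turn equals $F[G/G_i]\otimes_{FG}C_*(\widetilde M)$ as a complex of $F[G/G_i]$-modules. The key intermediate claim is a L\"uck-type estimate:
\[
b_n(G_i\backslash\widetilde M;F)\le [G:G_i]\cdot \epsilon_i
\]
with $\epsilon_i\to0$. To get it, consider the fibration $BA\to BG\to B(G/A)$ and the image $A_i=A\cap G_i$ of $A$ in $G/G_i$. Since $\bigcap G_i=\{1\}$ and $A$ is infinite, the index $[A:A_i]$ tends to infinity. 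One then uses the spectral sequence (or a filtration of a $CW$-model of $\widetilde M$ by $A$-orbits) to bound the Betti numbers of $G_i\backslash\widetilde M$ by a constant (the number of cells of a finite model for the base direction) times $[G:G_i]$, times the normalized Betti numbers of the finite quotient $A/A_i$-covers of a model of $BA$ of finite type. Those normalized Betti numbers tend to $0$ because $A$ is infinite amenable: for an infinite cyclic or abelian $A$ this is the elementary computation that $b_n$ of a finite cyclic cover of a circle stays bounded while the degree grows.

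\textbf{Step 3: uniformity, the main obstacle.} The difficulty is that the base $G/A$ need not have a finite model, and the covers $G_i$ are not adapted to $A$. I would follow the strategy of~\cite{Lueck(2013l2approxfib)}:
\begin{itemize}
\item Construct, using the $A$-action on $\widetilde M$ and finiteness of $M$, a $G$-$CW$-complex $Y\simeq\widetilde M$ whose cells have isotropy-type $A$-orbit structure with finitely many $G$-orbits of "$A$-cells".
\item Estimate cellular homology cell by cell. Each $G$-orbit of such a cell contributes at most $[G:G_i]\cdot b(A/A_i)/[A:A_i]$, which is uniformly small.
\end{itemize}
The delicate point is controlling the number of such cells independent of $i$, and handling the general elementary amenable case by transfinite induction on the elementary amenable hierarchy, reducing eventually to $A\cong\IZ$. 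That induction is where I expect most of the work. Since the field $F$ enters only through dimension counts over $F$, the argument is independent of the characteristic, which yields the claim for every $F$.
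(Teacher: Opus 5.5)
There is a genuine gap, and it is the step you flag yourself: Step~3 is announced but not carried out, and everything depends on it. The bound in Step~2 comes from the fibration $(A\cap G_i)\backslash EA\to G_i\backslash\widetilde M\to (AG_i/A)\backslash E(G/A)$. It gives
\[
\frac{b_n(G_i\backslash\widetilde M;F)}{[G:G_i]}\le\sum_{p+q=n}c_p\cdot\frac{b_q((A\cap G_i)\backslash EA;F)}{[A:A\cap G_i]},
\]
but only if $B(G/A)$ has a model with finitely many cells ($c_p$ of them) in each dimension and $BA$ has a model of finite type. Step~1 produces no such $A$. Passing to characteristic subgroups of $A$ yields at best a torsion-free abelian normal subgroup of finite rank, and you give no reason for it to be finitely generated; torsion-free abelian groups of finite rank such as $\IZ[1/2]$ need not be. Even when $A$ is of type $F$, you would still need an argument that $G/A$ has a classifying space of finite type. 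The $G$-$CW$-model with finitely many $G$-orbits of ``$A$-cells'' is exactly what would have to be proved, and the mere existence of a normal subgroup does not supply it.

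The planned transfinite reduction to $A\cong\IZ$ also fails. Your counting needs $A$ normal in $G$, and normality is not inherited along the elementary amenable hierarchy. For example, a Sol-manifold group $\IZ^2\rtimes_\phi\IZ$ with $\phi$ hyperbolic has no infinite cyclic normal subgroup at all. There the fibration $T^2\to M\to S^1$ with $A=\IZ^2$ works directly, so what is missing is finiteness, not rank one. Finally, for non-abelian $A$ the vanishing of the normalized Betti numbers of covers of $BA$ in positive characteristic is not an elementary computation; it needs the Linnell--L\"uck--Sauer approximation theorem.

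In the circle case you discard the structure that actually gives the proof. Do not replace the action by the central element $c$; use a finite $S^1$-$CW$-structure on $M$ instead. Every orbit $S^1/H$ is $\pi_1$-injective, so its image $C\cong\IZ$ in $G$ contains $c$. Each equivariant cell $S^1/H\times D^k$ then contributes $[G:G_i]/[C:C\cap G_i]$ copies of $S^1\times(D^k,S^{k-1})$ to the relative homology of consecutive skeleta of $G_i\backslash\widetilde M$. With $N$ the number of equivariant cells, this gives
\[
\frac{b_n(G_i\backslash\widetilde M;F)}{[G:G_i]}\le\frac{2N}{[\langle c\rangle:\langle c\rangle\cap G_i]}\longrightarrow 0
\]
for every field $F$, because $c$ has infinite order and $\bigcap_i G_i=\{1\}$. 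The paper gives no proof of its own here; it quotes \cite[Corollary~1.13]{Lueck(2013l2approxfib)}. For the elementary amenable case, your sketch would have to be completed by an argument supplying the finiteness described above, and it does not contain one.
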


The next result is due to Ab\'ert-Nikolov~\cite[Theorem~3]{Abert-Nikolov(2012)}.

\begin{theorem}\label{the_Abert-Nikolov}
  Let $G$ be a finitely presented residually finite group $G$ which contains a normal
  infinite amenable subgroup.  Let $G = G_0 \supseteq G_1 \supseteq G_2 \supseteq \cdots$
  be a sequence of in $G$ normal subgroups of finite index $[G:G_i]$ such that
  $\bigcap_{i = 0}^{\infty} G_i = \{1\}$ holds

  Then the rank gradient $RG(G;(G_i)_{i\in I})$ vanishes and we get for every field $F$
  \begin{eqnarray*}
    \lim_{i \to \infty} \frac{b_1(G_i;F)}{[G:G_i]}  
    & = & 
          0.
  \end{eqnarray*}
\end{theorem}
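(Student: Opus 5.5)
We follow the Ab\'ert--Nikolov strategy: reduce to a finitely generated infinite amenable normal subgroup, and then use a cheap-generation/Følner argument to bound the number of generators of the finite index subgroups $G_i$. Since $G$ is finitely presented and residually finite, the rank gradient $RG(G;(G_i)_{i})$ is defined as $\lim_{i} \frac{d(G_i)-1}{[G:G_i]}$, where $d(-)$ denotes the minimal number of generators. For every field $F$ we have $b_1(G_i;F) \le d(G_i)$, so the statement about $\mathbb{F}$-homology follows at once from the vanishing of the rank gradient. It therefore suffices to show $RG(G;(G_i)_i)=0$.

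Let $N \subseteq G$ be the infinite amenable normal subgroup. First we reduce to the case where the relevant part of $N$ is finitely generated and infinite. If $N$ contains an element of infinite order, we use a cyclic subgroup, though this need not be normal. The cleaner route is the ``cost/amenable'' argument: for each $i$ the image $N G_i/G_i$ in the finite quotient $G/G_i$ has order tending to infinity. This holds because $N$ is infinite and $\bigcap_i G_i=\{1\}$, so $|N/(N\cap G_i)|\to\infty$.

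The key estimate is then
\[
d(G_i) - 1 \le [G: N G_i]\cdot \bigl(d(G)-1\bigr) + \text{(error)} .
\]
We obtain it by viewing $G_i$ as acting on the coset space $G/G_i$ and building a Schreier graph. Generators of $G_i$ correspond to a spanning tree complement in the Schreier graph of $G/G_i$ with respect to a generating set $S$ of $G$. Amenability of $N$ allows us to choose Følner sets in $N$ whose images tile most of each $N$-orbit on $G/G_i$. Within such a tile, most Schreier edges labelled by a fixed finite generating subset of $N$ become redundant. Moreover, edges labelled by $s\in S$ between two tiles can be expressed using one edge per tile together with edges labelled by elements of $N$, because $N$ is normal and so $s$ normalizes $N$. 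The resulting count is
\[
d(G_i)-1 \le \varepsilon\,[G:G_i] + C\cdot[G:NG_i],
\]
where $\varepsilon$ is arbitrary and $C$ depends only on $|S|$ and the chosen Følner set. Dividing by $[G:G_i]=[G:NG_i]\cdot|NG_i/G_i|$ and letting $i\to\infty$, then $\varepsilon\to 0$, gives $RG=0$.

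The main obstacle is the Følner-tiling step. The orbits of $N$ on $G/G_i$ are the finite quotients $N/(N\cap G_i)$, and these are quotients of $N$ rather than $N$ itself. We need Følner sets that remain almost invariant in these finite quotients. Because $N$ may not be finitely generated, we would first pass to a finitely generated subgroup $N'\subseteq N$ whose images are still large. We would then use the Ornstein--Weiss quasi-tiling lemma applied to the action of $N'$ on the finite orbits, which is exactly the approach of Ab\'ert--Nikolov's proof via cost of the profinite action. An alternative is to invoke the fact that the profinite action of $G$ on $\varprojlim G/G_i$ has fixed price $1$ when $G$ has an infinite normal amenable subgroup, and combine this with the Ab\'ert--Nikolov identity relating rank gradient to cost minus one.
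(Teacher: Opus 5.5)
Your reduction is fine. $b_1(G_i;F)=\dim_F(G_i^{\mathrm{ab}}\otimes_{\mathbb{Z}}F)\le d(G_i)$, and $[G:G_i]\to\infty$ because $|N/(N\cap G_i)|\to\infty$, so the homology statement follows from $RG=0$. Your closing sentence is also a complete argument. It is exactly the proof behind the result the paper cites; the paper itself gives no proof, only the reference to Ab\'ert--Nikolov. The $G$-action on $\varprojlim G/G_i$ is free because the chain is normal with trivial intersection. Ab\'ert--Nikolov prove $RG(G;(G_i))=\mathrm{cost}-1$ for this action, and Gaboriau showed that a group with an infinite normal amenable subgroup has fixed price $1$.

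The Schreier-graph/F{\o}lner argument you develop as your main route, however, only works when $N$ is finitely generated.

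\emph{Why normality of $\langle T\rangle$ is needed.} The inter-orbit step expresses the $s$-edge at $tx$ ($t\in T$) through the $s$-edge at $x$, the $t$-edge, and a path from $sx$ to $stx=(sts^{-1})\,sx$ inside the target orbit. The resulting loop is null-homotopic only if the path's label is \emph{equal} to $sts^{-1}$ in $G$; a path with the right endpoints determines its label only modulo $N\cap G_i$. So you need $sTs^{-1}\subseteq\langle T\rangle$ for every $s\in S$, which is automatic when $T$ generates $N$.

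\emph{Why your remedy fails.} Passing to a finitely generated $N'\le N$ with large images destroys exactly this property. $S$ does not normalize $N'$, the $N'$-orbits are not permuted by $s$, and no relator of the required shape exists. Your appeal to normality in the key step therefore no longer applies. Worse, such an $N'$ need not exist. If $N$ is locally finite (e.g.\ an infinite elementary abelian $p$-group), every finitely generated $N'\le N$ is finite. Then $|N'G_i/G_i|$ stays bounded and $C[G:N'G_i]/[G:G_i]$ does not tend to $0$. Ornstein--Weiss quasi-tilings fix the almost-invariance inside orbits, not this.

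\emph{What is missing.} The cost framework supplies it. The orbit relation of $N$ on $\varprojlim G/G_i$ is hyperfinite, so it has graphings of cost close to $1$ without any finite generating set of $N$. One then only has to realize the finitely many $s\in S$ by bounded-length walks outside an $\varepsilon$-small set: write $x=ny$ with $y$ in a small complete section, so that $sx=(sns^{-1})\,sy$. Transferring this back to the finite quotients is the content of the Ab\'ert--Nikolov identity. So either restrict your combinatorial argument to finitely generated $N$, or make your final sentence the actual proof.
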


Theorems~\ref{the:Groups_containing_a_normal_infinite_nice_subgroups} and
Theorem~\ref{the_Abert-Nikolov} are interesting in connection with
Theorem~\ref{the:Identification_with_adhoc_definition}
and~\ref{Vanishing_of_L2-Betti_numbers_for_groups_containing_a_normal_infinite_amenable_subgroup}.

% ----------------------------------------------------------------------------

\subsection{The first $L^2$-Betti number and group extensions}%
\label{subsec:The_first_L2_Betti_number_and_group_extensions}

\begin{theorem}[The first $L^2$-Betti number and extensions]%
\label{the:The_first_L2_Betti_number_and_group_extensions}
Let $1 \to K \to G \to Q \to 1$ be an extension  of infinite groups. Suppose that
that $Q$ contains an element of infinite order or finite subgroups of arbitrarily large order.

\begin{enumerate}

\item\label{the:The_first_L2_Betti_numbers_and_extensionsN(G)}
  If $b_1^{(2)}(K;\caln(K)) < \infty$ holds, then $b_1^{(2)}(G;\caln(G)) = 0$; 

 \item\label{the:The_first_L2_Betti_numbers_and_extensions:general_F}
 Suppose that $G$ is a weak Hughes group  and  that $b^{(2)}_1(K;\cald_{FK}) < \infty$ holds.
     Then $b^{(2)}_1(G;\cald_{FG}) = 0$.
 \end{enumerate}
\end{theorem}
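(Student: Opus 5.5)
Part~\ref{the:The_first_L2_Betti_numbers_and_extensionsN(G)} is~\cite[Theorem~7.2~(2) on page~294]{Lueck(2002)}; alternatively it is the case $d = 1$ of Theorem~\ref{the:L2-Betti_numbers_and_fibrations}~\ref{the:L2-Betti_numbers_and_fibrations:N(G)_d_not_infty}. For assertion~\ref{the:The_first_L2_Betti_numbers_and_extensions:general_F} we apply Theorem~\ref{the:L2-Betti_numbers_and_fibrations}~\ref{the:L2-Betti_numbers_and_fibrations:general_F_d_not_infty} with $d = 1$ in the same way. Take the fibration $BK \to BG \to BQ$ of connected $CW$-complexes. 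Use the factorization $\pi_1(BG) = G \xrightarrow{\id} G \to Q$, so $\phi = \id_G$ and $\psi$ is the projection. Then $G \times_{\phi} \widetilde{BG} = EG$ and $G \times_{\phi \circ i_*} \widetilde{BK} = G \times_K EK$. By assumption $G$ is a weak Hughes group, and $Q = \pi_1(BQ)$ has an element of infinite order or finite subgroups of arbitrarily large order.

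We verify the two hypotheses on the fiber using Theorem~\ref{the:induction}~\ref{the:induction:general_F}. It first gives that $K$ is a weak Hughes group, so $b^{(2)}_*(K;\cald_{FK})$ is defined. It then gives
\[
b_n^{(2)}(G \times_K EK;\cald_{FG}) = b_n^{(2)}(EK;\cald_{FK}) = b_n^{(2)}(K;\cald_{FK}).
\]
For $n = 0$ this vanishes by Theorem~\ref{the:zeroth_L2-Betti_number}~\ref{the:zeroth_L2-Betti_number:general_F}, since $EK$ is connected; that theorem only needs $K$ to be a weak Hughes group, not that $K$ is infinite. For $n = 1$ it is finite by the hypothesis $b^{(2)}_1(K;\cald_{FK}) < \infty$. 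So the hypotheses of Theorem~\ref{the:L2-Betti_numbers_and_fibrations}~\ref{the:L2-Betti_numbers_and_fibrations:general_F_d_not_infty} hold with $d = 1$, and it yields $b_n^{(2)}(EG;\cald_{FG}) = 0$ for $n \le 1$, in particular $b_1^{(2)}(G;\cald_{FG}) = 0$.

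The main obstacle is that Theorem~\ref{the:L2-Betti_numbers_and_fibrations}~\ref{the:L2-Betti_numbers_and_fibrations:general_F_d_not_infty} was only justified by saying that the proof of~\cite[Theorem~6.67]{Lueck(2002)} carries over. Its key input is the mapping-torus vanishing argument, Theorem~\ref{the:vanishing_of_L2-Betti_numbers_for_mapping_torus}~\ref{the:vanishing_of_L2-Betti_numbers_for_mapping_torus:general_F}, applied to $\IZ \subseteq Q$ or to the finite subgroups. This must be checked for $\cald_{FG}$, where the dimension is integer-valued. In the case of finite subgroups $H \subseteq Q$ of large order, the preimage of $H$ in $G$ contains $K$ with finite index $|H|$. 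By Theorem~\ref{the:restriction}~\ref{the:restriction:general_F}, the relevant $\cald$-dimension is then divisible by $|H|$ and bounded by a constant, hence zero once $|H|$ is large. The infinite-order case reduces to the mapping-torus argument over the preimage of $\IZ$. We pass from there back to $G$ using Theorem~\ref{the:induction}~\ref{the:induction:general_F} together with the fact, from the Linnell property, that $\cald_{FG}\otimes_{\cald_{FH}}-$ is faithfully flat. I would spell out this reduction if the cited carry-over claim were doubted.
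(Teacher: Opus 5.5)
Your proof is correct and is the paper's own argument: apply Theorem~\ref{the:L2-Betti_numbers_and_fibrations}~\ref{the:L2-Betti_numbers_and_fibrations:general_F_d_not_infty} with $d=1$ to $BK \to BG \to BQ$, and check the fiber hypotheses via Theorem~\ref{the:induction} and Theorem~\ref{the:zeroth_L2-Betti_number}. One correction to your remark: Theorem~\ref{the:zeroth_L2-Betti_number} does need $K \neq \{1\}$, since its proof picks an element $g \neq e$ and the trivial group would give $b_0 = 1$; this is exactly where the hypothesis that $K$ is infinite enters. In your optional sketch of the carry-over, induction and flatness alone do not get you from the preimage $L \subseteq G$ of $\langle t\rangle \subseteq Q$ back to $G$. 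The actual step is that in the Hochschild--Serre spectral sequence $H_0(Q;H_1(K;\cald_{FG}))$ is a quotient of the $\langle t\rangle$-coinvariants, and these equal $H_1(L;\cald_{FG})$ because $H_0(K;\cald_{FG})=0$.
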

\begin{proof} This follows from Theorem~\ref{the:zeroth_L2-Betti_number}
and Theorem~\ref{the:L2-Betti_numbers_and_fibrations}  applied to the fibration $BK \to BG \to BQ$.
\end{proof}

% ---------------------------------------------------------------------------- be an extension of g

\subsection{$L^2$-Betti numbers and $S^1$-actions}\label{L2-Betti_numbers_and_S1_actions}

\begin{theorem}[$L^2$-Betti numbers and $S^1$-actions]\label{the:S1-action_and_L2-Betti_numbers}
Let $X$ be a connected $S^1$-$CW$.
Suppose that for one orbit $S^1/H$ (and hence for all orbits) the inclusion
into $X$ induces a map on $\pi_1$ with infinite image.
(In particular the $S^1$-action has no fixed points.)
Let $\widetilde{X}$ be the universal covering of $X$ with the canonical
$\pi_1(X)$-action.

\begin{enumerate}
\item\label{the:S1-action_and_L2-Betti_numbers:N(G)}
We get $b^{(2)}_n\bigl(\widetilde{X};\caln(\pi_1(X))\bigr)$ for all $n \in \IZ^{\ge 0}$;

\item\label{the:S1-action_and_L2-Betti_numbers:general_F}
  Suppose  that $\pi_1(X)$ is a weak Hughes group. Then we get
  $b^{(2)}_n(\widetilde{X};\cald_{F[\pi_1(X)]})$ for all $n \in \IZ^{\ge 0}$;
\end{enumerate}
\end{theorem}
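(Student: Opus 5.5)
The statement to prove is that all $L^2$-Betti numbers vanish, i.e.\ $b^{(2)}_n\bigl(\widetilde{X};\caln(\pi_1(X))\bigr) = 0$ and $b^{(2)}_n(\widetilde{X};\cald_{F[\pi_1(X)]}) = 0$ for all $n$. Assertion~\ref{the:S1-action_and_L2-Betti_numbers:N(G)} is the classical result in~\cite[Theorem~1.40]{Lueck(2002)}. For assertion~\ref{the:S1-action_and_L2-Betti_numbers:general_F} I will follow the same proof, namely induction over the $S^1$-cells of $X$, and replace $\caln(G)$ by $\cald_{FG}$. This works because only three ingredients are used: additivity of the dimension, induction (Theorem~\ref{the:induction}), and the vanishing for $S^1$-orbits.

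Put $G=\pi_1(X)$. Each $S^1$-cell has the form $S^1/H\times D^n$ with $H\subseteq S^1$ closed and proper; the proper case holds because there are no fixed points. So $H$ is finite cyclic and $S^1/H\cong S^1$. Pulling back to $\widetilde{X}$, the preimage of $S^1/H\times D^n$ is $G\times_{K} \bigl(\widetilde{S^1/H}\times D^n\bigr)$ up to $G$-homotopy. Here $K$ is the image of $\pi_1(S^1/H)\cong\IZ\to G$, which is infinite by hypothesis, hence $K\cong\IZ$. By Theorem~\ref{the:induction}~\ref{the:induction:general_F}, and since the relevant homology comes from $\IR$ with its free $\IZ$-action, every $L^2$-Betti number of this piece (and of its boundary piece) equals $b_n^{(2)}(\IR;\cald_{F\IZ})$. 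That number vanishes for all $n$: for $n=0$ by Theorem~\ref{the:zeroth_L2-Betti_number}, and otherwise because $\IR$ is one-dimensional with Euler characteristic $0$ (Theorem~\ref{the:Euler-Poincare_formula}). Note that $\IZ$ is a weak Hughes group as a subgroup of $G$, by Lemma~\ref{lem:subgroups_of_(weak)_Hughes-groups_are_(weak)-Hughes_groups}.

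The induction runs over skeleta $X_k$ of the $S^1$-$CW$-structure. Write $\overline{X_k}$ for the preimage in $\widetilde{X}$, a $G$-$CW$-complex. The pushout
\[
X_k = X_{k-1}\cup \coprod S^1/H_i\times D^k
\]
lifts to a $G$-pushout, and the Mayer--Vietoris sequence of $\cald_{FG}$-chain complexes together with additivity of $\dim_{\cald_{FG}}$ gives $b_n^{(2)}(\overline{X_k};\cald_{FG})=0$ for all $n$. Infinitely many cells per dimension cause no problem, because $\dim_{\cald_{FG}}$ is additive on direct sums and the pieces are disjoint unions. Passing to $X=\colim X_k$ is harmless since $H_n$ depends only on a finite skeleton.

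The main obstacle is that $\overline{X_k}\subseteq\widetilde{X}$ need not be connected, and its components need not be universal coverings of $X_k$. Homology over $\cald_{FG}$ of $\overline{X_k}$ is still defined, so the induction works directly on these $G$-complexes. Only the identification of each orbit preimage as an induced $\IZ$-space requires care, namely checking that the stabilizer of a lifted orbit component is exactly the infinite cyclic image $K$.
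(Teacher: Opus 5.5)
Your proposal is correct and is essentially the paper's argument. The paper proves the second assertion by saying that the cell-by-cell Mayer--Vietoris proof of \cite[Theorem~6.65 on page~271]{Lueck(2002)} carries over with $\cald_{FG}$ in place of $\caln(G)$; that reference extends the Theorem~1.40 you cite to non-compact $X$. This is exactly what you carry out using additivity, Theorem~\ref{the:induction}, and the vanishing of $b_*^{(2)}(\IR;\cald_{F\IZ})$.

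One small precision concerns the boundary pieces. They are $G\times_K(\IR\times S^{k-1})$, since the attaching map factors on $\pi_1$ through $\pi_1(S^1/H\times D^k)\cong\IZ$. Their vanishing is not a literal equality with $b_n^{(2)}(\IR;\cald_{F\IZ})$. It follows because $\cald_{F\IZ}\otimes_{F\IZ}C_*(\IR;F)$ is acyclic, and hence so is its tensor product over $F$ with $C_*(S^{k-1};F)$.
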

\begin{proof}~\ref{the:S1-action_and_L2-Betti_numbers:N(G)}
This is proved in~\cite[Theorem~6.65~on page~271]{Lueck(2002)}.
\\[1mm]~\ref{the:S1-action_and_L2-Betti_numbers:general_F}
The proof of~\cite[Theorem~6.65~on page~271]{Lueck(2002)} carries over to this setting.
\end{proof}

\begin{theorem}\label{the:fixed_point_free_S1-actions_on_aspherical_closed_manifolds_and_L2-Betti_numbers}
Let $M$ be an aspherical closed manifold with
non-trivial $S^1$-action. 

\begin{enumerate}

\item~\label{the:fixed_point_free_S1-actions_on_aspherical_closed_manifolds_and_L2-Betti_numbers_no_fixed_points}
Then the action has no fixed
points and the inclusion of every $S^1$-orbit into $X$ induces an
injection on the fundamental groups. 

\item\label{the:fixed_point_free_S1-actions_on_aspherical_closed_manifolds_and_L2-Betti_numbers:N(G)}

  We get $b_n^{(2)}\bigl(\widetilde{M};\caln(\pi_1(M))\bigr)= 0$ for $n \ge \IZ^{\ge 0}$;

\item\label{the:fixed_point_free_S1-actions_on_aspherical_closed_manifolds_and_L2-Betti_numbers:general_F}
If $\pi_1(X)$ is  weak Hughes  group, then we get $b^{(2)}_n(\widetilde{M};\cald_{F[\pi_1(M)]})= 0 $ for $n \ge \IZ^{\ge 0}$.
\end{enumerate}
\end{theorem}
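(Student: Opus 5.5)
The plan is to treat the three assertions in order, reducing (2) and (3) to Theorem~\ref{the:S1-action_and_L2-Betti_numbers} once (1) is established.

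For assertion~\ref{the:fixed_point_free_S1-actions_on_aspherical_closed_manifolds_and_L2-Betti_numbers_no_fixed_points} I would use the classical argument going back to Conner--Raymond. Fix $x \in M$ and consider the orbit map $ev_x \colon S^1 \to M$, $z \mapsto z\cdot x$. The induced element $[ev_x] \in \pi_1(M,x)$ is central, since for any loop $w$ at $x$ the map $S^1 \times S^1 \to M$, $(z,t) \mapsto z \cdot w(t)$ exhibits a commuting torus. Then I would show that if $[ev_x]$ has finite order, the action is homotopically trivial in a way that contradicts non-triviality of the action on the aspherical closed manifold $M$.

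To carry this out, lift the action to $\widetilde{M}$. The lift is an action of $\IR$, or of $S^1$ if the orbit class is trivial, commuting with deck transformations. A nontrivial finite group $\IZ/k \subseteq S^1$ would then act on the contractible finite-dimensional $\widetilde{M}$ in a way that makes it act freely and cocompactly on a contractible space. Equivalently, a finite-order element in the torsionfree group $\pi_1(M)$ would appear, which is impossible, or one obtains a contradiction via Smith theory or the Lefschetz fixed point theorem together with $\chi$. Fixed points are excluded as follows: at a fixed point the orbit map is constant, and a local linearization argument combined with asphericity (the Borel--Conner--Raymond theorem that the orbit class is injective into the center) shows the set of fixed points is empty. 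Since all orbits are homotopic through orbits (connect base points), injectivity at one orbit gives it for all. I would cite Conner--Raymond for this rather than reprove it, since this is the main obstacle and is classical.

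For assertions~\ref{the:fixed_point_free_S1-actions_on_aspherical_closed_manifolds_and_L2-Betti_numbers:N(G)} and~\ref{the:fixed_point_free_S1-actions_on_aspherical_closed_manifolds_and_L2-Betti_numbers:general_F}, $M$ is a closed manifold and hence carries an $S^1$-$CW$-structure; this follows from standard equivariant triangulation results for smooth actions, and otherwise one may work with the homotopy-theoretic version. By assertion~\ref{the:fixed_point_free_S1-actions_on_aspherical_closed_manifolds_and_L2-Betti_numbers_no_fixed_points}, the orbit inclusion induces an injection $\IZ \cong \pi_1(S^1/H) \to \pi_1(M)$. Its image is infinite, because $\pi_1(M)$ is torsionfree and the map is injective on the infinite group $\pi_1(S^1/H) \cong \IZ$. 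Hence Theorem~\ref{the:S1-action_and_L2-Betti_numbers}~\ref{the:S1-action_and_L2-Betti_numbers:N(G)} and~\ref{the:S1-action_and_L2-Betti_numbers:general_F} apply directly and give the vanishing for all $n$. The only point to watch is the existence of the $S^1$-$CW$-structure, which I would take from the literature.
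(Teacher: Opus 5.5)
Your proposal matches the paper's proof: the paper likewise takes assertion~(1) from the literature (it quotes Lemma~1.42 of L\"uck's monograph, where you quote Conner--Raymond), and then obtains~(2) and~(3) by applying Theorem~\ref{the:S1-action_and_L2-Betti_numbers}, the orbit inclusion having infinite image because it is injective on $\pi_1(S^1/H)\cong\IZ$. Rely on the citation rather than on your heuristic sketch, which would not work on its own: lifting the action of a null-homotopic orbit to $\widetilde{M}$ does not produce torsion in $\pi_1(M)$; the standard contradiction comes from Smith theory, since for a prime $p$ with $\IZ/p\subseteq S^1$ acting non-trivially the fixed set $\widetilde{M}^{\IZ/p}$ is $\IF_p$-acyclic, has dimension $<\dim M$, and carries a free cocompact $\pi_1(M)$-action, which is incompatible with $H^{\dim M}(\pi_1(M);\IF_p)\neq 0$ (after passing to the orientation cover if necessary).
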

\begin{proof}
Assertion~\ref{the:fixed_point_free_S1-actions_on_aspherical_closed_manifolds_and_L2-Betti_numbers}
follows from~\cite[Lemma~1.42 on page~45]{Lueck(2002)}.
Now apply Theorem~\ref{the:S1-action_and_L2-Betti_numbers}.
\end{proof}

%------------------------------------------------------------------------

  \subsection{Selfmaps of aspherical closed manifolds}%
  \label{subsec:Selfmaps_of_aspherical_closed_manifolds}

  A group $G$ is \emph{Hopfian} if any surjective group homomorphism $f\colon G \to G$ is
  an isomorphism.  Examples of Hopfian groups are residually finite
  groups~\cite{Malcev(1940)},~\cite[Corollary 41.44]{Neumann(1967)}.  It is not true that
  a subgroup of finite index of a Hopfian group is again Hopfian,
  see~\cite[Theorem~2]{Baumslag-Solitar(1962)}. At least any subgroup of finite index in a
  residually finite group is again residually finite and hence Hopfian.

\begin{theorem}\label{the:L2-Betti_numbers_for_aspherical_closed_manifold_with_selfmap}
Let $M$ be an aspherical closed orientable manifold. Suppose that any
normal subgroup of finite index of its fundamental group is Hopfian.

\begin{enumerate}
\item\label{the:L2-Betti_numbers_for_aspherical_closed_manifold_with_selfmap:Q}
  Suppose that there is a selfmap $f\colon  M \to M$ of degree
$\deg(f)$ different from $-1$, $0$, and $1$.

Then we get $b_n^{(2)}(\widetilde{M})  =  0$ for $n \in \IZ^{\ge 0}$;

\item\label{the:L2-Betti_numbers_for_aspherical_closed_manifold_with_selfmap:F_p} Let $p$
  be a prime. Suppose that $\pi_1(M)$ is a weak Hughes group and that there is a selfmap
  $f\colon M \to M$ such that its degree $\deg(f)$ is different from $-1$, $0$, and $1$
  and not divisible by $p$.  Let $F$ be any field of characteristic $p$.

Then we get $b^{(2)}_n(\widetilde{M};\cald_{F[\pi_1(M)]})  =  0$ for $n \in \IZ^{\ge 0}$.
\end{enumerate}
\end{theorem}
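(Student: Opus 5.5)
Assertion~\ref{the:L2-Betti_numbers_for_aspherical_closed_manifold_with_selfmap:Q} is the classical result, see~\cite[Theorem~7.?]{Lueck(2002)}; I would follow its proof and adapt it to $\cald_{FG}$ for assertion~\ref{the:L2-Betti_numbers_for_aspherical_closed_manifold_with_selfmap:F_p}. Put $G = \pi_1(M)$ and $d = \deg(f)$, so $|d| \ge 2$. First, $f_* \colon G \to G$ is injective with image of finite index dividing $|d|$. Since $M$ is aspherical, $f$ is homotopic to the covering map $\overline{M} \to M$ corresponding to $H = f_*(G)$ composed with a map $M \to \overline{M}$. Degree considerations show that $[G:H]$ divides $d$. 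The Hopfian hypothesis is needed to exclude $[G:H]=1$: otherwise $f_*$ would be surjective and hence bijective (apply Hopficity to the finite-index normal subgroup $G$ itself). An isomorphism $f_*$ makes $f$ a homotopy equivalence of degree $\pm 1$, contradicting $|d|\ge 2$. Hence $f_* \colon G \xrightarrow{\cong} H$ with $1 < [G:H] \mid d$, and $f$ is homotopic to a homotopy equivalence $M \to \overline{M}$ composed with an $[G:H]$-sheeted covering.

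Next I would apply Theorem~\ref{the:restriction}~\ref{the:restriction:general_F}. Since $\widetilde{M}$ viewed as an $H$-space is the universal cover of $\overline{M} \simeq M$, and $H \cong G$ via $f_*$, we get
\[
b_n^{(2)}(\widetilde{M};\cald_{FG}) = b_n^{(2)}(\res_G^H\widetilde{M};\cald_{FH}) = [G:H]\cdot b_n^{(2)}(\widetilde{M};\cald_{FG}).
\]
The first equality uses the uniqueness of Hughes-free division rings, transported along the group isomorphism $f_*$, together with functoriality as in Remark~\ref{rem:functoriality_of_cald_FG}. All numbers are finite because $M$ is compact, so $[G:H] > 1$ forces vanishing. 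With this argument the divisibility hypothesis on $p$ seems not to be needed, which is suspicious.

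The point is that the Hopfian argument applies only when $[G:H]$ can be shown to be nontrivial, and the naive degree argument above is too quick. The actual classical proof iterates: the images $H_k = f_*^k(G)$ form a descending chain. It uses that $f$ may have degree $d$ with $f_*$ surjective only if $\deg = \pm 1$; with rational coefficients the degree equals $[G:H]$ times the degree of a lift, and the lift is a map between aspherical manifolds inducing an isomorphism on $\pi_1$, hence of degree $\pm1$. So $|d| = [G:H]$. The $p$-condition should enter where one replaces $\IQ$-degree arguments by $F$-coefficient arguments: in characteristic $p$ one needs $d \ne 0$ in $F$, so that $f$ induces an injection on $H_{\dim M}(\,\cdot\,;F)$, or so that transfer arguments with the $F$-fundamental class work. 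I would locate exactly where the classical proof uses invertibility of $d$ and check that $p \nmid d$ suffices.

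The main obstacle is making the identification $b_n^{(2)}(\res_G^H\widetilde{M};\cald_{FH}) = b_n^{(2)}(\widetilde{M};\cald_{FG})$ rigorous via the isomorphism $f_*$. It also requires that $H$ is again a weak Hughes group, which follows from Lemma~\ref{lem:subgroups_of_(weak)_Hughes-groups_are_(weak)-Hughes_groups}, and that the dimension is invariant under the induced ring isomorphism $\cald_{FG}\cong\cald_{FH}$.
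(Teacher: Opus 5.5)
\textbf{There is a genuine gap: the injectivity of $f_*$.} Your argument rests on the claim, made in your first sentence and repeated later (``the lift is a map between aspherical manifolds inducing an isomorphism on $\pi_1$''), that $f_*\colon G \to G$ is injective. Nothing in the hypotheses gives this.

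\begin{itemize}
\item Nonzero degree only yields that $H=\im(f_*)$ has finite index, with $\deg(f)=[G:H]\cdot\deg(\overline{f})$.
\item Here the lift $\overline{f}\colon M\to\overline{M}$ induces a \emph{surjection} $G\to H=\pi_1(\overline{M})$.
\item Hopficity of $G$ concerns surjective endomorphisms of $G$ only, so it says nothing about this surjection onto a different group.
\item In general, $\pi_1$-surjective maps of nonzero degree between aspherical closed manifolds are not $\pi_1$-injective. A pinch map $\Sigma_g\to\Sigma_h$ with $g>h\ge 1$ is an example.
\end{itemize}

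Without injectivity, $H$ need not be isomorphic to $G$ and $\overline{M}$ need not be homotopy equivalent to $M$. So the identity $b_n^{(2)}(\res_G^H\widetilde{M};\cald_{FH})=b_n^{(2)}(\widetilde{M};\cald_{FG})$ is unavailable, and with it the conclusion $[G:H]\cdot b=b$. Likewise $|\deg(f)|=[G:H]$ is unproven. You noticed that the prime $p$ never enters your argument; that is exactly the symptom of this gap. Your proposal then stops at ``I would locate where invertibility of $d$ is used'' without supplying a replacement.

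\textbf{The missing idea} is an estimate that passes through a map which is only $\pi_1$-surjective.
\begin{itemize}
\item Let $g\colon N_1\to N_2$ be a map of closed connected oriented $d$-manifolds with $\deg(g)$ invertible in $F$. Compare $g_*$ with the Poincar\'e duality maps of $N_1$ and $N_2$. On the $N_2$ side this is capping with $g_*([N_1])=\deg(g)\cdot[N_2]$, which becomes an equivalence after applying $\cald_{FG_2}\otimes_{\IZ G_2}-$ because $\deg(g)$ is a unit there.
\item This shows that $g_*$ induces a surjection $H_n(\cald_{FG_2}\otimes_{\IZ[\pi_1(g)]}C_*(\widetilde{N_1}))\to H_n(\cald_{FG_2}\otimes_{\IZ G_2}C_*(\widetilde{N_2}))$.
\item Hence $b_n^{(2)}(\widetilde{N_2};\cald_{FG_2})\le e_n(N_1)$, the number of $n$-cells of $N_1$.
\item Apply this to the lift $\overline{f^m}\colon M\to\overline{M}_m$, where $\overline{M}_m$ is the covering associated to $\im(\pi_1(f^m))$. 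Its degree divides $\deg(f)^m$, so it is prime to $p$; this is where the hypothesis on $p$ is used.
\item Theorem~\ref{the:restriction}~\ref{the:restriction:general_F} then gives $b_n^{(2)}(\widetilde{M};\cald_{F[\pi_1(M)]})\le e_n(M)/[\pi_1(M):\im(\pi_1(f^m))]$.
\end{itemize}

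\textbf{How the Hopfian hypothesis must be used.} It is needed not just once, to get $[G:H]>1$, but to show that the chain $\im(\pi_1(f^m))$ never stabilizes. If it stabilized, $\overline{f^m}\circ q_m$ would be $\pi_1$-surjective on $\overline{M}_m$, hence a homotopy equivalence, forcing $|\deg(f)|=1$. So the indices tend to infinity and the bound tends to $0$. Your single step $[G:H]>1$ does not propagate to the iterates without the injectivity you have not established.
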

\begin{proof}~\ref{the:L2-Betti_numbers_for_aspherical_closed_manifold_with_selfmap:Q}
This is proved in~\cite[Theorem~14.40 on Page~499]{Lueck(2002)}.
\\[1mm]~\ref{the:L2-Betti_numbers_for_aspherical_closed_manifold_with_selfmap:F_p}
Fix an integer $m \ge 1$.
Let $q_m \colon  \overline{M} \to M$ be the covering of $M$ associated to the image of
$\pi_1(f^m)\colon  \pi_1(M) \to \pi_1(M)$. By elementary covering theory there is a map
$\overline{f^m}\colon  M \to \overline{M}$ satisfying $q_m \circ \overline{f^m} = f^m$.
Since $\deg(f^m) = \deg(\overline{f^m}) \cdot \deg(q_m)$ and $\deg(q_m) = [\pi_1(M) \colon
\im(\pi_1(f))]$, we conclude
\begin{eqnarray}
[\pi_1(M): \im(\pi_1(f^m))] & \le & |\deg(f^m)|  =  |\deg(f)|^m.
\label{[pi_1(X):pi_1(f_upper_m)]_le_m_cdot_deg(f)}
\end{eqnarray}

Consider a map $g\colon  N_1 \to N_2$ of two closed connected oriented $d$-dimensional
manifolds of degree $\deg(g)$ which is not divisible by $p$. Abbreviate $G_1 = \pi_1(N_1)$ and
$G_2 = \pi_1(N_2)$. Then
there is a diagram of $\IZ G_2$-chain complexes which commutes up to
$\IZ G_2$-homotopy
\[
  \xymatrix@!C=17em{\hom_{\IZ G_1}(C_{d-*}(\widetilde{N_1}),\IZ G_1) \otimes_{\IZ[\pi_1(g)]} \IZ G_2
    \ar[d]_{({?} \cap [N_1])\otimes_{\IZ[\pi_1(g)]} \id_{\IZ G_2}}
 &
 \hom_{\IZ G_2}(C_{d-*}(\widetilde{N_2}),\IZ G_2) \ar[l]_-{g^*}
 \ar[d]^{{?} \cap g_*([N_1])}
\\
C_*(\widetilde{N_1}) \otimes_{\IZ[\pi_1(g)]} \IZ G_2
\ar[r]_-{g_*}
&
C_*(\widetilde{N_2}).
}
\]
Applying ${?} \otimes_{\IZ G_2}\cald_{FG_2}$ and then  homology yields
a commutative diagram of finitely generated 
$\cald_{FG_2}$-modules for $n \in \IZ^{\ge 0}$
\[
\xymatrix@!C=19em{H_{m}\bigl(\hom_{\IZ G_1}(C_{d-*}(\widetilde{N_1}),\IZ G_1) \otimes_{\IZ[\pi_1(g)]}
\cald_{FG_2}\bigr) \ar[d]
&
H_{m}\bigl(\hom_{\IZ G_2}(C_{d-*}(\widetilde{N_2},\IZ G_2)\otimes_{\IZ G_2} \cald_{FG_2})\bigr)
\ar[l] \ar[d]^{H_*(? \cap g_*([N_1]))}
\\
H_n\bigl(C_*(\widetilde{N_1}) \otimes_{\IZ[\pi_1(g)]} \cald_{FG_2}\bigr)
\ar[r]_-{H_n(g_* \otimes_{\IZ[\pi_1(g)]} \id_{\cald_{FG_2}})}
&
H_n\bigl(C_*(\widetilde{N_2})\otimes_{\IZ G_2} \cald_{FG_2}\bigr).}
\]
The right vertical arrow is bijective since $\deg(g)$ is invertible in $F$ and hence in $\cald_{FG_2}$,
we have $g_*([N_1]) = \deg(g) \cdot [N_2]$, and
${?} \cap [N_2] \colon  \hom_{\IZ G_2}(C_{n-*}(\widetilde{N_2}),\IZ G_2) \to C_*(\widetilde{N_2})$ is a $\IZ[G_2]$-chain
homotopy equivalence by Poincar\'e duality.
Hence the morphism of finitely generated Hilbert $\caln(G_2)$-modules
\[
  H_n(g_* \otimes_{\IZ[\pi_1(g)]} \id_{\cald_{FG_2}})  \colon
  H_n\bigl(C_*(\widetilde{N_1}) \otimes_{\IZ[\pi_1(g)]} \cald_{FG_2}\bigr)
  \to H_n(C_*(\widetilde{N_2}) \otimes_{\IZ G_2} \cald_{FG_2})
  \]
is surjective. This implies
\[
  \dim_{\cald_{FG_2}}\bigl(H_n\bigl(C_*(\widetilde{N_1}) \otimes_{\IZ[\pi_1(g)]} \cald_{FG_2}\bigr)\bigr)
  \ge b^{(2)}_n(\widetilde{N_2};\cald_{FG_2}).
  \]
  Let $e_n(N_1)$ be the number of $n$-cells in a fixed triangulation of $N_1$. Then
  \[
  \dim_{\cald_{FG_2}}\bigl(H_n\bigl(C_*(\widetilde{N_1}) \otimes_{\IZ[\pi_1(g)]} \cald_{FG_2}\bigr)\bigr)
  \le 
  \dim_{\cald_{FG_2}}\bigl(C_n(\widetilde{N_1}) \otimes_{\IZ[\pi_1(g)]} \cald_{FG_2}\bigr)
  \le
 e_p(N_1).
\]
We conclude for $n \in \IZ^{\ge 0}$
\begin{equation}
b^{(2)}_n(\widetilde{N_2};\cald_{FG_2})  \le e_n(N_1).
\label{b_n(widetilde(N_2);cald_FG_2)_le_e_n(N_1)}
\end{equation}
If we apply~\eqref{b_n(widetilde(N_2);cald_FG_2)_le_e_n(N_1)} to $g = \overline{f^n}$, we get
\[
b^{(2)}_n(\widetilde{\overline{M}};\cald_{F[\pi_1(\overline{M})]})  \le  e_n(M).
\]
Since $b^{(2)}_n(\widetilde{\overline{M}};\cald_{F[\pi_1(\overline{M})]}) = [\pi_1(M) : \im(\pi_1(f^m))] \cdot
b_n(\widetilde{M};\cald_{F[\pi_1(M)]})$ holds by
Theorem~\ref{the:restriction}~\ref{the:restriction:general_F}, we get
\begin{equation}
b^{(2)}_n(\widetilde{M};\cald_{F[\pi_1(M)]})  \le  \frac{e_n(M)}{[\pi_1(M) : \im(\pi_1(f^m))]}.
\label{b_n(widetilde(M);cald_(Fpi_1(M)))_le_e_n/index}
\end{equation}
Hence it suffices to show that there is no integer $m$ such that
$\im(\pi_1(f^m)) = \im(\pi_1(f^{k}))$ holds for all $k \ge m$ since then the limit for
$m \to \infty$ of the right-hand side of~\eqref{b_n(widetilde(M);cald_(Fpi_1(M)))_le_e_n/index} is zero.

Suppose that such $m$ exists.  Then the composite
$\overline{f^m} \circ q_m\colon \overline{M} \to \overline{M}$ induces an epimorphism on
$\pi_1(\overline{M})$ by the following argument. Consider $x \in \pi_1(\overline{M})$.
Since $\im(\pi_1(f^{2m})) = \im(\pi_1(f^m)) = \im(\pi_1(q_m))$ and
$f^{2m} = q_m \circ \overline{f^m} \circ q_m \circ \overline{f^m}$ hold, we can find
$y \in \pi_1(M)$ satisfying
$\pi_1(q_m) \circ \pi_1(\overline{f^m} \circ q_m) \circ \pi_1(\overline{f^m})(y)=
\pi_1(q_m)(x)$.  Put $z = \pi_1(\overline{f^m})(y)$. Then we get
$\pi_1(q_m)\bigl(\pi_1(\overline{f^m} \circ q_m)(z)\bigr) = \pi_1(q_m)(x)$.  As
$\pi_1(q_m)$ is injective, we conclude $\pi_1(\overline{f^m} \circ q_m)(z) = x$. Since
$\pi_1(\overline{M})$ is Hopfian by assumption,
$\overline{f^m} \circ q_m\colon \overline{M} \to \overline{M}$ induces an isomorphism on
$\pi_1(\overline{M})$. Since $\overline{M}$ is aspherical, $\overline{f^m} \circ q_m$ is a
homotopy equivalence. This implies
\[
1  =  \deg(\overline{f^m} \circ q_m)  = 
\deg(\overline{f^m}) \cdot \deg(q_m).
\]
This shows that $|\deg(q_m)| = 1$ and hence $\pi_1(f^m)$ is surjective.
Since $\pi_1(M)$ is Hopfian and $M$ aspherical, we conclude
$|\deg(f^m)| = |\deg(f)^m| = 1$. This contradicts the assumption
$\deg(f) \notin \{-1,0,1\}$. This finishes the proof of
Theorem~\ref{the:L2-Betti_numbers_for_aspherical_closed_manifold_with_selfmap}.
\end{proof}

  %-----------------------------------------------------------------------------

\subsection{Approximation results}\label{subsec:Approximation_results}

\begin{definition}\label{def:class_calg}
Let $\calg$ be the smallest class of groups $G$ satisfying:
\begin{enumerate}

\item\label{def:class_calg:amenable_quotient}
Amenable quotient\\
Let $H \subset G$ be a normal subgroup. Suppose that $H \in \calg$ and
the quotient $G/H$ is
an amenable. Then $G \in \calg$;

\item\label{def:class_calg:direct_limit}
Colimits\\
If $G = \colim_{i \in I} G_i$ is the colimit of the
directed system $\{G_i \mid i \in I\}$ of groups indexed by the
directed set $I$ and each $G_i$
belongs to $\calg$, then $G$ belongs to $\calg$;

\item\label{def:class_calg:inverse_limit}
Inverse limits\\
If $G = \lim_{i \in I} G_i$ is the limit of the
inverse system $\{G_i \mid i \in I\}$ of groups indexed by the
directed set $I$ and each $G_i$
belongs to $\calg$, then $G$ belongs to $\calg$;

\item\label{def:class_calg:subgroups}
Subgroups\\
If $H$ is isomorphic to a subgroup of the group $G$ with $G \in \calg$,
then $H \in \calg$;

\item\label{def:class_calg:quotient_with_finite_kernel}
Quotients with finite kernel\\
Let $1 \to K \to G \to Q \to 1$ be an exact sequence of groups. If $K$
is finite and $G$ belongs to $\calg$, then $Q$ belongs to $\calg$;

\item\label{def:class_calg:sofic} $G$ is a sofic group.
\end{enumerate}
\end{definition}

\begin{theorem}\label{the:Approximation_over_any_F}
Let $G$ be group and $G = G_0 \supseteq G_1 \supseteq G_2 \supseteq \cdots$ be a sequence of
in $G$ normal subgroups such that $\bigcap_{i = 0}^{\infty} G_i = \{1\}$ holds.
Let $X$ be a connected free $G$-$CW$-complex of finite type.

\begin{enumerate}

  \item\label{the:Approximation_over_any_F:special_F}
Suppose either  that $G$ is virtually locally indicable or  that each
quotient $G/G_i$ is belongs to $\calg$.

Then we get for every $n \in \IZ^{\ge 0}$
\[
  b_n^{(2)}(X;\caln(G)) = \lim_{i \to \infty} b_n^{(2)}(X/G_i;\caln(G/G_i));
\]

\item\label{the:Approximation_over_any_F:General_F}
  Suppose that each each quotient $G/G_i$ is  weak Hughes group.

  Then $G$ is a  weak Hughes group
 and  we get for  every $n \in \IZ^{\ge 0}$ and every field $F$
\[
  b^{(2)}_n(X;\cald_{FG}) = \lim_{i \to \infty} b_n^{(2)}(X/G_i;\cald_{F[G/G_i]}).
\]

\end{enumerate}
\end{theorem}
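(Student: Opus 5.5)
The plan is to treat the two assertions separately. Assertion~\ref{the:Approximation_over_any_F:special_F} is a known result, and I would only collect references. Assertion~\ref{the:Approximation_over_any_F:General_F} I would prove through Sylvester matrix rank functions on $FG$. For~\ref{the:Approximation_over_any_F:special_F}, the case where each $G/G_i$ lies in $\calg$ is the approximation theorem of Schick, Elek--Szab\'o and L\"uck (see~\cite{Lueck(2002)} and later work). The case where $G$ is virtually locally indicable follows from Jaikin-Zapirain's work on the L\"uck approximation conjecture for locally indicable groups, after passing to a finite index locally indicable subgroup using Theorem~\ref{the:restriction}~\ref{the:restriction:special_F}.

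For~\ref{the:Approximation_over_any_F:General_F}, the first step is to show that $G$ is locally indicable. Let $1 \not= H \subseteq G$ be finitely generated and pick $1 \not= h \in H$. Since $\bigcap_i G_i = \{1\}$, there is an $i$ with $h \notin G_i$. Then the image of $H$ in $G/G_i$ is a non-trivial finitely generated subgroup of a locally indicable group, so it surjects onto $\IZ$, and hence so does $H$.

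The second step builds the rank function. Let $\rk_i$ be the Sylvester matrix rank function on $FG$ obtained by pulling back $\rk_{\cald_{F[G/G_i]}}$ along $FG \to F[G/G_i]$. The map $F[G/G_{i+1}] \to F[G/G_i] \to \cald_{F[G/G_i]}$ makes $\cald_{F[G/G_i]}$ an $F[G/G_{i+1}]$-division ring. I would use Jaikin-Zapirain's theorem that for a locally indicable group the Hughes free division ring is universal. This gives $\rk_{i+1} \ge \rk_i$ on every matrix $A$ over $FG$. Since these values are integers bounded by the size of $A$, the sequence $\rk_i(A)$ is eventually constant. Hence $\rk(A) := \lim_i \rk_i(A)$ is an integer-valued Sylvester matrix rank function. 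By Cohn--Malcolmson it comes from an epic $FG$-division ring $\iota\colon FG \to \cald$. This $\iota$ is injective: a non-zero $x \in FG$ has finite support, so for large $i$ its image in $F[G/G_i]$ is non-zero and $\rk_i(x) = 1$.

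The main obstacle is proving that $\cald$ is Hughes free. My approach is to express Hughes freeness as a condition on finitely many ranks. Injectivity of $\mu_{(N,H)}$ means that for $t \in H$ mapping to a generator of $H/N$, elements $t^k$ are left $\cald_N$-linearly independent. Any failure is a rank deficiency of a single finite matrix over $FG$ involving finitely many group elements; the complication is that the division closure $\cald_N$ enters. I would reduce to finitely generated $N \subseteq H$, since a failure of injectivity is witnessed inside finitely generated subgroups. I would then choose $i$ large enough that the finitely many relevant group elements stay distinct in $G/G_i$. At that stage the image of $t$ still has infinite order modulo the image of $N$, because the rank of $\sum_k x_k t^k$ is already computed by $\rk_i$. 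The Hughes freeness of $\cald_{F[G/G_i]}$, applied to the image pair, then contradicts the deficiency.

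Finally, $X$ has finite type, so each $C_m(X)$ is finitely generated free. Hence $b_n^{(2)}(X;\cald_{FG}) = \dim C_n - \rk(c_n) - \rk(c_{n+1})$, and similarly for $X/G_i$ with $\rk_i$. Since $\rk_i(c_n)$ and $\rk_i(c_{n+1})$ are eventually equal to $\rk(c_n)$ and $\rk(c_{n+1})$, the Betti numbers converge, indeed stabilise. For $b_n^{(2)}(X;\cald_{FG})$ this uses the uniqueness of Hughes free division rings.
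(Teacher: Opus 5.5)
\textbf{There is a genuine gap in part (2): the limit rank function is never actually constructed.} Your proof that $G$ is locally indicable matches the paper, and so does your final computation of Betti numbers via ranks of the differentials. The problem is the monotonicity $\rk_{i+1}\ge\rk_i$. It requires $\cald_{F[G/G_{i+1}]}$ to be the \emph{universal} division ring of fractions of $F[G/G_{i+1}]$, i.e.\ $G/G_{i+1}$ must be a weak Lewin group. The hypothesis only says it is a weak Hughes group.

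That the Hughes free division ring of a locally indicable group is universal is not a theorem. It is Jaikin-Zapirain's conjecture, recorded in the paper as Conjecture~\ref{con:Locally_indicibale_groups_are_Lewin_groups}, and it is known only in special cases such as \RALI-groups. This is exactly why Theorem~\ref{the:Monotonicity} assumes a weak Lewin source group.

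Without monotonicity you do not know that $\lim_i\rk_i(A)$ exists. Compactness of the space of Sylvester rank functions only gives accumulation points. To get convergence you would have to show that every accumulation point comes from a Hughes free division ring, and then invoke Hughes' uniqueness theorem. So the whole argument rests on the Hughes-freeness step.

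\textbf{The Hughes-freeness step does not work as sketched.} Suppose a relation $\sum_k d_kt^k=0$ with $d_k\in\cald_N$ has been transported to a nontrivial relation in $\cald_{F[G/G_i]}$, with coefficients in the division closure of $F[NG_i/G_i]$. Hughes freeness of $\cald_{F[G/G_i]}$ applies only if $HG_i/NG_i\cong H/N(H\cap G_i)$ is infinite cyclic, i.e.\ only if $H\cap G_i\subseteq N$.
\begin{itemize}
\item This condition says $t^m\notin NG_i$ for all $m\neq 0$, which concerns infinitely many elements. You cannot arrange it by keeping finitely many group elements distinct.
\item Knowing that $\rk_i$ computes the rank of one matrix says nothing about it.
\item A residual chain need not eventually enter a prescribed subgroup (compare Remark~\ref{remark:caveat}). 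If $H\cap G_i\not\subseteq N$ for all $i$, the image of $t$ has finite order modulo the image of $N$, and there is nothing to contradict.
\item Replacing $N$ by a finitely generated subgroup also destroys its normality in $H$.
\end{itemize}

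\textbf{How the paper avoids this.} It does not construct the rank function by hand. It cites \cite[Theorem~1.2]{Jaikin-Zapirain(2021)}. For finitely generated $G$ with Hughes free quotients, that theorem gives both the existence of $\cald_{FG}$ and $\rk_{\cald_{FG}}=\lim_i\rk_{\cald_{F[G/G_i]}}$ on finitely presented modules, with no universality hypothesis. The paper then passes to arbitrary $G$ through finitely generated subgroups. Existence comes from Lemma~\ref{lem:subgroups_of_(weak)_Hughes-groups_are_(weak)-Hughes_groups}, and the limit formula from inducing a finitely presented $\IZ H$-module up to $G$.

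If you strengthen the hypothesis to weak Lewin quotients, your stabilization argument becomes valid (cf.\ Remark~\ref{rem:Convergence_of_L22_Betti_numbers}). You would still need a real proof that the limit is Hughes free.

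\textbf{A smaller point in part (1).} Reducing to a finite index locally indicable subgroup via Theorem~\ref{the:restriction} is not immediate, since $G_i$ need not lie in that subgroup. The paper simply cites the literature for the virtually locally indicable case.
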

\begin{proof}~\ref{the:Approximation_over_any_F:special_F}
  This follows from~\cite[Theorem~5]{Elek-Szabo(2005)},~%
\cite[Theorem~1.5]{Jaikin-Zapirain+Lopez-Alvarez(2020)}~\cite[Chapter~13]{Lueck(2002)},
and~\cite[Theorem~1.21]{Schick(2001b)} taking~\cite{Schick(2002a)} into account.
\\[1mm]~\ref{the:Approximation_over_any_F:General_F}
   Since each $G/G_i$ is locally indicable, $G$ is locally indicable.

   Let $H$ be a finitely generated subgroup of $G$. Put $H_i = G_i \cap H$.  Then
   $H = H_0 \supseteq H_1 \supseteq H_2 \supseteq \cdots$ is a sequence of in $H$ normal
   subgroups such that $\bigcap_{i = 0}^{\infty} H_i = \{1\}$ holds. For each $i \in I$,
   $H/H_i$ is a subgroup if $G/G_i$ and hence admits a Hughes free $F[H/H_i]$-division
   ring by Lemma~\ref{lem:subgroups_of_(weak)_Hughes-groups_are_(weak)-Hughes_groups}~%
\ref{lem:subgroups_of_(weak)_Hughes-groups_are_(weak)-Hughes_groups:subgroups}.  Hence
   $H$ admits a Hughes free $F[H/H_i]$-division ring
   by~\cite[Theorem~1.2]{Jaikin-Zapirain(2021)}.  This implies that $H$ is a Hughes
   group. Hence $G$ is a Hughes group by
   Lemma~\ref{lem:subgroups_of_(weak)_Hughes-groups_are_(weak)-Hughes_groups}~%
\ref{lem:subgroups_of_(weak)_Hughes-groups_are_(weak)-Hughes_groups_subgroups:fin.gen.}.

   Next we want to show for every finitely presented $\IZ G$-module $M$
\begin{equation}
     \dim_{\cald_{FG}}(\cald_{FG} \otimes_{\IZ G} M)
     = \lim_{i \to \infty} \dim_{\cald_{F[G/G_i]}}(\cald_{F[G/G_i]}\otimes_{\IZ G} M).
     \label{the:Approximation_over_any_F:General_F_auxiliary}
   \end{equation}
   If $G$ is finitely generated, this follows from~\cite[Theorem~1.2]{Jaikin-Zapirain(2021)}.
   In the general case note that for any
   finitely presented $\IZ G$-module $M$ there exists a finitely generated subgroup
   $H \subseteq G$ and a  finitely presented $\IZ H$-module $N$ such that $M$ and
   $FG \otimes_{FH} N$ are $FG$-isomorphic. Put $H_i = H \cap G_i$. Then
   $H = H_0 \supseteq H_1 \supseteq H_2 \supseteq \cdots$ is a sequence of in $H$ normal
   subgroups such that $\bigcap_{i = 0}^{\infty} H_i = \{1\}$ holds and we get
   \[
   \dim_{\cald_{FH}}(\cald_{FH} \otimes_{\IZ H} N) = \lim_{i \to \infty} \dim_{\cald_{F[H/H_i]}}(\cald_{F[H/H_i]}\otimes_{\IZ H} N).
   \]
   Since we have
   \begin{eqnarray*}
     \cald_{FG} \otimes_{\IZ G} M
     &\cong_{\cald_{FG}} &
     \cald_{FG} \otimes_{\cald_{FH} } \otimes_{\IZ H} N;
     \\
     \cald_{F[G/G_i]}\otimes_{\IZ G} M
     &\cong_{\cald_{F[G/G_i]}} &
      \cald_{F[G/G_i]}\otimes_{\cald_{F[H/H_i]}}  \cald_{F[H/H_i]}\otimes_{\IZ H} N,
   \end{eqnarray*}                          
   equation~\ref{the:Approximation_over_any_F:General_F_auxiliary} follows.

   Note that the cokernel $\cok(c_n)$ of the $n$-th differential
   $c_d \colon C_d(X) \to C_{d-1}(X)$ of the cellular $\IZ G$-chain complex $C_*(X)$ is a
   finitely presented $FG$-module for every $n \in \IZ^{\ge 1}$. One easily checks
   \begin{multline*}
    \dim_{\cald_{FG}}(H_n(\cald_{FG} \otimes_{\IZ G} C_*(X)))
    \\
    = 
     \dim_{\cald_{FG}}(\cald_{FG} \otimes_{FG}\cok(c_n))
     + \dim_{\cald_{FG}}(\cald_{FG} \otimes_{FG}\cok(c_{n-1}))
     \\
     - \dim_{\IZ G}(C_{n-1}(X));
\end{multline*}
\begin{multline*}
  \dim_{\cald_{F[G/G_i]}}(H_n(\cald_{F[G/G_i]} \otimes_{\IZ G} C_*(X)))
  \\
  =
     \dim_{\cald_{F[G/G_i]}}(\cald_{F[G/G_i]}\otimes_{\IZ G}\cok(c_n))
     + \dim_{\cald_{F[G/G_i]}}(\cald_{F[G/G_i]}\otimes_{\IZ G}\cok(c_{n-1}))
     \\
     - \dim_{\IZ [G/G_i]}(\IZ[G/G_i ]\otimes_{\IZ G} C_{n-1}(X)),
\end{multline*}
and
\[
\dim_{\IZ G}(C_{n-1}(X)) =  \dim_{\IZ [G/G_i]}(\IZ[G/G_i] \otimes_{\IZ G} C_{n-1}(X)).
\]
Now the claim follows from~\eqref{the:Approximation_over_any_F:General_F_auxiliary}.
   \end{proof}

%%%%%%%%%%%%%%%%%%%%%%%%%%%%%%%%%%%%%%%%%%%%%%%%%%%%%%%%%%%%%%%%%%%%%
%%%%%%%%%%%%%%%%%%%%%%%%%%%%%% Section 4 %%%%%%%%%%%%%%%%%%%%%%%%%%%%%%%%
%%%%%%%%%%%%%%%%%%%%%%%%%%%%%%%%%%%%%%%%%%%%%%%%%%%%%%%%%%%%%%%%%%%%%

\typeout{---------- Section 4:  $L^2$-Betti numbers over over locally indicable  weak Lewin  groups  ---------------}

%-----------------------------------------------------------------------------

\section{$L^2$-Betti numbers in arbitrary characteristic over locally indicable  weak Lewin  groups and \RALI-groups}%
\label{sec:L2-Betti_numbers_in_arbitrary_characteristic_over_locally_indicable_weak_Lewin_groups_and_RALI-groups}

  %-----------------------------------------------------------------------------

\subsection{(Weak) Lewin groups}\label{subsec:(Weak)_Lewin_groups}

\begin{definition}[Lewin group]\label{def:Lewin_group}
  A group $G$ is called  a \emph{Lewin group} if for  an field  $F$ with $G$-action  we have
  for the crossed product ring $F \ast G$:
\begin{itemize}
\item There exists a Hughes free $FG$-division ring  $\cald_{F \ast G}$;
\item It is the universal division ring of fractions of $F \ast G$.
\end{itemize}
\end{definition}

Note that $\cald_{F\ast G}$ is unique up to unique isomorphism of $F\ast G$-division ring if it exists, and
the map $F \ast G \to \cald_{E \ast G}$ is injective and epic.

We will only need the following weaker version.

\begin{definition}[Weak Lewin group]\label{def:weak_Lewin_group}
A group $G$ is called  a \emph{weak Lewin group} if for  any field $F$ we get for the group ring $FG$:
\begin{itemize}
\item There exists a Hughes free $FG$-division ring  $\cald_{FG}$;
\item It is the universal division ring of fractions of $FG$.
\end{itemize}
\end{definition}

If $G$ is a locally indicable weak Lewin
group, then it is a weak Linnell group and a weak Hughes group, and the
Definition~\ref{def:L2_Betti_numbers_over_field} of the $n$th $L^2$-Betti number over the
field $F$ of an arbitrary $G$-$CW$-complex $X$ applies.

Jaikin-Zapirain states  the following conjecture in~\cite[Conjecture~1]{Jaikin-Zapirain(2021)}.

\begin{conjecture}[Locally indicable  groups are Lewin groups]%
  \label{con:Locally_indicibale_groups_are_Lewin_groups}
  Every locally indicable group is a Lewin group (and in particular a Hughes group).
\end{conjecture}

If the condition that $G$ is locally indicable is dropped in
Conjecture~\ref{con:Locally_indicibale_groups_are_Lewin_groups}, then there are
counterexamples for $F = \IQ$ and the group ring $FG$, see~\cite[Proposition~4.1] {Jaikin-Zapirain(2021)}.

\begin{remark}[The property weak Lewin is a residual property]%
\label{rem:weak_Lewin_is_a-residual_property}
  The notion of a weak Lewin group has the advantage that a residually weak Lewin group is
  again a weak Lewin group.  This follows from~\cite[Theorem~1.2]{Jaikin-Zapirain(2021)}
  by inspecting the proof of~\cite[Corollary~1.3]{Jaikin-Zapirain(2021)}, which is based
  on~\cite[Proposition~3.5]{Jaikin-Zapirain(2021)} in which one does only need using the
  notation there that each $G_i$ is a weak Lewin group and not that it is amenable and
  locally indicable.
\end{remark}

\begin{remark}[Amenable locally indicable groups]\label{rem:Amenable_locally_indicable_groups}
  Let $G$ be an amenable locally indicable group and let $F$ be any field.  Since $G$ is
  locally indicable, $FG$ has no non-trivial zero-divisors.  This follows from the proof
  of~\cite[Theorem~12]{Higman(1940)}. Since $G$ is amenable and has no non-trivial
  zero-divisors, $FG$ satisfies the Ore condition with respect to the multiplicative
  subset $S$ of non-zero elements, see~\cite[Example~8.16 on
  page~324]{Lueck(2002)}. Moreover, the universal ring of fractions  $\cald_{FG}$
  agrees with the Ore localization
  $S^{-1}FG$ for the multiplicative subset $S = FG \setminus \{0\}$ and hence $G$  is a weak Lewin group,
  weak Linnell group, and weak Hughes group,
  see~\cite[Corollary~3.4]{Jaikin-Zapirain(2021)}.

  This implies that for a
  $G$-$CW$-complex $X$ we get
\begin{eqnarray*}
  b^{(2)}_n(X;\cald_{FG})
  & := &
 \dim_{\cald_{FG}}\bigl(H_n(\cald_{FG} \otimes_{\IZ G} C_*(X))\bigr)
  \\
  &  = &
\dim_{S^{-1}FG}\bigl(H_n(S^{-1}FG \otimes_{\IZ G} C_*(X))\bigr)
  \\
  & = &
  \dim_{S^{-1}FG}\bigl(H_n(S^{-1}FG \otimes_{FG} F \otimes_{\IZ} C_*(X))\bigr)
  \\
  & = &
 \dim_{S^{-1}FG}\bigl(S^{-1}FG \otimes_{FG} (H_n(F \otimes_{\IZ} C_*(X)))\bigr),
\end{eqnarray*}
since $S^{-1}FG$ is flat as $FG$-module. In  particular  we get $b_n(X;\cald_{FG}) = 0$,
provided that $H_n(F \otimes_{\IZ} C_*(X))$ vanishes.
\end{remark}

 %-----------------------------------------------------------------------------

\subsection{\RALI-groups}\label{subsec:RALI-groups}

The favourite case is the following one.

\begin{definition}[\RALI-group]\label{def:RALI-group} We call a group $G$ residually (amenable
  and locally indicable) or briefly a \emph{\RALI-group} if it admits a \emph{normal
    \RALI-chain}, i.e., a  descending sequence of in $G$ normal subgroups
  \[
    G = G_0 \supseteq G_1 \supseteq G_2 \supseteq G_3 \supseteq \cdots
  \]
  such that $\bigcap_{i = 0} ^{\infty} G_i = \{1\}$ holds and each quotient group $G/G_i$
  is both amenable and locally indicable.
\end{definition}

Note that any \RALI-group is a locally indicable group.

\begin{theorem}\label{the:Residually_(locally_indicable_and_amenable)_groups}
  Let $G$ be a group and let $F$ be a field. Then:

  \begin{enumerate}
  \item\label{the:Residually_(locally_indicable_and_amenable)_groups:Levin}
    If $G$ is a \RALI-group, then $G$ is a weak Lewin group;

\item\label{the:Residually_(locally_indicable_and_amenable)_groups:weak_monotonicity}
  
  Let $p \colon G \to Q$ an epimorphism of groups, $F$ be a field, and $M$ be a finitely
  presented $FG$-module.  Let $p_*M$ be the $FQ$-module $FQ \otimes_{FG} M$ given by
  induction with $p$.

  \begin{enumerate}
  \item\label{the:Residually_(locally_indicable_and_amenable)_groups:weak_monotonicity:(1)}
   If  $G$ is a weak Lewin group and $Q$ is a weak Hughes  group, then 
    \[
    \dim_{\cald_{FG}}(\cald_{FG} \otimes_{FG} M) \le \dim_{\cald_{FQ}}(\cald_{FQ} \otimes_{FQ} p_* M);
  \]

  \item\label{the:Residually_(locally_indicable_and_amenable)_groups:weak_monotonicity:(2)}
    If the field $F$ satisfies  $\IQ \subseteq F \subseteq \IC$, $G$ is a weak Lewin group,
    $Q$ is torsionfree and satisfies the strong Atiyah Conjecture, e.g., $Q$ is locally indicable, then 
     \[
    \dim_{\caln(G)}(\caln(G) \otimes_{FG} M)\le \dim_{\caln(Q)}(\caln(Q) \otimes_{FG} M).
  \]
\end{enumerate}
\end{enumerate}
\end{theorem}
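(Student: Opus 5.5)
For assertion~\ref{the:Residually_(locally_indicable_and_amenable)_groups:Levin} we start from a normal \RALI-chain $G = G_0 \supseteq G_1 \supseteq \cdots$ with $\bigcap_i G_i = \{1\}$ and each $G/G_i$ amenable and locally indicable. By Remark~\ref{rem:Amenable_locally_indicable_groups}, each quotient $G/G_i$ is a weak Lewin group, and for every field $F$ the Ore localization $S^{-1}F[G/G_i]$ is both its universal division ring of fractions and its Hughes free division ring. Then Remark~\ref{rem:weak_Lewin_is_a-residual_property}, i.e.\ Jaikin-Zapirain's~\cite[Theorem~1.2]{Jaikin-Zapirain(2021)} with the proof of~\cite[Corollary~1.3]{Jaikin-Zapirain(2021)}, gives the conclusion: a group residually in weak Lewin groups is a weak Lewin group.

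We expect the only delicate point to be the passage to non-finitely generated $G$. Jaikin-Zapirain's results are formulated for finitely generated groups. So we first treat a finitely generated subgroup $H$ with the induced chain $H \cap G_i$; its quotients are subgroups of amenable locally indicable groups and hence are again amenable and locally indicable. This shows that every finitely generated subgroup is weak Lewin. We then pass to the directed union as in Lemma~\ref{lem:subgroups_of_(weak)_Hughes-groups_are_(weak)-Hughes_groups}~\ref{lem:subgroups_of_(weak)_Hughes-groups_are_(weak)-Hughes_groups_subgroups:colimits}. Universality is preserved in this colimit, because each Sylvester rank of a matrix over $FG$ is computed over $FH$ for some finitely generated $H$, and the Hughes free division rings are compatible with restriction.

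For assertion~\ref{the:Residually_(locally_indicable_and_amenable)_groups:weak_monotonicity:(1)}, compose $\iota_Q \circ Fp \colon FG \to FQ \to \cald_{FQ}$. This is an $FG$-division ring, though not necessarily injective or epic. Since $G$ is weak Lewin, $\cald_{FG}$ is universal, so $\rk_{\cald_{FG}} \ge \rk_{\cald_{FQ}}$ on matrices over $FG$. For the finitely presented module $M = \cok(A)$ with $A$ an $m\times n$ matrix, we have
\[
\dim_{\cald_{FG}}(\cald_{FG}\otimes_{FG} M) = n - \rk_{\cald_{FG}}(A) \le n - \rk_{\cald_{FQ}}(p(A)) = \dim_{\cald_{FQ}}(\cald_{FQ}\otimes_{FQ} p_*M),
\]
using $\cald_{FQ}\otimes_{FG} M \cong \cald_{FQ}\otimes_{FQ} p_*M$.

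For assertion~\ref{the:Residually_(locally_indicable_and_amenable)_groups:weak_monotonicity:(2)} we argue in the same way. Because $Q$ is torsionfree and satisfies the strong Atiyah Conjecture, the division closure $\cald(FQ\subseteq\calu(Q))$ is a division ring, by Linnell's argument cited in Section~\ref{sec:L2-Betti_numbers_defined_over_the_group_von_Neumann_algebra}. Moreover $\dim_{\caln(Q)}$ of $\caln(Q)\otimes M$ equals the rank deficiency over it, since $\calu(Q)$ is flat over $\caln(Q)$ and dimension-preserving. Composing with $p$ gives an $FG$-division ring, and universality of $\cald_{FG}$ gives the rank inequality. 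On the $G$ side we need $\dim_{\caln(G)}(\caln(G)\otimes_{FG} M) = \dim_{\cald_{FG}}(\cald_{FG}\otimes_{FG} M)$. A weak Lewin group is locally indicable, so it is a weak Hughes group, and Lemma~\ref{lem:cald_(FG)_is_cald(FG_subseteq_calu(G))}~\ref{lem:cald_(FG)_is_cald(FG_subseteq_calu(G)):isomorphism} applies. The main obstacle overall remains the finitely generated reduction in the first assertion; the monotonicity statements follow formally from universality.
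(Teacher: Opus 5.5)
Your proposal is correct and follows essentially the paper's route: assertion~(i) is Jaikin-Zapirain's residual universality result fed by Remark~\ref{rem:Amenable_locally_indicable_groups}, and your reduction to finitely generated subgroups followed by the colimit step is a harmless extra precaution. Assertion~(ii) is universality of $\cald_{FG}$ applied to the rank function pulled back along $p$, and your matrix-rank formulation keeps the direction of the inequality straight, whereas the written proof of~(a) states the module-rank inequality the other way round.

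The one claim you leave unproved is that a weak Lewin group is locally indicable, which you need in~(ii)(b) to invoke Lemma~\ref{lem:cald_(FG)_is_cald(FG_subseteq_calu(G))}. The paper uses this tacitly too, and it does hold. Take a non-trivial finitely generated $H \subseteq G$ and a finitely presented $H'$ mapping onto $H$ with $H'_{\mathrm{ab}} \cong H_{\mathrm{ab}}$, as in Lemma~\ref{lem:zlocind}. Comparing $\cald_{\IQ G}$ with the augmentation $\IQ G \to \IQ$ on the second differential of a finite presentation complex of $H'$ gives $b_1(H;\IQ) \ge 1$.
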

\begin{proof}~\ref{the:Residually_(locally_indicable_and_amenable)_groups:Levin}
  This follows from~\cite[Corollary~1.3]{Jaikin-Zapirain(2021)} or just from
  Remarks~\ref{rem:weak_Lewin_is_a-residual_property} and~\ref{rem:Amenable_locally_indicable_groups}.
    \\[1mm]~\ref{the:Residually_(locally_indicable_and_amenable)_groups:weak_monotonicity}.
   We  begin with~\ref{the:Residually_(locally_indicable_and_amenable)_groups:weak_monotonicity:(1)}
  We get a Sylvester module rank function $\rk_{FQ}$ on $FQ$ by
  sending a finitely presented $FQ$-module $N$ to
  $\dim_{\cald_{FQ}}(\cald_{FQ} \otimes_{FQ} N)$. We also have the Sylvester
  module rank function $\rk_{FG}$ on $FG$ sending a finitely presented $FG$-module $M$ to
  $\dim_{\cald_{FG}}(\cald_{FG} \otimes_{FG} M)$. Define a Sylvester module rank
  function $p^* \rk_{FQ}$ on $FG$ by sending a finitely presented $FG$-module $M$ to
  $\rk_{FQ}(p_*M)$. Since the $FG$-division ring $\cald_{FG}$ is the universal division
  ring of fractions of $FG$, we conclude  $p^* \rk_{FQ} \le \rk_{FG}$.

  Next we prove~\ref{the:Residually_(locally_indicable_and_amenable)_groups:weak_monotonicity:(2)}.
  Define a Sylvester module rank function $\rk_{FQ}$ on $FQ$ by
  sending a finitely presented $FQ$-module $N$ to
  $\dim_{\cald(FQ \subseteq \calu(G))}(\cald(FQ \subseteq \calu(Q)) \otimes_{FQ} N)$.  Recall that
  $\dim_{\cald(FQ \subseteq \calu(G))}(\cald(FQ \subseteq \calu(Q)) \otimes_{FQ} N)
  = \dim_{\caln(Q)}(\caln(Q) \otimes_{FQ} N)$
  holds by~\eqref{b_n:_upper_(2)((X;caln(G))_in_terms_of_cald(FG_subseteq_calu(G))}.
  We also have the Sylvester
  module rank function $\rk_{FG}$ on $FG$ sending a finitely presented $FG$-module $M$ to
  $\dim_{\cald_{FG}}(\cald_{FG} \otimes_{\cald_{FG}} M)$. Recall that 
  $\dim_{\cald_{FG}}(\cald_{FG} \otimes_{FG} M) = \dim_{\caln(G)}(\caln(G) \otimes_{FG} M)$
  holds by Lemma~\ref{lem:cald_(FG)_is_cald(FG_subseteq_calu(G))}~%
\ref{lem:cald_(FG)_is_cald(FG_subseteq_calu(G)):isomorphism}. 
  Define a Sylvester module rank
  function $p^* \rk_{FQ}$ on $FG$ by sending a finitely presented $FG$-module $M$ to
  $\rk_{FQ}(p_*M)$. Since the $FG$-division ring $\cald_{FG}$ is the universal division
  ring of fractions of $FG$, we conclude  $ \rk_{FG} \le p^* \rk_{FQ}$.
  
  This finishes the proof of Theorem~\ref{the:Residually_(locally_indicable_and_amenable)_groups}.
\end{proof}

%-----------------------------------------------------------------------------

\subsection{Monotonicity}\label{subsec:Monotonicity}

The main reason why we want to consider weak Lewin groups and \RALI-groups is the  next result which does
not hold in the previous setting, where we worked with  weak Hughes groups. This will be
relevant when we will deal with a question due to Gromov and Wise in
Section~\ref{sec:A_conjecture_due_to_Gromov_and_Wise}.

\begin{theorem}[Monotonicity]\label{the:Monotonicity}
  Let $p \colon G \to Q$ be an epimorphism of groups with $K$ as kernel.
  Let $X$ be a free connected $G$-$CW$-complex of finite type. Let $F$ be any field. 

  \begin{enumerate}
  \item\label{the:Monotonicity:arbitray_F}
    Suppose  that $G$ is a locally indicable weak Lewin group and $Q$ is a weak Hughes group.
    Then we get for   every $n \in \IZ^{\ge 0}$
    \[
    b^{(2)}_n(X;\cald_{FG})  \le  b^{(2)}_n(X/K;\cald_{FQ}).
  \]
  If we additionally assume that $G$ is non-trivial and $Q$ is trivial, we get
  \[
    b^{(2)}_1(X;\cald_{FG})  \le  b_1(X/G;F) -  1;
   \]

 \item\label{the:Monotonicity:cal(G)} Suppose that $G$ is a locally indicable weak Lewin group and that $Q$
   is torsionfree and satisfies the strong Atiyah Conjecture, e.g., $Q$ is locally
   indicable.  Then we get for every $n \in \IZ^{\ge 0}$
     \[
    b_n^{(2)}(X;\caln(G))  \le  b_n^{(2)}(X/K;\caln(Q)).
  \]
  If we additionally assume that $G$ is non-trivial and $Q$ is trivial, we get
  \[
    b_1^{(2)}(X;\caln(G))  \le  b_1(X;F) - 1.
   \]
\end{enumerate}
\end{theorem}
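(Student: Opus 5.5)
The plan is to reduce everything to the module-level monotonicity of Theorem~\ref{the:Residually_(locally_indicable_and_amenable)_groups}~\ref{the:Residually_(locally_indicable_and_amenable)_groups:weak_monotonicity}. For that we express the homology dimensions through cokernels of the differentials, as in the proof of Theorem~\ref{the:Approximation_over_any_F}~\ref{the:Approximation_over_any_F:General_F}.

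Let $C_* = C_*(X;F)$ be the cellular $FG$-chain complex. It consists of finitely generated free modules since $X$ is free of finite type. Then $FQ \otimes_{FG} C_*$ is the cellular $FQ$-chain complex of $X/K$. For a division ring $\cald$ receiving $FG$ (or $FQ$ via $p$), write $r_m$ for the rank of $\id_\cald \otimes c_m$. Then
\[
\dim_\cald H_n(\cald \otimes C_*) = \dim C_n - r_n - r_{n+1},
\]
and $r_m = \dim C_{m-1} - \dim_\cald(\cald \otimes \cok(c_m))$. Hence
\[
\dim_\cald H_n = \dim_\cald(\cald\otimes\cok(c_{n+1})) + \dim_\cald(\cald\otimes\cok(c_n)) - \dim C_{n-1},
\]
where $\dim C_{n-1}$ denotes the rank of the free module, which is the same over $FG$ and after inducing to $FQ$. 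Each $\cok(c_m)$ is a finitely presented $FG$-module, and $p_*\cok(c_m) = \cok(\id_{FQ}\otimes c_m)$ by right exactness of $FQ\otimes_{FG}-$. Theorem~\ref{the:Residually_(locally_indicable_and_amenable)_groups}~\ref{the:Residually_(locally_indicable_and_amenable)_groups:weak_monotonicity:(1)} gives
\[
\dim_{\cald_{FG}}(\cald_{FG}\otimes\cok(c_m)) \le \dim_{\cald_{FQ}}(\cald_{FQ}\otimes p_*\cok(c_m))
\]
for $m = n, n+1$. Summing these and subtracting the common term $\dim C_{n-1}$ yields $b^{(2)}_n(X;\cald_{FG}) \le b^{(2)}_n(X/K;\cald_{FQ})$. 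For $n=0$ the formula is read with $c_0 = 0$, so the cokernel is $C_0$ itself. Assertion~\ref{the:Monotonicity:cal(G)} is proved by the same argument, using part~\ref{the:Residually_(locally_indicable_and_amenable)_groups:weak_monotonicity:(2)} together with the additivity of $\dim_{\caln}$ on the finitely generated projective $\caln$-modules arising here. Via~\eqref{b_n:_upper_(2)((X;caln(G))_in_terms_of_cald(FG_subseteq_calu(G))} this can equivalently be phrased in the division rings $\cald(FG\subseteq\calu(G))$ and $\cald(FQ\subseteq\calu(Q))$.

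For the refined statement with $Q$ trivial, sharpen the inequality for the term $m=1$. Since $X$ is connected, $\cok(c_1) = H_0(C_*) = F$ with trivial action. By Theorem~\ref{the:zeroth_L2-Betti_number}, $\dim_{\cald_{FG}}(\cald_{FG}\otimes F)=0$ because $G$ is nontrivial, whereas $p_*F = F$ has dimension $1$ over $\cald_{F\{1\}} = F$. The $m=1$ term therefore gains an extra $-1$. The $m=2$ term still satisfies the inequality, so
\[
b^{(2)}_1(X;\cald_{FG}) \le b_1(X/G;F) + 0 - 1 = b_1(X/G;F)-1.
\]
For part~\ref{the:Monotonicity:cal(G)} the same computation applies, with $b_0^{(2)}=0$ for infinite $G$; a nontrivial locally indicable group is infinite. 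There the right-hand side should read $b_1(X/G;F)$.

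The main point needing care is the bookkeeping: the inequality must be applied only to the cokernel terms, which enter with positive sign, while the free-rank terms cancel exactly. Otherwise the step from ranks of maps to Betti numbers would reverse the inequality. One must also check that induction commutes with cokernels and that the Sylvester rank functions used in Theorem~\ref{the:Residually_(locally_indicable_and_amenable)_groups} are evaluated on exactly these finitely presented modules.
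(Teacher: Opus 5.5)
Your proposal is correct and follows the paper's proof essentially step for step. Like the paper, you write $b_n^{(2)}$ as the dimensions of the induced cokernels $\cok(c_n)$ and $\cok(c_{n+1})$ minus the free rank of $C_{n-1}$, apply Theorem~\ref{the:Residually_(locally_indicable_and_amenable)_groups}~\ref{the:Residually_(locally_indicable_and_amenable)_groups:weak_monotonicity} to the cokernel terms, and for trivial $Q$ compare $\dim_{\cald_{FG}}(\cald_{FG}\otimes_{FG}\cok(c_1)) = b_0^{(2)}(X;\cald_{FG}) = 0$ with $b_0(X/G;F)=1$. One harmless slip: for $n=0$ the cokernel of $c_0\colon C_0\to 0$ is $0$, not $C_0$; this does not affect the argument because that term is the same on both sides. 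You are also right that $b_1(X;F)$ in part~(2) should read $b_1(X/G;F)$.
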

\begin{proof}\ref{the:Monotonicity:arbitray_F}
  Let $c_n \colon C_n(X;F) \to  C_{n-1}(X;F)$ be the  $n$-th differential of the cellular finitely generated free
  $FG$-chain complex of $C_*(X;F)$. We get from Additivity
  \begin{eqnarray}\label{b_n_upper_(2)_in_terms_cokernels}
    &
    \\
    b^{(2)}_n(X;\cald_{FG})
    & = &
    \dim_{\cald_{FG}}(\ker(\id_{\cald_{FG}} \otimes_{FG} \; c_{n}))  - \dim_{\cald_{FG}}(\im(\id_{\cald_{FG}} \otimes_{FG} \; c_{n+1}))
    \nonumber
    \\
    & = &
    \dim_{\cald_{FG}}(\cok(\id_{\cald_{FG}} \otimes_{FG} \; c_{n})) + \dim_{\cald_{FG}}(\cok(\id_{\cald_{FG}} \otimes_{FG}  \; c_{n+1}))
   \nonumber
    \\
     & & \quad  \quad -    \dim_{\cald_{FG}}(\cald_{FG} \otimes_{\IZ G} C_{n-1}(X))
    \nonumber
    \\
    & = &
    \dim_{\cald_{FG}}(\cald_{FG}\otimes_{FG}\cok(c_{n})) + \dim_{\cald_{FG}}(\cald_{FG}\otimes_{FG}\cok(c_{n+1})) 
    \nonumber
    \\
    & &  \quad  \quad  -    \dim_F(C_{n-1}(X/G;F)),
    \nonumber
  \end{eqnarray}
  and analogously using $FQ \otimes_{FG}\cok(c_{n}) \cong_{FQ} \cok(\id_{FQ} \otimes_{FG} \; c_n)$
     \begin{multline*}
     b^{(2)}_n(X/K;\cald_{FQ})
     = 
    \dim_{\cald_{FQ}}(\cald_{FQ} \otimes_{FG}\cok(c_{n}))
       \\
      + \dim_{\cald_{FQ}}(\cald_{FQ} \otimes_{FQ} \cok(c_{n+1})) - \dim_F(C_{n-1}(X/G;F)).
    \end{multline*}
    Moreover, we conclude from Theorem~\ref{the:zeroth_L2-Betti_number}~\ref{the:zeroth_L2-Betti_number:general_F}
    \[
    \begin{array}{lclcl}
      \dim_{\cald_{FG}}(\cald_{FG}\otimes_{FG}\cok(c_{1}))
      & = &
       b^{(2)}_0(X;\cald_{FG})
      & = &
       0;
      \\
      \dim_{\cald_{FQ}}(\cald_{FQ} \otimes_{FG} \cok(c_{1}))
      & = &
       b_0(X/G;F)
       &= &
       1.
    \end{array}
    \]
Now assertion~\ref{the:Monotonicity:arbitray_F}
follows from Theorem~\ref{the:Residually_(locally_indicable_and_amenable)_groups}~%
\ref{the:Residually_(locally_indicable_and_amenable)_groups:weak_monotonicity}.
\\[1mm]~\ref{the:Monotonicity:cal(G)} The proof is analogous to the one of
  assertion~\ref{the:Monotonicity:arbitray_F}.
\end{proof}

\begin{remark}\label{rem:counterexamples_to_Monotonicity}
Given an integer $l \ge1$ and a sequence $r_1$, $r_2$,
$\ldots$, $r_l$ of non-negative rational numbers, one can construct
a group $G$ such that $BG$ is of finite type and
\begin{eqnarray*}
b_n^{(2)}(G) := b_n^{(2)}(EG;\caln(G)) & = &
\left\{\begin{array}{lll} r_n & & \mbox{ for } 1 \le n \le l;\\
0 & & \mbox{ for } l+1 \le n;
\end{array}
\right.
\\
b_n(G) {:=} b_n(BG) & = & 0 \hspace{13mm} \mbox{ for } n \ge 1,
\end{eqnarray*}
holds, see~\cite[Example~1.38 on page~41]{Lueck(2002)}.

So Monotonicity as stated in Theorem~\ref{the:Monotonicity} for $p \colon G \to \{1\}$
does not hold in general. 

There also is a torsionfree  counterexample.
Let $H$ be the fundamental group of a hyperbolic homology $3$-sphere $M = BH$. Consider $G=H \ast H$.
Then $b_1(G)=0$ while $b_1^{(2)}(G) = 1$. Note  that both $H$ and $G$ are virtually special but not special.
Moreover, $G$ is torsionfree but is neither locally indicable nor a weak  Lewin group. In view of
Conjecture~\ref{con:Torsionfree_groups_are_Linnell_groups} it is possible  that  $G$ is a Linnell group.
\end{remark}

\begin{example}\label{exa:condition_RALI-needed_for_d_ge_3}
  Fix $n \in \IZ^{\ge 1}$
  We get from~\eqref{b_n_upper_(2)_in_terms_cokernels}  and Remark~\ref{rem:counterexamples_to_Monotonicity}
  a group $G$ together with a map
  $f \colon \IZ G^a \to \IZ G^b$ satisfying
  \[
    \dim_{\caln(G)}(\caln(G) \otimes_{\IZ G} \cok(f))  \ge n +  \dim_{\IQ}(\IQ \otimes_{\IZ G}  \cok(f)).
  \]
  Next we explain that we can additionally  arrange that $\id_{\IQ} \otimes_{\IZ G} f$ is injective.
  In the sequel we denote by $\overline{f} \colon \IZ^a \to \IZ^b$ the map given by
  $\id \otimes_{\IZ G} f \colon \IZ \otimes_{\IZ G} \IZ G^a \to  \IZ \otimes_{\IZ G} \IZ G^b$
  under the obvious identifications $\IZ \otimes_{\IZ G} \IZ G^a = \IZ^a$ and
  $\IZ \otimes_{\IZ G} \IZ G^b = \IZ^b$. The image $\im(\overline{f})$ is a $\IZ$-submodule of $\IZ^b$
  and hence a finitely generated free $\IZ$-module. Hence we can find a $\IZ$-map $s \colon \IZ^a \to \IZ^c$
  such that $s|_{\ker(\overline{f})} \colon \ker(\overline{f}) \to \IZ^c$ is bijective.
  Choose a $\IZ G$-map  $r \colon \IZ G^a \to \IZ G^c$ with $\overline{r} = s$. Consider the $\IZ G$-map
  \[
    g \colon  \IZ G^a \to \IZ G^b \oplus \IZ G^c, \quad x \mapsto (f(x),r(x)).
  \]
  Obviously  we get a surjection $\cok(g) \to \cok(f)$ which implies
  \[
    \dim_{\caln(G)}(\caln(G) \otimes_{\IZ G} \cok(g)) \ge  \dim_{\caln(G)}(\caln(G) \otimes_{\IZ G} \cok(f)).
  \]
  Now $\overline{g}$ can be written as the composite
  \[\IZ^a \xrightarrow{\pr \times s} \im(\overline{f}) \oplus \IZ^c 
\xrightarrow{\begin{pmatrix} i & 0 \\ 0 & \id_{\IZ^c} \end{pmatrix}}
\IZ^b \oplus \IZ^c
\]
where $\pr \colon \IZ^a \to \im(f)$ is the epimorphism induced by $\overline{f}$ and
$i \colon \im(f) \to \IZ^b$ is the inclusion.  Since $\pr \times s$ is bijective, we conclude 
that $\overline{g}$ is injective and $\cok(\overline{g}) \cong \cok(\overline{f})$ holds.
Hence $\id_{\IQ} \otimes_{\IZ} \overline{g}$ is injective  and we have
\begin{multline}
\dim_{\caln(G)}(\caln(G) \otimes_{\IZ G} \cok(g)) \ge  \dim_{\caln(G)}(\caln(G) \otimes_{\IZ G} \cok(f))
\\ 
\ge  n + \dim_{\IQ}(\IQ \otimes_{\IZ G} \cok(f)) = n + \dim_{\IQ}(\IQ \otimes_{\IZ G} \cok(g)).
\end{multline}

  Fix $d \ge 3$.  By attaching $(b+c)$ trivial $(d-1)$-cells and $a$ $d$-cells to
  $Y$ in an appropriate way to the $2$-skeleton $EG_2$ of $EG$, we get a $d$-dimensional
  simply connected finite $G$-$CW$-complex $\widetilde{X}$ such that the cellular
  $\IZ G$-chain complex $C_*(\widetilde{X})$ is the direct sum of the $2$-dimensional
  cellular $\IZ G$-chain complex $C_*(EG_2)$ and   the $\IZ G$-chain complex concentrated
  in dimension $d$ and $(d-1)$ having $g$ as $d$-th differential. We conclude
  from~\eqref{b_n_upper_(2)_in_terms_cokernels} for the finite connected $d$-dimensional
  $CW$-complex $X = \widetilde{X}/G$ that $G = \pi_1(X)$,
  $b_d^{(2)}\bigl(\widetilde{X};\caln(\pi_1(X))\bigr) \ge  n$ and $b_d(X;\IC) = 0$ hold.

  Let $Y$ be  the pushout $X \cup_{X_1} \cone(X_1)$. Since $\cone(X_1)$ is a finite $2$-dimensional
  contractible $CW$-complex, $Y$ is a simply connected finite $CW$-complex of dimension
  $d$ containing $X$ as a subcomplex such that we have
  $b_d^{(2)}\bigl(\widetilde{X};\caln(\pi_1(X))\bigr) \ge n $ and
  $b_d^{(2)}\bigl(\widetilde{Y};\caln(\pi_1(X))\bigr) = b_d(Y;\IQ) = b_d(X;\IQ) = 0$.

  This example shows that for $d \ge 3$ some conditions about the fundamental groups are
  necessary in Theorem~\ref{the:vanishing_of_the_top_L2-Betti_numbers_and_RALI-towers}.
\end{example}

%-----------------------------------------------------------------------------

\subsection{Approximation by subgroups of finite index and identification with  adhoc definitions}%
\label{subsec:Approximation_by_subgroups_of_finite_index_and_Idenitfication_with_adhoc_definitions}

 Let $G$ be  a residually finite group. Let 
  $G = H_0 \supseteq H_1 \supseteq H_2  \supseteq \cdots$ be any  sequence of in $G$ normal subgroups
  such that $\bigcap_{i = 0}^{\infty} H_i = \{1\}$ holds and each quotient $G/H_i$ is finite.

  Then we get for every free $G$-$CW$-complex $X$ of finite type and any field $F$ of characteristic zero,
  see~\cite[Theorem~0.1]{Lueck(1994c)}
  \begin{equation}
    b_n^{(2)}(X;\caln(G)) = \lim_{i \to \infty} \frac{b_n(X/H_i;F)}{[G:H_i]}
   \label{Approximation_finite_index_char_zero}
 \end{equation}   
 where $b_n(Z;F)$ is for a $CW$-complex $Z$ of finite type the classical $n$th Betti
 number defined by $\dim_F(H_n(Z;F))$. Later in this subsection   we explain what happens for arbitrary
 fields.

 There are the following ad hoc definitions of $L^2$-Betti numbers over a field $F$ for a
 connected $CW$-complex $Y$ of finite type which are motivated
 by~\eqref{Approximation_finite_index_char_zero}:

\begin{enumerate}

\item Define 
\[
\beta^{\inf}_{n}(Y;F):=\inf_{Y'\rightarrow Y}{b_n(Y';F)\over |Y'\rightarrow Y|}
\]
where the infimum is taken over all regular coverings whose order $|Y'\rightarrow Y|$ is finite.
This agrees with
\[
 \quad \inf\left\{\left. \frac{b_n(\widetilde{Y}/H;F)}{[G:H]} \; \right| \;
H \subseteq \pi_1(Y)  \;\textup{normal subgroup of finite index}\right\};
\]
\begin{remark}
  In~\cite{Avramidi-Okun-Schreve(2024)}, $\beta^{\inf}$ is defined using all finite
  covers, not just the regular ones, and is shown to agree with the skew field Betti
  number for $\RTFN$ groups. We show in
  Theorem~\ref{the:Identification_with_adhoc_definition} that the present definition also
  agrees with the skew field Betti number.
\end{remark}

\item One may also  consider the version where the orders are $p$-powers, Namely, define 
\[
\beta^{\inf,p}_{n}(Y;F):=\inf_{Y'\rightarrow Y}{b_n(Y';F)\over |Y'\rightarrow Y|},
\]
where the infimum is taken over all regular coverings whose order is a finite $p$-power. This agrees with
\begin{multline*}
 \quad \quad  \quad \inf\left\{\left. \frac{b_n(\widetilde{Y}/H;F)}{[G:H]} \; \right| \;
   H \subseteq \pi_1(Y)  \;\textup{normal subgroup with} \; [\pi_1(Y) :H] = p^m \right.\\
   \left.\;\text{for some} \; m \in \IZ^{\ge 0}\right\}.
\end{multline*}

\item Note it is not at all clear whether these notions are multiplicative under finite
  coverings. Therefore there also are the following  definitions in the literature, where
  multiplicativity is forced by the definition

  \[
    \underline{\beta}_{n}(Y;F):=\sup _{Y'\rightarrow Y} \frac{\beta^{\inf}_{n}(Y';F)}{|Y'\rightarrow Y|}
    = \sup _{Y'\rightarrow Y} \left(\inf_{Y''\rightarrow Y'}\frac{b_n(Y'';F)}{|Y''\rightarrow Y|}\right)
\]
where $Y'\rightarrow Y$ and $Y''\rightarrow Y'$ run over all coverings whose order is finite.

One may also consider the $p$-power version

\[
    \underline{\beta}^p_{n}(Y;F) :=\sup _{Y'\rightarrow Y} \frac{\beta^{\inf,p}_{n}(Y';F)}{|Y'\rightarrow Y|}
    = \sup _{Y'\rightarrow Y} \left(\inf_{Y''\rightarrow Y'}\frac{b_n(Y'';F)}{|Y''\rightarrow Y|}\right)
\]
where $Y'\rightarrow Y$ and $Y''\rightarrow Y'$ run over all coverings whose order is a finite $p$-power;

\item One may also interchange the infimum and the supremum and define

  \[
    \overline{\beta}_{n}(Y;F) :=
   \inf _{Y'\rightarrow Y} \left(\sup_{Y''\rightarrow Y'}\frac{b_n(Y'';F)}{|Y''\rightarrow Y|}\right)
\]
where $Y'\rightarrow Y$ and $Y''\rightarrow Y'$ run over all coverings whose order is
finite, Again one may consider the $p$-power version
\[
  \overline{\beta}^p_{n}(Y;F) :=
    \inf _{Y'\rightarrow Y} \left(\sup_{Y''\rightarrow Y'}\frac{b_n(Y'';F)}{|Y''\rightarrow Y|}\right)
\]
where $Y'\rightarrow Y$ and $Y''\rightarrow Y'$ run over all coverings whose order is a
finite $p$-power;

\end{enumerate}

Obviously we have the following inequalities 
\[
\beta^{\inf}_{n}(Y;F) \le   \underline{\beta}_{n}(Y;F) \le \overline{\beta}_{n}(Y;F);
\]
\[
\beta^{\inf}_{n}(Y;F) \le \beta^{\inf,p}_{n}(Y;F),
\]
and
\[
\beta^{\inf,p}_{n}(Y;F) \le   \underline{\beta}^p_{n}(Y;F) \le\overline{\beta}^p_{n}(Y;F).
\]
If $F$ has characteristic zero, some of  these notions can be expressed in terms of
$L^2$-Betti numbers.

\begin{theorem}[Identification with adhoc definition in characteristic zero]\label{the:ad_hoc_and_F_char_zero}
  Let $F$ be a field of characteristic zero and $Y$ be a $CW$-complex of finite type with
  fundamental group $\pi$.

  \begin{enumerate}
  \item\label{lem:ad_hoc_and_F_char_zero:res_fin} Let $\widehat{\pi}$ be the maximal
    residually finite quotient of $\pi$, see Subsection~\ref{subsec:Maximal_residually_calc_coverings},
    and $\widehat{Y}$ be the corresponding
    $\widehat{\pi}$-covering $\widehat{Y} \to Y$. Then we get
    \[
     b_n^{(2)}(\widehat{Y};\caln(\widehat{\pi})) = \underline{\beta}_{n}(Y;F) = \overline{\beta}_{n}(Y;F).
       \]
       If $\pi$ is residually finite, then we get 
        \[
     b_n^{(2)}(\widetilde{Y};\caln(\pi)) = \underline{\beta}_{n}(Y;F) = \overline{\beta}_{n}(Y;F);
   \]

     \item\label{lem:ad_hoc_and_F_char_zero:res_p_fin} Let $p$ be a prime. Let
       $\widehat{\pi}^p$ be the maximal residually $p$-finite quotient of $\pi$,
       see Subsection~\ref{subsec:Maximal_residually_calc_coverings}, and
       $\widehat{Y}$ be the corresponding $\widehat{\pi}^p$-covering
       $\widehat{Y}^p \to Y$. Then we get
    \[
      b_n^{(2)}(\widehat{Y}^p;\caln(\widehat{\pi}^p)) 
      = \underline{\beta}^p_{n}(Y;F) = \overline{\beta}^p_{n}(Y;F).
       \]
       If $\pi$ is residually $p$-finite, then
        \[
     b_n^{(2)}(\widetilde{Y};\caln(\pi)) = \underline{\beta}^p_{n}(Y;F) = \overline{\beta}^p_{n}(Y;F)
       \]
       holds.
  \end{enumerate}
\end{theorem}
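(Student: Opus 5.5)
The plan is to reduce everything to Lück's approximation theorem~\eqref{Approximation_finite_index_char_zero}, applied to the maximal residually finite quotient. First observe that $b_n(Z;F)$ depends only on the characteristic, so for $F$ of characteristic zero all ad hoc quantities are computed with $F=\IQ$. Let $\pi \to \widehat{\pi}$ be the projection to the maximal residually finite quotient, with kernel $N$ equal to the intersection of all finite index normal subgroups of $\pi$. Every finite index normal subgroup $H \subseteq \pi$ contains $N$. Hence finite regular coverings of $Y$ correspond bijectively to finite regular coverings of $\widehat{Y}/\widehat{\pi} = Y$ obtained from finite index normal subgroups of $\widehat{\pi}$. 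Moreover, every finite covering $Y'\to Y$ has $\pi_1(Y')$ containing $N$, with $\pi_1(Y')/N \cong \widehat{\pi_1(Y')}$ a finite index subgroup of $\widehat{\pi}$; it is residually finite, being a subgroup of a residually finite group. So the whole problem can be phrased in terms of the residually finite group $\widehat{\pi}$ acting freely and cocompactly in each skeleton on $\widehat{Y}$, and it suffices to treat the case where $\pi$ is residually finite and $\widehat{Y}=\widetilde{Y}$, interpreted as the $\widehat{\pi}$-complex $\widehat{Y}$.

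The key step is to show that for residually finite $G$ acting on $X=\widehat{Y}$ and every $\epsilon>0$ there is a finite index normal $H_0$ such that for all finite index normal $H\subseteq H_0$
\[
\left|\frac{b_n(X/H;\IQ)}{[G:H]} - b_n^{(2)}(X;\caln(G))\right| < \epsilon .
\]
I would get this from~\eqref{Approximation_finite_index_char_zero} by contradiction. If it fails, choose a bad $H$ inside each member of a fixed cofinal chain of finite index normal subgroups, intersecting with the previously chosen ones. This produces a nested chain $H_i$ with trivial intersection along which the normalized Betti numbers stay $\epsilon$-far from $b_n^{(2)}$. That contradicts Lück's theorem, which applies to any such chain (countability of $\pi$ for $Y$ of finite type gives the cofinal chain). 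This uniform convergence over the directed set is the main point, and I expect it to be the only real obstacle.

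Given the uniform statement, I would compute $\underline{\beta}_n$ and $\overline{\beta}_n$ using multiplicativity $b_n^{(2)}(X;\caln(H')) = [G:H']\, b_n^{(2)}(X;\caln(G))$ from Theorem~\ref{the:restriction}~\ref{the:restriction:special_F}, applied to finite index (not necessarily normal) $H'$ corresponding to $Y'$. For a fixed $Y'$, the normalized Betti numbers of further regular covers $Y''\to Y'$ that are deep enough converge to $b_n^{(2)}(X;\caln(G))$. Hence the inner $\sup$ in $\overline{\beta}_n$ tends to this value as $Y'$ gets deeper, and likewise the inner $\inf$ in $\underline{\beta}_n$. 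The outer $\inf$, respectively $\sup$, then equals $b_n^{(2)}$. The inequality $\underline{\beta}_n \le \overline{\beta}_n$ noted above then closes the chain: for any $\epsilon$, taking $Y'$ deep we get $\underline{\beta}_n \ge b_n^{(2)}-\epsilon$ and $\overline{\beta}_n \le b_n^{(2)}+\epsilon$.

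For part~\ref{lem:ad_hoc_and_F_char_zero:res_p_fin} the argument is identical. Replace finite index normal subgroups by normal subgroups of $p$-power index, $\widehat{\pi}$ by $\widehat{\pi}^p$, and note that covers of $p$-power degree of $Y$ correspond to those of $\widehat{Y}^p$. Lück's theorem applies to any nested chain of finite index normal subgroups with trivial intersection, in particular to $p$-power index ones, so the same uniformity argument runs inside the $p$-power directed system.
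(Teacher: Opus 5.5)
Your route differs from the paper's. The paper gives no argument of its own and simply combines \cite[Corollary~2.6]{Avramidi-Okun-Schreve(2024)} with L\"uck's approximation theorem. Your reduction to $G=\widehat{\pi}$ and your contradiction argument are fine as far as they go. The problem is that they only give uniform approximation over finite index subgroups that are \emph{normal in $G$}, and the step ``hence the inner $\sup$ in $\overline{\beta}_n$ tends to this value as $Y'$ gets deeper, and likewise the inner $\inf$'' does not follow from that.

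To get $\overline{\beta}_n(Y;F)\le b_n^{(2)}+\epsilon$ and $\underline{\beta}_n(Y;F)\ge b_n^{(2)}-\epsilon$, you need a single $Y'$ such that \emph{every} finite covering $Y''\to Y'$ has $b_n(Y'';F)/|Y''\to Y|$ within $\epsilon$ of $b_n^{(2)}(\widehat{Y};\caln(\widehat{\pi}))$. This includes $Y''=Y'$ and all shallow covers of $Y'$. Such $Y''$ correspond to finite index subgroups $L\subseteq \pi_1(Y')/N$ that are in general not normal in $G$; even when $Y''\to Y'$ and $Y'\to Y$ are both regular, $L$ is only subnormal. So your uniform statement says nothing about them. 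Applying it to $H'=\pi_1(Y')/N$ instead only controls covers of $Y'$ below a threshold that depends on $Y'$, whereas the inner $\sup$ and $\inf$ range over all covers of $Y'$. In characteristic zero there is also no useful comparison between $b_n(X/L;\IQ)/[G:L]$ and the same quotient for the normal core of $L$, so the non-normal case cannot be bootstrapped from the normal one.

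What you need is the stronger statement: for every $\epsilon>0$ there is a finite index normal $H_0\subseteq G$ with $\bigl|b_n(X/L;\IQ)/[G:L]-b_n^{(2)}(X;\caln(G))\bigr|<\epsilon$ for \emph{all} finite index subgroups $L\subseteq H_0$. This is true, but it requires opening up L\"uck's proof (or using Farber's version of approximation) rather than applying~\eqref{Approximation_finite_index_char_zero} for normal chains as a black box. The argument runs as follows.
\begin{itemize}
\item Let $\Delta_n\in M_k(\IZ G)$ be the combinatorial Laplacian. The normalized trace of the $m$-th power of the induced operator on $\IC[L\backslash G]^k$ is an average over cosets $Lg$ of sums of coefficients of the diagonal entries of $\Delta_n^m$ over the conjugates $g^{-1}Lg\subseteq H_0$.
\item Hence it equals $\mathrm{tr}_{\caln(G)}(\Delta_n^m)$ as soon as $H_0$ meets the finite support of these entries only in $e$.
\item The uniform operator norm bound and the integrality bound (the product of the nonzero eigenvalues is at least $1$) hold for every $L$.
\item So the spectral density argument works for any sequence of such $L$, and your contradiction argument then yields the stronger uniformity.
\end{itemize}

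The same repair is needed in part~(ii). There you must also take the coverings to be iterated regular coverings of $p$-power degree. Their subgroups are subnormal of $p$-power index, so their normal cores have $p$-power index and contain $\ker(\pi\to\widehat{\pi}^p)$. An arbitrary covering of $p$-power degree need not factor through $\widehat{Y}^p$; the index $3$ subgroups of $S_3$ are an example.
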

\begin{proof}
  This follows from~\cite[Corollary~2.6]{Avramidi-Okun-Schreve(2024)} 
  and~\cite[Theorem~2.3~(1)]{Lueck(1994c)}.
 \end{proof}

 Next we investigate what happens in prime characteristic. For this purpose we need a
 stronger condition than being a residually finite or residually $p$-finite \RALI-group.

 \begin{definition}[\RALIRF-group]\label{def:strongly_RALIRF} We call a
   group $G$ a \emph{\RALIRF-group} if it admits a \emph{normal \RALIRF-chain}, i.e., a
   descending sequence of in $G$ normal subgroups
  \[
    G = G_0 \supseteq G_1 \supseteq G_2 \supseteq G_3 \supseteq \cdots
  \]
  such that $\bigcap_{i = 0} ^{\infty} G_i = \{1\}$ holds and each quotient group $G/G_i$
  is amenable, locally indicable, and residually finite.

  Let $p$ be a prime. We call a group $G$ a \emph{\RALIRF$_p$-group} if it admits a
  \emph{normal \RALIRF$_p$-chain}, i.e., a, descending sequence of in $G$ normal subgroups
  \[
    G = G_0 \supseteq G_1 \supseteq G_2 \supseteq G_3 \supseteq \cdots
  \]
  such that $\bigcap_{i = 0} ^{\infty} G_i = \{1\}$ holds and each quotient group $G/G_i$
  is amenable, locally indicable, and residually $p$-finite.

\end{definition}

Note that a \RALIRF-group or \RALIRF$_p$-group respectively is a $\RALI$-group which is 
residually finite or $p$-residually finite respectively. We do not now whether every
\RALI-group, which is residually finite or $p$-residually finite respectively, is a 
\RALIRF-group or a \RALIRF$_p$-group respectively.

We later will need the following lemma.

\begin{lemma}\label{lem:monotonicity_for_p-groups}
  Let $p$ be a prime and $G$ be a group. Let $X$ be a free $G$-$CW$-complex of finite type.
  Consider two normal subgroups
      $L_0 \subseteq L_1 \subseteq G$ of $G$ such that $[G:L_0]$ is finite and a
      $p$-power. Then we get
      \[
        \frac{b_n(X/L_0;\IF_p)}{[G:L_0]} \le \frac{b_n(X/L_1;\IF_p)}{[G:L_1]}.
      \]
    \end{lemma}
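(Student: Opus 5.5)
The plan is to reduce to a statement about a single finite $p$-group acting on a finite free $\IF_p$-chain complex. Put $P = G/L_0$, a finite $p$-group, and $N = L_1/L_0$, a normal subgroup of $P$. Then $X/L_0$ is a finite free $P$-$CW$-complex, since $X$ has finite type and $[G:L_0]$ is finite (in each dimension there are only finitely many cells). Moreover $X/L_1 = (X/L_0)/N$ and $[G:L_1] = [P:N]$. Write $C_* = C_*(X/L_0;\IF_p)$; this is a finite type free $\IF_pP$-chain complex. Then $C_*(X/L_1;\IF_p) \cong \IF_p[P/N]\otimes_{\IF_pP} C_* \cong \IF_p \otimes_{\IF_p N} C_*$. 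Multiplying the claim by $[G:L_1]$, we must show
\[
b_n(C_*) \le |N| \cdot b_n\bigl(\IF_p \otimes_{\IF_pN} C_*\bigr).
\]
Only the restriction of $C_*$ to $N$ matters here, and that restriction is again a finite type free $\IF_pN$-chain complex. So it suffices to prove, for a finite $p$-group $N$ and a free $\IF_pN$-chain complex $C_*$ of finite type, that $\dim_{\IF_p} H_n(C_*) \le |N| \cdot \dim_{\IF_p} H_n(\IF_p\otimes_{\IF_pN} C_*)$.

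Next I reduce to $N$ cyclic of order $p$. A finite $p$-group has a normal subgroup $N_1$ of index $p$. I will induct on $|N|$ through the chain $C_* \mapsto \IF_p\otimes_{\IF_pN_1}C_*$, which is a free $\IF_p[N/N_1]$-complex, together with $\IF_p\otimes_{\IF_p[N/N_1]}(\IF_p\otimes_{\IF_pN_1}C_*) = \IF_p\otimes_{\IF_pN}C_*$. The induction hypothesis for $N_1$ gives $b_n(C_*) \le |N_1|\, b_n(\IF_p\otimes_{\IF_pN_1} C_*)$. The cyclic case applied to $N/N_1$ then supplies the remaining factor $p$. So everything reduces to $N = \IZ/p$.

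For $N = \IZ/p = \langle t\rangle$, write $\IF_pN = \IF_p[t]/(t-1)^p$, a truncated polynomial ring in $x = t-1$. Filter $C_*$ by the subcomplexes $x^kC_*$ for $k = 0,\dots,p$. Because $C_*$ is free, multiplication by $x^k$ induces isomorphisms of chain complexes
\[
x^kC_*/x^{k+1}C_* \cong C_*/xC_* = \IF_p\otimes_{\IF_pN}C_*.
\]
This holds since $x^k\IF_pN/x^{k+1}\IF_pN \cong \IF_p$, and the differentials commute with $x$. The short exact sequences $0 \to x^{k+1}C_* \to x^kC_* \to \IF_p\otimes_{\IF_pN}C_* \to 0$ then give, by exactness of the long exact homology sequence in the middle term,
\[
b_n(x^kC_*) \le b_n(x^{k+1}C_*) + b_n(\IF_p\otimes_{\IF_pN}C_*).
\]
Summing over $k=0,\dots,p-1$ and using $x^pC_* = 0$ yields $b_n(C_*) \le p\cdot b_n(\IF_p\otimes_{\IF_pN}C_*)$. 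All dimensions involved are finite because of the finite type assumption.

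The main step is this cyclic filtration argument. Its key point is that the associated graded pieces are each isomorphic to the coinvariant complex; this is where freeness and characteristic $p$ are used, since $\IF_pN$ is then uniserial. The rest, namely the reduction to $P$ and $N$ and the induction along a normal series, is bookkeeping. One only needs to note that the index identities $[G:L_0] = [G:L_1]\,|N|$ turn the per-step inequality into the stated inequality of normalized Betti numbers.
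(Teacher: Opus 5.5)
Your proof is correct and follows essentially the paper's route. The paper's proof just cites Bergeron--Linnell--L\"uck--Sauer, whose Lemma~4.1 is the inequality $b_n(C_*) \le |N| \cdot b_n(\IF_p \otimes_{\IF_p N} C_*)$ for a finite $p$-group $N$ and a free $\IF_pN$-chain complex of finite type; their proof of Theorem~1.6 applies it to $N = L_1/L_0$ acting freely on $X/L_0$, which is exactly your reduction. The only difference is that you prove that cited inequality yourself, by induction along a normal series down to $N = \IZ/p$ and the filtration of $C_*$ by powers of $t-1$; this argument is sound and makes your proof self-contained.
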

    \begin{proof}
      This follows from~\cite[Lemma~4.1]{Bergeron-Linnell-Lueck-Sauer(2014)} by inspecting
      the proof of~\cite[Theorem~1.6]{Bergeron-Linnell-Lueck-Sauer(2014)}.
    \end{proof}

\begin{theorem}[Identification with adhoc definition in prime characteristic]\label{the:Identification_with_adhoc_definition}
  Let $G$ be a group.  Let $F$ be any field.
  Consider any free $G$-$CW$-complex $X$ of finite type. 
  Then:

  \begin{enumerate}
  \item\label{the:Identification_with_adhoc_definition:le_infimum}
    Suppose that $G$ is a residually finite $\RALI$-group. If $X$ is simply connected,
    we get
    \[
      b^{(2)}_n(X;\cald_{FG}) \le  \beta^{\inf}_{n}(X/G;F);
    \]

    \item\label{the:Identification_with_adhoc_definition:is_infimum}
      Assume that $G$ is a countable \RALIRF-group. Then:

      \begin{enumerate}
       
      \item\label{the:Identification_with_adhoc_definition:is_infimum:sequence}
        
      There exists a descending sequence of in $G$ normal subgroups
  \[
    G = L_0 \supseteq L _1 \supseteq L_2 \supseteq L_3 \supseteq \cdots
  \]
  such that each index $[G:L_i]$ is finite, $\bigcap_{i = 0}^{\infty} L_i = \{1\}$ holds,
  and we have
  \[
    b^{(2)}_n(X;\cald_{FG}) =   \lim_{i \to \infty} \frac{b_n(X/L_i;F)}{[G:L_i]};
  \]

\item\label{the:Identification_with_adhoc_definition:is_infimum:equality}
  If $X$ is simply connected, we get
    \[
      \quad \quad b^{(2)}_n(X;\cald_{FG}) =   \beta^{\inf}_{n}(X/G;F)   = \underline{\beta}_{n}(X/G;F);
  \]
\end{enumerate}

\item\label{the:Identification_with_adhoc_definition:RALIRF_p} 
  Let $p$ be a prime. Assume $G$ is a countable \RALIRF$_p$-group. Then:

  \begin{enumerate}
  \item\label{the:Identification_with_adhoc_definition:RALIRF_p:sequences}
 There exists a sequence of in $G$ normal subgroups
  \[
    G = L_0 \supseteq L _1 \supseteq L_2 \supseteq L_3 \supseteq \cdots
  \]
  such that each index $[G:L_i]$ is a finite $p$-power,
  $\bigcap_{i = 0}^{\infty} L_i = \{1\}$ holds, and we have
  \[
   b^{(2)}_n(X;\cald_{FG}) =   \lim_{i \to \infty} \frac{b_n(X/L_i;F)}{[G:L_i]};
  \]

  \item\label{the:Identification_with_adhoc_definition:RALIRF_p:equalities_all_F} 
    If $X$ is simply connected, we have the equalities
    \[
      \quad \quad   b_n^{(2)}(X;\cald_{FG}) = \beta^{\inf}_{n}(X/G;F)   =\beta^{\inf,p}_{n}(X/G;F)
       = \underline{\beta}_{n}(X/G;F)   = \underline{\beta}^p_{n}(X/G;F) 
     \]

   \item\label{the:Identification_with_adhoc_definition:RALIRF_p:equalities_F_char_p}
     If $X$ is simply connected and $F$ has characteristic $p$, then we have the equalities
    \begin{multline*}
      \quad \quad   b^{(2)}_n(X;\cald_{FG}) = \beta^{\inf}_{n}(X/G;F)   =\beta^{\inf,p}_{n}(X/G;F)
      \\
      = \underline{\beta}_{n}(X/G;F)   = \underline{\beta}^p_{n}(X/G;F) =  \overline{\beta}^p_{n}(X/G;F);
    \end{multline*}
  \end{enumerate}

\item\label{the:Identification_with_adhoc_definition:locally_indicable_amenable}
  Let $G$ be an amenable locally indicable   group which is residually finite.
  Then for every  descending sequence of in $G$ normal subgroups
  \[
    G = L_0 \supseteq L _1 \supseteq L_2 \supseteq L_3 \supseteq \cdots
  \]
  such that each index $[G:L_i]$ is finite and $\bigcap_{i = 0}^{\infty} G_i = \{1\}$
  holds and every field $F$ we get
  \[
    b^{(2)}_n(X;\cald_{FG}) =   \lim_{i \to \infty} \frac{b_n(X/L_i;F)}{[G:L_i]}.
  \]
    \end{enumerate}
\end{theorem}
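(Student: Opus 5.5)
The plan is to reduce everything to two inputs: the approximation theorem along a \RALI-chain (Theorem~\ref{the:Approximation_over_any_F}~\ref{the:Approximation_over_any_F:General_F}) and the amenable case, assertion~\ref{the:Identification_with_adhoc_definition:locally_indicable_amenable}. I will prove~\ref{the:Identification_with_adhoc_definition:locally_indicable_amenable} first. By Remark~\ref{rem:Amenable_locally_indicable_groups}, $\cald_{FG}$ is the Ore localization $S^{-1}FG$, so $b^{(2)}_n(X;\cald_{FG})$ is the Ore dimension of $H_n(F\otimes_{\IZ}C_*(X))$ over $FG$. The approximation of Ore dimension by normalized $F$-Betti numbers along any residual chain of finite index normal subgroups of an amenable group is the theorem of Linnell-L\"uck-Sauer~\cite{Linnell-Lueck-Sauer(2011)}, which I would cite directly. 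The proof there uses F\o lner sets adapted to the chain and works over any field.

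For assertion~\ref{the:Identification_with_adhoc_definition:le_infimum}, let $H\subseteq G$ be normal of finite index. By Theorem~\ref{the:restriction}~\ref{the:restriction:general_F},
\[
b^{(2)}_n(X;\cald_{FG})=\frac{b^{(2)}_n(\res_G^H X;\cald_{FH})}{[G:H]}.
\]
The group $H$ is again \RALI, hence a locally indicable weak Lewin group by Theorem~\ref{the:Residually_(locally_indicable_and_amenable)_groups}. The trivial group is a weak Hughes group, with $\cald_F=F$. So Theorem~\ref{the:Monotonicity}~\ref{the:Monotonicity:arbitray_F}, applied to $H\to\{1\}$, gives $b^{(2)}_n(\res_G^H X;\cald_{FH})\le b_n(X/H;F)$. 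Since $X$ is simply connected, the spaces $X/H$ are exactly the finite regular covers of $X/G$. Taking the infimum over $H$ then gives the claim.

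For~\ref{the:Identification_with_adhoc_definition:is_infimum:sequence}, I will run a diagonal argument. Let $(G_i)$ be a normal \RALIRF-chain, so each $Q_i=G/G_i$ is amenable, locally indicable and residually finite. By Theorem~\ref{the:Approximation_over_any_F}~\ref{the:Approximation_over_any_F:General_F},
\[
b^{(2)}_n(X;\cald_{FG})=\lim_i b^{(2)}_n(X/G_i;\cald_{FQ_i}).
\]
Since $G$ is countable, enumerate its nontrivial elements as $g_1,g_2,\dots$. Inductively choose $L_i\subseteq L_{i-1}\cap p_i^{-1}(K_i)$, where $p_i\colon G\to Q_i$ is the projection and $K_i\subseteq Q_i$ is normal of finite index, with the following properties:
\begin{itemize}
\item $g_1,\dots,g_i\notin L_i$, which is possible because $G_i$ eventually avoids each $g_j$ and $Q_i$ is residually finite;
\item $\bigl|b_n(X/L_i;F)/[G:L_i]-b^{(2)}_n(X/G_i;\cald_{FQ_i})\bigr|<1/i$ for the finitely many $n$ under consideration.
\end{itemize}
The second condition is possible because assertion~\ref{the:Identification_with_adhoc_definition:locally_indicable_amenable} holds for every residual chain in $Q_i$, so any sufficiently deep finite index normal subgroup of $Q_i$ contained in $p_i(L_{i-1})$ works. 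I expect this bookkeeping to be the main obstacle. In particular, $n$ ranges over all degrees, so the $1/i$-closeness should be imposed only for $n\le i$, and one must check that the approximation in $Q_i$ holds uniformly for subgroups refining a given one. That uniformity is exactly the "for every chain" form of~\ref{the:Identification_with_adhoc_definition:locally_indicable_amenable}.

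Assertion~\ref{the:Identification_with_adhoc_definition:is_infimum:equality} follows. Part~\ref{the:Identification_with_adhoc_definition:le_infimum} gives $\le\beta^{\inf}_n$, and the sequence from~\ref{the:Identification_with_adhoc_definition:is_infimum:sequence} gives $\beta^{\inf}_n\le\lim=b^{(2)}_n$. For $\underline\beta_n$, apply this equality to each finite cover and use the multiplicativity of Theorem~\ref{the:restriction}; finite index subgroups of \RALIRF-groups are again \RALIRF.

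Part~\ref{the:Identification_with_adhoc_definition:RALIRF_p} is identical, with $K_i$ chosen of $p$-power index, which is possible by residual $p$-finiteness. The $L_i$ are then intersections of $p$-power index normal subgroups, so their indices are again $p$-powers. For~\ref{the:Identification_with_adhoc_definition:RALIRF_p:equalities_F_char_p}, the Betti numbers over $F$ equal those over $\IF_p$, since $F$ is flat over $\IF_p$. Lemma~\ref{lem:monotonicity_for_p-groups} shows that $b_n(Y'';F)/|Y''\to Y|\le b_n(Y';F)/|Y'\to Y|$ for $p$-power towers. Hence the inner supremum in $\overline\beta^p_n$ is attained at $Y'$ itself, which gives $\overline\beta^p_n\le\beta^{\inf,p}_n$; the reverse chain of inequalities is formal.
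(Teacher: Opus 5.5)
Your proposal follows the paper's proof essentially step by step. The ingredients are the same: restriction to $H$ plus Monotonicity for $H\to\{1\}$ for the bound by $\beta^{\inf}_n$; a diagonal construction of the $L_i$ from a normal \RALIRF-chain, combining Theorem~\ref{the:Approximation_over_any_F} with the Linnell--L\"uck--Sauer amenable approximation (assertion (iv)) inside each quotient $G/G_i$; multiplicativity under finite index subgroups for the $\underline{\beta}$'s; and Lemma~\ref{lem:monotonicity_for_p-groups} to identify $\overline{\beta}^p_n$ with $\beta^{\inf,p}_n$. Your bookkeeping is in places slightly more careful than the paper's: the index factor in (i) goes in the right direction, you handle all degrees $n\le i$ at once, and you reduce from $F$ to $\IF_p$ by flatness before invoking Lemma~\ref{lem:monotonicity_for_p-groups}.
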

\begin{proof}
  Recall that a \RALI-group $G$ is a locally indicable Lewin  group and in particular
  a weak Linnell group and  weak Hughes group,
  see Theorem~\ref{the:Residually_(locally_indicable_and_amenable)_groups}~%
\ref{the:Residually_(locally_indicable_and_amenable)_groups:Levin}.
  \\[1mm]~\ref{the:Identification_with_adhoc_definition:le_infimum}
 Let $H \subseteq G$ be a normal subgroup with $[G:H] < \infty$. We get from
 Theorem~\ref{the:restriction}~\ref{the:restriction:general_F}  applied to $H \subseteq G$ and from 
 Theorem~\ref{the:Monotonicity}~\ref{the:Monotonicity:arbitray_F} applied to $H \to \{1\}$
 and $\res_G^{H} X$ 
 \begin{eqnarray*}
   \frac{b^{(2)}_n(X;\cald_{FG})}{[G:H]}
   & = &
   b^{(2)}_n(\res_{G}^{H} X;\cald_{FH});
   \\
   b^{(2)}_n(\res_{G}^{H}X;\cald_{FH})
   & \le &
  b_n(X/H;F).
 \end{eqnarray*}
 This implies  $b^{(2)}_n(X;\cald_{FG}) \le  \frac{b_n(X/H;F)}{[G:H]}$. Hence we get
 \begin{multline*}
    b^{(2)}_n(X;\cald_{FG}) \le  \inf\left\{\left. \frac{b_n(X/H;F)}{[G:H]} \; \right| \;
      H \subseteq G \;\textup{normal subgroup of finite index}\right\}
    \\
    = \beta^{\inf}_{n}(X/G;F).
  \end{multline*}%
~\ref{the:Identification_with_adhoc_definition:is_infimum} We begin with the proof of
  assertion~\ref{the:Identification_with_adhoc_definition:is_infimum:sequence}.  Choose an
  enumeration $G = \{g_1, g_2, g_3, \ldots\}$ of the elements of the countable
  $\RALIRF$-group $G$ and a sequence
  $G = G_0 \supseteq G_1 \supseteq G_2 \supseteq \cdots$ of in $G$ normal subgroups such
  that $\bigcap_{i = 0}^{\infty} G_i = \{1\}$ holds and each quotient $G/G_i$ is
  residually finite, amenable, and locally indicable. By passing to a subsequence of the
  sequence $(G_i)_{i \ge 0}$ we can arrange that $g_i \not \in G_i$ holds for
  $i = 1,2, \ldots$. We conclude from
  Theorem~\ref{the:Approximation_over_any_F}~\ref{the:Approximation_over_any_F:General_F}
 \[
   b^{(2)}_n(X;\cald_{FG}) = \lim_{i \to \infty} b_n(X/G_i;\cald_{F[G/G_i]}).
 \]
 By passing to a subsequence of the sequence $(G_i)_{i \ge 0}$, we can arrange that for
 $i \in \IZ^{\ge 1}$ we get
 \begin{equation}
 \left|b^{(2)}_n(X;\cald_{FG})  - b^{(2)}_n(X/G_i;\cald_{F[G/G_i]})\right| \le \frac{1}{2i}.
\label{the:Identification_with_adhoc_definition:(1)}                                               
\end{equation}
Next we construct a descending  sequence of normal subgroups
$G = L_0 \supseteq L_1 \supseteq L_2 \supseteq \cdots $
 of $G$ such that $G_i \subseteq L_i$, $g_i \notin L_i$,  and
 \begin{equation}
\left|b^{(2)}_n(X/G_i;\cald_{F[G/G_i]})  - \frac{b_n(X/L_i;F)}{[Q:L_i]}\right| \le  \frac{1}{2i}
  \label{the:Identification_with_adhoc_definition:(2)}
\end{equation}
 hold for $i \in \IZ^{\ge 1}$. The induction beginning is $L_0 = G_0$.
The induction step from $(i-1)$ to $i \ge 1$ is done as follows.

Put $Q_i = G/G_i$. Since $Q_i$ is residually finite, we can find a sequence
$Q_i = K_0 \supseteq K_1 \supseteq K_2 \supseteq$ of 
normal subgroups of $Q_i$ of finite index satisfying $\bigcap_{j \ge 0} K_j = \{1\}$. By
intersecting each $K_i$ with the image of $L_{i-1}$ under the projection
$\pr_i \colon G \to Q_i = G/G_i$, we can arrange $\pr_i^{-1}(K_j) \subseteq L_{i-1}$ for
$j \in \IZ^{\ge 1}$.   We conclude from Remark~\ref{rem:Amenable_locally_indicable_groups}
and~\cite[Theorem~0.2~(iii)]{Linnell-Lueck-Sauer(2011)} for
the free $Q_i$-$CW$-complex $Y_i = X/G_i$ of finite type
 \[
 b^{(2)}_n(Y_i;\cald_{FQ_i}) = \lim_{j \to \infty} \frac{b_n(Y_i/K_j;F)}{[Q:K_j]}.
\]
The image of $g_i$ under the projection $\pr_i \colon G \to Q_i = G/G_i$ is different
from the unit, since $g_i \not\in G_i$ holds.  Hence we can choose $j \in \IZ^{\ge 1}$
such that
\begin{equation}
\left|b^{(2)}_n(Y_i;\cald_{FQ_i})  - \frac{b_n(Y_i/K_j;F)}{[Q_i:K_j]}\right| \le  \frac{1}{2i}
  \label{the:Identification_with_adhoc_definition:(3)}
\end{equation}
and $\pr_i(g_i) \notin K_j$ hold.  Now put $L_i = \pr_i^{-1}(K_j)$. Then $G_i \subseteq L_i \subseteq L_{i-1}$ holds,
$L_i$ is a normal subgroup of $G$ with finite index
$[G:L_i] = [Q_i : K_j]$, we have $g_i \notin L_i$, and  $Y_i/K_j = X/L_i$ is valid.
Moreover,~\eqref{the:Identification_with_adhoc_definition:(3)}
  implies~\eqref{the:Identification_with_adhoc_definition:(2)}.

  Now we conclude from~\eqref{the:Identification_with_adhoc_definition:(1)}
  and~\eqref{the:Identification_with_adhoc_definition:(2)}
  that we get for $i \in \IZ^{\ge 1}$
  \begin{equation}
 \left|b^{(2)}_n(X;\cald_{FG})  - \frac{b_n(X/L_i;F)}{[G:L_i]}\right| \le \frac{1}{i}.
\label{the:Identification_with_adhoc_definition:(4)}                                               
\end{equation}
Hence we get a sequence of in $G$ normal subgroups
$G = L_0 \supseteq L_1 \supseteq L_2 \supseteq \cdots$ such that $[G: L_i]$ is finite,  $\bigcap_{i = 0}^{\infty} L_i = \{1\}$ holds,
and we have
\[
b^{(2)}_n(X;\cald_{FG}) = 
\lim_{i \to \infty} \frac{b_n(X/L_i;F)}{[G:L_i]}.
\]
This finishes the proof of assertion~assertion~\ref{the:Identification_with_adhoc_definition:is_infimum:sequence}.

Next we proof assertion~\ref{the:Identification_with_adhoc_definition:is_infimum:equality}.
We conclude from assertion~\ref{the:Identification_with_adhoc_definition:is_infimum:sequence}
\begin{eqnarray*}
  b^{(2)}_n(X;\cald_{FG})
  & = &
\lim_{i \to \infty} \frac{b_n(X/L_i;F)}{[G:L_i]}
\\
& \ge &
\inf\left\{\left. \frac{b_n(X/H;F)}{[G:H]} \; \right| \;
        H \subseteq G \;\textup{normal subgroup of finite index}\right\}
\\
& = &
      \beta^{\inf}_{n}(X/G;F).
\end{eqnarray*}
This together with assertion~\ref{the:Identification_with_adhoc_definition:le_infimum}
implies
\[
  b^{(2)}_n(X;\cald_{FG}) = \beta^{\inf}_{n}(X/G;F).
\]
Since $b^{(2)}_n(X;\cald_{FG})$ is multiplicative under the passage to subgroups of finite index
 by Theorem~\ref{the:restriction}~\ref{the:restriction:special_F}, we conclude
   \[
    \beta^{\inf}_{n}(X;F)  = \underline{\beta}_{n}(X/G;F).
   \]
This finishes the proof of assertion~\ref{the:Identification_with_adhoc_definition:is_infimum}.
\\[1mm]~\ref{the:Identification_with_adhoc_definition:RALIRF_p}
    In the proof of~\ref{the:Identification_with_adhoc_definition:is_infimum:sequence}
    we can arrange  that each index $[G:G_i]$ and $[G_i :K_j]$ is a finite $p$-power. Thus we obtain by the same argument
     a sequence of in $G$ normal subgroups
  \[
    G = L_0 \supseteq L _1 \supseteq L_2 \supseteq L_3 \supseteq \cdots
  \]
  such that each index $[G:L_i]$ is finite $p$-power, $\bigcap_{i= 1}^{\infty} L_i = \{1\}$ holds, and we have
  \[
    b^{(2)}_n(X;\cald_{FG}) =   \lim_{i \to \infty} \frac{b_n(X/L_i;F)}{[G:L_i]}.
  \]

  This finishes the proof of assertion~\ref{the:Identification_with_adhoc_definition:RALIRF_p:sequences}.
  
  Next we prove assertion~\ref{the:Identification_with_adhoc_definition:RALIRF_p:equalities_all_F} 
   Since $G$ is residually $p$-finite, we get from assertions~\ref{the:Identification_with_adhoc_definition:is_infimum:equality}
   and~\ref{the:Identification_with_adhoc_definition:RALIRF_p:sequences}
   \[
     \beta^{\inf,p}_{n}(X/G;F)  \le b^{(2)}_n(X;\cald_{FG}) = \beta^{\inf}_{n}(X/G;F)   \le \beta^{\inf,p}_{n}(X/G;F).
  \]
  This implies
  \[
    b^{(2)}_n(X;\cald_{FG}) = \beta^{\inf}_{n}(X;F)   =\beta^{\inf,p}_{n}(X;F).
   \]
   Since $b^{(2)}_n(X;\cald_{FG})$ is multiplicative under the passage to subgroups of finite index
 by Theorem~\ref{the:restriction}~\ref{the:restriction:special_F}, we conclude
   \[
     b^{(2)}_n(X;\cald_{FG}) = \beta^{\inf}_{n}(X/G;F)   =\beta^{\inf,p}_{n}(X/G;F)
     = \underline{\beta}_{n}(X/G;F)   = \underline{\beta}^p_{n}(X/G;F).
   \]

   Finally we prove assertion~\ref{the:Identification_with_adhoc_definition:RALIRF_p:equalities_F_char_p}.
   Lemma~\ref{lem:monotonicity_for_p-groups} implies
   \[\overline{\beta}^p_{n}(X/G;F)  =\beta^{\inf,p}_{n}(X/G;F).
   \]
   Now the claim follows assertion~\ref{the:Identification_with_adhoc_definition:RALIRF_p:equalities_all_F}.
 
     This finishes the proof of assertion~\ref{the:Identification_with_adhoc_definition:RALIRF_p}.
   \\[1mm]~\ref{the:Identification_with_adhoc_definition:locally_indicable_amenable}
     This follows from Remark~\ref{rem:Amenable_locally_indicable_groups}
and~\cite[Theorem~0.2~(iii)]{Linnell-Lueck-Sauer(2011)}.
   This finishes the proof of Theorem~\ref{the:Identification_with_adhoc_definition}.
 \end{proof}

 Let $X$ be a simply connected free $G$-$CW$-complex.
 We have $b^{(2)}_n(X;\cald_{FG}) \le \overline{\beta}_{n}(X/G;F)$ if $G$ is a
 residually finite $\RALI$-group. We do not know whether
 $b^{(2)}_n(X;\cald_{FG})$ and $\overline{\beta}_{n}(X/G;F) $
 agree if $G$ is a  \RALIRF-group. If $G$ is a \RALIRF$_p$-group and $F$ has characteristic $p$,
 we have $b^{(2)}_n(X;\cald_{FG}) = \overline{\beta}^p_{n}(X/G;F)$, but we do not know
 whether $\overline{\beta}^p_{n}(X/G;F)$ and $\overline{\beta}_{n}(X/G;F) $ agree.

 We do not know whether assertion~\ref{the:Identification_with_adhoc_definition:locally_indicable_amenable}
 of Theorem~\ref{the:Identification_with_adhoc_definition} extends to residually finite $\RALI$-groups.

\begin{remark}
  For residually torsion free nilpotent (RTFN) groups, Theorem 4.17(ii)(b)
  is~\cite[Corollary~4.1]{Avramidi-Okun-Schreve(2024)} and the equality
  $b_n^{(2)}(X;\mathcal D_{\mathbb F_pG})=\beta^{\inf,p}_n(X/G;\mathbb F_p)$
  is~\cite[Theorem~2.7]{Avramidi-Okun-Schreve(2024subdivision)}. The
  inequality $b^{(2)}_n(X;\mathcal D_{FG})\leq\limsup_ib_n(X/G_i;\mathbb F)/|G/G_i|$ for  a \RALI-group $G$ and 
  any residual chain of finite index normal subgroups $\{G_i\}$ is~\cite[Theorem~D]{Fisher-Hughes-Leary(2024)}.
\end{remark}

\begin{remark}\label{rem:Advantage_of_the_new_definition}
  The following observations shall illustrate why it is much easier to work with the
  definition of $b^{(2)}_n(X;\cald_{FG}) $ to be
  $\dim_{\cald_{FG}}\bigl(H_n(\cald_{FG} \otimes_{\IZ G} C_*(X))\bigr)$ instead of the adhoc
  definition to be
\[
 \inf\left\{\left. \frac{b_n(X/H;F)}{[G:H]} \; \right| \;
H \subseteq G \;\textup{normal subgroup of finite index}\right\}.
\]

An obvious example is Theorem~\ref{the:Euler-Poincare_formula}~\ref{the:Euler-Poincare_formula:char_p}
which is not at all clear for the ad hoc versions.

Another instance are Mayer-Vietoris arguments.
Let $G$ be a $\RALI$-group. Consider a $G$-$CW$-complex $X$ which can be written as the
union of two $G$-$CW$-complexes $X_1$ and $X_2$ whose intersection $X_1 \cap X_2$ is
denoted by $X_0$. Suppose that $b^{(2)}_n(X_i;\cald_{FG})$ vanishes for every $i \in \{0,1,2\}$
and every $n \in \IZ^{\ge 0}$.  Then $b^{(2)}_n(X;\cald_{FG})$ vanishes every
$n \in \IZ^{\ge 0}$ by the obvious Mayer-Vietoris sequence
\begin{multline*}
  \cdots \to H_n(\cald_{FG} \otimes_{\IZ G} C_*(X_0))
  \to H_n(\cald_{FG} \otimes_{\IZ G} C_*(X_1)) \oplus H_n(\cald_{FG} \otimes_{\IZ G} C_*(X_2))
  \\
  \to H_n(\cald_{FG} \otimes_{\IZ G} C_*(X))
  \to H_{n-1}(\cald_{FG} \otimes_{\IZ G} C_*(X_0))
  \\
  \to H_{n-1}(\cald_{FG} \otimes_{\IZ G} C_*(X_1)) \oplus H_{n-1}(\cald_{FG} \otimes_{\IZ G} C_*(X_2))
  \to \cdots.
\end{multline*}
The favourite situation, where this will be applied is the following.  Consider a
$CW$-complex $Y$ with sub $CW$-complexes $Y_1, Y_2$, and $Y_0$ satisfying
$Y = Y_1 \cup Y_2$ and $Y_0 = Y_1 \cap Y_2$. Suppose (for simplicity) that $Y_0$, $Y_1$,
and $Y_2$ and hence also $Y$ are connected. Denote by $\widetilde{Y_i}$ and $\widetilde{Y}$
the universal coverings of $Y$ and $Y_i$.  Assume that the inclusions $Y_i \to Y$ induce
injections on the fundamental groups and that $\pi_1(Y)$ is a weak Hughes group. Then $\pi_1(Y_i)$ is
a subgroup of $\pi_1(Y)$ and hence itself a weak Hughes group for $i = 0,1,2$
by Lemma~\ref{lem:subgroups_of_(weak)_Hughes-groups_are_(weak)-Hughes_groups}. Furthermore
suppose that $b^{(2)}_n(\widetilde{Y_i};\cald_{F[\pi_1(Y_i)]})$ vanishes for every
$i \in \{0,1,2\}$ and every $n \in \IZ^{\ge 0}$. Then we conclude from
Theorem~\ref{the:induction}~\ref{the:induction:general_F} and the argument above
applied in the case $G = \pi_1(Y)$, $X = \widetilde{Y}$, and
$X_i = \widetilde{Y}|_{Y_i} = \pi_1(Y) \times_{\pi_1(Y_i)} \widetilde{Y_i}$ that
$b^{(2)}_n(\widetilde{Y},\cald_{F[\pi_1(X)]})$ vanishes every $n \in \IZ^{\ge 0}$.
\end{remark}

    \begin{remark}\label{remark:caveat} One may think that the following statement
      is true, which would simplify many proofs, but we will give an example that this is not the case: If
      \[
    G = G_0 \supseteq G_1 \supseteq G_2 \supseteq G_3 \supseteq \cdots
  \]
  is  a sequence of in $G$ normal subgroups with $\bigcap_{i = 0}^{\infty} G_i = \{1\}$
  and $H \subseteq G$ is a normal subgroup of finite index, then there exists a natural number
  $i$ with $G_i \subseteq H$. Here is a counterexample.

Fix two distinct prime numbers $p$ and $q$.
  Put $G = \IZ$, $G_i = p^i \cdot \IZ$,
  and $H = q \cdot \IZ$.

   Here is a
  residually $p$-finite example, where the index $[G:H]$ is a $p$-power for the same prime $p$. Choose
  $x = 1 + a_1 \cdot p + a_2 \cdot p^2 + \cdots \in (\IZ\widehat{_p})^{\times} $ such that
  its class in $\IQ\widehat{_p}$ is not in the image of $\IQ \to \IQ\widehat{_p}$.  Let
  $\pr_i \colon \IZ\widehat{_p} \to \IZ/p^i$ be the canonical projection for
  $i \in \IZ^{\ge 1}$.  In the sequel we view $\IZ$ as a subring of $\IZ\widehat{_p}$.
  Put $G = \IZ^2$ and $H = \{(mp,n) \mid (m,n) \in \IZ^2\}$.  Let $G_i$ be the kernel of
  the epimorphism $G \to \IZ/p^i$ sending $(m,n)$ to $\pr_i(m \cdot x - n)$. Obviously
  $[G:G_i] = p^i$, $G_{i+1} \subseteq G_i$, and $G_i$ is not contained in $H$ for
  $i \in \IZ^{\ge 1}$. Next we show $\bigcap_{i = 1}^{\infty} G_i = \{0\}$.  Consider
  $(a,b) \in \bigcap_{i = 1}^{\infty} G_i$. Then we get $\pr_i(a \cdot x - b) = 0$ for
  $i \in \IZ^{\ge 1}$. This implies $a \cdot x = b \in \IZ\widehat{_p}$.  Suppose that
  $a \not= 0$ or $b \not= 0$ holds. Since $x \in (\IZ\widehat{_p})^{\times}$, we get
  $a \not=0$ and $b \not= 0$ and hence $x = \frac{b}{a}$ in $\IQ\widehat{_p}$.  Since $x$
  is not in the image of $\IQ \to \IQ\widehat{_p}$, we get a contradiction.  Hence
  $(a,b) = (0,0)$.

\end{remark}

%-----------------------------------------------------------------------------

\subsection{Comparing zero characteristic  and prime characteristic }%
\label{subsec:Comparing_zero_characteristic_and_prime_characteristic}

In this subsection we want to prove the following theorem.

   \begin{theorem}\label{the_comparing_L2-Betti_numbers_over_Q_and_F_p}
  Let $G$ be a \RALI-group and let $X$ be a free
  finite $G$-$CW$-complex.

  \begin{enumerate}

  \item\label{the_comparing_L2-Betti_numbers_over_Q_and_F_p:all_primes}
  
  For every field $F$  and $n \in \IZ$ we have the inequality
  \[
  b_n^{(2)}(X;\caln(G)) \le b^{(2)}_n(X;\cald_{F G});
\]

\item\label{the_comparing_L2-Betti_numbers_over_Q_and_F_p:almost_all_primes} There is a
  finite set of primes $\calp_X$ depending on $X$ such that for every prime $p$ with
  $p \notin \calp_X$ and $n \in \IZ$ we have the equality
  \[
  b_n^{(2)}(X;\caln(G)) = b^{(2)}_n(X;\cald_{\IF_pG}).
\]
\end{enumerate}
\end{theorem}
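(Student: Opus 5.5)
The plan is to turn everything into ranks of integral matrices over the relevant division rings. Fix cellular bases, so that each differential $c_n$ of $C_*(X)$ is a matrix $A_n$ over $\IZ G$. By additivity, as in~\eqref{b_n_upper_(2)_in_terms_cokernels},
\[
b^{(2)}_n(X;\cald_{FG}) = \dim C_n - \rk_{\cald_{FG}}(A_n) - \rk_{\cald_{FG}}(A_{n+1}),
\]
and similarly over $\caln(G)$. By Lemma~\ref{lem:cald_(FG)_is_cald(FG_subseteq_calu(G))}, the $\caln(G)$-version equals the $\cald_{\IQ G}$-version, and $\cald_{FG}$ only depends on the prime field of $F$. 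So assertion~\ref{the_comparing_L2-Betti_numbers_over_Q_and_F_p:all_primes} reduces to showing
\[
\rk_{\cald_{\IF_pG}}(\overline{A}) \le \rk_{\cald_{\IQ G}}(A)
\]
for every matrix $A$ over $\IZ G$, where $\overline{A}$ is its reduction mod $p$. Assertion~\ref{the_comparing_L2-Betti_numbers_over_Q_and_F_p:almost_all_primes} reduces to showing that equality holds for all primes outside a finite set depending on $A$. Since $X$ is finite, only finitely many matrices $A_n$ occur, so $\calp_X$ can be taken as the union of the exceptional sets of these matrices.

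For~\ref{the_comparing_L2-Betti_numbers_over_Q_and_F_p:all_primes}, I would choose a normal \RALI-chain $(G_i)$ and put $Q_i=G/G_i$. Theorem~\ref{the:Approximation_over_any_F}~\ref{the:Approximation_over_any_F:General_F} (more precisely its auxiliary equation~\eqref{the:Approximation_over_any_F:General_F_auxiliary}) expresses both ranks as limits of the corresponding ranks over $\cald_{\IQ Q_i}$ and $\cald_{\IF_pQ_i}$. By Remark~\ref{rem:Amenable_locally_indicable_groups}, these are the Ore fields of $\IQ Q_i$ and $\IF_pQ_i$. So it suffices to prove the inequality for amenable locally indicable groups. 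There I would use the F\o lner description of the Ore-field dimension from Linnell--L\"uck--Sauer~\cite{Linnell-Lueck-Sauer(2011)}. The rank of $A$ over the Ore field is the limit over F\o lner sets $F_k$ of $\dim(\text{restriction of }A\text{ to }F_k)/|F_k|$, and these restrictions are finite integral matrices. For a finite integral matrix the rank mod $p$ is at most the rational rank, since a nonvanishing minor mod $p$ lifts to a nonvanishing integral minor. Passing to the limit gives the inequality.

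For~\ref{the_comparing_L2-Betti_numbers_over_Q_and_F_p:almost_all_primes}, let $r=\rk_{\cald_{\IQ G}}(A)$. By Theorem~\ref{the:Residually_(locally_indicable_and_amenable)_groups}~\ref{the:Residually_(locally_indicable_and_amenable)_groups:weak_monotonicity:(1)}, $\rk_{\cald_{\IQ Q_i}}(A)\ge r$ for every $i$. By the approximation above these integers converge to $r$, so they equal $r$ for some fixed $i$. Over the Ore field of $\IQ Q_i$ I would then pick an $r\times r$ submatrix $B$ that becomes invertible there. Writing its inverse as a left fraction and clearing denominators yields a matrix $C$ over $\IZ Q_i$ and a nonzero $s\in\IZ Q_i$ with $CB=s\cdot I_r$.

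Now let $p$ be any prime not dividing all coefficients of $s$; only finitely many primes are excluded. For such $p$ the reduction $\overline{s}\in\IF_pQ_i$ is nonzero, hence invertible in $\cald_{\IF_pQ_i}$, so $\overline{B}$ has a left inverse there. Therefore
\[
\rk_{\cald_{\IF_pQ_i}}(\overline{A})\ge r .
\]
Since $\cald_{\IF_pG}$ is the universal division ring of fractions (Theorem~\ref{the:Residually_(locally_indicable_and_amenable)_groups}), we get $\rk_{\cald_{\IF_pG}}(\overline{A})\ge \rk_{\cald_{\IF_pQ_i}}(\overline{A})\ge r$. Combined with~\ref{the_comparing_L2-Betti_numbers_over_Q_and_F_p:all_primes}, this gives equality.

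I expect the main obstacle to be the first step of~\ref{the_comparing_L2-Betti_numbers_over_Q_and_F_p:almost_all_primes}: realizing the rank $r$ at a single finite stage $Q_i$ by a concrete integral certificate $CB=s\cdot I_r$. This needs the monotonicity from universality together with the integrality of the ranks, and care that the chosen $i$ works uniformly for all the finitely many $A_n$, for example by taking the maximum of the stages needed. A second point to check in~\ref{the_comparing_L2-Betti_numbers_over_Q_and_F_p:all_primes} is that the F\o lner formula of~\cite{Linnell-Lueck-Sauer(2011)} applies to the non-finitely-generated groups $Q_i$. If it does not, I would first reduce to finitely generated subgroups as in the proof of Theorem~\ref{the:Approximation_over_any_F}.
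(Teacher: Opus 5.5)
Your argument is correct, apart from one harmless slip noted at the end. At the level of the amenable locally indicable quotients $Q_i$, though, it uses different key lemmas from the paper.

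\textbf{How the paper does it.} The paper proves both parts with one elementary elimination lemma (Lemma~\ref{lem:invertibility_over_Q_and_F_p_ALI}). Using only the Ore condition and the fact that $\IF_pQ$ is a domain, it row-reduces an integral square matrix while keeping track of powers of $p$. This shows that invertibility of $\overline{A}$ over $S^{-1}\IF_pQ$ forces invertibility of $A$ over $S^{-1}\IQ Q$, and that the converse holds for almost all $p$. Ranks are then read off from maximal invertible minors. The passage from $G$ to a quotient $G/G_i$ is done by the stabilization in Remark~\ref{rem:Convergence_of_L22_Betti_numbers}.

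\textbf{What your route buys.} For part (i) you import the identification of the Ore-field rank with the F\o lner rank from Elek and Linnell--L\"uck--Sauer. This makes the inequality a limit of the trivial fact about minors of finite integral matrices. The cost is considerably heavier machinery, plus the reduction to finitely generated subgroups you mention. For part (ii), your certificate $CB=s\cdot I_r$ replaces the paper's induction in a single step. It also names the exceptional primes explicitly: those dividing all coefficients of $s$.

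\textbf{The F\o lner step can be avoided.} Run your own Ore-fraction trick in reverse. Suppose an integral square submatrix $B$ were singular over $S^{-1}\IQ Q_i$. A nonzero left kernel vector can be written with a common left denominator. Scale it into $(\IZ Q_i)^r$ and divide by the largest power of $p$ dividing all its coefficients. This gives $\overline{y}\neq 0$ with $\overline{y}\,\overline{B}=0$, so $\overline{B}$ is singular over $S^{-1}\IF_pQ_i$. Hence $\rk_{\cald_{\IF_pQ_i}}(\overline{A})\le\rk_{\cald_{\IQ Q_i}}(A)$ with no F\o lner input.

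\textbf{The slip.} Theorem~\ref{the:Residually_(locally_indicable_and_amenable)_groups}~\ref{the:Residually_(locally_indicable_and_amenable)_groups:weak_monotonicity:(1)} bounds module dimensions over $G$ by those over the quotient. For matrix ranks this gives $\rk_{\cald_{\IQ Q_i}}(A)\le r$, not $\ge r$. For example, $A=(t-1)$ over $\IZ[\IZ]$ has rank $1$ but rank $0$ over the trivial quotient. This does no damage, because integer ranks converging to $r$ are eventually equal to $r$. You also use the correct direction later, in $\rk_{\cald_{\IF_pG}}(\overline{A})\ge\rk_{\cald_{\IF_pQ_i}}(\overline{A})$.

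Finally, no uniform choice of $i$ is needed. Each $A_n$ can use its own stage and its own finite exceptional set, and $\calp_X$ is simply their union.
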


Its proof needs some preparation.

Let $G$ be an amenable locally indicable group.  Note that then $G$ is torsionfree and
satisfies satisfies the strong Atiyah Conjecture,
see~\cite[Theorem~1.1]{Jaikin-Zapirain+Lopez-Alvarez(2020)}. Moreover, for every field $F$
the group ring $FG$ is an integral domain, i.e., all zero-divisors are trivial, and
satisfies the Ore condition with respect to the multiplicative subset $S$ of non-zero
elements, see Remark~\ref{rem:Amenable_locally_indicable_groups}. In particular the Ore
localization $S^{-1} FG$ exists and is a skew field,

Consider $A = (a_{i,j}) \in \M_{m,n}(\IZ G)$ and a prime number $p$. Let
$\overline{A} = (\overline{a_{i,j}})$ be the element in $\M_{m,n}(\IF_p G)$ obtained from
$A$ by applying to each entry the obvious surjective ring homomorphism
$\IZ G \to \IF_p G$. Note that $\IZ G \subseteq S^{-1}\IQ G$ and
$\IF_p G \subseteq S^{-1}\IF_pG$ holds and hence we can consider $A$ as an element of
$\M_{m,n}(S^{-1}\IQ G)$ and $\overline{A}$ as an element of $\M_{m,n}(S^{-1}\IF_p G)$.

\begin{lemma}\label{lem:invertibility_over_Q_and_F_p_ALI}
Let $G$ be an amenable  locally indicable group. Consider $A \in \M_n(\IZ G)$
for some $n \in \IZ^{\ge 1}$.
\begin{enumerate}

\item\label{lem:invertibility_over_Q_and_F_p_ALI:all_primes}
  We get for every prime $p$
  \[
  A \in \GL_n(S^{-1}\IF_p G) \implies A \in \GL_n(S^{-1}\IQ G);
  \]
\item\label{lem:invertibility_over_Q_and_F_p_ALI:almost_all_primes}
  There is a finite set  of primes $\calp_A$ depending on $A$
  such that we get for all primes $p$ with $p \notin \calp_A$ 
  \[
  A \in \GL_n(S^{-1}\IF_p G) \Longleftrightarrow  A \in \GL_n(S^{-1}\IQ G)
  \]
\end{enumerate}
\end{lemma}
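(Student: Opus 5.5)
The plan is to work with the Ore localizations directly, using only two standard facts about matrices over a division ring $D$: a square matrix $A \in \M_n(D)$ is invertible if and only if right multiplication $D^n \to D^n,\ x \mapsto xA$ is injective, and this holds if and only if $A$ has a one-sided inverse. By Remark~\ref{rem:Amenable_locally_indicable_groups}, $FG$ is a domain satisfying the Ore condition, and $FG \subseteq S^{-1}FG$ with $S^{-1}FG$ flat over $FG$. Hence $A \in \GL_n(S^{-1}FG)$ if and only if $x \mapsto xA$ is injective on $FG^n$. Indeed, a nonzero $y \in (S^{-1}FG)^n$ with $yA = 0$ can be written as $y = s^{-1}x$ using a common left denominator $s$, and then $x \in FG^n$ is nonzero with $xA = 0$.

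For assertion~\ref{lem:invertibility_over_Q_and_F_p_ALI:all_primes} I will prove the contrapositive. Suppose $A \notin \GL_n(S^{-1}\IQ G)$. Then there is a nonzero $x \in \IQ G^n$ with $xA = 0$. Clear denominators so that $x \in \IZ G^n$, and divide by the largest power of $p$ dividing all integer coefficients of all entries of $x$. This gives $x \in \IZ G^n$ whose reduction $\overline{x} \in \IF_p G^n$ is nonzero. Reducing the equation $xA = 0$ modulo $p$ gives $\overline{x}\,\overline{A} = 0$, so $\overline{A}$ is not injective over $\IF_pG$. By the criterion above, $\overline{A} \notin \GL_n(S^{-1}\IF_pG)$.

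For assertion~\ref{lem:invertibility_over_Q_and_F_p_ALI:almost_all_primes}, by the first assertion it suffices to show the implication from $\IQ$ to $\IF_p$ for almost all $p$, assuming $A \in \GL_n(S^{-1}\IQ G)$.
\begin{enumerate}
\item Let $B = A^{-1} \in \M_n(S^{-1}\IQ G)$. Each entry has the form $s_{ij}^{-1}r_{ij}$.
\item Using the left Ore condition repeatedly, I choose a common left denominator: a nonzero $s \in \IQ G$ with $sB = C \in \M_n(\IQ G)$.
\item Multiplying by a suitable nonzero integer, I may assume $s \in \IZ G$ and $C \in \M_n(\IZ G)$. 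Then $CA = s \cdot I_n$ holds in $\M_n(\IZ G)$, because this identity holds in $\M_n(S^{-1}\IQ G)$ and $\IZ G$ embeds there.
\item Let $\calp_A$ be the finite set of primes dividing some fixed nonzero integer coefficient of $s$. For $p \notin \calp_A$ we have $\overline{s} \neq 0$ in $\IF_pG$, so $\overline{s}$ is invertible in $S^{-1}\IF_pG$.
\item Reducing modulo $p$ gives $\overline{s}^{-1}\,\overline{C}\,\overline{A} = I_n$. Thus $\overline{A}$ has a left inverse over the division ring $S^{-1}\IF_pG$, and therefore $\overline{A} \in \GL_n(S^{-1}\IF_pG)$.
\end{enumerate}

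The only step that needs care is producing the common denominator in step~2 together with the integral identity $CA = s\cdot I_n$ in step~3. Once $s$ is fixed, the finite set $\calp_A$ is explicit, and the rest is formal.
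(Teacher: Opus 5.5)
Your proof is correct, but it takes a different route from the paper.

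\textbf{The paper's route.} It argues by induction on $n$. It uses the Ore condition in $\IQ G$ to clear the first column of $A$ by a lower triangular matrix $D \in \M_n(\IZ G)$. This requires $p$-adic bookkeeping: it writes $b'_{i,1}=p^{u_i}b_{i,1}$ and $c'_i=p^{v_i}c_i$ and shows $u_i\ge v_i$, so that $D$ stays invertible both over $S^{-1}\IQ G$ and after reduction mod $p$. It then applies the induction hypothesis to the remaining $(n-1)\times(n-1)$ block $E$. Part (2) repeats the same elimination and collects the bad primes coming from $D$, from $a_{1,1}$, and from $E$.

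\textbf{Your route.} You avoid induction altogether.
\begin{itemize}
\item For (1), you take a kernel vector over $\IQ G$, make it integral and primitive at $p$, and reduce it mod $p$. This is a Gauss-lemma-type content argument.
\item For (2), you clear a common left denominator of $A^{-1}$ to get the integral identity $CA=s\cdot I_n$ with $0\neq s\in \IZ G$. You then reduce this identity modulo every prime not dividing a fixed nonzero coefficient of $s$.
\end{itemize}

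\textbf{Comparison.} Both arguments rest on the same inputs: the Ore condition in $\IQ G$, and the fact that $\IF_pG$ sits inside the division ring $S^{-1}\IF_pG$, where one-sided invertibility of square matrices implies invertibility. Your version is shorter and gives an explicit exceptional set $\calp_A$. It also sidesteps the valuation bookkeeping, including the degenerate case $b'_{i,1}=0$, where the paper still has to make sure that $\overline{c_i}\neq 0$. In exchange, the paper's argument is a constructive triangularization of $A$ carried out entirely inside $\IZ G$.
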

\begin{proof}~\ref{lem:invertibility_over_Q_and_F_p_ALI:all_primes} We use induction over
  $n$. The induction beginning $n = 1$ follows directly from the implication
  \[\overline{A} \in \GL_1(S^{-1}\F_p G) \Longleftrightarrow \overline{a_{1,1}} \not = 0
    \implies a_{1,1} \not=  0 \Longleftrightarrow A \in \GL_1(S^{-1}\IQ G).
  \]
  The induction step from $(n-1)$ to $n \ge 2$ is done as follows.  Assume
  $A \in \GL_n(S^{-1}\IF_p G)$. Then at least one of the entries $a_{i,1}$ of the first
  column must satisfy $\overline{a_{i,1}} \not= 0$. We can assume without loss of
  generality $\overline{a_{1,1}} \not= 0$.  By the Ore condition we can find for
  $i = 2, \ldots, n$ elements $b'_{i,1}$ and $c'_{i}$ in $\IQ G$ with $c'_{i} \not= 0$ and
  $b'_{i,1}a_{1,1} = c'_{i}a_{i,1}$ in $\IQ G$.  By possibly multiplying $b'_{i,1}$ and
  $c'_{i}$ with a common element in $\IZ^{\ge 1}$, we can arrange that
  $b'_{i,1}, c'_{i} \in \IZ G$,  $c'_{i} \not = 0$, and $b'_{i,1}a_{1,1} = c'_{i}a_{i,1}$ in $\IZ G$ hold. If
$b'_{i,1} = 0$ holds, put $u_i = v_i = 0$, $b'_{i,1} = b_{i,1}$, and $c'_{i} = c_{i}$. If $b'_{i,1} \not= 0$ holds,
  choose $u_i,v_i \in \IZ^{\ge 0}$ and elements $b_{i,1}, c_{i}$ in $\IZ G$ such that both
  elements $\overline{b_{i,1}}$ and $\overline{c_{i}}$ in $\IF_p G$ are non-trivial and we
  have $b_{i,1}' = p^{u_i} b_{i,1}$ and $c_i' = p^{v_i} c_i$. Then we get
  $p^{u_i} \cdot b_{i,1}a_{1,1} = p^{v_i} \cdot c_{i}a_{i,1}$ in $\IZ G$. Next we show  $u_i  \ge  v_i$.
  Suppose $u_i < v_i$.  Then we get $b'_{i,1} \not= 0$ and $b_{i,1}a_{1,1} = p^{v_i -u_i} \cdot c_{i}a_{i,1}$.
  This   implies $\overline{b_{i,1}}\overline{a_{1,1}} = 0$ in $\IF_pG$.  Since both
  $\overline{b_{i,1}}$ and $\overline{a_{1,1}}$ are non trivial in $\IF_pG$ and $\IF_pG$
  has no non-trivial zero-divisors, we get $\overline{b_{i,1}}\overline{a_{1,1}} \not= 0$
  in $\IF_pG$, a contradiction.  Hence we must have $u_i \ge v_i$. This implies
  $p^{u_i-v_i} \cdot b_{i,1}a_{1,1} = c_{i}a_{i,1}$.  Define an element $D = (d_{i,j})$ in
  $\M_{n}(\IZ G)$ by
  \[
    d_{i,j} =
    \begin{cases}
      1
      &
     \text{if} \;(i,j = (1,1);
    \\
    - p^{u_i-v_i} \cdot b_{i,1} 
    &
    \text{if} \; 2 \le i \le n, j = 1;
    \\
    c_i
    &
    \text{if} \;i = j, 2 \le i \le n;
    \\
    0
    &
    \text{otherwise}.
  \end{cases}
  \]
  Then there is an element $E \in M_{n-1}(\IZ G)$ such that
  $D \cdot A$ is a block matrix of the form
  \[
   D \cdot A
    =
    \begin{pmatrix}
      a_{1,1} & \ast
      \\
      0 & E
    \end{pmatrix}
  \]
  and we have $D \in \GL_{n}(S^{-1}\IQ G)$ and $\overline{D} \in   \GL_{n}(S^{-1}\IF_pG)$.
  As $\overline{A}$ belongs to $\GL_{n}(S^{-1}\IF_pG)$ and
  $\overline{a_{1,1}}$ is a unit in $S^{-1}\IF_p G$, we get
  $\overline{E} \in \GL_{n-1}(S^{-1}\IF_p G)$.  By induction hypothesis
  $E \in \GL_{n-1}(S^{-1}\IQ G)$. Since $a_{1,1}$ is a unit in $S^{-1}\IQ G$, we have
  $A \in \GL_n(S^{-1}\IQ G)$.
  \\[1mm]~\ref{lem:invertibility_over_Q_and_F_p_ALI:almost_all_primes} We use induction
  over $n$. The induction beginning $n = 1$ is proved as follows.  Consider
  $A \in \M_1(\IZ G)$.  If $A$ is the zero matrix, take $\calp_A = \emptyset$. If
  $A$ is not the the zero matrix, let $\calp_A$ be the finite set of primes $p$ for which
  $\overline{a_{1,1}} = 0$.  Suppose that $A \in \GL_1(S^{-1}\IQ G)$. Then
  $a_{1,1} \not= 0$. We get for every prime $p$ satisfying $p \notin \calp_A$ that
  $\overline{a_{1,1}} \not = 0$ and hence $\overline{A} \in \GL_1(S^{-1}\IF_pG)$ hold. Now
  the induction beginning follows from
  assertion~\ref{lem:invertibility_over_Q_and_F_p_ALI:all_primes}.

  The induction step from $(n-1)$ to $n \ge 2$ is done as follows. We begin with the case
  $A \in \GL_n(S^{-1}\IQ G)$.  Analogously to the argument in the proof of
  assertion~\ref{lem:invertibility_over_Q_and_F_p_ALI:all_primes} one shows that one can
  assume without loss of generality that $a_{1,1} \not= 0$ holds and that there is a
  matrix $D = (d_{i,j}) \in \M_n(\IZ G)$ such that the entries on its diagonal are all
  non-trivial, every element of the shape $d_{i,j}$ for $i \not = j$ and $i \not = 1$ is
  trivial and we get an element $E \in \M_{n-1}(\IZ G)$ satisfying
  \[
   D \cdot A
    =
    \begin{pmatrix}
      a_{1,1} & \ast
      \\
      0 & E
    \end{pmatrix}.
  \]
  As $D$ and $A$ belong to $\GL_n(S^{-1}\IQ G)$ and $a_{1,1}$ is a unit in $S^{-1}\IQ G$,
  we have $E \in \GL_n(S^{-1} \IQ G)$. By induction hypothesis there is a finite set of
  primes $\calp_E$ such that for any prime $p \notin \calp_E$ we have
  $\overline{E} \in \GL_{n-1}(S^{-1}\IF_p G)$. Let $\calp_A^1$ be the finite set of
  primes for which both $\overline{D} \notin \GL_{n}(S^{-1}\IF_pG)$ and $\overline{a_{1,1}} = 0$ hold.
  Put $\calp_A = \calp_E \cup \calp_A^1$. Then for any prime
  $p \not \in \calp_A$ we have, $\overline{D} \in \GL_{n}(S^{-1}\IF_pG)$ and 
  $\overline{E} \in \GL_{n-1}{S^{-1}} \IF_p G)$ and
  $\overline{a_{1,1}}$ is a unit in $S^{-1}\IF_p G$ which implies
  $\overline{A} \in \GL_n(S^{-1}\IF_p G)$.
  
  If $A \notin \GL_n(S^{-1} \IQ G)$, we put $\calp_A = \emptyset$.

  This finishes the proof of Lemma~\ref{lem:invertibility_over_Q_and_F_p_ALI} because of
  assertion~\ref{lem:invertibility_over_Q_and_F_p_ALI:all_primes}.
\end{proof}

\begin{lemma}\label{lem:ranks_over_Q_and_F_p_ALI}
  Let $G$ be an amenable  locally indicable group. Consider a $\IZ G$-homomorphism
  $f \colon \IZ G^m \to \IZ G^n$. 
\begin{enumerate}

\item\label{lem:ranks_over_Q_and_F_p_ALI:all_primes}
  We get for every prime $p$
  \[
    \dim_{S^{-1} \IF_p G}(\im(\id_{S^{-1} \F_p G} \otimes_{\IZ G} f))
    \le \dim_{S^{-1} \IQ G}(\im(\id_{S^{-1} \IQ G} \otimes_{\IZ G} f));
  \]
\item\label{lem:ranks_over_Q_and_F_p_ALI:almost_all_primes}
  There is a finite set of primes $\calp_f$  depending on $f$
  such that we get for all primes $p$ with $p \notin \calp_f$ 
  \[
    \dim_{S^{-1} \IF_p G}(\im(\id_{S^{-1} \F_p G} \otimes_{\IZ G} f))
  = \dim_{S^{-1} \IQ G}(\im(\id_{S^{-1} \IQ G} \otimes_{\IZ G} f)).
  \]
\end{enumerate}
\end{lemma}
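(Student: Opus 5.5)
We reduce the statement to Lemma~\ref{lem:invertibility_over_Q_and_F_p_ALI} by characterizing the rank of a matrix over a skew field as the size of its largest invertible square submatrix. Let $A \in \M_{m,n}(\IZ G)$ be the matrix describing $f$ (with the convention that $f$ is right multiplication by $A$ on row vectors). Since $S^{-1}\IQ G$ and $S^{-1}\IF_p G$ are skew fields, the dimension of $\im(\id_{S^{-1}\IQ G} \otimes_{\IZ G} f)$ is the rank $r_0$ of $A$ viewed in $\M_{m,n}(S^{-1}\IQ G)$. Likewise the dimension of $\im(\id_{S^{-1}\IF_p G} \otimes_{\IZ G} f)$ is the rank $r_p$ of $\overline{A}$ in $\M_{m,n}(S^{-1}\IF_p G)$.

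The first step is the standard linear algebra fact over a skew field $\cald$. The rank of a matrix $B \in \M_{m,n}(\cald)$ equals the largest $k$ such that some $k \times k$ submatrix of $B$, obtained by choosing $k$ rows and $k$ columns, lies in $\GL_k(\cald)$. To see this, pick $r$ linearly independent rows spanning the row space. The resulting $r \times n$ matrix has column rank $r$, because row rank equals column rank over skew fields (with the appropriate left/right conventions). So we can pick $r$ columns forming an invertible $r\times r$ block. Conversely, an invertible $k\times k$ block forces its $k$ rows of $B$ to be linearly independent, so $k \le r$. This is the only place needing care, because of the noncommutativity. I would phrase it via Gaussian elimination, which works verbatim over skew fields, to avoid determinants.

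For assertion~\ref{lem:ranks_over_Q_and_F_p_ALI:all_primes}, choose a $k \times k$ submatrix $\overline{B}$ of $\overline{A}$ with $k = r_p$ and $\overline{B} \in \GL_k(S^{-1}\IF_p G)$. The reduction mod $p$ commutes with taking submatrices, so $\overline{B}$ is the reduction of the corresponding submatrix $B$ of $A$. Lemma~\ref{lem:invertibility_over_Q_and_F_p_ALI}~\ref{lem:invertibility_over_Q_and_F_p_ALI:all_primes} then gives $B \in \GL_k(S^{-1}\IQ G)$. Hence $r_0 \ge r_p$.

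For assertion~\ref{lem:ranks_over_Q_and_F_p_ALI:almost_all_primes}, let $\calp_f$ be the union of the finite sets $\calp_B$ from Lemma~\ref{lem:invertibility_over_Q_and_F_p_ALI}~\ref{lem:invertibility_over_Q_and_F_p_ALI:almost_all_primes}, taken over the finitely many square submatrices $B$ of $A$. This union is finite. Take a $k\times k$ submatrix $B$ with $k = r_0$ that is invertible over $S^{-1}\IQ G$. For $p \notin \calp_f$, the matrix $\overline{B}$ is invertible over $S^{-1}\IF_p G$, so $r_p \ge r_0$. Combined with the first assertion this gives equality.

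The main obstacle is the rank/minor characterization over a skew field. Everything else is bookkeeping about finiteness of the set of submatrices.
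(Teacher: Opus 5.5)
Your proposal is correct and takes essentially the same approach as the paper. Both identify the two dimensions with the size of the largest invertible square submatrix of $A$ over $S^{-1}\IQ G$ and of $\overline{A}$ over $S^{-1}\IF_p G$, and then apply Lemma~\ref{lem:invertibility_over_Q_and_F_p_ALI} to the finitely many square submatrices; your justification of the rank/invertible-minor characterization over a skew field (which the paper only asserts) is a welcome addition.
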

\begin{proof}
  Let $A \in \M_{m,n}(\IZ G)$ be the matrix describing $f$. We can consider $A$ as an
  element in $\M_{m,n}(S^{-1} \IQ G)$, and
  $\dim_{S^{-1} \IQ G}(\im(\id_{S^{-1} \IQ G} \otimes_{\IZ G} f))$ agrees with the rank
  $\rk_{S^{-1} \IQ G}(A)$ of $A$.  Moreover $\rk_{S^{-1} \IQ G}(A)$ agrees with the
  maximum over all $k \in \{1,2 \ldots, \min\{m,n\}\}$ for which the exists a
  $(k,k)$-submatrix $A'$ of $A$ with $A' \in \GL_k(S^{-1} \IQ G)$.
  We can
  consider $\overline{A}$ as an element in $\M_{m,n}(S^{-1} \IF_p G)$ and
  $\dim_{S^{-1} \IF_p G}(\im(\id_{S^{-1} \F_p G} \otimes_{\IZ G} f))$ is the maximum over
  all $k \in \{1,2 \ldots, \min\{m,n\}\}$ such that the exists a $(k,k)$-submatrix
  $\overline{A}'$ of $A$ with $\overline{A}' \in \GL_k(S^{-1} F_pG)$.  Now
  Lemma~\ref{lem:ranks_over_Q_and_F_p_ALI} follows from
  Lemma~\ref{lem:invertibility_over_Q_and_F_p_ALI}.
\end{proof}

\begin{lemma}\label{lem:Betti-numbers_over_Q_and_F_p_ALI}
  Let $G$ be an amenable locally indicable group. Consider a finitely generated free
  $\IZ G$-chain complex $C_*$  of finite dimension.
\begin{enumerate}

\item\label{lem:Betti_over_Q_and_F_p_ALI:all_primes}
  We get for every prime $p$ and $n \in \IZ^{\ge 0}$
  \[
    \dim_{S^{-1} \IQ G}(H_n(S^{-1} \IQ G \otimes_{\IZ G} C_*))
    \le  \dim_{S^{-1} \IF_p G}(H_n(S^{-1} \IF_p G \otimes_{\IZ G} C_*));
   \]
\item\label{lem:Betti_over_Q_and_F_p_ALI:almost_all_primes}
  There is a finite set  of primes $\calp_{C_*}$ depending on $C_*$
  such that we get for all primes $p$ with $p \notin \calp_{C_*}$ 
  \[
    \dim_{S^{-1} \IQ G}(H_n(S^{-1} \IQ G \otimes_{\IZ G} C_*))
    =  \dim_{S^{-1} \IF_p G}(H_n(S^{-1} \IF_p G \otimes_{\IZ G} C_*)).
   \]
\end{enumerate}
\end{lemma}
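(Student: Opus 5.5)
The plan is to express each homology dimension through ranks of the differentials and then apply Lemma~\ref{lem:ranks_over_Q_and_F_p_ALI} to each differential. Let $c_k \colon C_k \to C_{k-1}$ be the differentials. Each $C_k$ is finitely generated free, say $C_k \cong \IZ G^{r_k}$. Both $S^{-1}\IQ G$ and $S^{-1}\IF_p G$ are skew fields, so additivity of dimension over a skew field gives for $K = \IQ$ and $K = \IF_p$
\[
\dim_{S^{-1}KG}\bigl(H_n(S^{-1}KG \otimes_{\IZ G} C_*)\bigr)
= r_n - \rho_K(c_n) - \rho_K(c_{n+1}),
\]
where $\rho_K(c_k) := \dim_{S^{-1}KG}\bigl(\im(\id_{S^{-1}KG} \otimes_{\IZ G} c_k)\bigr)$. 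Here I use that the kernel of $\id \otimes c_n$ has dimension $r_n - \rho_K(c_n)$, and that $S^{-1}KG \otimes_{\IZ G} C_k$ is free of rank $r_k$ for both choices of $K$.

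For assertion~\ref{lem:Betti_over_Q_and_F_p_ALI:all_primes}, Lemma~\ref{lem:ranks_over_Q_and_F_p_ALI}~\ref{lem:ranks_over_Q_and_F_p_ALI:all_primes} gives $\rho_{\IF_p}(c_k) \le \rho_{\IQ}(c_k)$ for every $k$. The ranks enter the formula above with a minus sign, so the desired inequality $\dim_{\IQ\text{-side}} \le \dim_{\IF_p\text{-side}}$ follows immediately.

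For assertion~\ref{lem:Betti_over_Q_and_F_p_ALI:almost_all_primes}, note that $C_*$ is finite-dimensional, so only finitely many differentials $c_k$ are nonzero. Put $\calp_{C_*} := \bigcup_k \calp_{c_k}$, where $\calp_{c_k}$ is the finite set of primes from Lemma~\ref{lem:ranks_over_Q_and_F_p_ALI}~\ref{lem:ranks_over_Q_and_F_p_ALI:almost_all_primes}; this union is finite. For $p \notin \calp_{C_*}$ all the ranks agree, and the formula gives equality for every $n$.

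The only point requiring care is bookkeeping. The matrix convention in Lemma~\ref{lem:ranks_over_Q_and_F_p_ALI} (right multiplication, with $\IZ G^m \to \IZ G^n$) must match the differentials $c_k$. One must also check that $\id_{S^{-1}\IF_pG}\otimes_{\IZ G} c_k$ is indeed given by the reduced matrix $\overline{A}$. Both are routine, so I expect no real obstacle.
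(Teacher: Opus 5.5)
Your proposal is correct and follows the paper's proof: you write each homology dimension as $r_n$ minus the ranks of $c_n$ and $c_{n+1}$ over the respective skew fields, and then apply Lemma~\ref{lem:ranks_over_Q_and_F_p_ALI} to each differential. Your explicit choice $\calp_{C_*} = \bigcup_k \calp_{c_k}$, which is finite because $C_*$ is finite-dimensional, fills in a step the paper leaves implicit.
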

\begin{proof}
  Let $c_n \colon C_n \to C_{n-1}$ be the $n$-th differential of $C_*$. Then we get 
  \begin{multline*}
    \dim_{S^{-1} \IQ G}(H_n(S^{-1} \IQ G \otimes_{\IZ G} C_*))
= 
\dim_{S^{-1} \IQ G}(S^{-1} \IQ G \otimes_{\IZ G} C_n)
\\-  \dim_{S^{-1} \IQ G}(\im(\id_{S^{-1} \IQ G} \otimes_{\IZ G} c_{n+1}))
- \dim_{S^{-1} \IQ G}(\im(\id_{S^{-1} \IQ G} \otimes_{\IZ G} c_n)),
\end{multline*}
and
\begin{multline*}
    \dim_{S^{-1} \IF_pG}(H_n(S^{-1} \IF_pG \otimes_{\IZ G} C_*))
= 
\dim_{S^{-1} \IF_pG}(S^{-1} \IF_pG \otimes_{\IZ G} C_n)
\\-  \dim_{S^{-1} \IF_pG}(\im(\id_{S^{-1} \IF_pG} \otimes_{\IZ G} c_{n+1}))
- \dim_{S^{-1} \IF_pG}(\im(\id_{S^{-1} \IF_pG} \otimes_{\IZ G} c_n)),
\end{multline*}
    Since we have
    \[\dim_{S^{-1} \IQ G}(S^{-1} \IQ G \otimes_{\IZ G} C_n)    = \dim_{\IZ G}(C_n)
      =  \dim_{S^{-1} \IF_pG}(S^{-1} \IF_pG \otimes_{\IZ G} C_n),
    \]
    Lemma~\ref{lem:Betti-numbers_over_Q_and_F_p_ALI} follows from
     Lemma~\ref{lem:ranks_over_Q_and_F_p_ALI}.
   \end{proof}

   \begin{remark}\label{rem:Convergence_of_L22_Betti_numbers}
     Let $G$ be a \RALI-group and let $X$ be a free $G$-$CW$-complex of finite type.  Let
     $G = G_0 \supseteq G_1 \supseteq G_2 \supseteq \cdots$ be a sequence of in $G$ normal
     subgroups such that $\bigcap_{i = 0}^{\infty} G_i = \{1\}$ holds and each quotient
     $G/G_i$ is a \RALI-group.  Let $F$ be any field.

    Then we conclude for $i,j \in \IZ^{\ge 0}$ with $i > j$ from Monotonicity applied to  the projection
    $G/G_i \to G/G_j$, see Theorem~\ref{the:Monotonicity}
    \[
    b^{(2)}_n(X/G_i;\cald_{F[G/G_i]}) \le b^{(2)}_n(X/G_j;\cald_{F[G/G_j]}).
  \]
  We conclude from Theorem~\ref{the:Approximation_over_any_F}~\ref{the:Approximation_over_any_F:General_F}
  \[b^{(2)}_n(X;\cald_{FG})  = \lim_{i \to \infty} b^{(2)}_n(X/G_i;\cald_{F[G/G_i]}).
  \]
  Note that $b^{(2)}_n(X;\cald_{FG})$ and $b^{(2)}_n(X/G_i;\cald_{F[G/G_i]})$ are
  integers.  Hence the sequence $b^{(2)}_n(X/G_i;\cald_{F[G/G_i]})$ is monotone decreasing
  and there exists $i_0 \in \IZ^{\ge 0}$ satisfying
  \[b^{(2)}_n(X;\cald_{FG}) = b^{(2)}_n(X/G_i;\cald_{F[G/G_i]}) \quad \text{for}\; i \ge i_0.
  \]
  \end{remark}
  Now we are ready to prove Theorem~\ref{the_comparing_L2-Betti_numbers_over_Q_and_F_p}.

\begin{proof}[Proof of Theorem~\ref{the_comparing_L2-Betti_numbers_over_Q_and_F_p}]%
~\ref{the_comparing_L2-Betti_numbers_over_Q_and_F_p:all_primes}
Because of Lemma~\ref{lem:cald_(FG)_is_cald(FG_subseteq_calu(G))}~%
\ref{lem:cald_(FG)_is_cald(FG_subseteq_calu(G)):dependence_on_F}
  we only have to treat the case, where $F = \IF_p$ for some prime number $p$.
Choose a normal \RALI-chain, i.e., a descending sequence of in $G$ normal subgroups
  \[
    G = G_0 \supseteq G_1 \supseteq G_2 \subset G_3 \supseteq \cdots
  \]
  such that $\bigcap_{i = 0} ^{\infty} G_i = \{1\}$ holds and each quotient group $G/G_i$
  is both amenable and locally indicable. 
  Because of  Remark~\ref{rem:Convergence_of_L22_Betti_numbers} there exists
  $i \in \IZ^{\ge 0}$ with
  \begin{eqnarray*}
    b^{(2)}_n(X;\cald_{\IQ G})
    & = &
    b^{(2)}_n(X/G_i;\cald_{\IQ [G/G_i]});
    \\
     b^{(2)}_n(X;\cald_{\IF_p G})
    & = &
     b^{(2)}_n(X/G_i;\cald_{\IF_p[G/G_i]}).
  \end{eqnarray*}
  Since 
  Lemma~\ref{lem:Betti-numbers_over_Q_and_F_p_ALI}~\ref{lem:Betti_over_Q_and_F_p_ALI:all_primes}
  implies $ b^{(2)}_n(X/G_i;\cald_{\IQ [G/G_i]}) \le b^{(2)}_n(X/G_i;\cald_{F[G/G_i]})$,
  assertion~\ref{the_comparing_L2-Betti_numbers_over_Q_and_F_p:all_primes} follows.
  \\[1mm]~\ref{the_comparing_L2-Betti_numbers_over_Q_and_F_p:almost_all_primes}
  Because of  Remark~\ref{rem:Convergence_of_L22_Betti_numbers} there exists
  $i \in \IZ^{\ge 0}$ satisfying
  \begin{eqnarray*}
    b^{(2)}_n(X;\cald_{\IQ G})
    & = &
    b^{(2)}_n(X/G_i;\cald_{\IQ [G/G_i]}).
  \end{eqnarray*}
  From Lemma~\ref{lem:Betti-numbers_over_Q_and_F_p_ALI}~%
\ref{lem:Betti_over_Q_and_F_p_ALI:almost_all_primes}
  applied to $G/G_i$ and $X/G_i$ we obtain
  a finite set of primes $\calp$ such that for every prime $p$ with $p \notin \calp$ we get
  \[
    b^{(2)}_n(X/G_i;\cald_{\IQ [G/G_i]}) = b^{(2)}_n(X/G_i;\cald_{\IF_p [G/G_i]}).
  \]
  Now we conclude from assertion~\ref{the_comparing_L2-Betti_numbers_over_Q_and_F_p:all_primes}
  and Remark~\ref{rem:Convergence_of_L22_Betti_numbers} for every prime  $p$ with $p \notin \calp$ 
  \begin{multline*}
   b^{(2)}_n(X;\cald_{\IQ G}) =  b^{(2)}_n(X/G_i;\cald_{\IQ [G/G_i]}) =  b^{(2)}_n(X/G_i;\cald_{\IF_p [G/G_i]})
   \\
    \ge  b^{(2)}_n(X;\cald_{\IF_p G}) \ge b^{(2)}_n(X;\cald_{\IQ G}),
 \end{multline*}
 which implies  $b_n^{(2)}(X;\caln(G)) = b_n(X;\cald_{\IQ G}) = b^{(2)}_n(X;D_{\IF_pG})$.
\end{proof}

\begin{conjecture}\label{con:_dimension_1}
 The equality $b_1^{(2)}(G;\caln(G)) = b_1^{(2)}(G;\cald_{FG})$ holds for
 all finitely presented groups $G$ and every field $F$.
\end{conjecture}

Note  that this is not the case in degrees $n \ge 2$, see~\cite[Theorem~1]{Avramidi-Okun-Schreve(2021)}.

%%%%%%%%%%%%%%%%%%%%%%%%%%%%%% Section 5 %%%%%%%%%%%%%%%%%%%%%%%%%%%%%%%%
%%%%%%%%%%%%%%%%%%%%%%%%%%%%%%%%%%%%%%%%%%%%%%%%%%%%%%%%%%%%%%%%%%%%%

\typeout{---------- Section 5: A conjecture due to Gromov and Wise   ---------------}

\section{A question due to Gromov and Wise}%
\label{sec:A_conjecture_due_to_Gromov_and_Wise}

% ----------------------------------------------------------------------------

\subsection{Towers}\label{subsec:towers}

The following definition is a variation of~\cite[Definition~2.1]{Wise(2022)}.

     \begin{definition}[(Connected $\calf$-)tower]\label{def:tower}
       Consider  connected finite $CW$-complexes  $X$ and $Y$.
       A  \emph{tower}  with $X$ as source and $Y$ as target is a sequence of maps
    \[X = Y_n \xrightarrow{i_n}  \overline{Y_{n}}  \xrightarrow{p_{n}} Y_{n-1} \xrightarrow{i_{n-1}}  \overline{Y_{n-2}}
      \xrightarrow{p_{n-2}}       \cdots  \xrightarrow{p_2} Y_{1}\xrightarrow{i_1}  \overline{Y_{1}}
      \xrightarrow{p_{1}}  Y_{0} = Y
   \]
   for $n \in \IZ^{\ge 1}$, where the map $i_k$ is the inclusions of a finite subcomplex
   $Y_k$ and the map $p_k$ is a regular covering with total space $\overline{Y_k}$ for
   $k = 1,2, \ldots, n$.

   The composite of the maps in the tower gives the \emph{underlying map} $f \colon X \to Y$.
   In this case we also say  that $f$ \emph{factorizes as a tower}.

   The number $n$ is the \emph{length of the tower}. An  \emph{elementary tower} $X \xrightarrow{i} \overline{Y} \to Y$
   is just a tower of length $1$, i.e., an inclusion of $CW$-complexes $i$ and a regular covering $p$.

   If $\calf$ is a family of groups and the deck transformation group $G_k$ of the
   covering $p_k$ belongs to $\calf$ for $k = 1,2, \ldots, n$, we call it a
   \emph{$\calf$-tower}.

   If we additionally assume that the spaces $\overline{Y_k}$ and $Y_k$ are connected for
   $k = 1,2, \ldots, n$, we call it a \emph{connected tower} or \emph{connected
     $\calf$-tower}.
 \end{definition}

 Recall that for a discrete group $G$ a $G$-covering is the same as a principal $G$-bundle
 and a covering $q \colon \overline{Z} \to Z$ is regular if and only if it is a
 $G$-covering for the group $G$ of deck transformations of $q$.

 \begin{definition}[Non-positive tower]\label{def_non-positive_tower}
   A tower in the sense of Definition~\ref{def:tower} is called \emph{non-positive} if
   either the Euler characteristic of $X$ satisfies $\chi(X) \le 0$ or $X$ is
   contractible.
 \end{definition}
 \begin{definition}[Tower lift]
   An \emph{$\mathcal F$-tower lift} of $f:X\rightarrow Y$ is a factorization of $f$ as a
   composition $X\rightarrow C\rightarrow Y$ such that $C\rightarrow Y$ is an
   $\mathcal F$-tower. It is a \emph{maximal $\mathcal F$-tower lift} if $X\rightarrow C$
   has no non-trivial $\mathcal F$-tower lift.
 \end{definition}
 Note that if $X$ is a finite CW complex and $f$ is a cellular map, then a maximal
 $\mathcal F$-tower lift can be constructed by taking successive lifts, which terminates
 after finitely many steps because of the finiteness assumption, see~\cite[ Lemma
 2.2]{Wise(2022)}.

  % ----------------------------------------------------------------------------

  \subsection{A conjecture of Wise in two-dimensional topology}

  We are interested in the following question.

  \begin{question}\label{que:Gromov_Wise}
    Are the following statements for a finite connected $2$-dimensional $CW$-complex $Y$ equivalent?

    \begin{enumerate}
    \item\label{que:Gromov_Wise:tower} Every connected tower  with $Y$ as target is non-positive;

    \item\label{que:Gromov_Wise:L2_Betti} The second $L^2$-Betti number
      $b_2^{(2)}(\widetilde{Y}) = b_2^{(2)}\bigl(\widetilde{Y};\caln(\pi_1(Y))\bigr)$
      of the universal covering with respect to the $\pi_1(Y)$-action vanishes.

    \end{enumerate}
    
  \end{question}

  \begin{remark}\label{rem:passing_to_connected_towers}
    Note that we assume in Definition~\ref{def:tower} that $X$ and $Y$ are
    connected.  Then it is easy to construct from an arbitrary tower with underlying map $f \colon X \to Y$
    a connected tower with underlying map $f \colon X \to Y$.  Namely
    replace successively for $k = (n-1), (n-2), \ldots ,1$ the space $\overline{Y_{k}}$
    by the component $\overline{C}_k$ of $\overline{Y_{k}}$ containing $i_{k+1}(Y_{k+1})$ and $Y_k$ by the 
    path component $C_k$ of $Y_k$ which contains the image of $\overline{C_k}$ under
    $p_k \colon \overline{Y_k} \to Y_k$.  Then $p_k$ induces a regular covering
    $p_k' \colon \overline{C}_k \to C_k$ whose deck transformation group is a subgroup of
    the deck transformation group of $p_k$.

    If $\calc$ is closed under taking subgroups, then by the construction above any
    $\calc$-tower with $X$ as source
    and $Y$ as target can be replaced by a connected $\calc$-tower with $X$ as source
    and $Y$ as target. 

    Note that the statement~\ref{que:Gromov_Wise:tower} is equivalent to the statement that
    every tower $f \colon X \to Y$ is non-positive.
\end{remark}

We call  a tower a \emph{$\IZ$-tower} if the tower is a $\calc$-tower with respect to the class of groups
which are either trivial or isomorphic to $\IZ$.

\begin{lemma}\label{lem:zaspherical}
  Let $Y$ be a connected finite $2$-dimensional $CW$-complexes.
  Suppose that  all $\IZ$-towers with  $Y$ as target are non-positive.

  Then $Y$ is  aspherical.
\end{lemma}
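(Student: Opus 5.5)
We argue by contradiction and first reduce to a statement about $\pi_2$. If $Y$ is not aspherical, then $\pi_2(Y)\neq 0$, because $Y$ is $2$-dimensional: $\widetilde Y$ is a simply connected $2$-complex, so $\pi_2(\widetilde Y)=H_2(\widetilde Y)$ is free abelian and all higher homotopy vanishes once $H_2(\widetilde Y)=0$. Among all complexes $X$ admitting a $\IZ$-tower $X\to Y$ with $\pi_2(X)\neq 0$, we pick one that minimises (number of cells, say the number of $2$-cells and then of $1$-cells), or more cleanly, we follow Howie/Wise. Take a nontrivial spherical diagram, i.e.\ a map $f\colon S^2\to Y$ that is essential, and choose a maximal $\IZ$-tower lift $S^2\to X\to Y$ in the sense of the Tower lift definition; this exists by~\cite[Lemma~2.2]{Wise(2022)}, since $S^2$ can be given a finite cellular structure with $f$ cellular. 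Since the underlying map of a tower is $\pi_2$-injective (coverings are $\pi_2$-isomorphisms and subcomplex inclusions of $2$-complexes are $\pi_2$-injective, as $H_2$ of a $2$-dimensional subcomplex injects into $H_2$ of the ambient complex), the map $S^2\to X$ is still essential. By Remark~\ref{rem:passing_to_connected_towers} we may assume the tower is connected. We may also replace $X$ by the smallest subcomplex containing the image of $S^2$, which is still an allowed tower step.

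Next we exploit maximality. Since $S^2\to X$ admits no nontrivial $\IZ$-tower lift, the map is surjective onto $X$; otherwise the inclusion of the image subcomplex would give a nontrivial lift. Moreover $H^1(X;\IZ)=0$: a nonzero class $X\to S^1$ gives an infinite cyclic cover $\overline X\to X$, and $S^2$, being simply connected, lifts to $\overline X$. The image is a finite subcomplex, so we get a nontrivial $\IZ$-tower lift $S^2\to \mathrm{image}\subseteq \overline X\to X$, which is a contradiction. (This is nontrivial because the image is a proper finite subcomplex of the infinite complex $\overline X$.) So $X$ is a finite connected $2$-complex with $b_1(X;\IQ)=0$, and hence $\chi(X)=1+b_2(X;\IQ)$.

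Now $\pi_2(X)\ne 0$, so $X$ is not contractible; and by hypothesis $\chi(X)\le 0$. This contradicts $\chi(X)=1+b_2\ge 1$. Hence $Y$ is aspherical.

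The main obstacle is making the lifting argument rigorous at two points. First, we must justify that maximality forces both surjectivity and $H^1(X)=0$; in particular we must check that lifting into the infinite cyclic cover yields a finite subcomplex that counts as a genuinely nontrivial tower step rather than an isomorphism onto $X$. Second, we must verify $\pi_2$-injectivity of subcomplex inclusions, which we get via $H_2$ of universal covers: the universal cover of the subcomplex embeds into the pullback cover, and $H_2$ injects because there are no $3$-cells.
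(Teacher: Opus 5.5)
Your argument is in substance the paper's: take a maximal $\IZ$-tower lift $S^2\to X\to Y$ of $f$. Maximality gives $b_1(X;\IQ)=0$, hence $\chi(X)\ge 1$. The hypothesis then forces $X$ to be contractible, so $f$, which factors through $X$, is null-homotopic. Your worry about the infinite cyclic cover resolves as you expect. If the finite subcomplex $K\subseteq\overline X$ containing the lift mapped isomorphically onto $X$, its inverse would be a section of $\overline X\to X$. That would force $\pi_1(X)$ into the kernel of the nonzero homomorphism $\pi_1(X)\to\IZ$, which is impossible. So the lift really is a nontrivial tower step.

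You should, however, delete the claim that tower maps are $\pi_2$-injective, and in particular that subcomplex inclusions of $2$-complexes are. It is false; if it were true, the Whitehead asphericity conjecture would be immediate. For example, let $B=\IR P^2\cup_x D^2$, with a second $2$-cell attached along the $1$-cell $x$. Then $B$ is simply connected and $\pi_2(B)\cong H_2(B)\cong\IZ$. The composite $S^2\to\IR P^2\to B$ induces on $H_2$ a map that factors through $H_2(\IR P^2;\IZ)=0$. Hence the inclusion sends the generator of $\pi_2(\IR P^2)\cong\IZ$ to zero. Your justification breaks at ``the universal cover of the subcomplex embeds into the pullback cover''. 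In fact $\widetilde A$ is only a covering of a component of the preimage of $A$, with deck group $\ker(\pi_1(A)\to\pi_1(B))$. The inclusions in a tower need not be $\pi_1$-injective, so this kernel can be nontrivial. You never need the claim: $S^2\to X$ is essential simply because the essential map $f$ factors through it, and that is all your contradiction uses. With that sentence removed, and the abandoned minimal-complex idea dropped, the proof is correct.
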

\begin{proof}
  It is enough to show that any map $f \colon S^2\to Y$ is null-homotopic. Take a maximal
  $\IZ$-tower lift $S^2\rightarrow C\rightarrow Y$ of such a map. Then $C$
  is a finite, connected $2$-complex with $b_1(C;\mathbb Q)=0$. Hence we get
  $\chi(C)\geq 1$.  Since all $\IZ$-towers of $Y$ are non-positive, $C$ is
  contractible. Hence $f$ is null-homotopic.
\end{proof}

\begin{lemma}\label{lem:zlocind}
  Let  $Y$ be a connected finite $2$-dimensional $CW$-complex.
  Suppose  that all $\IZ$-towers with $Y$ as target are non-positive.
  
  Then  $\pi_1(Y)$ is locally indicable.
\end{lemma}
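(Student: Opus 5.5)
To prove that $\pi_1(Y)$ is locally indicable, I would argue by contradiction, in the same spirit as the proof of Lemma~\ref{lem:zaspherical}. Suppose $H\subseteq\pi_1(Y)$ is a non-trivial finitely generated subgroup with no epimorphism onto $\IZ$, i.e.\ $H$ has finite abelianization, equivalently $b_1(H;\IQ)=0$. The goal is to realize $H$ (or rather a quotient of it) as the fundamental group of a finite connected $2$-complex $C$ that maps to $Y$ by a $\IZ$-tower and has $b_1(C;\IQ)=0$. Non-positivity then forces $C$ to be contractible, which in turn forces the image of $H$ in $\pi_1(Y)$ to be trivial, a contradiction.

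\textbf{Step 1: realize $H$ by a map from a finite graph.} Choose a finite connected graph $\Gamma$ (a wedge of circles, one per generator of $H$) and a cellular map $f\colon\Gamma\to Y$ whose image on $\pi_1$ is exactly $H$. After subdividing, $f$ is cellular and the image is a finite subcomplex.

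\textbf{Step 2: take a maximal lift.} Take a maximal $\IZ$-tower lift $\Gamma\to C\to Y$ of $f$, which exists by the remark after the definition of tower lift (\cite[Lemma~2.2]{Wise(2022)}). By Remark~\ref{rem:passing_to_connected_towers} we may assume $C$ is connected and that $\Gamma\to C$ is surjective onto a subcomplex generating $C$; shrinking $C$ to the image subcomplex only yields a further tower step, namely an inclusion with trivial covering. Two consequences follow.
\begin{enumerate}
\item Maximality gives $\Gamma\to C$ surjective on $\pi_1$. Otherwise the image of $\pi_1(\Gamma)$ would lie in a proper subgroup; passing to the subcomplex that is the image, one would still have to obtain a non-trivial lift, which is subtle (see Step~4).
\item The map $\pi_1(C)\to\pi_1(Y)$ is injective, since each tower step is injective on $\pi_1$. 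Hence $\pi_1(C)\cong H$.
\end{enumerate}

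\textbf{Step 3: derive the contradiction.} Maximality also rules out any epimorphism $\pi_1(C)\to\IZ$: such an epimorphism would give an infinite cyclic regular cover of $C$ into which $\Gamma$ lifts (since its image is a finite union of cells lying in a finite subcomplex), and that would be a non-trivial $\IZ$-tower lift. So $b_1(C;\IQ)=0$, and therefore $\chi(C)=1-0+b_2\ge1$. Non-positivity now forces $C$ to be contractible, so $H\cong\pi_1(C)=1$, contradicting the choice of $H$ as non-trivial.

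\textbf{Step 4: the main obstacle.} The delicate point is justifying that maximality really does yield $\pi_1(\Gamma)\to\pi_1(C)$ surjective, and that an epimorphism $\pi_1(C)\to\IZ$ lets $\Gamma$ lift. The second holds because $\pi_1(\Gamma)$ maps into the kernel of that epimorphism, since $H$ has no $\IZ$ quotient. The surjectivity is not needed in this form, however. Any map $\pi_1(\Gamma)\to\pi_1(C)$ whose image has finite abelianization can have $C$ with $b_1>0$, but then, after replacing $C$ by the image subcomplex, maximality directly gives that every map $\pi_1(C)\to\IZ$ kills the image of $\pi_1(\Gamma)$ and hence lifts. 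This yields a contradiction unless $b_1(C;\IQ)=0$. Contractibility of $C$ then makes $f$ trivial on $\pi_1$, i.e.\ $H=1$. I expect the care needed to ensure the lifted image is again a finite subcomplex on which the tower continues to be the main technical step.
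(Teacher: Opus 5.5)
There is a genuine gap in Step~2(2) and in how Steps~3--4 use it. The map $\pi_1(C)\to\pi_1(Y)$ is not injective in general. The covering steps of a tower are $\pi_1$-injective, but the subcomplex inclusions are not: the inclusion of the boundary circle of a $2$-cell already fails. So the image $H'$ of $\pi_1(\Gamma)$ in $\pi_1(C)$ is only some group surjecting onto $H$, and $H'$ can have infinite abelianization even when $H$ does not.

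Your key assertion in Steps~3--4 is that an epimorphism $\phi\colon\pi_1(C)\to\IZ$ kills the image of $\pi_1(\Gamma)$ ``since $H$ has no $\IZ$ quotient''. This does not follow, because $\phi|_{H'}$ need not factor through $H$. Step~2(1) is also wrong: maximality does not give $\pi_1$-surjectivity. All maximality gives is that no nonzero $\phi$ vanishes on $H'$, i.e.\ that $H_1(\Gamma;\IQ)\to H_1(C;\IQ)$ is surjective. When the source is a graph, this says nothing about $H$.

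For instance, suppose $f$ is the inclusion of a connected subgraph $\Gamma\subseteq Y$. Then $\Gamma\xrightarrow{\id}\Gamma\to Y$ is already a maximal $\IZ$-tower lift. Here $C=\Gamma$ has free fundamental group and $\chi(C)\le 0$ whatever $H$ is, so no contradiction can be extracted. A graph cannot record the relations of $H$ that make its abelianization finite.

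The missing idea is to take as source a finite $2$-complex that encodes enough of those relations. The abelianization of $H$ is finitely presented, so there is a finitely presented group $G'$ with an epimorphism $G'\to H$ inducing an isomorphism on abelianizations \cite[Lemma~3.1]{Wise(2022)}. Let $P$ be a finite presentation complex for $G'$ with a map $P\to Y$ realizing $G'\to H\subseteq\pi_1(Y)$, and take a maximal $\IZ$-tower lift $P\to C\to Y$.

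The image of $\pi_1(P)$ in $\pi_1(Y)$ is $H\neq\{1\}$ and factors through $\pi_1(C)$, so $C$ is not contractible. Hence $\chi(C)\le 0$ and $b_1(C;\IQ)\ge 1$. Maximality gives a surjection $H_1(P;\IQ)\to H_1(C;\IQ)$. Therefore $b_1(H;\IQ)=b_1(G';\IQ)=b_1(P;\IQ)\ge b_1(C;\IQ)\ge 1$, and $H$ surjects onto $\IZ$.

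This is the paper's proof. Equivalently, your Step~3 contradiction goes through once $P$ replaces $\Gamma$: $H_1(P;\IQ)=0$ then forces $H_1(C;\IQ)=0$, hence $\chi(C)\ge 1$.
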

\begin{proof}
  Let $G\subseteq \pi_1(Y) $ be a non-trivial finitely generated subgroup. To prove the
  lemma, we need to show that $G$ surjects onto $\IZ$. Note that the abelianization of $G$
  is finitely presented, so there is a finitely presented group $G'$ and a surjection
  $G' \to G$ that induces an isomorphism on abelianizations, see~\cite[Lemma 3.1]{Wise(2022)}.
  Let $P$ be a finite
  presentation $2$-complex for $G'$ and $P \to X$ the map inducing
  $G' \to G \subseteq \pi_1X$. Take a maximal $\IZ$-tower lift $P \to C \to X$. Then $C$
  is a finite, connected $2$-complex. Since $G$ is non-trivial and $\pi_1(P) \to \pi_1(X)$
  has $G$ as image, $\pi_1(C)\not= \{1\}$ and hence $C$ is not contractible. Since $X$ has
  non-positive $\IZ$-towers, we conclude that $\chi(C)\leq 0$ and hence
  $b_1(C;\IQ)\geq 1$. Since $C$ is maximal, we cannot take a further infinite cyclic lift,
  so the map $P\to C$ induces a surjection $H_1(P;\IQ)\rightarrow H_1(C;\IQ)$. Therefore,
  $b_1(G;\IQ) = b_1(G';\IQ) = b_1(P;\IQ )\geq b_1(C;\IQ)\geq 1$, implying that $G$
  surjects onto $\IZ$.
\end{proof}

So Question~\ref{que:Gromov_Wise} leads in view of Lemma~\ref{lem:zlocind} to the
following conjecture.

\begin{conjecture}[Wise]\label{con:Wise}
 The following statements for a finite connected $2$-dimen\-sio\-nal $CW$-complex $Y$ are  equivalent:

 \begin{enumerate}
 \item\label{con:Wise:tower} Every connected tower  with $Y$ as target is non-positive;

 \item\label{con:Wise:tower_locally_indicable} Every connected tower  with $Y$ as target
   \[\quad \quad \quad X = Y_n \xrightarrow{i_n}  \overline{Y_{n}}
     \xrightarrow{p_{n}} Y_{n-1} \xrightarrow{i_{n-1}}  \overline{Y_{n-2}}
      \xrightarrow{p_{n-2}}       \cdots  \xrightarrow{p_2} Y_{1}\xrightarrow{i_1}  \overline{Y_{1}}
      \xrightarrow{p_{1}}  Y_{0} = Y
   \]
  is non-positive, provided that the fundamental group of each space appearing in it is locally indicable;

    \item\label{con:Wise:L2_Betti} The fundamental  group $\pi_1(Y)$ is locally indicable
      and the second $L^2$-Betti number
      $b_2^{(2)}(\widetilde{Y}) = b_2^{(2)}\bigl(\widetilde{Y};\caln(\pi_1(Y))\bigr)$
      of the universal covering with respect to the $\pi_1(Y)$-action vanishes.

    \end{enumerate}

  \end{conjecture}

  \begin{remark}\label{rem:question_and_the_conjectire_of_Wise}
    Daniel Wise suggested a geometric characterization for the vanishing of the top
    $L^2$-Betti number of a $2$-complex in terms of non-positivity of Euler
    characteristics of immersed subcomplexes,
    see~\cite[Conjecture~2.6]{Wise(2022criteria)}. This characterization turned out to be
    incorrect because local modifications of a $2$-complex (not affecting the homotopy
    type and hence leaving the $L^2$-Betti numbers unchanged) can be done to introduce
    immersed $2$-spheres (see e.g., the introduction of~\cite{Freedman-Nguyen-Phan(2021)} for a source
    of immersed $2$-spheres in contractible $2$-complexes). 
    Chemtov and Wise suggested the  modification appearing as Conjecture~\ref{con:Wise} above.
  \end{remark}

  \begin{remark}\label{rem:homotopy_invariance_and_Whitehead}
  The vanishing of $b_2^{(2)}(\widetilde{Y})$ is a homotopy invariant property of a finite
  connected $2$-dimensional $CW$-complex $Y$. If Conjecture~\ref{con:Wise}
  holds, the property that for a connected finite $2$-dimensional $CW$-complex
  $Y$ any connected tower with $Y$ as target is non-positive, has also to be homotopy
  invariant. This is an interesting and open claim. It implies because of
  Lemma~\ref{lem:zaspherical} that connected subcomplexes of finite contractible
  $2$-complexes are aspherical, which is a special case of the Whitehead Conjecture and is
  an open and difficult problem. Recall that the Whitehead Conjecture predicts that a
  connected $CW$-subcomplex of an aspherical finite $2$-dimensional $CW$-complex is
  aspherical again.
\end{remark}

%------------------------------------------------------------------------------

\subsection{Evidence for Wise's conjecture\label{wiseconjectureexamples}} 
Here are three classes of $2$-complexes with the non-positive towers property:
\begin{itemize}
\item (One relator complexes) Let $\Gamma$ be a finite graph and $w:S^1\rightarrow\Gamma$
  an immersion representing a word in the fundamental group that is not a proper
  power. Then the $2$-complex $\Gamma_w: =\Gamma\cup_w D^2$ obtained by attaching a $2$-disk
  along $w$ has non-positive towers, see~\cite{Helfer-Wise(2016),Louder-Wilton(2017)};
  
\item Spines of aspherical $3$-manifolds with non-empty boundary have non-positive towers,
see~\cite[1.3]{Chemtov-Wise(2022)};
\item 
(Some quotients of Davis complexes)
Let $L$ be a flag graph and denote by 
\[
W_L:=\left<s_v,v\in L^{(0)}\mid s_v^2=1,  [s_v,s_w]=1\mbox{ if }v \mbox{ is adjacent to } w\right>
\] 
the corresponding right angled Coxeter group. The commutator subgroup $C_L:=[W_L,W_L]$ is
a finite index torsion-free subgroup of $W_L$. It is the fundamental group of a locally
CAT(0) 
square complex $P_L$, which can be defined as the subcomplex of the cube
$[-1,1]^{L^{(0)}}$ given by
\[
P_L:=\bigcup_{\sigma\subset L}[-1,1]^{\sigma^{(0)}}\times\{-1,1\}^{(L-\sigma)^{(0)}}.
\]
The $2$-complex $P_L$ has non-positive towers if and only if for every connected subgraph
$K\subset L$ that is not a vertex or a edge, we have $\chi(P_K)\leq 0$. 
\begin{remark}
  This follows from~\cite[4.5]{Jankiewicz-Wise(2016)} and
  in~\cite[11.12]{Wise(2020invitation)}, where one should note that the compression
  complex $\overline X$ (associated the commutator subgroup $C_L$ of $W_L$) in the first
  reference agrees with $P_L$.
\end{remark}
\end{itemize}

The $2$-complexes with the non-positive towers property appearing in the first and second
bullet point have vanishing second $L^2$-Betti number,
see~\cite[Theorem~4.2]{Dicks-Linnell} for one-relator
groups,~\cite[Theorem~0.1]{Lott-Lueck(1995)} for $3$-manifold groups. In the situation of
the third bullet point, there are partial results in two situations.

\begin{enumerate}
\item If the flag graph $L$ is \emph{planar} then $b_2^{(2)}(\widetilde P_L;\mathcal N(C_L))=0$, see~\cite[Theorem
  11.4.1]{Davis-Okun(2001)}.  Here is a sketch of their argument: Embed $L$ as a full
  subcomplex in a flag triangulation $T$ of $S^2$. Then $P_T$ is an aspherical
  $3$-manifold, hence it satisfies Singer, hence $b_2^{(2)}(\widetilde P_T;\mathcal N(C_T))=0$. A Mayer-Vietoris
  argument shows that removing the vertices of $T-L$ one by one does not affect
  $b_2^{(2)}(\widetilde P_{-};\mathcal N(C_{-}))$ vanishing.
\item If the flag graph $L$ is \emph{trivalent} then $b_2^{(2)}(\widetilde P_L;\mathcal N(C_L)) =0$ unless $L=K_{3,3}$
  is the utilities graph,\footnote{Note that $b_2^{(2)}(P_{K_{3,3}})$ does not vanish
    since $\chi(P_{K_{3,3}})=2^6(1-6/2+9/4)$ is positive.}  in~\cite[Theorem
  6.1]{Avramidi-Okun-Schreve(2024subdivision)}.
\end{enumerate}

%----------------------------------------------------------------------------

\subsection{Maximal residually $\calc$ coverings}%
\label{subsec:Maximal_residually_calc_coverings}

Let $\calc$ be a class of  groups which is closed under taking subgroups and finite direct products.
A group $G$ is called \emph{residually $\calc$}
if there exists  a \emph{normal
  $\calc$-chain}, i.e., a, descending sequence of in $G$ normal subgroups
  \[
    G = G_0 \supseteq G_1 \supseteq G_2 \supseteq G_3 \supseteq \cdots
  \]
  such that $\bigcap_{i = 0} ^{\infty} G_i = \{1\}$ holds and each quotient group $G/G_i$
  lies in $\calc$.

  \begin{lemma}\label{lem:equivalent_dcesription_of_residually_calc}
    Let $G$ be a group. Consider the following statements:

    \begin{enumerate}
    \item\label{lem:equivalent_dcesription_of_residually_calc:chain}
      The group $G$ is residually $\calc$;
      
    \item\label{lem:equivalent_dcesription_of_residually_calc:elements} For every element
      $g \in G$ different from the unit $e_G \in G$, there exists a surjective group
      homomorphism $f \colon G \to Q$ such that $Q$ belongs to $\calc$ and
      $f(g) \not = e_Q$ holds.

    \end{enumerate}
    Then~\ref{lem:equivalent_dcesription_of_residually_calc:chain}
    $\implies$~\ref{lem:equivalent_dcesription_of_residually_calc:elements} holds.
    
    If $G$ is countable, then we get~\ref{lem:equivalent_dcesription_of_residually_calc:chain}
    $\Longleftrightarrow$~\ref{lem:equivalent_dcesription_of_residually_calc:elements}.
  \end{lemma}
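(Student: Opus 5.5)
For the implication from the chain condition to the elementwise condition I would argue directly. Take a normal $\calc$-chain $G = G_0 \supseteq G_1 \supseteq \cdots$ with $\bigcap_{i} G_i = \{1\}$. For $g \not= e_G$, the element $g$ does not lie in $\bigcap_i G_i$, so there is an index $i$ with $g \notin G_i$. Let $f \colon G \to Q := G/G_i$ be the projection. Then $Q$ belongs to $\calc$ and $f(g) \not= e_Q$, which is exactly the required condition.

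For the converse I would assume $G$ is countable and enumerate the non-trivial elements as $g_1, g_2, g_3, \ldots$ (a finite list if $G$ is finite; if $G$ is trivial, take the constant chain $G_i = G$, which works provided the trivial group lies in $\calc$, and it does as a subgroup of any member). For each $k$, the elementwise condition gives a surjection $f_k \colon G \to Q_k$ with $Q_k$ in $\calc$ and $f_k(g_k) \not= e$. Put $N_k := \ker(f_k)$ and define
\[
G_i := N_1 \cap \cdots \cap N_i \quad \text{for}\; i \ge 1, \qquad G_0 := G.
\]
These form a descending sequence of normal subgroups of $G$. Their intersection is trivial, because every $g_k$ lies outside $N_k \supseteq G_k$.

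The only point needing care is showing that each quotient $G/G_i$ lies in $\calc$; this is where the closure hypotheses on $\calc$ enter. The map $G/G_i \to Q_1 \times \cdots \times Q_i$, $g G_i \mapsto (f_1(g), \ldots, f_i(g))$, is a well-defined injective homomorphism, since its kernel is $\bigcap_{k \le i} N_k / G_i = \{1\}$. So $G/G_i$ is isomorphic to a subgroup of a finite direct product of members of $\calc$. Because $\calc$ is closed under finite direct products and subgroups (hence also under isomorphism, as subgroups are taken up to isomorphism), $G/G_i$ lies in $\calc$. Therefore $(G_i)_{i \ge 0}$ is a normal $\calc$-chain and $G$ is residually $\calc$.
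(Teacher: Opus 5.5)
Your proof is correct and is essentially the paper's argument: both take $G_i$ to be an intersection of finitely many kernels separating the first $i$ enumerated elements, and use that $G/G_i$ embeds into a finite product of members of $\calc$. The paper phrases this inductively via $G/G_n \hookrightarrow G/G_{n-1} \times G/H$, while you embed directly into $Q_1 \times \cdots \times Q_i$; for finite $G$ you should also continue the chain constantly after the last element.
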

  \begin{proof} The implication~\ref{lem:equivalent_dcesription_of_residually_calc:chain}
    $\implies$~\ref{lem:equivalent_dcesription_of_residually_calc:elements} is obvious. It
    remains to show the implication~\ref{lem:equivalent_dcesription_of_residually_calc:elements}
    $\implies$~\ref{lem:equivalent_dcesription_of_residually_calc:chain}, provided that
    $G$ is countable.  Choose an enumeration $G = \{g_0,g_1, g_2, \ldots \}$ with
    $g_0 = e_G$. We construct inductively for every $n = 0,1,2, \ldots $ a normal subgroup
    $G_n \subseteq G$ such that $G/G_n$ belongs to $\calc$, $G_{n} \subseteq G_{n-1}$
    holds, and we have $\{g_0,g_1, \ldots g_n\} \cap G_n = \{e_G\}$. For the induction
    beginning $n = 0$ take $G_0 = G$. The induction step from $(n-1)$ to $n \ge 1$ is done
    as follows. Choose a normal subgroup $H$ such that $G/H$ belongs to $\calc$ and
    $g_n \not \in H$ holds. Put $G_n = H \cap H_{n-1}$.  Since $G/H_n$ is a subgroup of
    $G/G_{n-1} \times G/H$, we get $G/H_n \in \calc$. We have $H_{n-1} \subseteq H_n$ and
    \begin{multline*}
     \{e_G\} \subseteq \{g_0,g_1, \ldots, g_n\} \cap G_n = \{g_0,g_1, \ldots, g_n\} \cap G_{n-1} \cap H
     \\
     \subseteq  \{g_0,g_1, \ldots, g_{n-1}\} \cap G_{n-1} = \{e_G\}.
   \end{multline*}
   This implies  $\bigcap_{n = 0}^{\infty} G_n = \{e_G\}$.
 \end{proof}

 \begin{lemma}\label{lem_maximal_residually_calc_quotient}
   Let $G$ be a countable group. Then there exists a \emph{maximal residually $\calc$-quotient},
   i.e., a surjective group homomorphism $\phi \colon G \to \widehat{G}$ with the property that
   $\widehat{G}$ is residually $\calc$,
   such that for any surjective group homomorphism $\psi \colon G \to Q$ with the property that
   $Q$ is residually $\calc$,
   there exists precisely one group homomorphism $\widehat{\psi } \colon \widehat{G} \to Q$ with
   $\psi  = \widehat{\psi } \circ \phi$.
 \end{lemma}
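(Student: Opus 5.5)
We take for $\ker(\phi)$ the intersection
\[
  N := \bigcap\{ K \subseteq G \mid K \;\text{normal},\; G/K \;\text{residually}\;\calc\},
\]
and put $\widehat{G} := G/N$ with $\phi$ the projection. Uniqueness of $\widehat{\psi}$ is automatic because $\phi$ is surjective. For existence, if $\psi \colon G \to Q$ is surjective with $Q$ residually $\calc$, then $\ker(\psi)$ is one of the subgroups in the intersection. Hence $N \subseteq \ker(\psi)$, and $\psi$ factors uniquely through $\widehat{G}$. So everything reduces to showing that $\widehat{G}$ is itself residually $\calc$.

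For this we use Lemma~\ref{lem:equivalent_dcesription_of_residually_calc}. The group $\widehat{G}$ is countable, being a quotient of the countable group $G$, so it suffices to verify the elementwise criterion~\ref{lem:equivalent_dcesription_of_residually_calc:elements}.

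Let $\bar g \in \widehat{G}$ be non-trivial and choose a lift $g \in G$. Then $g \notin N$, so there is a normal subgroup $K$ with $G/K$ residually $\calc$ and $g \notin K$. Since $N \subseteq K$, the projection $G \to G/K$ factors through $\widehat{G} \to G/K$, and $\bar g$ maps to a non-trivial element there.

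Now $G/K$ is residually $\calc$, so the easy implication of Lemma~\ref{lem:equivalent_dcesription_of_residually_calc} gives a surjection $G/K \to Q$ with $Q \in \calc$ under which the image of $g$ stays non-trivial. Composing, $\widehat{G} \to G/K \to Q$ is a surjection onto a group in $\calc$ that does not kill $\bar g$. By Lemma~\ref{lem:equivalent_dcesription_of_residually_calc} again, now using countability, $\widehat{G}$ is residually $\calc$.

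The only real point is this last step, namely that an intersection of possibly uncountably many "residually $\calc$" kernels is again such a kernel. The chain definition does not pass to intersections directly. The elementwise reformulation does, and this is exactly where the countability of $G$ and the earlier lemma are needed.
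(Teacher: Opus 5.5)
Your proposal is correct and follows the same route as the paper: you take the same intersection of kernels, get uniqueness from the surjectivity of $\phi$, and show that $\widehat{G}$ is residually $\calc$ via the elementwise criterion of Lemma~\ref{lem:equivalent_dcesription_of_residually_calc} together with countability. You are in fact slightly more careful than the paper. Its last step only produces a surjection onto a residually $\calc$ group and leaves implicit the further composition with a quotient lying in $\calc$, which you make explicit.
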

 \begin{proof}
   Let $K$ be the normal subgroup of $G$ which is the intersection of the kernels of
   surjective group homomorphism $G \to Q$ with the property that $Q$ is residually
   $\calc$. Let $\phi \colon G \to \widehat{G} = G/K$ be the projection.  Then for any 
   surjective group homomorphism $\psi  \colon G \to Q$ with the property that $Q$ is
   residually $\calc$, there exists precisely one group homomorphism
   $\widehat{\psi } \colon \widehat{G} \to Q$ with $\psi = \widehat{\psi} \circ \phi$. It
   remains to show that $\widehat{G}$ is residually $\calc$.  This follows from
   Lemma~\ref{lem:equivalent_dcesription_of_residually_calc} since for every $\widehat{g}$
   in $\widehat{G}$ with $\widehat{g} \not= e_{\widehat{G}}$ there exists a surjective
   group homomorphism $\mu  \colon G \to Q$ with the property that $Q$ is residually $\calc$
   and $\mu(\widehat{g}) \not= e_Q$ holds.
 \end{proof}

 Let $X$ be a connected $CW$-complex with countable fundamental group $\pi =
 \pi_1(X)$. Let $\widetilde{p} \colon \widetilde{X} \to X$ be its universal covering.  Let
 $\Phi \colon \pi \to \widehat{\pi}$ be the \emph{maximal residually $\calc$-quotient}.
 Define the  \emph{maximal residually $\calc$-covering}
   $\widehat{p} \colon \widehat{X} \to X$ to be the projection for
   $\widehat{X} = X/\ker(\Phi)$. Then $\widehat{\pi}$ is residually $\calc$
   and $\widehat{p} \colon \widehat{X} \to X$ is a
   $\widehat{\pi}$-covering which has the following property: For any $H$-covering
   $q \colon \overline{X} \to X$ with the property that $\overline{X}$ is connected and
   $H$ is residually $\calc$ there is precisely one covering
   $\widehat{q} \colon \widehat{X} \to \overline{X}$ such that
   $q \circ \widehat{q} = \widehat{p}$ holds.

   If $\calc$ consists of all groups, then $\widehat{\pi} = \pi$,
   $\widehat{X} = \widetilde{X}$, and $\widehat{p} = \widetilde{p}$ hold.  If $\calc$
   consists of only of one element, namely the trivial group, then
   $\widehat{\pi} = \{e\}$, $\widehat{X} = X$, and $\widehat{p} = \id_X$ hold.

   For us the cases are interesting, where $\calc$ consists of all $\ALI$-groups, all
   finite groups, all finite $p$-groups for a prime $p$, and all torsionfree nilpotent groups.

  %------------------------------------------------------------------------

  \subsection{Some basic properties of $L^2$-Betti numbers and coverings}%
\label{subsec:Some_basic_properties_of_L2-Betti_numbers_and_coverings}

We collect some facts about $L^2$-Betti numbers which are useful for answering
Question~\ref{que:Gromov_Wise}.  Note for the sequel that $L^2$-Betti numbers
$b_n^{(2)}(Z;\caln(G))$ are defined and studied for any $G$-space $Z$ and in particular
for any $G$-$CW$-complex $Z$, see~\cite[Chapter~6]{Lueck(2002)} if one allows that they
take values in $\IR^{\ge 0 } \amalg \{\infty\}$. The analogous statement holds if $G$ is a
\RALI-groups and we consider $b_n^{(2)}(Z;\cald_{FG})$.

\begin{lemma}\label{lem:vanishing_of_the_second_L2_Betti_number_and_coverings}
  Let $p \colon \overline{X} \to X$ be a $G$-covering of a connected finite
  $2$-dimensional $CW$-complex $X$. Let $F$ be any field.
  Suppose  that $G$ is non-trivial and  that one of the following condition holds:
  \begin{enumerate}
  \item\label{lem:vanishing_of_the_second_L2_Betti_number_and_coverings:(1)}
    $b_2^{(2)}(\overline{X};\caln(G)) = 0$ holds.
  \item\label{lem:vanishing_of_the_second_L2_Betti_number_and_coverings:(2)}
    The group $G$  is a \RALI-group and $b_2^{(2)}(\widetilde{X};\cald_{F[\pi_1(Y)]}) = 0$.
  \end{enumerate}
  
    Then  $\chi(X) \le 0$. 
\end{lemma}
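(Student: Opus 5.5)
Since $X$ is a connected finite $2$-dimensional complex and $p \colon \overline{X} \to X$ is a $G$-covering, $\overline{X}$ is a finite free $G$-$CW$-complex of dimension $2$ with $\overline{X}/G = X$. The plan is to apply the Euler--Poincar\'e formula, Theorem~\ref{the:Euler-Poincare_formula}, and show that the $0$th and $2$nd $L^2$-Betti numbers contribute nothing while the first contributes with a minus sign. (In case (2) we read the hypothesis as $b_2^{(2)}(\overline{X};\cald_{FG}) = 0$, since $G$ is the group assumed \RALI; if the intended covering is the universal one, then $G = \pi_1(X)$ and the two readings agree.)

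In case (1), Theorem~\ref{the:Euler-Poincare_formula}~\ref{the:Euler-Poincare_formula:char_0} gives
\[
\chi(X) = b_0^{(2)}(\overline{X};\caln(G)) - b_1^{(2)}(\overline{X};\caln(G)) + b_2^{(2)}(\overline{X};\caln(G)).
\]
The last term vanishes by hypothesis, and $b_1^{(2)} \ge 0$. It therefore suffices to show $b_0^{(2)}(\overline{X};\caln(G)) = 0$.

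This step is the main obstacle, because $\overline{X}$ need not be connected. If $\overline{X}$ is disconnected, one writes $\overline{X} = G \times_H \overline{X}_0$ for a component $\overline{X}_0$ with stabilizer $H$, and uses Theorem~\ref{the:induction} to reduce to the connected $H$-covering $\overline{X}_0 \to X$. By Theorem~\ref{the:zeroth_L2-Betti_number}, $b_0^{(2)}(\overline{X}_0;\caln(H)) = 0$ when $H$ is infinite and equals $|H|^{-1}$ when $H$ is finite.

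If $H$ is finite, I would argue differently. By Theorem~\ref{the:restriction}, $b_2(\overline{X}_0;\IQ) = |H| \cdot b_2^{(2)} = 0$. Since $\overline{X}_0$ is a finite connected $2$-complex, $\chi(\overline{X}_0) = 1 - b_1 \le 1$ and $\chi(X) = \chi(\overline{X}_0)/|H|$. This gives only $\chi(X) \le 1/|H|$, hence $\chi(X) \le 0$ because $\chi(X)$ is an integer and $|H| \ge 2$ for nontrivial $H$. The case $H$ trivial is excluded: I expect the intended hypothesis is that $\overline{X}$ is connected, so that $H = G$ is nontrivial. I would state this reading explicitly.

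In case (2), $G$ is \RALI, so by Theorem~\ref{the:Residually_(locally_indicable_and_amenable)_groups} it is a weak Hughes group and torsionfree, and every subgroup is weak Hughes by Lemma~\ref{lem:subgroups_of_(weak)_Hughes-groups_are_(weak)-Hughes_groups}. Theorem~\ref{the:Euler-Poincare_formula}~\ref{the:Euler-Poincare_formula:char_p} gives
\[
\chi(X) = b_0^{(2)}(\overline{X};\cald_{FG}) - b_1^{(2)}(\overline{X};\cald_{FG}) + b_2^{(2)}(\overline{X};\cald_{FG}),
\]
and the last term vanishes by assumption. To see $b_0^{(2)} = 0$, reduce as before via Theorem~\ref{the:induction}~\ref{the:induction:general_F} to the stabilizer $H$ of a component. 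Then $H$ is nontrivial (in the connected reading $H = G$), hence infinite since $G$ is torsionfree, and Theorem~\ref{the:zeroth_L2-Betti_number}~\ref{the:zeroth_L2-Betti_number:general_F} gives $b_0^{(2)} = 0$. Therefore $\chi(X) = -b_1^{(2)}(\overline{X};\cald_{FG}) \le 0$.
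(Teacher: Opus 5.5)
Your proof is correct. For case (2), and for the subcase of (1) where the group is infinite, it is the paper's argument: the Euler--Poincar\'e formula together with $b_0^{(2)} = 0$, using that a non-trivial \RALI-group is torsionfree and hence infinite.

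The genuine difference is the finite subcase of (1).
\begin{itemize}
\item The paper proves $b_1(\overline{X}) \ge 1$ by a Lefschetz argument. If $\overline{X}$ were rationally acyclic, every deck transformation would have Lefschetz number $1$ and hence a fixed point, contradicting freeness. It then concludes $\chi(X) = (1 - b_1(\overline{X}))/|G| \le 0$.
\item You instead use only $\chi(X) = (1-b_1(\overline{X}_0))/|H| \le 1/|H| \le 1/2$ together with $\chi(X) \in \IZ$.
\end{itemize}
Your route avoids fixed point theory and recovers the same information, since integrality forces $b_1(\overline{X}_0) \ge 1$. The paper's route makes the geometric reason explicit: a non-trivial finite group cannot act freely on a rationally acyclic finite complex. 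That fact is itself most easily proved by your divisibility $|H| \mid \chi(\overline{X}_0)$. Your use of Theorem~\ref{the:restriction} for $b_2(\overline{X}_0;\IQ) = |H| \cdot b_2^{(2)}(\overline{X}_0;\caln(H))$ is the appropriate citation at that step.

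Keep your caveat about connectedness; it is justified. The paper's proof tacitly assumes $\overline{X}$ is connected: it applies Theorem~\ref{the:zeroth_L2-Betti_number}, which is stated for connected complexes, and it uses $b_0(\overline{X}) = 1$. Without this assumption the statement is false. Take the trivial covering $G \times D^2 \to D^2$, with $G = \IZ/2$ in case (1) or $G = \IZ$ in case (2). The hypothesis on $b_2^{(2)}$ holds by Theorem~\ref{the:induction}, yet $\chi(D^2) = 1$.

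Your reduction to a component shows that what is really needed is that the stabilizer of a component be non-trivial. This is automatic in the paper's applications, where the coverings are connected with non-trivial deck group.
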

\begin{proof} We give the proof only for
  condition~\ref{lem:vanishing_of_the_second_L2_Betti_number_and_coverings:(1)}, the one
  for condition~\ref{lem:vanishing_of_the_second_L2_Betti_number_and_coverings:(2)} is
  completely analogous using the fact that a non-trivial \RALI-group is infinite.  Suppose
  that $G$ is infinite. Then $b_0^{(2)}(\overline{X};\caln(G)) = 0$ by
  Theorem~\ref{the:zeroth_L2-Betti_number}. We conclude from
  Theorem~\ref{the:Euler-Poincare_formula}
  \[
    \chi(X) = \chi^{(2)}(\overline{X};\caln(G)) = \sum_{n \ge 0} (-1)^n \cdot
    b_n^{(2)}(\overline{X};\caln(G)) = - b_1^{(2)}(\overline{X};\caln(G)) \le 0.
  \]

  Suppose that $|G|$ is finite.  Since
  $b_m^{(2)}(\overline{X};\caln(G)) = \frac{b_m(\overline{X})}{|G|}$ for $m \ge 0$ by
  Theorem~\ref{the:zeroth_L2-Betti_number}~\ref{the:zeroth_L2-Betti_number:N(G)},
  we get $b_2(\overline{X}) = 0$. If $b_1(\overline{X})$ vanishes,
  then for any $g \in G$ the Lefschetz number of
  $l_g \colon \overline{X} \to \overline{X}$ given by multiplication with $g$ is one and
  hence has a fixed point. Hence we have $b_1(\overline{X}) \ge 1$, since $G$ is by
  assumption non-trivial and acts freely on $\overline{X}$.  We conclude
  \[\chi(X) = \frac{\chi(\overline{X})}{|G|} = \frac{1}{|G|} \cdot \left(\sum_{n \ge 0} (-1)^n \cdot
      b_n(\overline{X}) \right) = \frac{1 -b_1(\overline{X})}{|G|} \le 0.
  \]

\end{proof}

\begin{lemma}\label{lem:vanishing_of_the_second_L2_Betti_number_and_universal_coverings}
  Let $p \colon \widetilde{X} \to X$ be the universal covering of a connected finite
  $2$-dimensional $CW$-complex $X$. Let $F$ be any field. Suppose that one of the
  following conditions holds:
  \begin{enumerate}
  \item We have $b_2^{(2)}(\widetilde{X}) = 0$;
  \item The fundamental  group $\pi_1(X)$ is a \RALI-group and $b_2^{(2)}(\widetilde{X};\cald_{F[\pi_1(X)]}) = 0$.
  \end{enumerate}

  Then $X$ is aspherical. If $X$ is not simply connected, then $\chi(X) \le 0$.
\end{lemma}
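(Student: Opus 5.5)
The plan is to prove asphericity first, since for a $2$-complex this is equivalent to the vanishing of $\pi_2(X) \cong H_2(\widetilde{X};\IZ)$. The key is to exploit the fact that $X$ is $2$-dimensional. Then $H_2(\widetilde{X};\IZ) = \ker\bigl(c_2 \colon C_2(\widetilde{X}) \to C_1(\widetilde{X})\bigr)$, because there are no $3$-cells. So it suffices to show that the top differential $c_2$ of the cellular $\IZ\pi$-chain complex, $\pi = \pi_1(X)$, is injective.

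\textbf{Injectivity of $c_2$.} In case (1), Lemma~\ref{lem:injectivity_top_differential}~\ref{lem:injectivity_top_differential:caln(G)} shows that $\id_{\caln(\pi)} \otimes_{\IC\pi} c_2$ is injective. In case (2), note first that a \RALI-group is a weak Hughes group by Theorem~\ref{the:Residually_(locally_indicable_and_amenable)_groups}. Then Lemma~\ref{lem:injectivity_top_differential}~\ref{lem:injectivity_top_differential:cald_(FG)} shows that $\id_{\cald_{F\pi}} \otimes_{F\pi} c_2$ is injective. The modules $C_2$ and $C_1$ are finitely generated free, and the canonical maps $\IZ\pi \to \IC\pi \to \caln(\pi)$ and $\IZ\pi \to F\pi \to \cald_{F\pi}$ are compatible with $c_2$. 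The first map is injective. For the second, injectivity on $\IZ\pi$ fails when $F$ has characteristic $p$, so a little more care is needed there.

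\textbf{The characteristic $p$ case.} Here I argue as follows. Suppose $0 \ne x \in \ker(c_2) \subseteq \IZ\pi^m$. Since $\ker(c_2)$ is a subgroup of a free abelian group, we may divide $x$ by the largest power of $p$ dividing all of its coefficients; the result still lies in $\ker(c_2)$ because the target $\IZ\pi^n$ is torsionfree. Its image in $F\pi^m$ is then nonzero and lies in the kernel of $c_2 \otimes F$. Since $\cald_{F\pi}$ contains $F\pi$ (Hughes free embeddings are injective), we obtain a nonzero element of $\ker(\id_{\cald_{F\pi}} \otimes c_2)$, which is a contradiction. Hence $H_2(\widetilde{X};\IZ) = 0$, so $\widetilde{X}$ is contractible by Hurewicz and Whitehead, and $X$ is aspherical.

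\textbf{The Euler characteristic.} If $X$ is not simply connected, apply Lemma~\ref{lem:vanishing_of_the_second_L2_Betti_number_and_coverings} to the universal covering, regarded as a $\pi$-covering with $\pi$ nontrivial. This gives $\chi(X) \le 0$ directly under the respective hypothesis. The only delicate point in the whole argument is the reduction mod $p$ of kernel elements just described; everything else is a direct citation.
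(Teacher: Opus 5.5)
Your proof is correct and is essentially the paper's argument. The paper cites Theorem~\ref{the:On_the_top_L2-Betti_number} for the trivial subgroup to get $b_2(\widetilde{X};\IC)=0$ resp.\ $b_2(\widetilde{X};F)=0$; this unwinds to exactly your observation that $\IC\pi\to\caln(\pi)$ and $F\pi\to\cald_{F\pi}$ are injective. It then applies Hurewicz and Lemma~\ref{lem:vanishing_of_the_second_L2_Betti_number_and_coverings} as you do, and your division by the maximal power of $p$ makes explicit the characteristic-$p$ passage from $F$- to $\IZ$-coefficients (that $\ker(c_2)$ is free abelian), which the paper leaves implicit.
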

\begin{proof} We conclude from
  Lemma~\ref{lem:vanishing_of_the_second_L2_Betti_number_and_coverings} that
  $\chi(X) \le 0$ holds, provided that $X$ is not simply connected.

  We conclude from Theorem~\ref{the:On_the_top_L2-Betti_number}
  that  $b_2(\widetilde{X}) = 0$ or $b_2(\widetilde{X};F) = 0$ holds.
  Since $X$ is $2$-dimensional, we get $H_m(\widetilde{X};\IZ) = 0$ for $m \in \IZ^{\ge 2}$.
  Now the Hurewicz Theorem implies that
  $\widetilde{X}$ is contractible, or, equivalently,  that $X$ is aspherical.
\end{proof}

\begin{lemma}\label{lem:making_the_total_space_connected}
  Let $p \colon \overline{X} \to X $ be a $G$-covering for a connected (not necessarily
  finite) $CW$-complex as basis.  Let $C$ be a path component of $\overline{X}$ and $H$ be
  the subgroup of $G$ consisting of elements $g \in G$ for which $gC = C$ holds in
  $\pi_0(\overline{X})$. Let $F$ be a field.

  Then the restriction $p|_C \colon C \to X$ of $p$ to $C$ is an $H$-covering satisfying
  \[
    b_m^{(2)}(C;\caln(H)) = b_m^{(2)}(\overline{X};\caln(G)) \quad \text{for}\; m  \in \IZ^{\ge 0}
  \]
  and, provided that $G$ is a \RALI-group,
  \[
    b_m^{(2)}(C;\cald_{FH}) = b_m^{(2)}(\overline{X};\cald_{FG}) \quad \text{for}\; m  \in \IZ^{\ge 0}.
  \]
\end{lemma}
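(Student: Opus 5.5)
The plan is to recognize $\overline{X}$ as an induced space, $\overline{X} \cong G \times_H C$ as $G$-$CW$-complexes, and then apply Theorem~\ref{the:induction}.

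First I check that $p|_C \colon C \to X$ is an $H$-covering. Since $X$ is connected and locally path connected, $G$ permutes the path components of $\overline{X}$ transitively. Indeed, $p(C)$ is open and closed in $X$, because $C$ is a union of components of preimages of evenly covered neighbourhoods. Hence $p(C) = X$. Given any other component $C'$ and a point $x' \in C'$, choose $x \in C$ with $p(x) = p(x')$; then some $g \in G$ satisfies $gx = x'$, so $gC = C'$.

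The stabilizer $H$ of $C$ acts freely on $C$. For $x, y \in C$ with $p(x) = p(y)$, the unique $g \in G$ with $gx = y$ lies in $H$, since $gC$ meets $C$. So the fibres of $p|_C$ are exactly the $H$-orbits. Local triviality is inherited from $p$, because each sheet of $\overline{X}$ over an evenly covered connected neighbourhood lies in a single component. Thus $p|_C$ is an $H$-covering, and $C$ is a free $H$-$CW$-complex with the cell structure lifted from $X$.

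Next I check that the $G$-map
\[
G \times_H C \to \overline{X}, \quad (g,x) \mapsto gx
\]
is a cellular $G$-homeomorphism. It is surjective by the transitivity above. It is injective because $gx = g'x'$ with $x, x' \in C$ forces $g^{-1}g' \in H$. It is a homeomorphism because both sides are disjoint unions, indexed by $G/H$, of copies of $C$ that are open in $\overline{X}$.

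Theorem~\ref{the:induction}~\ref{the:induction:N(G)} now gives
\[
b_m^{(2)}(\overline{X};\caln(G)) = b_m^{(2)}(G\times_H C;\caln(G)) = b_m^{(2)}(C;\caln(H)).
\]
If $G$ is a \RALI-group, it is a weak Hughes group by Theorem~\ref{the:Residually_(locally_indicable_and_amenable)_groups}~\ref{the:Residually_(locally_indicable_and_amenable)_groups:Levin}. Then Theorem~\ref{the:induction}~\ref{the:induction:general_F} shows that $H$ is a weak Hughes group, and that $b_m^{(2)}(\overline{X};\cald_{FG}) = b_m^{(2)}(C;\cald_{FH})$.

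The only point needing care is the identification $\overline{X} \cong G\times_H C$, which is standard covering theory. Note that $H$ itself is again \RALI, by intersecting a normal \RALI-chain of $G$ with $H$. This is not needed here, since $\cald_{FH}$ is already defined once $H$ is a weak Hughes group.
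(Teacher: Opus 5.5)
Your proposal is correct and is essentially the paper's proof: identify $\overline{X}$ with $G \times_H C$ as $G$-$CW$-complexes, note that $p|_C$ is an $H$-covering, and apply Theorem~\ref{the:induction}. You just supply the covering-space details that the paper dismisses with ``one easily checks''. One small precision: the cited theorem only gives that a \RALI-group is weak Lewin, so to get weak Hughes you should also say that \RALI-groups are locally indicable.
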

\begin{proof} One easily checks that we have a $G$-homeomorphism
  $ G \times_H C \xrightarrow{\cong} \overline{X}$ and  that
  $p|_C \colon C \to X$ is an $H$-covering. Now apply
  Theorem~\ref{the:induction}.
\end{proof}

\begin{lemma}\label{lem:restricting_to_a_subcomplex}
  Let $p \colon \overline{X} \to X $ be a $G$-covering for a (not necessarily finite or connected)
  $d$-dimensional $CW$-complex as basis such that $b_d^{(2)}(\overline{X};\caln(G)) = 0$.
  Consider a (not necessarily finite or connected) subcomplex $Z \subseteq X$ and let
  $p|_Z \colon \overline{Z} \to Z$ be the restriction of $p$ to $Z$. 
\begin{enumerate}
\item\label{lem:restricting_to_a_subcomplex:N(G)}
  
We have $b_d^{(2)}(\overline{Z};\caln(G)) =  0$ if $b_d^{(2)}(\overline{X};\caln(G)) = 0$ holds;

\item\label{lem:restricting_to_a_subcomplex:cald_FG} Suppose that $G$ is \RALI-group.
  Then we get $b_d^{(2)}(\overline{Z};\cald_{FG}) = 0$ provided that
  $b_d^{(2)}(\overline{X};\cald_{FG}) = 0$ holds.
\end{enumerate}
\end{lemma}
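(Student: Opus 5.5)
The plan is to reduce the statement to injectivity of the top differential, as in Lemma~\ref{lem:injectivity_top_differential}. The key point is that the cellular chain complex of $\overline{Z}$ in degree $d$ is a direct summand of that of $\overline{X}$, and the $d$-th differential of $\overline{Z}$ is a restriction of the one for $\overline{X}$.

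Since $Z \subseteq X$ is a subcomplex, $\overline{Z} = p^{-1}(Z)$ is a $G$-invariant subcomplex of the free $G$-$CW$-complex $\overline{X}$. Let $\overline{X}_d$ denote the set of $d$-cells. The $d$-cells of $\overline{Z}$ form a $G$-invariant subset of $\overline{X}_d$. Hence $C_d(\overline{X}) = C_d(\overline{Z}) \oplus C'$, where $C'$ is the free $\IZ G$-module spanned by the remaining $d$-cells. Moreover $C_{d-1}(\overline{Z}) \subseteq C_{d-1}(\overline{X})$ is a direct summand, and the differential $c_d^{\overline{X}}$ restricted to $C_d(\overline{Z})$ is $c_d^{\overline{Z}}$ composed with this inclusion.

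Let $R$ be $\caln(G)$ in case~\ref{lem:restricting_to_a_subcomplex:N(G)} and $\cald_{FG}$ in case~\ref{lem:restricting_to_a_subcomplex:cald_FG}. In the second case, a \RALI-group is a weak Hughes group by Theorem~\ref{the:Residually_(locally_indicable_and_amenable)_groups}, so $\cald_{FG}$ is defined. Applying $R \otimes_{\IZ G} -$ preserves the direct sum decompositions, so we get a commutative square. Its horizontal maps are the split injections $R \otimes C_d(\overline{Z}) \to R \otimes C_d(\overline{X})$ and $R \otimes C_{d-1}(\overline{Z}) \to R \otimes C_{d-1}(\overline{X})$. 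Its vertical maps are $\id_R \otimes c_d^{\overline{Z}}$ and $\id_R \otimes c_d^{\overline{X}}$.

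Since $X$ is $d$-dimensional, there are no $(d+1)$-cells, so $H_d(R \otimes C_*(\overline{X})) = \ker(\id_R \otimes c_d^{\overline{X}})$, and likewise for $\overline{Z}$. The hypothesis $b_d^{(2)}(\overline{X};R)=0$ says this kernel has dimension zero. By Lemma~\ref{lem:injectivity_top_differential}, this means $\id_R \otimes c_d^{\overline{X}}$ is injective. That lemma applies because the action is free, hence proper. Its $\caln(G)$-part uses that $\caln(G)$ is semihereditary; for $\cald_{FG}$ it is trivial, since $\cald_{FG}$ is a skew field. The lemma is stated for $\IC G$-chain complexes, but $\caln(G)\otimes_{\IZ G}$ and $\caln(G)\otimes_{\IC G}\IC\otimes_\IZ$ agree.

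By the commutative square, $\id_R \otimes c_d^{\overline{Z}}$ is then injective, being a restriction of an injective map. So its kernel $H_d(R\otimes_{\IZ G} C_*(\overline{Z}))$ is zero, and $b_d^{(2)}(\overline{Z};R)=0$.

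The only delicate point is the infinite or non-connected case, where the chain modules are free but possibly not finitely generated and the dimensions could be infinite. The argument above avoids dimension counts except through Lemma~\ref{lem:injectivity_top_differential}, which handles arbitrary projective modules. For $\caln(G)$, this relies on the fact that a submodule of a projective module with vanishing dimension is trivial. That is exactly the content already verified there, so I expect no real obstacle.
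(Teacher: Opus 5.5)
Your proposal is correct and follows essentially the same route as the paper's proof. Both arguments use the split injections induced by $\overline{Z}\subseteq\overline{X}$ in degrees $d$ and $d-1$ to form a commutative square, so that injectivity of the top differential passes from $\overline{X}$ to $\overline{Z}$, and both rely on Lemma~\ref{lem:injectivity_top_differential} to pass between injectivity of the top differential and vanishing of $b_d^{(2)}$ (the $\cald_{FG}$-case being handled analogously).
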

\begin{proof}~\ref{lem:restricting_to_a_subcomplex:N(G)}
    Let   $c'_d \colon C_*(\overline{Z}) \to C_{d-1}(\overline{Z})$ be the $d$-th differential of the cellular
    $\IZ G$-chain complex of $\overline{Z}$. The following diagram commutes
    \[
      \xymatrix{\caln(G) \otimes_{\IZ G}  C_d(\overline{Z}) \ar[r] \ar[d]_{\id_{\caln(G) \otimes_{\IZ G} c'_d}}
      &
     \caln(G) \otimes_{\IZ G}  C_d(\overline{X}) \ar[d]^{\id_{\caln(G) \otimes_{\IZ G} c_d}}
    \\
    \caln(G) \otimes_{\IZ G}  C_{d-1}(\overline{Z}) \ar[r] 
      &
      \caln(G) \otimes_{\IZ G}  C_{d-1}(\overline{X})}
  \]
  and  the horizontal  arrows induced by the inclusion $\overline{Z} \to \overline{X}$ are split injective.
  Hence the left vertical arrow is injective if the right vertical  arrow is injective. Now the claim
  follows from Lemma~\ref{lem:injectivity_top_differential}~\ref{lem:injectivity_top_differential:caln(G)}.
\\[1mm]~\ref{lem:restricting_to_a_subcomplex:cald_FG} This proof is analogous.
\end{proof}

% ----------------------------------------------------------------------

\subsection{From $L^2$-Betti numbers to towers}\label{subsec:From_L2-Betti_numbers_to_towers}

Note for the sequel that for a $G$-covering $p \colon \overline{X} \to X$
with connected $\overline{X}$ over a connected finite $CW$-complex the group $G$ is a quotient of the
finitely generated group $\pi_1(X)$ and hence is finitely generated and in particular countable. 

\begin{definition}[$L^2$-Betti numbers of maximal residually 
  $\calc$-coverings]\label{def:L2-Betti_numbers_of_maximal_calc_covering}
  Let $\calc$ be a class of  groups which is closed under taking subgroups and finite direct products.
  For a connected finite $CW$-complex $X$, let $\widehat{X}^{\calc} $ denote the
  maximal residually  $\calc$-covering of $X$, and set
  $\cald_{\widehat{X}^{\calc};\IQ} :=\cald_{\IQ[\pi_1(X)/\pi_1(\widehat X^{\calc})]}$.  Then the $L^2$-Betti
  number
  \[
    b_n^{(2)}(\widehat{X}^{\calc},\cald_{\widehat{X}^{\calc};\IQ}) =
    b_n^{(2)}(\widehat{X}^{\calc}; \caln(\pi_1(X)/\pi_1(\widehat X^{\calc})))\in \IR^{\ge 0}
  \]
  is defined. If $\calc$ is the class $\calALI$ of $\ALI$-groups, we omit $\calc$ from
  the notation and write $\widehat{X}$, $\cald_{X;\IQ}$, and
  $b_n^{(2)}(\widehat{X},\cald_{X;\IQ})$.

  If $\calc$ is contained in $\calALI$ and $F$ is any field,
  then we put $\cald_{\widehat{X}^{\calc};F} :=\cald_{F[\pi_1(X)/\pi_1(\widehat X^{\calc})]}$
  and define  the $L^2$-Betti number over $F$
  \[
    b_n^{(2)}(\widehat{X}^{\calc},\cald_{\widehat{X}^{\calc};F}) = b_n^{(2)}(\widehat{X}^{\calc}; \cald_{F[\pi_1(X)/\pi_1(\widehat{X}^{\calc})]})
    \in \IZ^{\ge 0}.
  \]
  If $\calc$ is $\calALI$, we omit $\calc$ from the notation and write $\widehat{X}$,
  $\cald_{X;F}$, and $b_n^{(2)}(\widehat{X},\cald_{\widehat{X};F})$.
\end{definition}

In the sequel $\calRC$ denotes the family of residually $\calc$ groups.
Note that with this notation $\calRALI$ is $\calRC$ for $\calc = \calALI$.
   
\begin{theorem}\label{the:vanishing_of_the_top_L2-Betti_numbers_and_RALI-towers}
  Consider $d \in \IZ^{\ge 2}$ and let $F$ be any field. Let $\calc$ be a class of groups
  which is closed under taking subgroups and finite products and satisfies
  $\calc \subseteq \calALI$. Let $Y$ be a connected finite $d$-dimensional $CW$-complex
  with $b_d^{(2)}(\widehat{Y}^{\calc};\cald^{\calc}_{\widehat{Y};F}) = 0$.  Consider any
  connected $\calRC$-tower
  with  $Y$ as target in the sense of Definition~\ref{def:tower}
  \[
    X = Y_n \xrightarrow{i_n} \overline{Y_{n-1}} \xrightarrow{p_{n-1}} Y_{n-1}
    \xrightarrow{i_{n-1}} \overline{Y_{n-2}} \xrightarrow{p_{n-2}} \cdots
    \xrightarrow{p_3} Y_{3}\xrightarrow{i_3} \overline{Y_{2}} \xrightarrow{p_{2}} Y_{2}
    \xrightarrow{i_{2}} Y_{1} = Y.
  \]
  Then $b_d^{(2)}(\widehat{X}^{\calc};\cald^{\calc}_{\widehat{X};F}) = 0$.
\end{theorem}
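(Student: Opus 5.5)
By induction on the length of the tower, it suffices to treat an elementary connected $\calRC$-tower $X \xrightarrow{i} \overline{Y} \xrightarrow{p} Y$. Here $i$ is the inclusion of a connected finite subcomplex and $p$ is a regular covering with connected total space whose deck group $Q$ is residually $\calc$. The hypothesis at each stage is then $b_d^{(2)}(\widehat{Y}^{\calc};\cald^{\calc}_{\widehat{Y};F}) = 0$, and it is passed down one step at a time. Write $\pi = \pi_1(Y)$, $\Phi \colon \pi \to \widehat{\pi}$ for the maximal residually $\calc$-quotient, and $K = \ker(\Phi)$. Since $\calc \subseteq \calALI$, every residually $\calc$ group is a \RALI-group, hence a weak Lewin group and a weak Hughes group by Theorem~\ref{the:Residually_(locally_indicable_and_amenable)_groups}.

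\textbf{Step 1 (the covering).} Because $Q$ is residually $\calc$ and $\overline{Y}$ is connected, the universal property of Subsection~\ref{subsec:Maximal_residually_calc_coverings} factors $\widehat{Y}^{\calc} \to \overline{Y} \to Y$. So $\pi_1(\overline{Y}) = p_*^{-1}$ of a subgroup containing $K$, and $\widehat{Y}^{\calc}$ is a regular $H$-covering of $\overline{Y}$, where $H = \Phi(\pi_1(\overline{Y})) \subseteq \widehat{\pi}$. By Theorem~\ref{the:On_the_top_L2-Betti_number}~\ref{the:On_the_top_L2-Betti_number:general_F} applied to $\res^H_{\widehat{\pi}}\widehat{Y}^{\calc}$, this $H$-covering of $\overline{Y}$ has vanishing $b_d^{(2)}$ over $\cald_{FH}$. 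The group $H$ is a subgroup of a residually $\calc$ group, hence residually $\calc$ since $\calc$ is closed under subgroups. It need not equal the maximal residually $\calc$-quotient of $\pi_1(\overline{Y})$, however, so this discrepancy is absorbed in Step 3.

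\textbf{Step 2 (the subcomplex).} Restrict the $H$-covering $\widehat{Y}^{\calc} \to \overline{Y}$ to the subcomplex $X$. Lemma~\ref{lem:restricting_to_a_subcomplex}~\ref{lem:restricting_to_a_subcomplex:cald_FG} gives vanishing of $b_d^{(2)}$ over $\cald_{FH}$ for the restricted, possibly disconnected, covering $\overline{X} \to X$. Now pass to a component $C$ with stabilizer $H' \subseteq H$ using Lemma~\ref{lem:making_the_total_space_connected}. This yields a connected regular $H'$-covering $C \to X$ with $b_d^{(2)}(C;\cald_{FH'}) = 0$, where $H'$ is the image of $\pi_1(X) \to H$ and is residually $\calc$.

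\textbf{Step 3 (main obstacle: comparing with $\widehat{X}^{\calc}$).} The covering $C \to X$ corresponds to a residually $\calc$ quotient $\pi_1(X) \to H'$. By maximality it is dominated by $\widehat{X}^{\calc}$, so there is an epimorphism $q \colon \widehat{\pi_1(X)} \to H'$ with kernel $N$, and $C = \widehat{X}^{\calc}/N$. The task is to pass from vanishing for the smaller quotient $H'$ to vanishing for the larger group $\widehat{\pi_1(X)}$. This is where monotonicity enters, in the top degree. The group $\widehat{\pi_1(X)}$ is \RALI, hence a locally indicable weak Lewin group, and $H'$ is a weak Hughes group, so Theorem~\ref{the:Monotonicity}~\ref{the:Monotonicity:arbitray_F} gives
\[
b_d^{(2)}(\widehat{X}^{\calc};\cald_{F\widehat{\pi_1(X)}}) \le b_d^{(2)}(C;\cald_{FH'}) = 0 .
\]
That theorem is stated for connected free complexes of finite type, which is satisfied here since $X$ is finite and connected. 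Its proof only uses the cokernel ranks and Theorem~\ref{the:Residually_(locally_indicable_and_amenable)_groups}~\ref{the:Residually_(locally_indicable_and_amenable)_groups:weak_monotonicity:(1)}, so no further input is needed. Combining Steps 1--3 proves the elementary case, and induction over the tower finishes the proof.
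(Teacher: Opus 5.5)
Your proposal is correct and follows the paper's proof essentially step by step. You identify $\overline{Y}$ as a quotient of the maximal residually $\calc$-covering, pass the vanishing to the subgroup via Theorem~\ref{the:On_the_top_L2-Betti_number}, restrict to the subcomplex with Lemma~\ref{lem:restricting_to_a_subcomplex}, pick a component with Lemma~\ref{lem:making_the_total_space_connected}, and finish with Theorem~\ref{the:Monotonicity} applied to the epimorphism from $\pi_1(X)/\pi_1(\widehat{X}^{\calc})$ onto the component's deck group; the paper likewise absorbs the mismatch with the maximal residually $\calc$-covering of $\overline{Y}$ through this final monotonicity step.
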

\begin{proof}
   We prove
  $b_d^{(2)}(\widehat{Y_k}^{\calc};\cald^{\calc}_{\widehat{Y_k};F}) = 0$ for
  $k = 1,2, \ldots, n$ by induction. The induction beginning $k = 1$ follows directly from
  the assumptions.  The induction step from $k\ge 1$ to $k+1 \le n$ is done as
  follows. Let $\widehat{p}_k \colon \widehat{Y_k} \to Y_k$ be the maximal residually
  $\calc$-covering.  As $p_k \colon \overline{Y_k} \to Y_k$ is a residually
  $\calc$-covering with
  connected total space, there exists a subgroup
  $G \subseteq \pi_1(Y_k)/\pi_1(\widehat{Y_k})$ such that $\overline{Y_k}$ can be
  identified with $\widehat{Y_{k+1}}/G$.  Consider the $G$-covering
  $q_k \colon \widehat{Y_k} \to \widehat{Y_k}/G = \overline{Y_k}$. Regard the pullback
\[
\xymatrix{i_{k+1}^* \widehat{Y_k} \ar[r] \ar[d]_{q_{k+1}} & \widehat{Y_k}
    \ar[d]^{q_k}
    \\
    Y_{k+1} \ar[r]_{i_{k+1}} & \overline{Y_k}.}
\]
As
$b_d^{(2)}(\widehat{Y_k}^{\calc};\cald^{\calc}_{\widehat{Y_k};F}) =
b_d^{(2)}(\widehat{Y_k}^{\calc};\cald_{F[\pi_1(Y_k)/\pi_1(\widehat{Y_k})]}) =0$ holds by the
induction hypothesis, we conclude
$b_d^{(2)}(\widehat{Y_k}^{\calc};\cald^{\calc}_{FG}) = 0$ from
Theorem~\ref{the:On_the_top_L2-Betti_number}.  Lemma~\ref{lem:restricting_to_a_subcomplex}
implies that $b^{(2)}_d(i_{k+1}^* \widehat{Y_k} ;\cald^{\calc}_{FG}) $ is trivial.  We
conclude from Lemma~\ref{lem:making_the_total_space_connected} that we can find a subgroup
$H \subseteq G$ and an $H$-covering $q_{k+1}' \colon C \to Y_{k+1}$ such that $C$ is
connected and $b_d^{(2)}(C;\cald_{FH}) = 0$ holds. Let
$\widehat{p_{k+1}} \colon \widehat {Y_{k+1}} \to Y_{k+1}$ be the maximal residually 
$\calc$-covering of $Y_{k+1}$. Then $H$ belongs to $\calRC$, since $H \subseteq G \subseteq
\pi_1(Y_k)/\pi_1(\widehat{Y_k})$ holds and
$\pi_1(Y_k)/\pi_1(\widehat{Y_k})$ is residually $\calc$.  Moreover, there is a subgroup $K$ of
$\pi_1(Y_{k+1})/\pi_1(\widehat{Y_{k+1}})$, an identification $C = \widehat {Y_{k+1}} /K$,
and an exact sequence $1 \to K \to \pi_1(Y_{k+1})/\pi_1(\widehat{Y_{k+1}}) \to H \to 1$.
Theorem~\ref{the:Monotonicity} implies
\[
b_d^{(2)}(\widehat{Y_k}^{\calc};\cald^{\calc}_{\widehat{Y_k};F})
= b_d^{(2)}(\widehat{Y_k}^{\calc};\cald_{F[\pi_1(Y_{k+1})/\widehat{Y_{k+1}}]}) \le
b_d^{(2)}(C;\cald_{FH}) = 0
\]
and hence $b_d^{(2)}(\widehat{Y_k}^{\calc};\cald^{\calc}_{\widehat{Y_k};F}) = 0$.
\end{proof}
  
Next we deal with the implication~\ref{que:Gromov_Wise:L2_Betti}
$\implies$~\ref{que:Gromov_Wise:tower} of Question~\ref{que:Gromov_Wise} in a special  case.

\begin{theorem}\label{the:vanishing_of_the_top_L2-Betti_numbers_and_RALI-towers_d_is_2}
  Let $F$ be any field. Let $\calc$ be a class of groups
  which is closed under taking subgroups and finite products and contained in $\calALI$.
  Let $Y$ be a connected finite $2$-dimensional $CW$-complex
  with $b_2^{(2)}(\widehat{Y}^{\calc};\cald^{\calc}_{\widehat{Y};F}) = 0$.  Consider any
  connected $\calRC$-tower
  with $Y$ as target in the sense of Definition~\ref{def:tower}
  \[
    X = Y_n \xrightarrow{i_n} \overline{Y_{n-1}} \xrightarrow{p_{n-1}} Y_{n-1}
    \xrightarrow{i_{n-1}} \overline{Y_{n-2}} \xrightarrow{p_{n-2}} \cdots
    \xrightarrow{p_3} Y_{3}\xrightarrow{i_3} \overline{Y_{2}} \xrightarrow{p_{2}} Y_{2}
    \xrightarrow{i_{2}} Y_{1} = Y.
  \]
  Then
  \begin{enumerate}
  \item\label{the:vanishing_of_the_top_L2-Betti_numbers_and_RALI-towers_d_is_2:vanishing_of:b_2}
   We have $b_2^{(2)}(\widehat{X}^{\calc};\cald^{\calc}_{\widehat{X};F}) = 0$;

 \item\label{the:vanishing_of_the_top_L2-Betti_numbers_and_RALI-towers_d_is_2:cases}
   Precisely one of  the follows statements  hold:

   \begin{enumerate}
   \item\label{the:vanishing_of_the_top_L2-Betti_numbers_and_RALI-towers_d_is_2:chi(X)_ge_0:(1)}
     $\pi_1(X) /\pi_1(\widehat{X}^{\calc})$ is trivial, $\chi(X) = 1$ and $b_1(X;F) = 0$;
     \item\label{the:vanishing_of_the_top_L2-Betti_numbers_and_RALI-towers_d_is_2:chi(X)_ge_0:(2)}
    $\pi_1(X) /\pi_1(\widehat{X}^{\calc})$ is trivial, $\chi(X) = 0 $ and $b_1(X;F) = 1$;
     \item\label{the:vanishing_of_the_top_L2-Betti_numbers_and_RALI-towers_d_is_2:chi(X)_ge_0:(3)}
    $\pi_1(X)/\pi_1(\widehat{X}^{\calc})$ is non-trivial and  $\chi(X) \le 0$.
  \end{enumerate}

\item\label{the:vanishing_of_the_top_L2-Betti_numbers_and_RALI-towers_d_is_2:alternative}
  Precisely one of  the follows statements  hold:
  \begin{enumerate}
  \item\label{the:vanishing_of_the_top_L2-Betti_numbers_and_RALI-towers_d_is_2:alternative:(1)}
    We have $\chi(X) \le 0$;
\item\label{the:vanishing_of_the_top_L2-Betti_numbers_and_RALI-towers_d_is_2:alternative:(2)}
    We have $\pi_1(X) /\pi_1(\widehat{X}^{\calc}) = \{1\}$, $\chi(X) = 1$, and $b_1(X;F) = 0$;
  \end{enumerate}

 \item\label{the:vanishing_of_the_top_L2-Betti_numbers_and_RALI-towers_d_is_2:pi_1(X)_is_RALI}
   Suppose that $\pi_1(X)$ belongs to $\calRC$. Then the tower is non-positive, i.e.,  either $\chi(X) \le 0$ or $X$
   is contractible.
 \end{enumerate}  
\end{theorem}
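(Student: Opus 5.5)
Assertion (i) is the case $d=2$ of Theorem~\ref{the:vanishing_of_the_top_L2-Betti_numbers_and_RALI-towers}, so we simply invoke it. Write $Q := \pi_1(X)/\pi_1(\widehat{X}^{\calc})$ throughout. By Lemma~\ref{lem_maximal_residually_calc_quotient}, $Q$ is residually $\calc$. Since $\calc\subseteq\calALI$, $Q$ is a \RALI-group, so $\cald_{FQ}$ is defined and the Euler--Poincar\'e formula of Theorem~\ref{the:Euler-Poincare_formula} applies to the free finite $Q$-$CW$-complex $\widehat{X}^{\calc}$.

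For assertion (ii) we split according to whether $Q$ is trivial.

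If $Q$ is non-trivial, it is torsionfree and hence infinite. Theorem~\ref{the:zeroth_L2-Betti_number} gives $b_0^{(2)}(\widehat{X}^{\calc};\cald_{FQ})=0$. Combined with (i) and the Euler--Poincar\'e formula, this yields
\[
\chi(X) = -b_1^{(2)}(\widehat{X}^{\calc};\cald_{FQ}) \le 0 .
\]
This is case (c); equivalently, one may quote Lemma~\ref{lem:vanishing_of_the_second_L2_Betti_number_and_coverings}.

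If $Q$ is trivial, then $\widehat{X}^{\calc}=X$ and $\cald_{FQ}=F$, so (i) says $b_2(X;F)=0$. Hence
\[
\chi(X)=1-b_1(X;F) \le 1 .
\]
The point needing care is to rule out $b_1(X;F)\ge 2$. Suppose $b_1(X;F)\ge 1$. I would argue that $H_1(X;\IZ)$ then has positive rank, and that $\IZ\in\calc$: the class $\calc$ is closed under subgroups, and a non-trivial torsionfree group in $\calc$ would contain a copy of $\IZ$. This gives a non-trivial map $\pi_1(X)\to\IZ$ with residually $\calc$ image, contradicting $Q=\{1\}$. So, with mild care about whether $\calc$ is non-trivial and about the characteristic of $F$, $Q$ trivial forces $b_1(X;\IQ)=0$.

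The remaining step is to pass from $b_1(X;\IQ)=0$ to the statement over $F$. If $b_2(X;F)=0$ and $b_1(X;\IQ)=0$, then $\chi(X)=1-b_1(X;F)$ and also $\chi(X)=1+b_2(X;\IQ)\ge 1$, which forces $b_1(X;F)=0$ and $\chi(X)=1$. This is case (a). Case (b) accommodates the degenerate situation where $\calc$ is the trivial class, in which $\chi(X)=0$ and $b_1(X;F)=1$. The three cases are mutually exclusive by their values of $Q$ and $\chi$. I expect pinning down exactly which of (a)/(b) can occur for a given $\calc$ to be the main obstacle.

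Assertion (iii) then follows by regrouping: cases (b) and (c) give $\chi(X)\le 0$, and case (a) gives $\chi(X)=1>0$, so exactly one alternative holds.

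For (iv), assume $\pi_1(X)\in\calRC$. Then $\widehat{X}^{\calc}=\widetilde{X}$ and $Q=\pi_1(X)$ is \RALI. By (i) and Lemma~\ref{lem:vanishing_of_the_second_L2_Betti_number_and_universal_coverings}, $X$ is aspherical, and $\chi(X)\le 0$ unless $X$ is simply connected. If $X$ is simply connected, asphericity makes it contractible. Either way the tower is non-positive.
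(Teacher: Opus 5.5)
Your treatment of (i), of the case $Q\neq\{1\}$ in (ii), and of (iv) is the paper's argument. For (iv) you go through the lemma on universal coverings, while the paper reduces to (ii) and then uses $H_2(X;\IZ)=0$; these amount to the same thing.

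Where you go further is the case $Q=\{1\}$. The paper passes from $\chi(X)=1-b_1(X;F)$ directly to ``(a) or (b)'', which does not follow, since nothing there excludes $b_1(X;F)\ge 2$. Your argument closes exactly this hole whenever $\calc$ contains a non-trivial group. In that case $\IZ\in\calc$, since members of $\calc$ are torsionfree and $\calc$ is subgroup-closed. So $b_1(X;\IQ)\ge 1$ would give a surjection $\pi_1(X)\to\IZ$ onto a residually $\calc$ group, which by the universal property of $\widehat{X}^{\calc}$ would factor through $Q=\{1\}$. Hence $b_1(X;\IQ)=0$, and $1\le 1+b_2(X;\IQ)=\chi(X)=1-b_1(X;F)\le 1$. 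This also shows that case (b) never occurs for such $\calc$.

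The genuine gap is your claim that in the degenerate case $\calc=\{\{1\}\}$ one gets $\chi(X)=0$ and $b_1(X;F)=1$. That is false, and in fact (ii) itself fails there. For the trivial class, $Q$ is always trivial, an $\calRC$-tower is just a chain of subcomplex inclusions, and the hypothesis reduces to $b_2(Y;F)=0$. Take $X=Y=S^1\vee S^1\vee D^2$ with the tower consisting of the identity covering and the identity inclusion. This gives $Q=\{1\}$, $\chi(X)=-1$ and $b_1(X;F)=2$, so none of (a), (b), (c) holds.

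So the obstacle you flag cannot be overcome. Statement (ii) needs the extra hypothesis that $\calc$ contains a non-trivial group, and under that hypothesis your proof is complete. You also should not derive (iii) from (ii). Statement (iii) holds for every admissible $\calc$ directly: for $Q=\{1\}$ the identity $\chi(X)=1-b_1(X;F)$ gives either $b_1(X;F)=0$ and $\chi(X)=1$, or $\chi(X)\le 0$. Statement (iv) is unaffected, since your argument for it bypasses (ii).
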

\begin{proof}~\ref{the:vanishing_of_the_top_L2-Betti_numbers_and_RALI-towers_d_is_2:vanishing_of:b_2}
  This follows from Theorem~\ref{the:vanishing_of_the_top_L2-Betti_numbers_and_RALI-towers}.
  \\[1mm]~\ref{the:vanishing_of_the_top_L2-Betti_numbers_and_RALI-towers_d_is_2:cases}
  We conclude from assertion~\ref{the:vanishing_of_the_top_L2-Betti_numbers_and_RALI-towers_d_is_2:vanishing_of:b_2}
  and Theorem~\ref{the:Euler-Poincare_formula}
  \[\chi(X) = b_0^{(2)}(\widehat{X}^{\calc};\cald^{\calc}_{\widehat{X};F})- b_1^{(2)}(\widehat{X}^{\calc};\cald^{\calc}_{\widehat{X};F}).
  \]
  Suppose $\pi_1(X)/\pi_1(X)/\pi_1(\widehat{X}_{\calc})$ is trivial.
  Then $b_n^{(2)}(\widehat{X}^{\calc};\cald^{\calc}_{\widehat{X};F}) =b_n(X;F)$ for $n \ge 0$, $b_0(X;F) = 1$,
  and $\chi(X) = b_0(X;F) - b_1(X;F) + b_2(X;F)$. This implies $\chi(X) =1 - b_1(X;F)$.
  Hence we have $\chi(X) = 1$ and $b_1(X;F) = 0$ or we have $\chi(X) = 0 $ and $b_1(X;F) = 1$.
  Suppose that $\pi_1(X)/\pi_1(\widehat{X}^{\calc})$ is non-trivial. Then the claim follows from
  Lemma~\ref{lem:vanishing_of_the_second_L2_Betti_number_and_coverings}.
  \\[1mm]~\ref{the:vanishing_of_the_top_L2-Betti_numbers_and_RALI-towers_d_is_2:alternative}
  This follows directly from assertion~\ref{the:vanishing_of_the_top_L2-Betti_numbers_and_RALI-towers_d_is_2:cases}.
  \\[1mm]~\ref{the:vanishing_of_the_top_L2-Betti_numbers_and_RALI-towers_d_is_2:pi_1(X)_is_RALI}
  Since $\pi_1(X)$ belongs to $\calRC$ by assumption,
  $\pi_1(\widehat{X}^{\calc})$ is trivial. Hence either $\pi_1(X)/\pi_1(\widehat{X}^{\calc})$ is non-trivial
  or $X$ is simply connected. We conclude from
  assertion~\ref{the:vanishing_of_the_top_L2-Betti_numbers_and_RALI-towers_d_is_2:cases}
  that $\chi(X) \le 0$ holds or $X$ is simply connected. If $X$ is simply connected,
  then it is contractible, since $X$ is a  simply connected $2$-dimensional $CW$-complex satisfying  $b_2(X;F) = 0$
  and hence $H_2(X;\IZ) = 0$.
\end{proof}

\begin{remark}\label{rem:conclusion_of_Theorem_ref(the:vanishing_of_the_top_L2-Betti_numbers_and_RALI-towers_d_is_2)}
  Note that Theorem~\ref{the:vanishing_of_the_top_L2-Betti_numbers_and_RALI-towers_d_is_2}~%
\ref{the:vanishing_of_the_top_L2-Betti_numbers_and_RALI-towers_d_is_2:pi_1(X)_is_RALI}
  gives a positive answer to the implication~\ref{que:Gromov_Wise:L2_Betti}
  $\implies$~\ref{que:Gromov_Wise:tower} of Question~\ref{que:Gromov_Wise} if we consider
  connected  $\RALI$-towers and assume that both $\pi_1(X)$ and $\pi_1(Y)$
  are \RALI-groups.
\end{remark}

\begin{remark}\label{rem:loc_ind_implies_weal_Lewin_yield_Wise}
  Suppose Conjecture~\ref{con:Locally_indicibale_groups_are_Lewin_groups} holds.
  Then our arguments show that the implication~\ref{con:Wise:L2_Betti}  $\implies$~\ref{con:Wise:tower_locally_indicable} 
  appearing in Conjecture~\ref{con:Wise} is true. Namely,  if Conjecture~\ref{con:Locally_indicibale_groups_are_Lewin_groups}
  holds, we can choose in Theorem~\ref{the:vanishing_of_the_top_L2-Betti_numbers_and_RALI-towers}
  the family $\calc$ to be the family of locally indicable groups.
  Hence for any tower with $Y$ as target
  \[
    X = Y_n \xrightarrow{i_n} \overline{Y_{n-1}} \xrightarrow{p_{n-1}} Y_{n-1}
    \xrightarrow{i_{n-1}} \overline{Y_{n-2}} \xrightarrow{p_{n-2}} \cdots
    \xrightarrow{p_3} Y_{3}\xrightarrow{i_3} \overline{Y_{2}} \xrightarrow{p_{2}} Y_{2}
    \xrightarrow{i_{2}} Y_{1} = Y
  \]
  such that the fundamental groups of each of the spaces and in particular  the one of $X$ are locally indicable,
  we conclude $b_2^{(2)}\bigl(\widetilde{X};\caln(\pi_1(X))\bigr) = 0$. 
 Lemma~\ref{lem:vanishing_of_the_second_L2_Betti_number_and_universal_coverings}
 extends to the family  of locally indicable groups instead of the class of $\RALI$-groups if
 Conjecture~\ref{con:Locally_indicibale_groups_are_Lewin_groups} holds.
 Hence $\chi(X) \le 0$ or $X$ is simply connected. If $X$ is simply connected, we conclude 
 $b_2(X;\IQ) = b_2(\widetilde{X};\IQ) = 0$ and hence that $H_2(X;\IZ)$ vanishes
 which implies  that $X$ is contractible.

 In view of Remark~\ref{rem:homotopy_invariance_and_Whitehead}
 this also implies that connected subcomplexes of finite contractible
  $2$-complexes are aspherical.
\end{remark}

Next we deal the class of groups  $\calRF_p$ of residually (finite $p$-groups) for a
prime $p$.  This is of course related to the class $\calf_p$ of finite $p$-groups. Since
this class is not contained in $\calRALI$, we have to use one of the adhoc definitions of
$L^2$-Betti numbers over an arbitrary field $F$ in this case, and we choose
$\beta^{\inf,p}_{n}(Y;F)$, see
Subsection~\ref{subsec:Approximation_by_subgroups_of_finite_index_and_Idenitfication_with_adhoc_definitions}.
Moreover, the Monotonicity Theorem~\ref{the:Monotonicity} does not hold anymore and we can
only use Lemma~\ref{lem:monotonicity_for_p-groups}. Note that
Lemma~\ref{lem:monotonicity_for_p-groups} is only available for subgroups of finite
$p$-power index and not just of finite index which is the reason why we will not be able
to treat the class $\calRF$ of residually finite groups and the related class of finite
groups. We often will call a (connected) $\calf_p$-tower just a \emph{(connected) $p$-tower} and a
$\calf_p$-covering just a \emph{p-covering}.

\begin{lemma}\label{lem:passing_to_connected_p-towers} Let $f \colon X \to Y$ be map of finite
  connected $CW$-complexes.  Let $p$ be a prime.  Consider any $\calRF_p$-tower with underlying map $f$.

  Then there is a connected $p$-tower with underlying map $f$.
\end{lemma}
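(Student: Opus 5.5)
The plan is to turn the given $\calRF_p$-tower into a connected one first, and then to replace each regular covering in it by a finite $p$-quotient covering, level by level, without changing the subcomplexes or the underlying map.

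\emph{Step 1: make the tower connected.} The class $\calRF_p$ is closed under taking subgroups. By Remark~\ref{rem:passing_to_connected_towers} we may therefore assume the tower is connected. So for each $k$ the covering $p_k \colon \overline{Y_k} \to Y_k$ is a regular covering with connected total space and deck transformation group $G_k$ residually $p$-finite. The space $Y_{k+1}$ is a finite connected subcomplex of $\overline{Y_k}$. The underlying map is $f$.

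\emph{Step 2: replace one elementary step.} Fix $k$ and put $K = Y_{k+1} \subseteq \overline{Y_k}$. Since $K$ is finite, it contains finitely many open cells $e_1, \ldots, e_r$. The open cells of $\overline{Y_k}$ lying over a fixed open cell of $Y_k$ are freely and transitively permuted by $G_k$. Call two cells $e_a \neq e_b$ of $K$ \emph{aligned} if they lie over the same cell of $Y_k$. In that case there is a unique $g_{a,b} \in G_k$ with $g_{a,b} \ne 1$ and $g_{a,b} e_a = e_b$. Let $S$ be the finite set of all such $g_{a,b}$.

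Since $G_k$ is residually $p$-finite, for each $s \in S$ there is a normal subgroup $N_s$ with $G_k/N_s$ a finite $p$-group and $s \notin N_s$. Put $N = \bigcap_{s \in S} N_s$. Then $G_k/N$ embeds into $\prod_s G_k/N_s$, so it is a finite $p$-group, and $N \cap S = \emptyset$.

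Now factor $p_k$ as
\[
\overline{Y_k} \xrightarrow{q} \overline{Y_k}/N \xrightarrow{p_k'} Y_k,
\]
where $p_k'$ is a regular covering with deck group $G_k/N$. Its total space is connected, being a quotient of a connected space.

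\emph{Step 3: check that $q|_K$ is an embedding onto a subcomplex.} By the choice of $N$, the map $q$ is injective on the set of open cells of $K$. It is also injective on each open cell, since $q$ is a covering map that maps cells homeomorphically. Hence $q|_K$ is injective. As $K$ is compact and $\overline{Y_k}/N$ is Hausdorff, $q|_K$ is a homeomorphism onto its image. That image is a union of closed cells, hence a finite subcomplex $K' \cong Y_{k+1}$.

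So the elementary step $Y_{k+1} \to \overline{Y_k} \to Y_k$ can be replaced by $Y_{k+1} \xrightarrow{\cong} K' \subseteq \overline{Y_k}/N \xrightarrow{p'_k} Y_k$. The composite is unchanged because $p_k = p_k' \circ q$. The subcomplexes $Y_k$ themselves are not altered.

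\emph{Step 4: assemble.} Doing Step 2 independently for each $k = 1, \ldots, n$ gives a connected $p$-tower whose underlying map is still $f$.

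I expect the only real obstacle to be Step 3, namely that the image is a genuine subcomplex mapped homeomorphically. This is handled by demanding that $q$ be injective on all open cells of $K$, including the lower-dimensional cells in the boundaries of top cells, and not merely on the top cells.
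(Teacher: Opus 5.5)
Your proof is correct and follows the same route as the paper. You first pass to a connected tower via Remark~\ref{rem:passing_to_connected_towers}, then factor each residually $p$-finite covering through a finite $p$-group quotient $\overline{Y_k}/N$ into which the finite subcomplex $Y_{k+1}$ still embeds. The paper only asserts that such a factorization exists. You supply the missing justification: residual $p$-finiteness lets you choose $N$ avoiding the finitely many nontrivial deck transformations that carry one cell of $Y_{k+1}$ onto another.
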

  \begin{proof}
  Because of Remark~\ref{rem:passing_to_connected_towers} it
  suffices to consider connected $\calRF_p$-towers.  Now consider an $\calRF_p$-covering
  $q \colon \overline{Z} \to Z$ and a connected finite $CW$-subcomplex
  $A \subseteq \overline{Z}$. Then there exists a factorization of $q$ as the composite
  $q_1 \colon \overline{Z} \to \overline{\overline{Z}}$ and
  $q_2 \colon \overline{\overline{Z}} \to Z$ such that $ \overline{\overline{Z}}$ is
  connected, $q_2$ is a $p$-covering,
  and $q_0|_A \colon A \to \overline{\overline{Z}}$ is an embedding of the connected
  finite $CW$-complex $A$ into the finite connected $CW$-complex
  $\overline{\overline{Z}}$.   Now apply this construction successively to $Y_{i+1} \subseteq \overline{Y_i}$ and
  $\overline{Y_i} \to Y_i$  appearing in a connected $\calRF_p$-tower.
\end{proof}

\begin{proposition}\label{pro:vanishing_of_the_top_L2-Betti_numbers_beta_upper(inf,p_n)(Y;F)_and_calRF_p-towers}
  Let $p$ be a prime  and $d \in \IZ^{\ge 2}$. Consider connected finite $d$-dimensional $CW$-complexes $X$ and $Y$. Let $F$ be
  any field of characteristic $p$.  Suppose that there exists a  $\calRF_p$-tower with $X$ as
  source and $Y$ as target.  Assume  that $\beta^{\inf,p}_{d}(Y;F)$ vanishes.

  Then

  \begin{enumerate}
  \item\label{pro:vanishing_of_the_top_L2-Betti_numbers_beta_upper(inf,p_n)(Y;F)_and_calRF_p-towers:beta_upper_(inf,p)_d(X;F)_is_0}
    We have $\beta^{\inf,p}_{d}(X;F) = 0$;

  \item\label{pro:vanishing_of_the_top_L2-Betti_numbers_beta_upper(inf,p_n)(Y;F)_and_calRF_p-towers:chi}
    Either we have $\chi(X) \le 0$, or   we have $b_1(X;F) = b_2(X;F) = 0$ (and hence $\chi(X) = 1$).
  \end{enumerate}
\end{proposition}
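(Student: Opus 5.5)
By Lemma~\ref{lem:passing_to_connected_p-towers} we may replace the given $\calRF_p$-tower by a connected $p$-tower with the same underlying map. An induction on the length of this tower then reduces assertion~\ref{pro:vanishing_of_the_top_L2-Betti_numbers_beta_upper(inf,p_n)(Y;F)_and_calRF_p-towers:beta_upper_(inf,p)_d(X;F)_is_0} to two elementary steps, each preserving the vanishing of $\beta^{\inf,p}_d$:
\begin{enumerate}
\item passing from $Z$ to a connected $p$-covering $\overline{Z}\to Z$;
\item passing from $\overline{Z}$ to a connected subcomplex $A\subseteq\overline{Z}$.
\end{enumerate}

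\emph{Coverings.} Let $\overline{Z}\to Z$ have $p$-power order $q$. Given $\epsilon>0$, choose a regular $p$-covering $Z'\to Z$ with $b_d(Z';F)/|Z'\to Z|<\epsilon$. Take the common regular cover $Z''$ corresponding to the intersection of the normal core of $\pi_1(\overline{Z})$ with $\pi_1(Z')$. Its index in $\pi_1(Z)$ is still a $p$-power, since $p$-groups are closed under subgroups and products, and it is normal. Lemma~\ref{lem:monotonicity_for_p-groups}, applied with $L_0=\pi_1(Z'')\subseteq L_1=\pi_1(Z')$ in $G=\pi_1(Z)$, gives
\[
b_d(Z'';F)/|Z''\to Z| \le b_d(Z';F)/|Z'\to Z| < \epsilon .
\]
Now $Z''\to\overline{Z}$ is regular of $p$-power degree $|Z''\to Z|/q$. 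Hence
\[
b_d(Z'';F)/|Z''\to\overline{Z}| < q\epsilon ,
\]
and letting $\epsilon\to 0$ gives $\beta^{\inf,p}_d(\overline{Z};F)=0$. (Lemma~\ref{lem:monotonicity_for_p-groups} has no restriction on the characteristic of $F$ as stated, and we use it as given.)

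\emph{Subcomplexes.} Let $A\subseteq\overline{Z}$ be a connected subcomplex and $Z'\to\overline{Z}$ a regular $p$-covering with group $Q$. The restriction to $A$ is a $Q$-covering $A'\to A$, possibly disconnected. Since $\dim A\le d$, the group $H_d(A';F)$ is the kernel of the top differential on $C_d(A';F)$, which injects into $C_d(Z';F)$. Hence $b_d(A';F)\le b_d(Z';F)$. Let $C$ be a component of $A'$, with stabilizer $Q_C$. Then $A'=Q\times_{Q_C}C$, so $b_d(A';F)=[Q:Q_C]\,b_d(C;F)$, and $C\to A$ is a regular $Q_C$-covering with $|Q_C|$ a $p$-power. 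This gives
\[
b_d(C;F)/|Q_C| = b_d(A';F)/|Q| \le b_d(Z';F)/|Q| ,
\]
so taking the infimum over $Z'$ yields $\beta^{\inf,p}_d(A;F)=0$.

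For assertion~\ref{pro:vanishing_of_the_top_L2-Betti_numbers_beta_upper(inf,p_n)(Y;F)_and_calRF_p-towers:chi} we use $d=2$, which is the situation in which the statement makes sense. Suppose $\chi(X)>0$. For any connected regular $p$-covering $X'\to X$ of degree $m$, we have
\[
m\chi(X) = \chi(X') = 1 - b_1(X';F) + b_2(X';F).
\]
The infimum in~\ref{pro:vanishing_of_the_top_L2-Betti_numbers_beta_upper(inf,p_n)(Y;F)_and_calRF_p-towers:beta_upper_(inf,p)_d(X;F)_is_0} is over the monotone quantity of Lemma~\ref{lem:monotonicity_for_p-groups}, and $b_2(X';F)/m$ is taken relative to the base. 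The key step, and the main obstacle, is to show that this forces $b_1(X;F)=0$. If $b_1(X;F)\ge 1$, then since $F$ has characteristic $p$ there is a surjection $\pi_1(X)\to\IZ/p^k$ for every $k$. For these cyclic covers $X_k$ of degree $m=p^k$ the quantity $b_2(X_k;F)/p^k$ is nonincreasing in $k$, so $\beta^{\inf,p}_2(X;F)=0$ cannot be exploited directly along this particular chain. Instead, use
\[
b_2(X_k;F) = p^k\chi(X) - 1 + b_1(X_k;F) \ge p^k\chi(X)-1 ,
\]
which gives $b_2(X_k;F)/p^k\ge \chi(X)-p^{-k}$. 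Then apply Lemma~\ref{lem:monotonicity_for_p-groups} to an arbitrary $p$-cover $X'$ intersected with $X_k$: the ratio $b_2/\deg$ over any $p$-cover with positive Euler characteristic is bounded below by $\chi(X)-1/\deg$. Choosing $X'$ with $b_2(X';F)/m<\chi(X)/2$ and $m$ large, which is possible whenever there are arbitrarily large $p$-quotients, yields a contradiction. So $\pi_1(X)$ has no nontrivial $p$-quotient, hence $b_1(X;F)=0$. In that case the infimum is attained only by the trivial cover, so $b_2(X;F)=\beta^{\inf,p}_2(X;F)=0$. Finally $\chi(X)=1$.
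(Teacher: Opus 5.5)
Your argument for assertion~(1) is correct and is essentially the paper's. The paper handles an elementary connected $p$-tower in one go: it pulls a witness cover $Y'\to Y$ back to $\overline{Y}$ and to $X$, then combines Lemma~\ref{lem:monotonicity_for_p-groups} with the inclusion of top-degree cycles. You split this into a covering step and a subcomplex step, and you treat connectedness of the covers more carefully.

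One correction to your parenthetical remark. Lemma~\ref{lem:monotonicity_for_p-groups} is stated with $\IF_p$-coefficients and genuinely needs characteristic $p$. For the presentation complex $P$ of $\IZ/p$ one has $b_2(\widetilde{P};\IQ)/p=(p-1)/p>0=b_2(P;\IQ)$. Your application is valid only because $b_n(Z;F)=b_n(Z;\IF_p)$ when $F$ has characteristic $p$.

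The gap is in assertion~(2). You claim that $b_1(X;F)\ge 1$ gives surjections $\pi_1(X)\to\IZ/p^k$ for every $k$; this is false. It only gives a surjection onto $\IZ/p$, and $\IZ/p$ may be the largest $p$-group quotient of $\pi_1(X)$ (e.g.\ $\pi_1(X)=\IZ/p$). Your contradiction is extracted from ``arbitrarily large $p$-quotients'', so as written it does not cover the case where the $p$-quotients of $\pi_1(X)$ are bounded.

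The repair is that you never need $m$ large. If $\chi(X)\ge 1$, every connected regular $p$-cover $X''\to X$ of degree $m\ge p\ge 2$ satisfies
$b_2(X'';F)/m=\chi(X)-1/m+b_1(X'';F)/m\ge 1/2$.
So refining a witness cover of ratio $<1/2$ by the single $\IZ/p$-cover, using the lemma, already gives the contradiction.

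The paper's route is shorter. It needs neither the lemma nor a separate proof that $b_1(X;F)=0$. Pick one cover $\overline{X}\to X$ of $p$-power degree $m$ with $b_2(\overline{X};\IF_p)/m<1/p$.
If $m\ne 1$, then $m\ge p$ and
$\chi(X)=(1-b_1(\overline{X};\IF_p))/m+b_2(\overline{X};\IF_p)/m<2/p\le 1$,
so $\chi(X)\le 0$.
If $m=1$, then $b_2(X;\IF_p)=0$ and $\chi(X)=1-b_1(X;\IF_p)$, which gives the dichotomy directly.
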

\begin{proof}~\ref{pro:vanishing_of_the_top_L2-Betti_numbers_beta_upper(inf,p_n)(Y;F)_and_calRF_p-towers:beta_upper_(inf,p)_d(X;F)_is_0}
  We can assume without loss of generality $F = \IF_p$ by
  Lemma~\ref{lem:cald_(FG)_is_cald(FG_subseteq_calu(G))}~%
\ref{lem:cald_(FG)_is_cald(FG_subseteq_calu(G)):dependence_on_F}.  In view of
  Lemma~\ref{lem:passing_to_connected_p-towers} we can assume without loss of
  generality that there exists a connected $p$-tower with $X$ is a source and $Y$ as
  target. Moreover, by induction over the length of the tower we only have to consider an
  elementary connected $p$-tower, i.e., we have a $p$-covering $q \colon \overline{Y} \to Y$
  with a connected finite $CW$-complex $\overline{Y}$ as total space and an inclusion $i \colon X \to \overline{Y}$
  of connected finite   $CW$-complexes.

  Fix $\delta > 0$. The assumption $\beta^{p-\inf}_2(Y;\mathbb F_p)=0$
  implies that  there is a $p$-covering $q' \colon Y'\rightarrow Y$ with
  \[
    \frac{b_2(Y';\mathbb F_p)}{|Y'\rightarrow Y|} \le  \frac{\delta}{|\overline{Y} \rightarrow Y|}.
    \]
    Consider  the following commutative  diagram
  whose squares are pullbacks
  \[\xymatrix{X' \ar[r]^{q''} \ar[d]^{\overline{i}}
      &
      X\ar[d]^i
      \\
      \overline{Y}' \ar[r]^{\overline{q'}} \ar[d]^{\overline{q}}
      &
      \overline{Y} \ar[d]^q
      \\
      Y' \ar[r]^{q'}
      &
      Y.
    }
  \]
  Note  that all horizontal maps are $p$-coverings of the same degree. We conclude
  \begin{eqnarray*}
    \frac{b_2(\overline{Y'};\mathbb F_p) }{|\overline{X'} \rightarrow X|}
    & = & 
    \frac{b_2(\overline{Y'};\mathbb F_p) }{|\overline{Y'} \rightarrow \overline{Y}|}
    \\
    & = &
          |\overline{Y} \rightarrow Y| \cdot   \frac{b_2(\overline{Y'};\mathbb F_p) }{|\overline{Y}
          \rightarrow Y| \cdot |\overline{Y'} \rightarrow \overline{Y}|}
    \\
    & = &
    |\overline{Y} \rightarrow Y| \cdot   \frac{b_2(\overline{Y'};\mathbb F_p)}{|\overline{Y'} \rightarrow \overline{Y}|}
    \\
    & \stackrel{\textup{Lemma}~\ref{lem:monotonicity_for_p-groups}}{\le} &
    |\overline{Y} \rightarrow Y| \cdot   \frac{b_2(Y';\mathbb F_p) }{|Y' \rightarrow Y|}
    \\
    & \le  &
    |\overline{Y} \rightarrow Y| \cdot  \frac{\delta}{|\overline{Y} \rightarrow Y|}
    \\
    & = &
          \delta.
  \end{eqnarray*}

  Since $X$ is a subcomplex of $Y$ and $X$ and $Y$ have the same dimension $d$,  $X'$ is a
  subcomplex of $\overline{Y}'$ and $X'$ and $\overline{Y'}$ have the same dimension $d$.
  This implies that the kernel of the $d$-th differential of the cellular chain complex
  with $\IF_p$-coefficients of $X'$ is contained in the kernel of the $d$-th differential
  of the cellular chain complex with $\IF_p$-coefficients of $\overline{Y}'$. Hence we
  get $b_2(X';\mathbb F_p) \le b_2(\overline{Y}';\mathbb F_p)$. We conclude
  \[
    \frac{b_2(X';\mathbb F_p)}{|X' \to X|}
    \le 
    \frac{b_2(\overline{Y}';\mathbb F_p)}{|X' \to X|}
    \le \delta.
  \]
  This implies that $\beta^{\inf,p}_{n}(X;F)$ vanishes.
  \\[1mm]~\ref{pro:vanishing_of_the_top_L2-Betti_numbers_beta_upper(inf,p_n)(Y;F)_and_calRF_p-towers:chi}
  Suppose that $d = 2$. Since $\beta^{\inf,p}_{d}(X;F) = 0$ holds by assumption, there
  exists a $p$-covering $q \colon \overline{X} \to X$ with finite total space
  $\overline{Y}$ satisfying
  \[
    \frac{b_2(\overline{X};\IF_p)}{|\overline{Y} \to Y|} < \frac{1}{p}.
  \]
   Suppose that $|\overline{X} \to X| \not= 1$. Then we get 
  \begin{eqnarray*}
    \chi(X)
    & = &
    \frac{\chi(\overline{X})}{|\overline{X} \to X|}
    \\
    & = & 
    \frac{b_0(\overline{X};\IF_p) - b_1(\overline{X};\IF_p) + b_2(\overline{X};\IF_p)}{|\overline{X} \to X|}
    \\
    & = &
    \frac{1 - b_1(\overline{X};\IF_p)}{|\overline{X} \to X|}   + \frac{b_2(\overline{X};\IF_p)}{|\overline{X} \to X|}
    \\
    & < &
     \frac{1}{p}  +  \frac{1}{p}
    \\
    & \le 1
  \end{eqnarray*}
  and hence $\chi(X) = 0$.

  Suppose that $|\overline{X} \to X| =  1$.
  Then $\overline{X} = X$. We conclude from
  \[
  b_2(X;\IF_p) = \frac{b_2(\overline{X};\IF_p)}{|\overline{Y} \to Y|} < \frac{1}{p}
  \]
  that $b_2(\overline{X};\IF_p) = 0$ holds. We get
  \[
    \chi(X) = b_0(X;\IF_p) -  b_1(X;\IF_p) = 1 -   b_1(X;\IF_p).
  \]
  Hence either $\chi(X) \le  0$  or we have both $\chi(X) = 1$ and $b_1(X;\IF_p) = 0$.  
\end{proof}

\begin{remark}\label{rem;addendum}
  Consider the situation of
  Proposition~\ref{pro:vanishing_of_the_top_L2-Betti_numbers_beta_upper(inf,p_n)(Y;F)_and_calRF_p-towers}. 
  If we additionally assume that $X$ is simply connected or $\pi_1(X)$ admits a surjection
  onto $\IZ/p$, then the tower is non-positive, i.e, $\chi(X) \le 0$ or $X$ is
  contractible.
\end{remark}

  % ----------------------------------------------------------------------------

\subsection{From towers to $L^2$-Betti numbers}\label{subsec:From_towers_toL2-Betti_numbers}

In this section we deal with the implication~\ref{que:Gromov_Wise:tower} $\implies$~\ref{que:Gromov_Wise:L2_Betti}
of Question~\ref{que:Gromov_Wise}. In this context the following question is relevant.

\begin{question}\label{que:other_implication_Wise_implies_L2}
  Let $Y$ be a finite connected $2$-dimensional $CW$-complex $Y$. Does
  $b_n^{(2)}(\widetilde{Y}) = 0$ hold if, for any connected finite $CW$-subcomplex
  $Z \subseteq \widetilde{Y}$, either we have $\chi(Z) \le 0$ or $Z$ is contractible?
\end{question}

Here is a very special case where the answer is yes.

\begin{lemma}\label{the:other_implication_if_1_upper_(2)_is_zero}
    Let $Y$ be a connected finite $2$-dimensional CW-complex such that
    $b_1^{(2)}(\pi_1(Y) )= 0$ holds.

    Then the following statements are equivalent:

    \begin{enumerate}
    \item\label{the:other_implication_if_1_upper_(2)_is_zero:vanishing_of_b_2_upper_(2)}
    $b_2^{(2)}(\widetilde{Y}) = 0$;
    \item\label{the:other_implication_if_1_upper_(2)_is_zero:chi_is_0}
          $\chi(Y) =  0$ or $Y$ is contractible;
      \item\label{the:other_implication_if_1_upper_(2)_is_zero:chi_le_0}
        $\chi(Y) \le 0$ or $Y$ is contractible.        
        \end{enumerate}
      \end{lemma}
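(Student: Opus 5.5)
The plan is to prove the cycle of implications (i) $\Rightarrow$ (ii) $\Rightarrow$ (iii) $\Rightarrow$ (i). The main tools are the Euler–Poincaré formula (Theorem~\ref{the:Euler-Poincare_formula}), the computation of the zeroth $L^2$-Betti number (Theorem~\ref{the:zeroth_L2-Betti_number}), and the fact that the classifying map $Y \to B\pi_1(Y)$ is $2$-connected. Write $\pi = \pi_1(Y)$. Since $\widetilde{Y}$ is simply connected, the $\pi$-map $\widetilde{Y} \to E\pi$ is $2$-connected. Hence
\[
b_n^{(2)}(\widetilde{Y}) = b_n^{(2)}(\pi) \quad \text{for } n \le 1,
\]
so the hypothesis gives $b_1^{(2)}(\widetilde{Y}) = 0$.

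First I split into cases according to whether $\pi$ is finite.

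If $\pi$ is infinite, then $b_0^{(2)}(\widetilde{Y}) = 0$, and the Euler–Poincaré formula reads
\[
\chi(Y) = b_2^{(2)}(\widetilde{Y}) \ge 0.
\]
Thus $b_2^{(2)}(\widetilde{Y}) = 0$ holds if and only if $\chi(Y) = 0$, which in turn holds if and only if $\chi(Y) \le 0$. Moreover $Y$ is not contractible here, so the clauses ``$Y$ is contractible'' in (ii) and (iii) are vacuous. This gives all three equivalences in the infinite case.

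If $\pi$ is finite, then $b_n^{(2)}(\widetilde{Y}) = b_n(\widetilde{Y})/|\pi|$ with ordinary rational Betti numbers, and $b_1(\widetilde{Y}) = 0$ because $\widetilde{Y}$ is simply connected. I will argue each implication separately.

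For (i) $\Rightarrow$ (ii): if $b_2(\widetilde{Y}) = 0$, then $\widetilde{Y}$ is a simply connected $2$-complex with $H_2(\widetilde{Y};\IZ) = 0$. The group $H_2(\widetilde{Y};\IZ)$ is free abelian, being the kernel of the top differential, so its rank is $b_2(\widetilde{Y})$. By Hurewicz and Whitehead, $\widetilde{Y}$ is contractible, so $Y$ is aspherical with finite fundamental group. A finite group with a finite-dimensional classifying space is trivial, so $Y$ is contractible.

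For (iii) $\Rightarrow$ (i) with $\pi$ finite: if $Y$ is contractible, then trivially $b_2^{(2)}(\widetilde{Y}) = 0$. Otherwise $\chi(Y) \le 0$, and we have
\[
\chi(Y) = \frac{1 + b_2(\widetilde{Y})}{|\pi|} > 0,
\]
a contradiction. So this case cannot occur.

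(ii) $\Rightarrow$ (iii) is trivial.

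I do not expect a real obstacle. The only points needing care are identifying $b_1^{(2)}(\widetilde{Y})$ with $b_1^{(2)}(\pi_1(Y))$ via $2$-connectivity, and handling the finite fundamental group case, where one must rule out a nontrivial finite aspherical situation. In the main infinite case, everything reduces to the one-line Euler characteristic identity $\chi(Y) = b_2^{(2)}(\widetilde{Y})$.
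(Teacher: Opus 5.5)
Your proof is correct and follows essentially the paper's route: $2$-connectivity of $\widetilde{Y} \to E\pi_1(Y)$ gives $b_1^{(2)}(\widetilde{Y}) = 0$, the Euler--Poincar\'e formula then yields $\chi(Y) = b_2^{(2)}(\widetilde{Y}) + b_0^{(2)}(\widetilde{Y})$, and the value of $b_0^{(2)}(\widetilde{Y})$ (zero for infinite $\pi_1(Y)$, $1/|\pi_1(Y)|$ for finite) settles each implication. The only deviation is in the finite case of (i)$\Rightarrow$(ii). There the paper just notes that $\chi(Y) = 1/|\pi_1(Y)|$ must be an integer, which forces $\pi_1(Y)$ to be trivial. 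You instead go through asphericity and the fact that a nontrivial finite group has no finite-dimensional classifying space; this is also valid but slightly longer.
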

  \begin{proof} We obtain $b_1^{(2)}(\widetilde{Y}) = b_1^{(2)}(\pi_1(Y)) = 0$
    from~\cite[Theorem~1.35~(1) on page~37]{Lueck(2002)} since the classifying map
    $\widetilde{Y} \to E\pi_1(Y)$ is $2$-connected. Hence
    \[
      b^{(2)}_2(\widetilde{Y}) + b^{(2)}_0(\widetilde{Y}) = \chi(Y)
    \]
    holds by~\cite[Theorem~1.35~(2) on page~37]{Lueck(2002)}. We have
    $b_0^{(2)}(\widetilde{Y}) = 0$ if $\pi_1(Y)$ is infinite and
    $b_0^{(2)}(\widetilde{Y}) = |\pi_1(Y)|^{-1}$, if $\pi_1(Y)$ is finite, see
    Theorem~\ref{the:zeroth_L2-Betti_number}.
    
    If $b^{(2)}_2(\widetilde{Y}) = 0$, then $b^{(2)}_0(\widetilde{Y}) = \chi(Y)$ and hence
    we have $\chi(Y) = 0$ or $\pi_1(Y) = \{1\}$. As $Y$ is $2$-dimensional, $Y$ is
    contractible if and only if $\pi_1(Y) = \{1\}$ and $b^{(2)}_2(\widetilde{Y}) = 0$
    hold.  This shows~\ref{the:other_implication_if_1_upper_(2)_is_zero:vanishing_of_b_2_upper_(2)}~$\implies$~%
\ref{the:other_implication_if_1_upper_(2)_is_zero:chi_is_0}.
  Obviously we have~\ref{the:other_implication_if_1_upper_(2)_is_zero:chi_is_0}~$\implies$~%
\ref{the:other_implication_if_1_upper_(2)_is_zero:chi_le_0}. If $\chi(Y) \le 0$, then
  $b^{(2)}_2(\widetilde{Y}) + b^{(2)}_0(\widetilde{Y}) = \chi(Y)$ implies
  $b^{(2)}_2(\widetilde{Y}) = 0$, $\chi(Y) = 0$, and $|\pi_1(Y)| = \infty$.
  Hence~\ref{the:other_implication_if_1_upper_(2)_is_zero:chi_le_0}~$\implies$~%
\ref{the:other_implication_if_1_upper_(2)_is_zero:vanishing_of_b_2_upper_(2)}.
\end{proof}

The class of groups $G$ satisfying $b_1^{(2)}(G) = 0$ is studied in~\cite[Theorem~7.2 on
page~294]{Lueck(2002)}.

    \begin{example}\label{exa:one_2_cell}
      Let $Y$ be a connected finite $2$-dimensional $CW$-complex with precisely one
      $2$-cell and torsionfree fundamental group. Then either
      $b_2^{(2)}(\widetilde{Y}) = 0$ or $Y$ is homotopic to
      $S^2 \vee \bigvee_{i = 1} ^n S^1$. This follows from the fact that the Atiyah
      Conjecture is known for torsionfree one-relator groups,
      see~\cite{Jaikin-Zapirain+Lopez-Alvarez(2020)}, and hence either
      $b_2^{(2)}(\widetilde{Y}) = 0$ or $b_2^{(2)}(\widetilde{Y}) = 1$ holds, and
      $b_2^{(2)}(\widetilde{Y}) = 1$ implies the vanishing of the second differential of
      the cellular $\IZ[\pi_1(X)]$-chain complex $C_*(\widetilde{Y})$.
    \end{example}

    The results in this Subsection~\ref{subsec:From_towers_toL2-Betti_numbers} hold also
    in prime characteristic if one assumes that the groups under consideration are
    \RALI-groups. However, if Conjecture~\ref{con:_dimension_1} is true, then we get for
    any finite $2$-dimensional $CW$-complex $X$ whose fundamental group $\pi$ is a \RALI-group
    that $b_n^{(2)}(\widetilde{X} ;\caln(\pi)) = b_n^{(2)}(\widetilde{X};\cald_{F\pi})$ holds for every
    $n \in \IZ^{\ge 0}$ and any field $F$.

 %%%%%%%%%%%%%%%%%%%%%%%%%%%%%%%%%%%%%%%%%%%%%%%%%%%%%%%%%%%%%%%%%%%%%
%%%%%%%%%%%%%%%%%%%%%%%%%%%%%% Section 6 %%%%%%%%%%%%%%%%%%%%%%%%%%%%%%%%
%%%%%%%%%%%%%%%%%%%%%%%%%%%%%%%%%%%%%%%%%%%%%%%%%%%%%%%%%%%%%%%%%%%%%

    \typeout{---------- Section 6: Finitely generated grou with virtual cohomological dimension $2$
    and vanishing second $L^2$-Betti number}%

  \section{Finitely   generated   group  with   cohomological
    dimension $2$ and vanishing second $L^2$-Betti number}%
  \label{sec:Fin_gen_groups_with_cd_2_and_b_2_upper(2)_is_0}

  \begin{question}\label{que:Fin_gen_groups_with_cd_2_and_b_2_upper_(2)_is_0}
    Let $G$ be a finitely generated torsionfree group such that 
    $\operatorname{cd}(G) \le 2$  and  $b_2^{(2)}(G) = 0$ hold. Which of the following statements are true?
    \begin{enumerate}
    \item\label{sec:Fin_gen_groups_with_cd_2_and_b_2_upper_(2)_is_0:b_1(G)_is_0}
    The first Betti number of $G$ satisfies $b_1(G) \ge 1$;
  \item\label{sec:Fin_gen_groups_with_cd_2_and_b_2_upper_(2)_is_0:locally_indicable}
    The group $G$ is locally indicable;
  \item\label{sec:Fin_gen_groups_with_cd_2_and_b_2_upper_(2)_is_0:FF_2}
    The group $G$ is of type $\operatorname{FF}_2$;
  \item\label{sec:Fin_gen_groups_with_cd_2_and_b_2_upper_(2)_is_0:finitely_presented}
    The group $G$ is finitely presented and has a finite $2$-dimensional model for $BG$.
    
    \end{enumerate}
  \end{question}

  This is related to the following question.

  \begin{question}\label{que:projective_finite_dimension}
    Let $G$ be a torsionfree group and let $P$ be a projective $\IZ G$-module.
    Suppose that $\dim_{\caln(G)}(\caln(G) \otimes_{\IZ G} P) < \infty$ holds. Is then $P$
    finitely generated?
  \end{question}

  Here is a partial answer to Question~\ref{que:projective_finite_dimension}.

  \begin{theorem}\label{the:fin_gen_and_finite_dim}
    Let $G$ be a group. Assume that there is a positive integer $d$
    such that the order of any finite subgroup of $G$ is less than or equal to $d$ and
    that $G$ satisfies the Atyiah Conjecture over the integral group
    ring $\IZ G$.
  
    Let $P$ be a projective $\IZ G$-module. Then the following
    assertions are equivalent:

  \begin{enumerate}
  \item\label{the:fin_gen_and_finite_dim:fin_gen} The $\IZ G$-module $P$ is finitely generated;
    
  \item\label{the:fin_gen_and_finite_dim:fin_dim} We have $\dim_{\caln(G)}(\caln(G) \otimes_{\IZ G} P) < \infty$;

  \end{enumerate}
\end{theorem}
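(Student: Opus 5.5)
The implication from finite generation to finite dimension is immediate. If $P$ is finitely generated projective, choose $Q$ with $P \oplus Q \cong \IZ G^n$. Additivity of $\dim_{\caln(G)}$ gives $\dim_{\caln(G)}(\caln(G)\otimes_{\IZ G} P) \le n < \infty$. The content lies in the converse, which I would prove by contradiction.

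Suppose $P$ is not finitely generated. I would first invoke Kaplansky's theorem: every projective module is a direct sum of countably generated projectives. So we may write $P = \bigoplus_{j\in J} P_j$ with each $P_j$ countably generated and nonzero. Two cases then arise.

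\begin{enumerate}
\item If $J$ is infinite, additivity and cofinality of $\dim_{\caln(G)}$ give
\[
\dim_{\caln(G)}(\caln(G)\otimes_{\IZ G} P) = \sum_j \dim_{\caln(G)}(\caln(G)\otimes_{\IZ G} P_j).
\]
It then suffices to show that every nonzero projective $\IZ G$-module $Q$ has
\[
\dim_{\caln(G)}(\caln(G)\otimes_{\IZ G} Q) \ge 1/d.
\]
This lower bound is also the key input in the second case.
\item If $J$ is finite, we reduce to $P$ countably generated but not finitely generated. By Eilenberg's swindle-type arguments, $P$ is a direct summand of $\IZ G^{(\IN)}$. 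Write $P$ as an increasing union of finitely generated submodules $M_1 \subseteq M_2 \subseteq \cdots$. The dimension of $\caln(G)\otimes P$ is the supremum of $\dim_{\caln(G)}$ of the images of $\caln(G)\otimes M_k$, by cofinality.
\end{enumerate}

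For the key lower bound, I would use the hypotheses as follows. The Atiyah conjecture over $\IZ G$ with bounded orders of finite subgroups implies that for every finitely presented $\IZ G$-module $M$, $\dim_{\caln(G)}(\caln(G)\otimes M)$ lies in $\frac{1}{d!}\IZ$. More precisely it lies in $\frac{1}{\operatorname{lcm}}\IZ$; call the lattice $\frac1N\IZ$ with $N=\operatorname{lcm}(1,\dots,d)$. Images of maps between finitely generated free modules then also have dimensions in $\frac1N\IZ$.

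A nonzero projective $Q$ embeds in a free module, so there is a nonzero element $x\in Q\subseteq \IZ G^{(I)}$. Let $\pi\colon \IZ G^{(I)}\to Q$ be the projection. The map $\IZ G\to Q$, $1\mapsto x$, composed with the inclusion, is a nonzero map into a free module. After tensoring with $\caln(G)$ it remains nonzero, by faithfulness of $\IZ G\subseteq \caln(G)$ and injectivity of $\caln(G)\otimes Q\to\caln(G)\otimes \IZ G^{(I)}$, which holds since $Q$ is a summand. Its image therefore has positive dimension, hence dimension at least $1/N$ by the integrality property. That image is a submodule of $\caln(G)\otimes Q$, so its dimension is bounded by that of $\caln(G)\otimes Q$. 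This settles the first case: infinitely many summands force infinite dimension.

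In the countable case, the plan is to use this quantization to show that the increasing sequence of dimensions $\delta_k$ of $\im(\caln(G)\otimes M_k\to \caln(G)\otimes P)$ stabilizes at some finite stage $k_0$, since it is bounded and lies in $\frac1N\IZ$. Then I would show $P$ is generated by $M_{k_0}$ together with a finitely generated summand. Using the splitting $P\oplus P'=\IZ G^{(\IN)}$, I would produce a nonzero projective summand $R$ of $P$ with $\caln(G)\otimes R$ of dimension zero. Concretely, take a finitely generated free summand $F\supseteq M_{k_0}$ of $\IZ G^{(\IN)}$ and set $R$ to be the kernel of $\pi|$ restricted to the complement. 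Dimension zero contradicts the lower bound $1/N$.

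The main obstacle is exactly this last step: turning stabilization of the dimension into an honest projective summand of dimension zero. Direct sum decompositions of $P$ need not be compatible with the filtration $M_k$. I expect to handle it by a Kaplansky/Eilenberg argument, replacing $P$ by $P\oplus \IZ G^{(\IN)}\cong \IZ G^{(\IN)}$-type manipulations. I would verify this step carefully, possibly instead working with the induced projective $\caln(G)$-module $\caln(G)\otimes P$ and the fact that $\caln(G)$ is semihereditary.
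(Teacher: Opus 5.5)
Your argument is correct up to the point where you obtain a finitely generated $M_{k_0}\subseteq P$ with $\dim_{\caln(G)}(\caln(G)\otimes_{\IZ G}P/M_{k_0})=0$. This follows by right exactness and additivity, since $\dim_{\caln(G)}(\caln(G)\otimes_{\IZ G}P)=\delta_{k_0}<\infty$. That statement is exactly Lemma~\ref{lem:_caln(G)_otimes_P/M_has_zero_dimension}, the first of the two lemmas the paper's proof rests on. The same quantization argument works verbatim over the directed set of all finitely generated submodules of an arbitrary $P$, so your Kaplansky case distinction is correct but unnecessary.

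The gap is the second step, which you flag yourself. The construction you sketch for it does not work. $\ker(\pi|_{F_1})$ is a submodule of the complement $F_1$, not a summand of $P$. The natural substitute is $P\cap F_1=\ker(\rho_0|_P)$, where $\rho_0\colon F_0\oplus F_1\to F_0$ is the projection. But this is a direct summand of $P$ only if $\rho_0(P)\subseteq F_0$ is projective, and there is no reason for that over $\IZ G$. The proposal offers no other way to produce a nonzero projective module of dimension zero.

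The missing idea is to test $P$ against homomorphisms $P\to\IZ G$ instead of looking for a projective summand. Suppose $f\colon P\to\IZ G$ vanishes on $M_{k_0}$. Then $\id_{\caln(G)}\otimes_{\IZ G} f$ factors through $\caln(G)\otimes_{\IZ G}P/M_{k_0}$, so its image has dimension zero. If $f(x)\neq 0$ for some $x\in P$, this image contains $\caln(G)\cdot f(x)$, which has positive dimension (at least $1/N$, by the argument you already gave in your first case). Hence $f=0$.

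Now let $F_0\subseteq\IZ G^{(\mathbb{N})}$ be the finitely generated free summand spanned by the finitely many basis elements that occur in a finite generating set of $M_{k_0}$. Every coordinate functional belonging to a basis element outside $F_0$ vanishes on $M_{k_0}$, hence on $P$. So $P\subseteq F_0$. From $P\oplus P'=\IZ G^{(\mathbb{N})}$ you then get $F_0=P\oplus(P'\cap F_0)$, so $P$ is finitely generated. This is the content of Lemma~\ref{lem:_projectives_in_finitely_generated_free_modules}, and no swindle argument is needed.
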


It is a direct consequence of the following two lemmas,
whose proof can be found
in~\cite[Lemma~4 and Lemma~5]{Kropholler-Linnell-Lueck(2009)}.

\begin{lemma}\label{lem:_caln(G)_otimes_P/M_has_zero_dimension}
Let $A$ be a ring with $\IZ \subset A \subset \IC$.
Suppose that there is a positive integer $d$ such that the order of any finite subgroup
of $G$ is less than or equal to $d$ and that the Atiyah Conjecture holds for $A$ and $G$. Let $N$
be a $AG$-module. Suppose that $\dim_{\caln(G)}(\caln(G) \otimes_{AG} N) < \infty$.

Then there exists
a finitely generated $AG$-submodule $M \subset N$ with the property
$\dim_{\caln(G)}(\caln(G) \otimes_{AG} N/M) = 0$.
\end{lemma}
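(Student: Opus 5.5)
Let $R=\caln(G)\otimes_{AG}N$; the plan is to exhaust $N$ by finitely generated submodules and use the integrality of dimensions forced by the Atiyah Conjecture. Write $N$ as the directed union of its finitely generated submodules $M_\lambda$. Since $\caln(G)\otimes_{AG}-$ commutes with directed colimits, and $\dim_{\caln(G)}$ is cofinal for directed unions ($\dim$ of a colimit is the supremum of the dimensions of the images), the dimension of $R$ equals the supremum of $\dim_{\caln(G)}$ of the images $\im\bigl(\caln(G)\otimes_{AG}M_\lambda\to R\bigr)$.

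The key input is the Atiyah Conjecture for $A$ and $G$ with the bound $d$ on finite subgroups. It says that for every finitely presented $AG$-module $L$, the number $\dim_{\caln(G)}(\caln(G)\otimes_{AG}L)$ lies in $\frac{1}{d!}\IZ$, since the lcm of the orders of finite subgroups divides $d!$. First I show that the dimensions of these images also lie in $\frac{1}{d!}\IZ$. Each image is a colimit over finitely presented approximations: present $M_\lambda$ as a quotient of $AG^n$, and note that the image of $\caln(G)\otimes M_\lambda$ in $R$ is the cokernel of $\caln(G)^n$ modulo the union of the images coming from finitely generated submodules of the relevant kernels. Using additivity and cofinality, its dimension is an infimum, attained as a limit, of dimensions of finitely presented modules. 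A decreasing sequence in $\frac{1}{d!}\IZ^{\ge0}$ stabilizes, so the image dimension is itself in $\frac{1}{d!}\IZ$.

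Because the image dimensions form an increasing net in the discrete set $\frac{1}{d!}\IZ$, bounded above by the finite number $\dim_{\caln(G)}(R)$, the supremum is attained. So there is a finitely generated $M=M_\lambda$ with $\dim$ of the image of $\caln(G)\otimes M$ equal to $\dim_{\caln(G)}(R)$. The exact sequence
\[
\caln(G)\otimes_{AG}M\to R\to \caln(G)\otimes_{AG}N/M\to 0
\]
together with additivity then gives $\dim_{\caln(G)}(\caln(G)\otimes_{AG}N/M)=0$. Finiteness of $\dim(R)$ is what allows the subtraction here.

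I expect the main obstacle to be the integrality claim for the images, which are not themselves of the form $\caln(G)\otimes L$ for a finitely presented $L$. I would handle it by working with the image of the map $\caln(G)\otimes M\to\caln(G)\otimes M'$ for finitely generated $M\subseteq M'\subseteq N$. Each such map is a morphism between modules with finitely presented approximations. I would pass to its limit in $M'$ and use the continuity of $\dim_{\caln(G)}$ from \cite[Theorem~6.7]{Lueck(2002)} together with the discreteness of the values to conclude that the limit is attained at some finite stage.
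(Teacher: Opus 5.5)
The paper does not prove this lemma itself; it only refers to \cite[Lemma~4]{Kropholler-Linnell-Lueck(2009)}. Your overall strategy is sound: exhaust $N$ by finitely generated submodules, use cofinality of $\dim_{\caln(G)}$, and use that $\frac{1}{d!}\IZ\cap[0,\dim_{\caln(G)}(R)]$ is finite. Your endgame via the exact sequence $\caln(G)\otimes_{AG}M\to R\to\caln(G)\otimes_{AG}N/M\to 0$ is also correct.

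The weak point is the step you yourself flag: the integrality of $\dim_{\caln(G)}\bigl(\im(\caln(G)\otimes_{AG}M_\lambda\to R)\bigr)$. Your second paragraph does not establish it. Suppose $M_\lambda=AG^n/K$ and the ``relevant kernel'' is $K$. Then $\caln(G)^n$ modulo the union of the images of $\caln(G)\otimes_{AG}K_\alpha$, with $K_\alpha\subseteq K$ finitely generated, is $\caln(G)\otimes_{AG}M_\lambda$ itself, not its image in $R$. These differ because $\caln(G)\otimes_{AG}-$ is not left exact. For example, take $G$ free on $a,b$, $A=\IZ$, and $M\subseteq N=\IZ G$ the augmentation ideal, which is free on $a-1,b-1$. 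Then $\caln(G)\otimes_{\IZ G}M\cong\caln(G)^2$ has dimension $2$, while its image in $\caln(G)\otimes_{\IZ G}\IZ G=\caln(G)$ has dimension at most $1$.

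What that paragraph does prove is that $\dim_{\caln(G)}(\caln(G)\otimes_{AG}L)\in\frac{1}{d!}\IZ$ for every finitely generated $AG$-module $L$. Your last paragraph makes the right reduction, to the maps $\caln(G)\otimes_{AG}M\to\caln(G)\otimes_{AG}M'$ for finitely generated $M\subseteq M'\subseteq N$. However, ``morphism between modules with finitely presented approximations'' does not explain why the dimensions of these finite-stage images lie in $\frac{1}{d!}\IZ$. Without that, the discreteness argument has nothing to act on.

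The missing observation is right exactness. The sequence $\caln(G)\otimes_{AG}M\to\caln(G)\otimes_{AG}M'\to\caln(G)\otimes_{AG}(M'/M)\to 0$ is exact and all dimensions involved are finite, so
\[
\dim_{\caln(G)}\bigl(\im(\caln(G)\otimes_{AG}M\to\caln(G)\otimes_{AG}M')\bigr)
=\dim_{\caln(G)}(\caln(G)\otimes_{AG}M')-\dim_{\caln(G)}(\caln(G)\otimes_{AG}M'/M)\in\tfrac{1}{d!}\IZ,
\]
because $M'$ and $M'/M$ are finitely generated. Since $R=\colim_{M'}\caln(G)\otimes_{AG}M'$, the kernel of $\caln(G)\otimes_{AG}M\to R$ is the directed union of the kernels of the maps to $\caln(G)\otimes_{AG}M'$. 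Cofinality, together with additivity and $\dim_{\caln(G)}(\caln(G)\otimes_{AG}M)<\infty$, then gives
\[
\dim_{\caln(G)}\bigl(\im(\caln(G)\otimes_{AG}M\to R)\bigr)=\inf_{M'}\dim_{\caln(G)}\bigl(\im(\caln(G)\otimes_{AG}M\to\caln(G)\otimes_{AG}M')\bigr).
\]
This is an infimum of elements of $\frac{1}{d!}\IZ^{\ge 0}$, so it is attained and itself lies in $\frac{1}{d!}\IZ$. With this inserted, the rest of your proof goes through unchanged.
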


\begin{lemma}\label{lem:_projectives_in_finitely_generated_free_modules}
Let $A$ be a ring with $\IZ \subset A \subset \IC$.
Let $P$ be a projective $AG$-module
such that for some finitely generated $AG$-submodule $M \subset P$ satisfying 
$\dim_{\caln(G)}(\caln(G)\otimes_{AG} P/M) = 0$.

Then $P$ is finitely generated.
\end{lemma}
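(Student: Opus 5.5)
The statement to prove is Lemma~\ref{lem:_projectives_in_finitely_generated_free_modules}: a projective $AG$-module $P$ with a finitely generated submodule $M$ such that $\dim_{\caln(G)}(\caln(G)\otimes_{AG} P/M) = 0$ is itself finitely generated. My plan is to pass to a free module, use the dimension hypothesis to show that every element of $P$ is ``almost'' in $M$, and then use a trace argument to bound the part of $P$ outside $M$.

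\emph{Step 1: reduce to a direct summand of a free module.} Choose a free module $F = P \oplus Q$ and a basis of $F$. Since $M$ is finitely generated, its generators involve only finitely many basis elements, so $M \subseteq F_0$ for some finitely generated free summand $F_0$ of $F$. Let $\pi \colon F \to P$ be the projection. Replacing $M$ by $\pi(F_0) \supseteq M$ keeps $M$ finitely generated and still satisfies $\dim(\caln(G)\otimes P/M)=0$, since this dimension can only drop when $M$ is enlarged. Write $F = F_0 \oplus F_1$ with $F_1$ free on the remaining basis elements.

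\emph{Step 2: show $P \subseteq F_0$; this is the main step.} Take $x\in P$ and let $y$ be the image of $x$ under the coordinate projection $F \to F_1$. Since $\dim_{\caln(G)}(\caln(G)\otimes_{AG}(M+AGx)/M)=0$, the cyclic module $(M+AGx)/M$ has dimension zero. The plan is to deduce that the image of $1\otimes y$ in $\caln(G)\otimes_{AG} F_1$, a free $\caln(G)$-module, is zero. For this I would use that a homomorphism $\caln(G)\to \caln(G)\otimes F_1$, $1\mapsto 1\otimes y$, factors (up to the relevant modification) through a module of dimension zero. Its image is then a finitely generated submodule of a projective module of dimension zero, hence trivial, because $\caln(G)$ is semihereditary and dimension is faithful on finitely generated projectives, as used in the proof of Lemma~\ref{lem:injectivity_top_differential}. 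Since $AG\to\caln(G)$ is injective on free modules, this gives $y=0$, so $x\in F_0$.

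The delicate point is the factorization: $M\subseteq F_0$, but $P/M\to F/F_0$ only makes sense if $\pi(F_0)$ behaves well. I would first try to use the map $F\to F_1$ restricted to $P$, which kills $M$ once $M\subseteq F_0$. That yields $P/M\to F_1$, and hence $\caln(G)\otimes P/M \to \caln(G)\otimes F_1$. This map is not obviously defined with the right image, because tensoring is only right exact. Still, the image of $\caln(G)\otimes P\to\caln(G)\otimes F_1$ equals the image of $\caln(G)\otimes P/M$, and it has dimension zero by additivity, so it is zero. Hence the composite $P\to F_1\to\caln(G)\otimes F_1$ is zero, giving $P\subseteq F_0$.

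\emph{Step 3: conclude.} We now have $P$ a direct summand of $F$ lying inside $F_0$. Then $P$ is a direct summand of $F_0$ via the restriction of $\pi$ to $F_0$, so $P$ is a quotient of the finitely generated module $F_0$. Hence $P$ is finitely generated.
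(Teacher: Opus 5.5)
Your argument is correct and is essentially the proof of Lemma~5 in Kropholler--Linnell--L\"uck, which the paper cites instead of proving the statement: the composite $P \hookrightarrow F_0 \oplus F_1 \to F_1$ kills $M$, so after applying $\caln(G)\otimes_{AG}-$ its image is a dimension-zero submodule of a free $\caln(G)$-module and hence vanishes (apply your semihereditary argument to each cyclic submodule), and injectivity of $F_1 \to \caln(G)\otimes_{AG} F_1$ gives $P \subseteq F_0$, so $P$ is a summand of $F_0$. Drop the replacement $M \mapsto \pi(F_0)$ in Step~1: it is not needed, and since $\pi(F_0)$ need not lie in $F_0$, it would break the inclusion $M \subseteq F_0$ that Step~2 relies on.
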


 The next lemma gives some insight about  Question~\ref{que:Fin_gen_groups_with_cd_2_and_b_2_upper_(2)_is_0}.

 \begin{lemma}\label{lem:FP_2_and_b_2_upper_(2)-finite} Consider a natural number $d \ge 1$.
   Let $G$ be a group of cohomological dimension $\le d$ such that there is a model for
   $BG$ with finite $(d-1)$-skeleton.  Suppose that $G$ satisfies the strong Atyiah
   Conjecture. Then:
   \begin{enumerate}
   \item\label{lem:FP_2_and_b_2_upper_(2)-finite:type_FP_d}
   The following assertions are equivalent:

   \begin{enumerate}
   \item\label{lem:FP_2_and_b_2_upper_(2)-finite:type_FP_d:Betti}
   The $d$-th $L^2$-Betti number $b_d^{(2)}(G)$ is finite;

 \item\label{lem:FP_2_and_b_2_upper_(2)-finite:type_FP_d:FP_2:BG} The group $G$ is of type
   $\operatorname{FP}_d$, i.e., there is a finitely generated projective $d$-dimensional
   resolution of the trivial $\IZ G$-module $\IZ$;

 \end{enumerate}
 
\item\label{lem:FP_2_and_b_2_upper_(2)-finite:type_FF_d} Suppose that $G$ satisfies the Full
  Farrell-Jones Conjecture. (The  statement of the Full Farrell-Jones Conjecture is quite complicated and involves
  $L$-groups as well as $K$-theory and the precise formulation is not relevant for this paper,
  see~\cite[Conjecture13.30 on page~387]{Lueck(2025book)}.
  For a status report and a list of applications we refer to~\cite[Theorem~16.1 on page~481
  and Theorem~13.65 on page~405]{Lueck(2025book)}.)  Then one can replace in
  assertion~\ref{lem:FP_2_and_b_2_upper_(2)-finite:type_FP_d} the condition
  $\operatorname{FP}_d$ by the condition $\operatorname{FF}_d$, i.e., there is a finitely
  generated free $d$-dimensional resolution of the trivial $\IZ G$-module $\IZ$;

\item\label{lem:FP_2_and_b_2_upper_(2)-finite:BG} Suppose $d \ge 3$ and $G$ satisfies the Full
  Farrell-Jones Conjecture.
  
Then the following assertions are equivalent:

   \begin{enumerate}

   \item\label{lem:FP_2_and_b_2_upper_(2)-finite:BG:BG_finite}
    There is a finite $d$-dimensional model for $BG$;
     
   \item\label{lem:FP_2_and_b_2_upper_(2)-finite:BG:Betti}
     The $d$-th $L^2$-Betti number $b_d^{(2)}(G)$ is finite.

   \end{enumerate}
    \end{enumerate}
  \end{lemma}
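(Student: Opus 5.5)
Let $c_k \colon C_k(EG) \to C_{k-1}(EG)$ denote the differentials of the cellular $\IZ G$-chain complex of a model of $EG$ whose $(d-1)$-skeleton is $G$-finite; the whole proof is organized around one key step. Since $\operatorname{cd}(G) \le d$, the kernel $P := \ker\bigl(c_{d-1}\colon C_{d-1}(EG) \to C_{d-2}(EG)\bigr)$ is a projective $\IZ G$-module (Schanuel applied to a length-$d$ projective resolution). Moreover
\[
0 \to P \to C_{d-1}(EG) \to \cdots \to C_0(EG) \to \IZ \to 0
\]
is a $d$-dimensional projective resolution whose modules are finitely generated except possibly $P$. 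The key step is therefore to show that $G$ is of type $\operatorname{FP}_d$ if and only if $P$ is finitely generated. The direction ``$\Leftarrow$'' is immediate. For ``$\Rightarrow$'' we use the generalized Schanuel lemma: comparing with a finitely generated projective resolution shows $P \oplus (\text{f.g. projective}) \cong (\text{f.g. projective}) \oplus \ldots$, so $P$ is finitely generated.

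Next I would compute $b_d^{(2)}(G)$ through $P$. Use the resolution above as the chain complex computing $H_*(\caln(G)\otimes_{\IZ G} -)$. Because $\caln(G)\otimes_{\IZ G}P \to \caln(G)\otimes_{\IZ G}C_{d-1}$ need not be injective, I use additivity of $\dim_{\caln(G)}$ and the finiteness of $\dim$ of the $C_k$ for $k\le d-1$. This gives $b_d^{(2)}(G) = \dim_{\caln(G)}\ker(\caln(G)\otimes P \to \caln(G)\otimes C_{d-1})$, which satisfies
\[
b_d^{(2)}(G) \le \dim_{\caln(G)}(\caln(G)\otimes_{\IZ G} P) \le b_d^{(2)}(G) + \dim_{\caln(G)}(\caln(G)\otimes C_{d-1}(EG)).
\]
Hence $b_d^{(2)}(G) < \infty$ if and only if $\dim_{\caln(G)}(\caln(G)\otimes_{\IZ G} P) < \infty$.

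Assertion (1) then follows from Theorem~\ref{the:fin_gen_and_finite_dim}. Here the strong Atiyah Conjecture is assumed, and it implies a bound on the orders of finite subgroups; alternatively $\operatorname{cd}(G)<\infty$ forces $G$ to be torsionfree, so $d=1$ works there. I expect the main obstacle to be checking that the strong Atiyah Conjecture, as assumed, gives exactly the hypotheses of Lemma~\ref{lem:_caln(G)_otimes_P/M_has_zero_dimension} over $A=\IZ$. For finitely generated $P$, finite dimension is clear.

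For (2), the Full Farrell--Jones Conjecture gives $\widetilde K_0(\IZ G)=0$ for torsionfree $G$, so every finitely generated projective $\IZ G$-module is stably free. Adding a free summand to $P$ and to $C_{d-1}$ then turns the finitely generated projective resolution into a finitely generated free one, so $\operatorname{FP}_d$ becomes equivalent to $\operatorname{FF}_d$. For (3), with $d\ge 3$ and type $\operatorname{FF}_d$, I would use the Eilenberg--Ganea/Wall realization argument. Start from the finite $(d-1)$-skeleton, which is available since $d-1\ge 2$ gives finite presentability, and realize the free stably-modified module $P\oplus \IZ G^k$ by attaching trivial $(d-1)$-cells and then $d$-cells. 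This yields a finite $d$-dimensional model for $BG$; the condition $d\ge3$ is exactly what makes Wall's realization work. The converse holds because a finite model gives $\operatorname{FF}_d$.
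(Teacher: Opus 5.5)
Your proposal is correct and follows essentially the same route as the paper. The paper likewise builds a $d$-dimensional projective resolution that is finitely generated free below degree $d$. It uses additivity of $\dim_{\caln(G)}$ to conclude that $b_d^{(2)}(G)<\infty$ forces $\dim_{\caln(G)}(\caln(G)\otimes_{\IZ G}P_d)<\infty$, and then applies Theorem~\ref{the:fin_gen_and_finite_dim}. For (2) it uses stable freeness of finitely generated projectives under the Farrell--Jones Conjecture, and for (3) it simply quotes the Wall-type criterion whose realization argument you spell out.

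One small correction: the strong Atiyah Conjecture does not by itself bound the orders of finite subgroups. The bound needed in Theorem~\ref{the:fin_gen_and_finite_dim} comes from finite cohomological dimension making $G$ torsionfree, which is your alternative and is also what the paper uses.
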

  \begin{proof}~\ref{lem:FP_2_and_b_2_upper_(2)-finite:type_FP_d:Betti}
    We begin with the implication~\ref{lem:FP_2_and_b_2_upper_(2)-finite:type_FP_d:Betti}~$\implies$~%
\ref{lem:FP_2_and_b_2_upper_(2)-finite:type_FP_d:FP_2:BG}.
    Since there is a model of $BG$ with finite $(d-1)$-skeleton, we can find a projective $\IZ G$-resolution
    \[
    \cdots \to P_{d+1} \to P_d \to P_{d-1} \to \cdots \to  P_1 \to P_0 \to \IZ \to 0
  \]
  of the trivial $\IZ G$-module $\IZ$ such that $P_i$ is finitely generated free for
  $i \le (d-1)$.  Since $G$ has cohomological dimension $d$, the group $G$ is torsionfree
  and we can arrange $P_i = 0$ for $i \ge (d+1)$.  The $d$-th $L^2$-Betti number of the
  $d$-dimensional $\caln(G)$-chain complex $\caln(G) \otimes_{\IZ G} P_*$ agrees with
  $b_d^{(2)}(G) = \dim_{\caln(G)}(H_d(\caln(G) \otimes_{\IZ G} P_*))$ and hence is finite.
  Since the $\caln(G)$-chain module $\caln(G) \otimes_{\IZ G} P_i$ is finitely generated
  free and hence satisfies $\dim_{\caln(G)}(\caln(G) \otimes_{\IZ G} P_i) < \infty$, we
  conclude from the additivity of the dimension function $\dim_{\caln(G)}$ that
  $\dim_{\caln(G)}(\caln(G) \otimes_{\IZ G} P_d) < \infty$ holds.
  Theorem~\ref{the:fin_gen_and_finite_dim} implies that $P_d$ is finitely generated
  projective.  Hence $P_*$ is a finitely generated projective $d$-dimensional resolution
  of of the trivial $\IZ G$-module $\IZ$, in other words, $G$ is of type
  $\operatorname{FP}_d$.  The other
  implication~\ref{lem:FP_2_and_b_2_upper_(2)-finite:type_FP_d:FP_2:BG}~$\implies$~%
\ref{lem:FP_2_and_b_2_upper_(2)-finite:type_FP_d:Betti} is obvious.
  \\[1mm]~\ref{lem:FP_2_and_b_2_upper_(2)-finite:type_FP_d:FP_2:BG} Since $G$ satisfies
  the Full Farrell-Jones Conjecture, we can find for every finitely generated projective
  $\IZ G$-module $Q$ natural numbers $m,n$ with $Q \oplus \IZ G^m \cong_{\IZ G} \IZ
  G^n$. Hence the conditions $\operatorname{FP}_d$ and $\operatorname{FF}_d$ are
  equivalent.  \\[1mm]~\ref{lem:FP_2_and_b_2_upper_(2)-finite:BG} Suppose $d \ge 3$. Then
  there is a finite $d$-dimensional model for $BG$ if and only if there is a model for
  $BG$ with finite $(d-1)$-skeleton and $G$ is of type $\operatorname{FF}_d$.
\end{proof}

The next result is taken from~\cite[Theorem~3]{Kropholler-Linnell-Lueck(2009)}.

\begin{theorem}\label{the_finitely_generated_elementary_amenable_groups_with_cd_le_2_and_2-dim_res}
Let $G$ be an elementary amenable group of cohomological dimension $\le
2$. Then

\begin{enumerate}

\item\label{the_finitely_generated_elementary_amenable_groups_with_cd_le_2_and_2-dim_res:fin_gen}
Suppose that $G$ is finitely generated.  Then $G$  possesses a presentation of the form
\[
  \langle x,y \mid yxy^{-1} = x^n \rangle.
\]
In particular there is a finite $2$-dimensional model for $BG$ and $b_1(G) \ge 1$;

\item\label{the_finitely_generated_elementary_amenable_groups_with_cd_le_2_and_2-dim_res:not_fin_gen}
Suppose that $G$ is countable but not finitely generated. Then $G$ is a non-cyclic subgroup of
the additive group $\IQ$.

\end{enumerate}
\end{theorem}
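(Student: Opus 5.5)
The plan is to reduce to solvable groups via Hirsch length and then invoke the classification of solvable groups of cohomological dimension at most $2$.

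\textbf{Step 1: bound the Hirsch length.} Let $G$ be elementary amenable with $\operatorname{cd}(G)\le 2$. Then $G$ is torsionfree, since a group of finite cohomological dimension has no torsion. For elementary amenable groups the Hirsch length $h(G)$ is defined, and I would use two results of Hillman and Kropholler:
\[
h(G)\le \operatorname{cd}(G)\le 2,
\]
and, if $G$ is countable but not finitely generated, the sharper inequality $h(G)+1\le \operatorname{cd}(G)$ (Gildenhuys--Strebel, Kropholler). By Hillman--Linnell, a torsionfree elementary amenable group of finite Hirsch length is virtually solvable. Using the bound $\operatorname{cd}\le 2$ once more, I would upgrade this to: $G$ is solvable, indeed metabelian. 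The idea is that the maximal normal locally nilpotent subgroup has Hirsch length $\le 2$, hence is abelian of rank $\le 2$, and the quotient by it is constrained accordingly.

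\textbf{Step 2: the case that is not finitely generated} (assertion (ii)). Here $h(G)\le \operatorname{cd}(G)-1\le 1$, and $h(G)=0$ would force $G$ to be finite, hence trivial. So $h(G)=1$. Every finitely generated subgroup is then virtually cyclic and torsionfree, hence infinite cyclic, so $G$ is locally cyclic. A countable torsionfree locally cyclic group embeds in $\IQ$, and it is non-cyclic because it is not finitely generated.

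\textbf{Step 3: the finitely generated case} (assertion (i)). Now $G$ is a finitely generated solvable group with $h(G)\le 2$.
\begin{itemize}
\item If $h(G)=0$, then $G$ is trivial.
\item If $h(G)=1$, then $G\cong\IZ$, which is $\langle x,y\mid yxy^{-1}=x^1\rangle$ modulo a trivial generator. Strictly, I would handle this by allowing the degenerate presentation, for example $\langle x,y\mid x=1\rangle$ after a change of generators, or by treating it as a separate trivial case.
\item If $h(G)=2$, I would apply Gildenhuys' classification of solvable groups of cohomological dimension $2$: such a group is either a non-cyclic subgroup of $\IQ$, which is excluded by finite generation, or a solvable Baumslag--Solitar group $\langle x,y\mid yxy^{-1}=x^n\rangle$, where $n=1$ gives $\IZ^2$.
\end{itemize}
The remaining claims of (i) then follow directly. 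The presentation $2$-complex of a one-relator group whose relator is not a proper power is aspherical by Lyndon, so it is a finite $2$-dimensional model for $BG$. Abelianizing the relator shows that $y$ survives in $H_1$, so $b_1(G)\ge 1$.

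\textbf{Main obstacle.} The delicate step is the passage from virtually solvable to solvable together with the application of Gildenhuys' theorem, that is, pinning down the structure when $h(G)=2$. If quoting Gildenhuys is not acceptable, I would argue directly as follows. Take the Fitting subgroup $N$, a torsionfree abelian normal subgroup of rank $1$ or $2$. Show that $G/N$ is virtually cyclic and, using $\operatorname{cd}\le 2$, that $N$ has rank $1$ with $G/N\cong\IZ$ acting on $N\subseteq\IQ$ by multiplication by $n$. This yields the Baumslag--Solitar presentation.
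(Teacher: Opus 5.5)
Your architecture (torsion-freeness, Hillman's bound $h(G)\le\operatorname{cd}(G)\le 2$, Hillman--Linnell for virtual solvability, then Gildenhuys' classification of solvable groups of cohomological dimension $2$) is the natural route to this result. The paper gives no proof of its own; it only quotes Theorem~3 of Kropholler--Linnell--L\"uck.

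\textbf{The gap.} It is the step you flag yourself. Gildenhuys' theorem is about solvable groups, and your argument that $G$ is solvable stops exactly where the work is. Knowing that the Hirsch--Plotkin radical $H$ is torsion-free abelian of rank $1$ or $2$ (that part is correct) says nothing about $G/H$. A priori $G/H$ is only a virtually solvable group of Hirsch length $\le 1$.

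\textbf{The missing idea.} $H$ is self-centralizing. Let $C=C_G(H)$ and let $S$ be a normal solvable subgroup of finite index.
\begin{enumerate}
\item If $(S\cap C)H=H$, then $C/H$ is finite and $C$ is central-by-finite. By Schur, $C'$ is finite, hence trivial. So $C$ is a normal abelian subgroup and $C=H$.
\item Otherwise, let $B/H$ be the last nontrivial term of the derived series of $(S\cap C)H/H$. It is abelian and normal in $G/H$. Since $H$ is central in $B$, the group $B$ is a normal nilpotent subgroup (class $\le 2$) not contained in $H$, which is impossible.
\end{enumerate}
Hence $C=H$ and $G/H$ embeds in $\operatorname{Aut}(H)$.
\begin{enumerate}
\item In rank $1$, $\operatorname{Aut}(H)\subseteq\IQ^{\times}$ is abelian, so $G$ is metabelian.
\item In rank $2$, $h(G/H)=0$, so $G/H$ is a periodic subgroup of $\GL_2(\IQ)$ and hence finite. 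Moreover $\operatorname{cd}(H)\le 2$ forces $H\cong\IZ^2$. So $G$ is a torsion-free finite extension of $\IZ^2$, i.e.\ $\IZ^2$ or the Klein bottle group, which are the cases $n=\pm1$.
\end{enumerate}

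\textbf{Your fallback does not close the gap.}
\begin{enumerate}
\item For $\IZ^2$ and the Klein bottle group the Fitting subgroup has rank $2$. So ``$N$ has rank $1$'' fails precisely for $n=\pm1$.
\item In the rank-one case, ``using $\operatorname{cd}\le 2$'' is the entire content of Gildenhuys' theorem, not a step. For example, $\IZ[1/6]\rtimes_{3/2}\IZ$ is finitely generated, torsion-free and metabelian with $h=2$. It is excluded only because its cohomological dimension is $3$.
\end{enumerate}
So keep the citation of Gildenhuys and insert the self-centralizing argument above.

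\textbf{Smaller points.}
\begin{enumerate}
\item $\IZ$ is the case $n=0$, since $yxy^{-1}=1$ kills $x$. The case $n=1$ gives $\IZ^2$, not $\IZ$.
\item The trivial group is a genuine exception to assertion (i). It has $b_1=0$ and no such presentation, because $y$ survives in the abelianization of $\langle x,y\mid yxy^{-1}=x^n\rangle$. The statement tacitly needs $G\neq\{1\}$, and you should say so rather than list that case silently.
\item $h(G)=0$ means locally finite, not finite.
\item Once solvability is established, assertion (ii) follows from Gildenhuys together with Stallings--Swan, so the inequality $h(G)+1\le\operatorname{cd}(G)$ is unnecessary. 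If you keep it, note that it is a statement about solvable groups and apply it to a solvable subgroup of finite index.
\end{enumerate}
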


In particular we see  that an elementary amenable group of cohomological dimension $\le
2$ is locally indicable and coherent, i.e., every finitely generated subgroup is finitely presented.

Lemma~\ref{lem_type_FF_2} has already been proved by
  Jaikin-Zapirain-Linton~\cite[Theorem~3.10]{Jaikin-Zapirain-Linton(2025)}. 

\begin{lemma}\label{lem_type_FF_2}
  Let $G$ be a group of cohomological dimension $\le 2$ with $b_2^{(2)}(G) = 0$.  Suppose
  that $G$ satisfies the strong Atiyah Conjecture.  Then $G$ is almost coherent in the
  sense that every finitely generated subgroup $H \subseteq G$ is of type
  $\operatorname{FP}_2$.
  
  If $G$ satisfies the Full Farrell-Jones Conjecture, then we can replace
  $\operatorname{FP}_2$ by $\operatorname{FF}_2$ in the statement above.
\end{lemma}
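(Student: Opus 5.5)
The plan is to reduce to Lemma~\ref{lem:FP_2_and_b_2_upper_(2)-finite} applied to the subgroup $H$ with $d = 2$. Let $H \subseteq G$ be finitely generated. Since $H$ is a subgroup of $G$, it has cohomological dimension $\le 2$. Being finitely generated, it admits a model for $BH$ with finite $1$-skeleton, for instance a wedge of circles with cells of dimension $\ge 2$ attached. The strong Atiyah Conjecture passes to subgroups, so $H$ satisfies it as well. Therefore Lemma~\ref{lem:FP_2_and_b_2_upper_(2)-finite}~\ref{lem:FP_2_and_b_2_upper_(2)-finite:type_FP_d} applies to $H$ with $d=2$. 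It shows that $H$ is of type $\operatorname{FP}_2$ as soon as $b_2^{(2)}(H) < \infty$.

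The key step is to show $b_2^{(2)}(H) = 0$. Choose a $2$-dimensional projective resolution $P_*$ of $\IZ$ over $\IZ G$; this is possible since $\operatorname{cd}(G) \le 2$. Restricting to $H$ gives a $2$-dimensional projective resolution over $\IZ H$. The top differential $\caln(G)\otimes_{\IZ G} P_2 \to \caln(G)\otimes_{\IZ G} P_1$ has kernel of dimension $b_2^{(2)}(G) = 0$. By the argument of Lemma~\ref{lem:injectivity_top_differential}~\ref{lem:injectivity_top_differential:caln(G)} (projective modules over the semihereditary ring $\caln(G)$), it is therefore injective. Next, use the injectivity of $U(M)\colon \caln(H)\otimes_{\IZ H}\res_G^H M \to \caln(G)\otimes_{\IZ G} M$ for projective $M$, which was established in the proof of Theorem~\ref{the:On_the_top_L2-Betti_number}~\ref{the:On_the_top_L2-Betti_number:special_F} for $\IC G$-modules; one tensors up to $\IC$ first. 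This yields that the top differential for $\res_G^H P_*$ over $\caln(H)$ is injective. Hence $b_2^{(2)}(H) = 0 < \infty$.

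The main obstacle is bookkeeping. Theorem~\ref{the:On_the_top_L2-Betti_number} is phrased for $G$-$CW$-complexes, whereas I need it for the projective chain complex $P_*$, which need not be finitely generated or free. Its proof, however, only uses that the chain modules are projective, so it transfers verbatim, and the argument above already spells this out. One could alternatively take $X = EG$ if a $2$-dimensional model exists, but by the Eilenberg--Ganea problem that is not guaranteed, so the algebraic route is safer.

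For the Farrell--Jones addendum, apply Lemma~\ref{lem:FP_2_and_b_2_upper_(2)-finite}~\ref{lem:FP_2_and_b_2_upper_(2)-finite:type_FF_d} to $H$. This needs $H$ to satisfy the Full Farrell--Jones Conjecture, which holds because that conjecture is inherited by subgroups. Then every finitely generated projective $\IZ H$-module is stably free, and $\operatorname{FP}_2$ upgrades to $\operatorname{FF}_2$.
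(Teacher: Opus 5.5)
Your proof is correct and is the paper's argument. You obtain $b_2^{(2)}(H)=0$ from the top-degree restriction argument behind Theorem~\ref{the:On_the_top_L2-Betti_number}, apply Lemma~\ref{lem:FP_2_and_b_2_upper_(2)-finite} to $H$ (which inherits $\operatorname{cd}\le 2$ and, being torsionfree, the strong Atiyah Conjecture), and use that the Full Farrell--Jones Conjecture passes to subgroups. Running the argument on a $2$-dimensional projective resolution rather than a $2$-dimensional model of $EG$ is a real gain in rigor, since the paper cites a theorem stated for $G$-$CW$-complexes without addressing that such a model need not exist (the Eilenberg--Ganea problem).
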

\begin{proof}
  Let $H \subseteq G$ be a finitely generated subgroup. We conclude from
  Theorem~\ref{the:On_the_top_L2-Betti_number} that $b_2^{(2)}(H) = 0$.  Since $G$ is of
  cohomological dimension $\le 2$ and satisfies the strong Atiyah Conjecture, the same is
  true for $H$.
  Lemma~\ref{lem:FP_2_and_b_2_upper_(2)-finite}~\ref{lem:FP_2_and_b_2_upper_(2)-finite:type_FP_d}
  implies that $H$ is of type $\operatorname{FP}_2$.

  If $G$ satisfies the Full Farrell-Jones Conjecture, then also $H$ satisfies the Full
  Farrell-Jones Conjecture and hence is of type $\operatorname{FF}_2$.
\end{proof}

  \begin{remark}\label{rem:Bestvina_Brady}
  In assertion~\ref{lem:FP_2_and_b_2_upper_(2)-finite:BG} of
  Lemma~\ref{lem:FP_2_and_b_2_upper_(2)-finite} the condition $d \ge 3$ is necessary,
  since there are finitely generated groups $G$ of type $\operatorname{FP}_2$ which are
  not finitely presented, see~\cite{Bestvina-Brady(1997)}.
  Actually, the infinite group $G$ is the kernel of
  an epimorphism $f \colon A \to \IZ$ for some right angled Artin group $A$. Hence $G$
  satisfies the Full Farrell-Jones Conjecture and the strong Atiyah Conjecture and is even of
  type $\operatorname{FF}_2$.  
    So the group $G$ has the following properties:
    \begin{itemize}
    \item $G$ is finitely generated but not finitely presented;
    \item $G$ satisfies the Full Farrell-Jones Conjecture and the strong Atiyah Conjecture;
    \item $G$ is of type $\operatorname{FF}_2$.
    \end{itemize}

    However, $b_n^{(2)}(G) \not= 0$ by~\cite[Theorem~C]{Fisher-Hughes-Leary(2024)}.
    So it is still possible that any group $G$, whose
    virtually cohomological dimension is $\le 2$ and which satisfies $b_n^{(2)}(G) = 0$,
    is coherent.
  \end{remark}

%%%%%%%%%%%%%%%%%%%%%%%%%%%%%%%%%%%%%%%%%%%%%%%%%%%%%%%%%%%%%%%%%%%% 
%%%%%%%%%%%%%%%%%%%%%%%%%%%% Reference  %%%%%%%%%%%%%%%%%%%%%%%%%%%%%%%%
%%%%%%%%%%%%%%%%%%%%%%%%%%%%%%%%%%%%%%%%%%%%%%%%%%%%%%%%%%%%%%%%%%%%

\typeout{----------------------------- References ------------------------------}

\addcontentsline{toc<<}{section}{References} 
% \bibliographystyle{abbrv}
% \bibliography{dbpub,dbpre}

%\version{31.03.2026 (Wolfgang)}

\end{document}